\theoremstyle{plain}
\newtheorem{theorem}{Theorem}[section]
\newtheorem*{theorem*}{Theorem}
\newtheorem{lemma}[theorem]{Lemma}
\newtheorem{proposition}[theorem]{Proposition}
\newtheorem{corollary}[theorem]{Corollary}
\newtheorem{conjecture}[theorem]{Conjecture}
\theoremstyle{definition}
\newtheorem{definition}[theorem]{Definition}
\theoremstyle{remark}
\newtheorem{remark}[theorem]{Remark}
\numberwithin{equation}{section}
\newcommand{\mc}{\mathcal}
\newcommand{\rr}{\mathbb{R}}
\newcommand{\cc}{\mathbb{C}}
\newcommand{\la}{\lambda}
\newcommand{\pl}{\partial}
\newcommand{\x}{\times}
\newcommand{\til}{\widetilde}
\newcommand{\cjd}{\rangle}
\newcommand{\cjg}{\langle}
\newcommand{\C}{\mathbb{C}}
\newcommand{\R}{\mathbb{R}}
\newcommand{\N}{\mathbb{N}}
\newcommand{\Ss}{\mathbb{S}}
\newcommand{\eps}{\varepsilon}
\newcommand{\wt}{\widetilde}
\DeclareMathOperator{\vol}{vol}
\DeclareMathOperator{\Tr}{Tr}
\DeclareMathOperator{\Op}{Op}
\DeclareMathOperator{\Var}{Var}
\DeclareMathOperator{\ran}{ran}
\DeclareMathOperator{\Diff}{Diff}
\newcommand{\be}{\begin{equation}}
\newcommand{\ee}{\end{equation}}
\title
[Geodesic stretch, pressure metric and marked length spectrum rigidity]
{Geodesic stretch, pressure metric and marked length spectrum rigidity}
\author{Colin Guillarmou}
\address{Laboratoire de Mathématiques d’Orsay, Univ. Paris-Sud, CNRS, Université Paris-Saclay, 91405 Orsay, France}
\email{colin.guillarmou@math.u-psud.fr}
\author{Gerhard Knieper}
\address{Ruhr-Universit\"at Bochum, Fakult\"at f\"ur Mathematik, D-44780 Bochum, Deutschland}
\email{gerhard.knieper@rub.de}
\author{Thibault Lefeuvre}
\address{Laboratoire de Mathématiques d’Orsay, Univ. Paris-Sud, CNRS, Université Paris-Saclay, 91405 Orsay, France}
\email{thibault.lefeuvre@u-psud.fr}
\begin{document}

\begin{abstract}
We refine the recent local rigidity result for the marked length spectrum obtained by the first and third author in \cite{Guillarmou-Lefeuvre-18} and give an alternative proof using the geodesic stretch between two Anosov flows and some uniform estimate on the variance appearing in the central limit theorem for Anosov geodesic flows. In turn, we also introduce a new pressure metric on the space of isometry classes, that reduces to the Weil-Petersson metric in the case of Teichm\"uller space and is related to the works of \cite{McMullen,Bridgeman-Canary-Labourie-Sambarino-15}. 
\end{abstract}

\maketitle

\section{Introduction}

Let $M$ be a smooth closed $n$-dimensional manifold. We denote by $\mc{M}$ the Fr\'echet manifold consisting of smooth metrics on $M$. We denote by $\mc{M}^{k,\alpha}$ the set of metrics with regularity $C^{k,\alpha}$, $k \in \N, \alpha \in (0,1)$. We fix a smooth metric $g_0 \in \mc{M}$ with Anosov geodesic flow $\varphi_t^{g_0}$ and define the unit tangent bundle by $S_{g_0}M := \left\{(x,v) \in TM ~|~ |v|_{g_0}= 1\right\}$. Recall that being Anosov means that there exists a flow-invariant continuous splitting
\[
T(S_{g_0}M) = \R X \oplus E_s \oplus E_u,
\]
such that
\[\begin{gathered}
\|d\varphi_t^{g_0}(w)\| \leq Ce^{-\lambda t}\|w\|, \quad \forall w \in E_s, \forall t \geq 0, \\
 \|d\varphi_t^{g_0}(w)\| \leq Ce^{-\lambda |t|}\|w\|, \quad \forall w \in E_u, \forall t \leq 0,
\end{gathered}\]
where the constants $C, \lambda > 0$ are uniform and the norm here is the one induced by the Sasaki metric of $g_0$. Such a property is satisfied in negative curvature.

\subsection{Geodesic stretch and marked length spectrum rigidity}

The set of primitive free homotopy classes $\mathcal{C}$ of $M$ is in one-to-one correspondance with the primitive conjugacy classes of $\pi_1(M,x_0)$ (where $x_0 \in M$ is arbitrary). When $g_0$ is Anosov, there exists a unique closed geodesic $\gamma_{g_0}(c)$ in each primitive free homotopy class $c \in \mathcal{C}$ (see \cite{Klingenberg-74}). This allows us to define the \emph{marked length spectrum} of the metric $g_0$ by:
\[ L_{g_0} : \mathcal{C} \rightarrow \R_+, \qquad L_{g_0}(c) = \ell_{g_0}(\gamma_{g_0}(c)), \]
where $\ell_{g_0}(\gamma)$ denotes the $g_0$-length of a curve $\gamma \subset M$ computed with respect to $g_0$. The marked length spectrum can alternatively be defined for the whole set of free homotopy classes but it is obviously an equivalent definition. Given $c \in \mc{C}$, we will write $\delta_{g_0}(c)$ to denote the probability Dirac measure carried by the unique $g_0$-geodesic $\gamma_{g_0}(c) \in c$. 

It was conjectured by Burns-Katok \cite{Burns-Katok-85} that the marked length spectrum of negatively curved manifolds determine the metric up to isometry in the sense that two negatively curved metrics $g$ and $g_0$ with same marked length spectrum (namely $L_g = L_{g_0}$) should be isometric. Although the conjecture was proved for surfaces by Croke and Otal \cite{Otal-90, Croke-90}) and in some particular cases in higher dimension (for conformal metrics by Katok \cite{Katok-88} and when $(M,g_0)$ is a locally symmetric space by the work of Hamenst\"adt and Besson-Courtois-Gallot \cite{Besson-Courtois-Gallot-95, Hamenstadt-99}), it is still open in dimension higher or equal to $3$ and open even in dimension $2$ in the more general setting of Riemannian metrics with Anosov geodesic flows. The same type of problems can also be asked for billiards and we mention recent results on this problem by Avila-De Simoi-Kaloshin \cite{Avila} and De Simoi-Kaloshin-Wei \cite{DSKW} for convex domains close to ellipses (although the Anosov case would rather correspond to the case of hyperbolic billiards).
Recently, the first and last author obtained the following result  on Burns-Katok conjecture:

\begin{theorem*}[Guillarmou-Lefeuvre \cite{Guillarmou-Lefeuvre-18}]
Let $(M,g_0)$ be a smooth Riemannian manifold with Anosov geodesic flow and further assume that its curvature is nonpositive if $\dim M \geq 3$. Then there exists $k \in \N$ depending only on $\dim M$ and $\eps > 0$ small enough depending on $g_0$ such that the following holds: if $g \in \mc{M}$ is such that $\|g-g_0\|_{C^k} \leq \eps$ and $L_g = L_{g_0}$, then $g$ is isometric to $g_0$.
\end{theorem*}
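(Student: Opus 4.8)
The idea is to recast $L_g=L_{g_0}$ as a statement about the \emph{geodesic stretch} of $g$ relative to $g_0$, to promote it to the equality $\mathbf{J}(g)=1$ for an entropy-normalised stretch functional $\mathbf{J}$, and then to read off $g\simeq g_0$ from the strict convexity of $\mathbf{J}$ near $g_0$, a convexity encoded by a \emph{pressure metric} (which reduces to the Weil--Petersson metric on Teichm\"uller space) whose nondegeneracy is exactly the $X$-ray injectivity established in \cite{Guillarmou-Lefeuvre-18}. First, since $g$ is $C^k$-close to $g_0$ it is again Anosov, and structural stability provides a H\"older orbit equivalence $\Psi\colon S_{g_0}M\to S_gM$ homotopic to the identity, hence mapping the closed $\varphi^{g_0}$-orbit in each class $c\in\mc{C}$ to the closed $\varphi^g$-orbit in the same class. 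Let $r_g\colon S_{g_0}M\to\R_{>0}$ be the associated (H\"older) reparametrisation cocycle, normalised so that $\int_{\gamma_{g_0}(c)}r_g\,\dd t=L_g(c)$ for all $c$. I would first note that $L_g=L_{g_0}$ is equivalent to $\int_\gamma(r_g-1)\,\dd t=0$ over every periodic orbit, so that Livsic's theorem gives $r_g-1=Xu$ with $u$ H\"older; in particular the geodesic stretch $I_\mu(g_0,g):=\int r_g\,\dd\mu$ equals $1$ for all invariant $\mu$, the flows $\varphi^g,\varphi^{g_0}$ are conjugate, and $h_{\mathrm{top}}(\varphi^g)=h_{\mathrm{top}}(\varphi^{g_0})=:h_0$ (so conjugacy of the geodesic flows alone is not enough, and a finer argument is needed).

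\textbf{The functional and its Hessian.} Next I would introduce, for $g$ near $g_0$, the diffeomorphism-invariant functional
\[
\mathbf{J}(g):=\frac{h_{\mathrm{top}}(\varphi^g)}{h_0}\,\int_{S_{g_0}M}r_g\,\dd\mu_{\mathrm{BM}},
\]
with $\mu_{\mathrm{BM}}$ the Bowen--Margulis measure of $\varphi^{g_0}$. Since $h_{\mathrm{top}}(\varphi^g)$ is the unique zero of $s\mapsto P_{\varphi^{g_0}}(-s\,r_g)$, the variational principle gives $\mathbf{J}(g)\ge1$, with equality iff $r_g$ is cohomologous to a constant; hence $\mathbf{J}(g_0)=1$ and, by the first step, $\mathbf{J}(g)=1$ under our hypothesis. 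I would then compute the Hessian of $\log\mathbf{J}$ at $g_0$ along $g_s=g_0+sh$: using $\partial_s|_{0}L_{g_s}(c)=\demi\int_{\gamma_{g_0}(c)}h(\dot\gamma,\dot\gamma)\,\dd t$ together with Livsic, the analyticity of the pressure, and the classical identity ``second derivative of pressure $=$ variance in the central limit theorem'', one finds
\[
\tfrac{\dd^2}{\dd s^2}\big|_{0}\log\mathbf{J}(g_0+sh)=\tfrac{h_0}{4}\,\Var_{\mu_{\mathrm{BM}}}\!\big(\pi_2^*h-\textstyle\int\pi_2^*h\,\dd\mu_{\mathrm{BM}}\big),\qquad \pi_2^*h(x,v):=h_x(v,v),
\]
i.e.\ a multiple of the advertised pressure metric.

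\textbf{Nondegeneracy and conclusion.} After pulling $g$ back by a diffeomorphism I may assume $h=g-g_0$ is solenoidal for $g_0$ (the Ebin-type slice), and it then suffices to prove $h=0$. The quadratic form above vanishes on a solenoidal $h$ iff $\pi_2^*h-\overline{\pi_2^*h}$ is a coboundary, iff $I_2h$ is constant, iff $h\in\R g_0$ — the last equivalence being the $s$-injectivity of the $X$-ray transform $I_2$ on solenoidal $2$-tensors from \cite{Guillarmou-Lefeuvre-18}; this is the only place the curvature hypothesis enters for $\dim M\ge3$, the surface case being unconditional. Thus $\mathrm{Hess}_{g_0}\log\mathbf{J}$ is positive definite on solenoidal tensors transverse to $\R g_0$; upgrading this to a quantitative bound $\mathbf{J}(g_0+h)\ge1+c\|h\|^2$ on such a transverse slice, the equality $\mathbf{J}(g)=1$ forces $h\in\R g_0$, i.e.\ $g=\la g_0$ with $\la$ close to $1$; and $L_{\la g_0}=\sqrt{\la}\,L_{g_0}=L_{g_0}$ then gives $\la=1$, so the original metric is isometric to $g_0$.

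\textbf{Main obstacle.} The hard part will be the \emph{uniform} variance estimate: I need the lower bound $\Var_{\mu_{\mathrm{BM}}^{g}}(\pi_2^*h-\overline{\pi_2^*h})\gtrsim\|h\|^2$ to hold not only at $g=g_0$ but uniformly for $g$ in a $C^2$-neighbourhood, so that the strict convexity of $\mathbf{J}$ survives the nonlinear remainder in the Taylor expansion. This requires uniform spectral-gap estimates for the transfer operators of the whole family $\{\varphi^g\}$ on suitable anisotropic Sobolev spaces, continuity of $g\mapsto\mu_{\mathrm{BM}}^{g}$ and of the associated variances, and a careful reconciliation of the merely H\"older regularity of the orbit equivalence with the smooth perturbation theory; the regularity threshold $k$, depending only on $\dim M$, is what comes out of these estimates.
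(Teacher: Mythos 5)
Your overall architecture is the same as the paper's alternative proof: use structural stability and Livsic to turn $L_g=L_{g_0}$ into ``$a_g$ is cohomologous to a constant'', build a non-negative, diffeomorphism-invariant functional out of the topological pressure whose value detects this cohomological condition, compute its Hessian at $g_0$ as a variance of $\pi_2^*h$ via the second derivative of pressure, kill the $\R g_0$ direction by the scaling of lengths, and identify the kernel of the quadratic form through the s-injectivity of $I_2$ (which is exactly where the nonpositive-curvature hypothesis enters for $\dim M\geq 3$). The divergence is your choice of reference measure, and it is not cosmetic: you normalize by entropy and integrate $r_g$ against the Bowen--Margulis measure (the renormalized intersection number with $f=\mathbf{1}$), whereas the paper's functional $\Psi(g)={\bf P}\big(-J^u_{g_0}-a_g+\int a_g\,d\mu^{\rm L}_{g_0}\big)$ is engineered around the Liouville measure, i.e.\ the choice $f=J^u_{g_0}$ in the intersection number.

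This choice creates a genuine gap exactly at the step you defer to the end. In infinite dimensions, positive-definiteness of the Hessian as a quadratic form does not yield a local minimum, let alone the bound $\mathbf{J}(g_0+h)\geq 1+c\|h\|^2$: one needs a \emph{coercive} estimate $\Var\big(\pi_2^*h-\overline{\pi_2^*h}\big)\geq c\|h\|^2_{H^{-1/2}}$ in a norm strong enough that the cubic Taylor remainder (controlled only in $C^{5,\alpha}$) can be absorbed by interpolation against a fixed $C^{k}$ bound. The paper gets this from Lemma~\ref{lemma:positivite-pi2}: the Liouville variance is the quadratic form of $\Pi_2={\pi_2}_*(\Pi+\langle\cdot,\mathbf 1\rangle)\pi_2^*$, which is an elliptic pseudodifferential operator of order $-1$ on solenoidal tensors precisely because $\mu^{\rm L}$ is a smooth measure. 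For the Bowen--Margulis measure, which is in general not absolutely continuous, the analogous operator is \emph{not} pseudodifferential, no elliptic estimate is known, and the uniform lower bound you need is open --- this is essentially the obstruction in Katok's entropy conjecture, where Flaminio and Pollicott must resort to representation theory even for locally symmetric base metrics. So the difficulty is not, as you frame it, uniform spectral gaps over the family $\{\varphi^g\}$ (the paper does need such continuity, but only to make the Liouville estimate locally uniform in $g$); it is the coercivity at the single base point $g_0$, and the fix is to replace $\mu_{\rm BM}$ by $\mu^{\rm L}_{g_0}$ throughout, i.e.\ to work with ${\bf P}(-J^u_{g_0}-a_g+\int a_g\,d\mu^{\rm L}_{g_0})$ rather than with the entropy-normalized stretch. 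With that substitution the rest of your argument (slice, Livsic, kernel identification, scaling) matches the paper's.
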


One of the aims of this paper is to further investigate this result from different perspectives: new stability estimates and a refined characterization of the condition under which the isometry may hold. More precisely, we can relax the assumption that the two marked length spectra of $g$ and $g_0$ exactly coincide to the weaker assumption that they "coincide at infinity" and still obtain the isometry. 
In the following, we say that $L_g/L_{g_0} \rightarrow 1$ when 
\begin{equation}\label{limLg/Lg0}
\lim_{j \rightarrow +\infty} \dfrac{L_g(c_j)}{L_{g_0}(c_j)} = 1,
\end{equation}
for any sequence $(c_j)_{j \in \N}$ of primitive free homotopy classes such that $\lim_{j\to \infty}L_{g_0}(c_j)=+\infty$, or equivalently $\lim_{j\to \infty}L_{g}(c_j)/L_{g_0}(c_j)=1$, if $\mc{C}=(c_j)_{j\in\N}$ is ordered by the increasing lengths $L_{g_0}(c_j)$. We prove in the Appendix \ref{appendix}, that $L_g/L_{g_0} \rightarrow 1$ is actually equivalent to $L_g = L_{g_0}$. As a consequence, by \cite{Guillarmou-Lefeuvre-18}, if \eqref{limLg/Lg0} holds and if $\|g-g_0\|_{C^k}<\eps$ for some small enough $\eps>0$, then $g$ is isometric to $g_0$. If we restrict ourselves to metrics
with same topological entropy, the knowledge of $L_{g}(c_j)/L_{g_0}(c_j)$ for a subsequence so that the geodesic $\gamma_{g_0}(c_j)$ equidistributes is even sufficient, see Theorem \ref{theorem:entropy}.

We develop a new strategy of proof, different from \cite{Guillarmou-Lefeuvre-18}, which relies on the introduction of the \emph{geodesic stretch} between two metrics. This quantity was first introduced by Croke-Fathi \cite{Croke-Fathi} and further studied by the second author \cite{Knieper-95}. If $g$ is close enough to $g_0$, then by Anosov structural stability, the geodesic flows $\varphi^{g_0}$ and $\varphi^g$ are orbit equivalent via a homeomorphism $\psi_g$, i.e. they are conjugate up to a time reparametrization
\[\varphi^g_{\kappa_{g}(z,t)} (\psi_g(z)) = \psi_g(\varphi^{g_0}_{t}(z))\]
for some time rescaling $\kappa_g(z,t)$.
The \emph{infinitesimal stretch} is the infinitesimal function of time reparametrization $a_g(z)=\pl_{t}\kappa_g(z,t)|_{t=0}$: it satisfies $d\psi_g(z) X_{g_0}(z) = a_g(z) X_{g}(\psi_g(z))$ where $z \in S_{g_0}M$ and $X_{g_0}$ (resp. $X_g$) denotes the geodesic vector field of $g_0$ (resp. $g$). The \emph{geodesic stretch between $g$ and $g_0$ with respect to the Liouville\footnote{Normalized with total mass $1$.} measure $\mu^{\rm L}_{g_0}$ of $g_0$} 
is then defined by 
\[ I_{\mu_{g_0}^{\rm L}}(g_0,g):=\int_{S_{g_0}M} a_g \, d\mu^{\rm L}_{g_0}.\]
The function $a_g$ is uniquely defined up to a coboundary \cite{DeLaLlave-Marco-Moryon-86} so that the geodesic stretch is well-defined\footnote{Although this is only used in \S\ref{ssection:thurston}, we also point out that the existence of the conjugacy $\psi_g$ and of the reparametrization $a_g$ is actually \emph{global} and one does not need to assume that the two metrics are close. This is a very particular feature of the geodesic structure. We refer to Appendix \ref{appendix:conjugacy} for a proof of this fact.}.

Since obviously $\cjg \delta_{g_0}(c_j),a_g\cjd=L_g(c_j)/L_{g_0}(c_j)$, we have 
\[  I_{\mu_{g_0}^{\rm L}}(g_0,g)=\lim_{j\to \infty}\frac{L_{g}(c_j)}{L_{g_0}(c_j)},\]
if $(c_j)_{j\in\N}\subset \mc{C}$ is a sequence so that the uniform probability measures $(\delta_{g_0}(c_j))_{j \in \N}$ supported on the closed geodesics of $g_0$ in the class $c_j$ converge to $\mu^{\rm L}_{g_0}$ in the weak sense of measures.\footnote{The existence of the sequence $c_j$ follows from \cite[Theorem 1]{Sigmund}.} In particular $L_g=L_{g_0}$ implies that $I_{\mu_{g_0}^{\rm L}}(g_0,g)=1$ (alternatively $L_g=L_{g_0}$ implies that $a_g$ is cohomologous to $1$ by Livsic's theorem). 
 While it has an interest on its own, it turns out that this method involving the geodesic stretch provides a new estimate which quantifies locally  the distance between isometry classes in terms of this geodesic stretch functional (below $H^{-1/2}(M)$ denotes the $L^2$-based Sobolev space of order $-1/2$ and $\alpha\in(0,1)$ is any fixed exponent).

\begin{theorem}
\label{theorem:stability}
Let $(M,g_0)$ be a smooth Riemannian $n$-dimensional manifold with Anosov geodesic flow and further assume that its curvature is nonpositive if $n \geq 3$. There exists $k \in \N$ large enough depending only on $n$, some positive constants $C,\eps$ depending on $g_0$ and $C_n>0$ depending on $n$ such that for all $\alpha \in (0,1)$, the following holds: for each $g \in \mc{M}^{k,\alpha}$ with 
$\|g-g_0\|_{C^{k,\alpha}(M)} \leq \eps$, 
there exists a $C^{k+1,\alpha}$-diffeomorphism $\psi : M \rightarrow M$ such that
\[\begin{split}
C\|\psi^*g-g_0\|^2_{H^{-1/2}(M)} \leq & ~{\bf P}\Big(-J_{g_0}^u-a_g+\int_{S_{g_0}M}a_g\, d\mu_{g_0}^L\Big)+C_n\Big(I_{\mu_{g_0}^L}(g_0,g)-1\Big)^2\\
C\|\psi^*g-g_0\|^2_{H^{-1/2}(M)}   \leq &~ |\mc{L}_+(g)|+|\mc{L}_-(g)|
\end{split}\]
where $J^u_{g_0}$ is the unstable Jacobian of $\varphi^{g_0}$, ${\bf P}$ denotes the topological pressure for the $\varphi^{g_0}$ flow defined by \eqref{equation:pression}, $a_g$ is the reparameterization coefficient relating $\varphi^{g_0}$ and $\varphi^g$ defined above, and
\[\mc{L}_+(g):=\limsup_{j\to \infty} \frac{L_g(c_j)}{L_{g_0}(c_j)}-1, \quad \mc{L}_-(g):=\liminf_{j\to \infty} \frac{L_g(c_j)}{L_{g_0}(c_j)}-1.\]
In particular if \eqref{limLg/Lg0} holds, then $g_0$ and $g$ are isometric.
\end{theorem}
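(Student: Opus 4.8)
The plan is to establish the first displayed inequality, whose right-hand side is analysed through the thermodynamic formalism, and then to obtain the second inequality and the isometry statement as formal consequences. \emph{Reduction by gauge fixing.} The operator $u\mapsto\Div_{g_0}(\mc{L}_ug_0)$ on vector fields is elliptic of order two and invertible, since $(M,g_0)$ carries no nontrivial Killing field; by the implicit function theorem, if $\|g-g_0\|_{C^{k,\alpha}}$ is small there is a $C^{k+1,\alpha}$-diffeomorphism $\psi$, close to the identity (hence homotopic to it), such that $f:=\psi^*g-g_0$ is divergence-free for $g_0$ and still small in the relevant Hölder norm. Since $\psi$ is an isometry from $(M,\psi^*g)$ to $(M,g)$, the orbit equivalence of $\varphi^{g_0}$ with $\varphi^{g}$ and the one with $\varphi^{\psi^*g}$ carry the \emph{same} reparametrization function $a_g$ on $S_{g_0}M$, and $L_{\psi^*g}=L_g$; thus the topological pressure, the geodesic stretch, and the numbers $\mc{L}_\pm$ on the right-hand sides are unchanged, and it suffices to prove the two inequalities when $\psi=\Id$ and $f=g-g_0$ is solenoidal, which I assume henceforth.

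\emph{Pressure as a variance.} The Liouville measure $\mu^{\rm L}_{g_0}$ is the SRB measure of the contact (hence volume-preserving) Anosov flow $\varphi^{g_0}$, so it is the equilibrium state of $-J^u_{g_0}$ and ${\bf P}(-J^u_{g_0})=0$ by Pesin's formula. Put $r:=I_{\mu^{\rm L}_{g_0}}(g_0,g)=\int a_g\,d\mu^{\rm L}_{g_0}$ and $b:=a_g-r$, so $\int b\,d\mu^{\rm L}_{g_0}=0$ and the potential in the theorem is $-J^u_{g_0}-b$. The function $s\mapsto{\bf P}(-J^u_{g_0}-sb)$ is real-analytic, vanishes at $s=0$, has vanishing derivative $-\int b\,d\mu^{\rm L}_{g_0}=0$ there, and has second derivative $\Var_{\nu_s}(b)$ at $s$, where $\nu_s$ is the equilibrium state of $-J^u_{g_0}-sb$ and $\Var$ is the variance in the central limit theorem; Taylor's formula with integral remainder gives the exact identity ${\bf P}(-J^u_{g_0}-b)=\int_0^1(1-s)\Var_{\nu_s}(b)\,ds\ \geq\ \tfrac12\inf_{s\in[0,1]}\Var_{\nu_s}(b)$. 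Using the exact remainder here, rather than a third-order Taylor expansion, is what lets us avoid estimating a remainder in terms of the (badly behaved) regularity of $a_g$ as a function of $g$.

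\emph{The uniform variance bound and the first inequality.} The core is to show, uniformly for $g$ near $g_0$ in $C^{k,\alpha}$ and for $\nu$ equal to any interpolating state $\nu_s$, that $\Var_\nu(b)\geq c_0\|f\|^2_{H^{-1/2}}-C(r-1)^2$. I would combine: (i) the description of the reparametrization from \cite{DeLaLlave-Marco-Moryon-86,Guillarmou-Lefeuvre-18}, namely that $a_g$ is cohomologous to $1+\tfrac12\pi_2^*f+R$ where $\pi_2^*f(x,v)=f_x(v,v)$ and $R$ is quadratically small in $f$ in a fixed Hölder norm, so that $\Var_\nu(b)=\Var_\nu(\tfrac12\pi_2^*f+R)$ since coboundaries have zero variance against every invariant measure; (ii) a uniform ellipticity estimate $\Var_\nu(\pi_2^*h)\geq c_1\|h\|^2_{H^{-1/2}}$ for $h$ solenoidal and $L^2$-orthogonal to $g_0$, obtained by re-running the microlocal analysis of the dynamical normal operator $\Pi_2=\pi_{2*}(R_0^++R_0^-)\pi_2^*$ of \cite{Guillarmou-Lefeuvre-18} with the reference equilibrium state allowed to range over a neighbourhood of $\mu^{\rm L}_{g_0}$, checking that its principal symbol stays uniformly elliptic and that its one-dimensional degeneracy locus stays equal to $\R g_0$ — here the nonpositive curvature hypothesis for $n\geq3$ enters exactly as in \cite{Guillarmou-Lefeuvre-18}, to ensure $s$-injectivity of the $X$-ray transform $I_2$ and hence invertibility of $\Pi_2$ on solenoidal tensors; (iii) the elementary facts that subtracting a suitable $\lambda g_0$ with $|\lambda|\lesssim|r-1|+\|f\|^2$ — using $\langle f,g_0\rangle_{L^2}\asymp\int_{S_{g_0}M}\pi_2^*f\,d\mu^{\rm L}_{g_0}=2(r-1)+O(\|f\|^2)$ — leaves $f-\lambda g_0$ solenoidal and orthogonal to $g_0$, giving $\Var_\nu(\pi_2^*f)=\Var_\nu(\pi_2^*(f-\lambda g_0))\geq c_1\|f-\lambda g_0\|^2_{H^{-1/2}}\geq\tfrac{c_1}{2}\|f\|^2_{H^{-1/2}}-C(r-1)^2-C\|f\|^4$; and (iv) the reverse triangle inequality for the variance seminorm together with the interpolation inequality $\|f\|_{C^N}\lesssim\|f\|^{\theta}_{H^{-1/2}}\|f\|^{1-\theta}_{C^{k,\alpha}}$ with $\theta>1/2$ (valid once $k$ is large in terms of $n$), to absorb the cross term with $R$ and all higher remainders into $\tfrac{c_1}{2}\|f\|^2_{H^{-1/2}}$. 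Together with the previous paragraph this gives ${\bf P}(-J^u_{g_0}-b)+C_n(r-1)^2\geq C\|f\|^2_{H^{-1/2}}$ for $C_n$ large, i.e.\ the first inequality. The hard part is precisely (ii): the uniformity of the $X$-ray ellipticity estimate as the underlying Gibbs measure varies, reconciled with the derivative loss in $g\mapsto a_g$ — which is what forces both the large $k$ and the correction term $C_n(r-1)^2$ accounting for the conformal direction $\R g_0$ on which $\Pi_2$ degenerates.

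\emph{From the first inequality to the rest.} Since $h_\nu(\varphi^{g_0})+\int(-J^u_{g_0})\,d\nu\leq{\bf P}(-J^u_{g_0})=0$ for every invariant $\nu$, the variational principle gives ${\bf P}(-J^u_{g_0}-b)\leq\sup_\nu\int(-b)\,d\nu=r-\inf_\nu\int a_g\,d\nu$; by Sigmund's density of periodic-orbit measures \cite{Sigmund} and the enumeration of $\mc{C}$ by $L_{g_0}$-length, $\inf_\nu\int a_g\,d\nu=1+\mc{L}_-(g)$ and $\sup_\nu\int a_g\,d\nu=1+\mc{L}_+(g)\geq r$, so ${\bf P}(-J^u_{g_0}-b)\leq(r-1)-\mc{L}_-(g)\leq\mc{L}_+(g)-\mc{L}_-(g)\leq|\mc{L}_+(g)|+|\mc{L}_-(g)|$, and likewise $(r-1)^2\leq(|\mc{L}_+(g)|+|\mc{L}_-(g)|)^2\leq|\mc{L}_+(g)|+|\mc{L}_-(g)|$ after shrinking the neighbourhood; substituting into the first inequality yields the second, with an adjusted constant. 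Finally, if \eqref{limLg/Lg0} holds then $\mc{L}_+(g)=\mc{L}_-(g)=0$, so the second inequality forces $\|\psi^*g-g_0\|_{H^{-1/2}}=0$; as $\psi$ is a $C^{k+1,\alpha}$-diffeomorphism, $\psi^*g$ is continuous, hence $\psi^*g=g_0$ and $g$ is isometric to $g_0$.
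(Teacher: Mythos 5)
Your overall architecture is the same as the paper's: gauge-fix to a solenoidal representative, view ${\bf P}(-J^u_{g_0}-a_g+\int a_g\,d\mu^{\rm L}_{g_0})$ as a nonnegative functional whose Hessian at $g_0$ is the variance $\langle\Pi_2^{g_0}h,h\rangle$ (via the cohomology $da_{g_0}.h\sim\tfrac12\pi_2^*h$), invoke the coercivity of $\Pi_2^{g_0}$ on solenoidal tensors, correct for the conformal direction $\R g_0$ with the $(I_{\mu^{\rm L}_{g_0}}(g_0,g)-1)^2$ term, absorb remainders by interpolation, and pass to $\mc{L}_\pm$ via the variational principle and Sigmund's theorem. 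The one place where you deviate — replacing the third-order Taylor expansion by the exact integral remainder ${\bf P}(-J^u_{g_0}-b)=\int_0^1(1-s)\Var_{\nu_s}(b)\,ds$ — is where the proof breaks. Your step (ii) asks for a uniform lower bound $\Var_{\nu_s}(\pi_2^*h)\geq c_1\|h\|^2_{H^{-1/2}}$ where $\nu_s$ ranges over the equilibrium states of $-J^u_{g_0}-sb$, $s\in(0,1]$. These measures are, in general, mutually singular with respect to the Liouville measure (they coincide only when $sb$ is cohomologous to a constant). The entire pseudodifferential machinery behind the coercivity of $\Pi_2^{g_0}$ — the computation of the principal symbol, hence the ellipticity on $\ker D^*$ — rests on the fact that Liouville disintegrates with \emph{smooth} densities $dS_x(v)\,d{\rm vol}(x)$, so that $\pi_{2*}\big(\int\chi(t)e^{tX}dt\big)\pi_2^*$ has a conormal Schwartz kernel on $M\times M$. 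For a singular Gibbs measure $\nu_s$ the analogous operator is not a pseudodifferential operator, its "principal symbol" is not defined, and no such $H^{-1/2}$ coercivity is known; "re-running the microlocal analysis with the reference equilibrium state allowed to range over a neighbourhood of $\mu^{\rm L}_{g_0}$" is therefore not an available move. Note also that weak-$*$ proximity of $\nu_s$ to $\mu^{\rm L}_{g_0}$ is of no help here.

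The gap is not cosmetic: the only effective way to lower-bound $\Var_{\nu_s}(b)$ is to compare it with $\Var_{\mu^{\rm L}_{g_0}}(b)$ using that the variance is the second derivative of the (real-analytic) pressure, which costs an error $O(\|b\|^3_{C^\nu})$ — i.e.\ exactly the cubic remainder your integral-remainder trick was meant to avoid. The paper accepts this from the start: it Taylor-expands $\Psi$ to second order \emph{at $g_0$ only}, so that the sole variance appearing is the Liouville one (equivalently $\Pi_2^{g_0}$, which \emph{is} an elliptic $\Psi$DO of order $-1$), keeps a remainder $C_{g_0}\|\psi^*g-g_0\|^3_{C^{5,\alpha}}$, and absorbs it into $\tfrac{c}{2}\|\psi^*g-g_0\|^2_{H^{-1/2}}$ via precisely the interpolation inequality you list as ingredient (iv), $\|f\|^3_{C^{5,\alpha}}\lesssim\|f\|^2_{H^{-1/2}}\|f\|_{H^k}$ for $k>\tfrac32 n+16+3\alpha$. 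If you reroute your argument this way — Liouville variance plus a cubic Hölder remainder, then interpolation — the rest of your write-up (the gauge fixing, the subtraction of $\lambda g_0$ to handle the conformal direction, and the passage from the first inequality to $|\mc{L}_+(g)|+|\mc{L}_-(g)|$ via the variational principle, which is a clean and correct alternative to the paper's periodic-orbit asymptotics for the pressure) goes through.
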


Note that $g$ does not need to have nonpositive curvature in the Theorem. We also remark that 
the curvature condition on $g_0$ can be replaced by the injectivity of the X-ray transform $I_2$ on divergence-free symmetric $2$-tensors, and similarly for Theorem \ref{mainth3} below. From the proof one sees that the exponent $k$ can be taken to be $k=3n/2+17$.

Theorem \ref{theorem:stability} is an improvement over the H\"older stability result \cite[Theorem 3]{Guillarmou-Lefeuvre-18} as it only involves the asymptotic behaviour of $L_g/L_{g_0}$ or some natural quantity from thermodynamic formalism. We insist on the fact that the new ingredient here is \emph{the stability estimate in itself} (the rigidity result is not new). 

We also emphasize that one of the key facts to prove this theorem still boils down to some elliptic estimate on some variance operator acting on symmetric $2$-tensors, denoted by $\Pi_2^{g_0}$ in \cite{Guillarmou-Lefeuvre-18,Guillarmou-17-1}:
indeed, we show that the combination of the Hessians of the geodesic stretch at $g_0$ and of the pressure functional can be expressed in terms of this variance operator, which enjoys uniform lower bounds $C_{g_0}\|\psi^*g-g_0\|_{H^{-1/2}}$ for some $C_{g_0}>0$, at least once we have factored out the gauge (the diffeomorphism action by pull-back on metrics).

We also notice that in Theorem \ref{theorem:stability}, although the $H^{-1/2}(M)$ norm is a weak norm, a straightforward interpolation argument using that $\|g\|_{C^{k,\alpha}}\leq \|g_0\|_{C^{k,\alpha}}+\eps$ is uniformly bounded shows that an estimate of the form
\[ \|\psi^*g-g_0\|_{C^{k'}} \leq  C \left(|\mc{L}_+(g)|+|\mc{L}_-(g)|\right)^{\delta}\]
holds for any $k'<k-n/2$ and some explicit $\delta\in (0,1/2)$ depending on $k,k'$ ($C>0$ depending only on $g_0$).

\subsection{Variance and pressure metric}

The variance operator appearing in the proof of Theorem \ref{theorem:stability}
can be defined for $h_1,h_2\in C^\infty(M;S^2T^*M)$ satisfying the condition
\begin{equation}
\label{equation:trace-free}
\int_{M}{\rm Tr}_{g_0}(h_i)\, d{\rm vol}_{g_0}=0,
\end{equation}
for $i=1,2$ (see \S\ref{sec:tensors} for further details on tensor analysis) by 
\[
\cjg \Pi_2^{g_0}h_1,h_2\cjd := \int_{\R}\int_{SM} \pi_2^*h_1(\varphi_t^{g_0}(z))\pi_2^*h_2(z)\, d\mu_{g_0}^{\rm L}(z) dt,
\]
where, $z = (x,v) \in SM$ and given a symmetric $2$-tensor $h \in C^\infty(M;S^2 T^*M)$, we define the pullback operator
\[
\pi_2^*h(x,v):=h_x(v,v).
\]
The quadratic form $\cjg \Pi_2^{g_0}h,h\cjd$ corresponds to the variance ${\rm Var}_{\mu_L}(\pi_2^*h)$ for $\varphi_t^{g_0}$ with respect to the Liouville measure
of the lift $\pi_2^*h$ of the tensor $h$ to $SM$ (see \S\ref{defvariance} and \eqref{defvariance}). Note that the trace-free condition \eqref{equation:trace-free} is equivalent to
\[
 \int_{SM} \pi_2^* h(x,v) d \mu^{\rm L}_{g_0}(x,v) = 0,
\]
see \S \ref{sec:tensors}. The integral defining $\Pi_2^{g_0}$ then converges (in the $L^1$ sense) by the rapid mixing of $\varphi^{g_0}$ (proved in \cite{Liverani}). 
The operator $\Pi^{g_0}_2$ is a pseudodifferential operator of order $-1$ that is elliptic on divergence-free tensors (see \cite{Guillarmou-17-1,Guillarmou-Lefeuvre-18,Gouezel-Lefeuvre-19}). As a consequence, it satisfies elliptic estimates on all Sobolev or H\"older spaces (see Lemma \ref{lemma:positivite-pi2}). More precisely, there is $C_{g_0}>0$ such that for all $h\in H^{-1/2}(M;S^2T^*M)$ which is divergence-free (i.e. ${\rm Tr}_{g_0}(\nabla^{g_0}h)=0$) 
\begin{equation}
\label{coercivePi2} 
\cjg \Pi_2^{g_0}h,h\cjd \geq C_{g_0}\|h\|^2_{H^{-1/2}(M)},
\end{equation}
provided $g_0$ is Anosov with non-positive curvature (or simply Anosov if $\dim M=2$).
We show in Proposition \ref{proposition:pi2-continuite} that $g\mapsto \Pi_2^g$ is continuous with values in $\Psi^{-1}(M)$ and this implies that for $g_0$ a smooth Anosov metric (with non-positive curvature if $\dim M>2$), \eqref{coercivePi2} holds uniformly if we replace $g_0$ by any metric $g$ in a small $C^\infty$-neighborhood of $g_0$. This allows to obtain a more uniform version of Theorem \ref{theorem:stability}.

\begin{theorem}\label{mainth3}
Let $(M,g_0)$ be a smooth Riemannian $n$-dimensional manifold with Anosov geodesic flow and further assume that its curvature is nonpositive if $n \geq 3$. Then there exists $k \in \N$, $\eps > 0$ and $C_{g_0}$ depending on $g_0$ such that for all $g_1,g_2 \in \mc{M}$ 
such that $\|g_1-g_0\|_{C^k} \leq \eps$, $\|g_2-g_0\|_{C^k} \leq \eps$, there is a $C^k$- diffeomorphism $\psi:M\to M$ such that 
\[
\|\psi^*g_2-g_1\|^2_{H^{-1/2}(M)} \leq  C_{g_0}(|\mc{L}_+(g_1,g_2)|+|\mc{L}_+(g_2,g_1)|)
\]
with 
\[\mc{L}_+(g_1,g_2):=\limsup_{j\to \infty} \frac{L_{g_2}(c_j)}{L_{g_1}(c_j)}-1.\]
In particular if $L_{g_1}/L_{g_2}\to 1$, then $g_2$ is isometric to $g_1$.
\end{theorem}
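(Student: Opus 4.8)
The plan is to reduce Theorem \ref{mainth3} to the already-established Theorem \ref{theorem:stability} by exploiting two features: the \emph{globality} of the conjugacy and reparametrization (pointed out in the footnote and Appendix \ref{appendix:conjugacy}), and the \emph{uniformity} in $g_0$ of the coercive lower bound \eqref{coercivePi2} that follows from the continuity of $g\mapsto \Pi_2^g$ in $\Psi^{-1}(M)$ (Proposition \ref{proposition:pi2-continuite}). Concretely, rather than measuring perturbations from the fixed basepoint $g_0$, I want to measure $g_2$ against $g_1$, using $g_1$ itself as the reference Anosov metric. Since $g_1$ is $C^k$-close to $g_0$, for $k$ large $g_1$ is again Anosov (Anosov-ness is $C^2$-open) and, by the uniform version of \eqref{coercivePi2}, the variance operator $\Pi_2^{g_1}$ satisfies the same coercive estimate $\langle \Pi_2^{g_1}h,h\rangle \geq C_{g_0}\|h\|^2_{H^{-1/2}}$ on $g_1$-divergence-free tensors, with a constant $C_{g_0}$ depending only on $g_0$ (not on $g_1$ in the $\eps$-ball). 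This is exactly the input needed to run the argument behind Theorem \ref{theorem:stability} with $g_1$ in the role of $g_0$.

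The key steps, in order, are as follows. First, fix $k$ and $\eps$ so small that every $g_1$ with $\|g_1-g_0\|_{C^k}\leq\eps$ is an Anosov metric with (if $n\geq 3$) injective X-ray transform $I_2$ on $g_1$-divergence-free tensors, and so that the coercivity constant for $\Pi_2^{g_1}$ is bounded below by a fixed $C_{g_0}>0$; this uses Proposition \ref{proposition:pi2-continuite} and the openness of the relevant spectral/ellipticity conditions. Second, apply Theorem \ref{theorem:stability} with base metric $g_1$ and perturbation $g_2$: this requires $\|g_2-g_1\|_{C^{k,\alpha}}$ small, which holds after possibly shrinking $\eps$ (triangle inequality, $\|g_2-g_1\|_{C^k}\leq 2\eps$). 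Theorem \ref{theorem:stability} then produces a diffeomorphism $\psi$ with
\[
C\|\psi^*g_2-g_1\|^2_{H^{-1/2}(M)} \leq |\mc{L}_+(g_1,g_2)| + |\mc{L}_-(g_1,g_2)|,
\]
where now $\mc{L}_\pm(g_1,g_2)$ are the $\limsup/\liminf$ of $L_{g_2}(c_j)/L_{g_1}(c_j)-1$ and $C$ depends only on $g_0$. Third, observe $\mc{L}_-(g_1,g_2) = -\mc{L}_+(g_2,g_1)\cdot\big(1+o(1)\big)$-type bookkeeping: more precisely, writing $r_j := L_{g_2}(c_j)/L_{g_1}(c_j)$, one has $\liminf r_j = 1/\limsup(1/r_j)$, and $1/r_j - 1 = (L_{g_1}(c_j)/L_{g_2}(c_j)) - 1$ up to a factor $r_j^{-1}$ which is bounded above and below (since $a_{g_2}$ relative to $g_1$ is a bounded positive function, by the global reparametrization statement). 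Hence $|\mc{L}_-(g_1,g_2)|$ is controlled by $|\mc{L}_+(g_2,g_1)|$ up to a fixed multiplicative constant, giving the stated bound
\[
\|\psi^*g_2-g_1\|^2_{H^{-1/2}(M)} \leq C_{g_0}\big(|\mc{L}_+(g_1,g_2)| + |\mc{L}_+(g_2,g_1)|\big).
\]
Finally, if $L_{g_1}/L_{g_2}\to 1$ then both $\mc{L}_+(g_1,g_2)$ and $\mc{L}_+(g_2,g_1)$ vanish (each ratio and its reciprocal tend to $1$), forcing $\psi^*g_2 = g_1$, i.e. $g_2$ is isometric to $g_1$.

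The main obstacle, and the point deserving genuine care, is the \textbf{uniformity of the constant $C_{g_0}$ over the whole $\eps$-ball of reference metrics $g_1$} — Theorem \ref{theorem:stability} as stated gives constants depending on the \emph{fixed} base metric, so one cannot simply quote it with $g_1$ varying. Resolving this requires going slightly into the proof of Theorem \ref{theorem:stability}: its constants ultimately come from (i) the coercivity constant of $\Pi_2^{g_1}$ on divergence-free tensors, which is uniform by Proposition \ref{proposition:pi2-continuite} and \eqref{coercivePi2}; (ii) the operator norms of the solution operators / projections onto divergence-free tensors for the metric $g_1$, which depend continuously on $g_1$ in $C^k$ and hence are uniformly bounded on a $C^\infty$-neighborhood; and (iii) the size of the $\eps$-neighborhood on which the structural-stability conjugacy and the implicit-function/Taylor arguments are valid, which is again lower-semicontinuous in $g_1$. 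So the real content is to track that every ingredient of Theorem \ref{theorem:stability} varies continuously (or semicontinuously in the favorable direction) with the base metric, uniformly on a fixed $C^k$-ball around $g_0$; once that is in hand, the reduction above is essentially formal. A secondary, more routine point is the elementary but slightly fiddly manipulation converting between $\limsup/\liminf$ of $L_{g_2}/L_{g_1}$ and $\limsup$ of $L_{g_1}/L_{g_2}$, which uses the boundedness of the infinitesimal stretch $a_{g_2}$ (relative to $g_1$) away from $0$ and $\infty$ — guaranteed by the \emph{global} existence statement for the reparametrization in Appendix \ref{appendix:conjugacy}, applied with $g_1$ as reference.
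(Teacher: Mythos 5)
Your proposal is correct and follows essentially the same route as the paper: the authors likewise rerun the proof of Theorem \ref{theorem:stability} with $g_1$ as the base metric, obtaining uniformity of the constants over the $\eps$-ball from Proposition \ref{proposition:pi2-continuite} together with Lemma \ref{lemma:positivite-pi2unif} (uniform coercivity of $\Pi_2^{g_1}$), from the continuity in $(g_1,g_2)$ of the conjugacy data $a_{g_1,g_2}$ (Proposition \ref{stabilityth}) and of the thermodynamic quantities via \cite{Contreras-92}, and from Lemma \ref{lemma:solenoidal-gauge} for the gauge fixing. Your explicit conversion of $\mc{L}_-(g_1,g_2)$ into $\mc{L}_+(g_2,g_1)$ via the uniform boundedness of the length ratios is also correct (the paper leaves this step implicit).
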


This result suggests to define a distance on isometry classes\footnote{Here we mean isometries homotopic to the Identity.} metrics from the marked length spectrum by setting for $g_1,g_2$ two $C^{k,\alpha}$ metrics  
\[ d_L(g_1,g_2):=\limsup_{j\to \infty}
\Big|\log \frac{L_{g_1}(c_j)}{L_{g_2}(c_j)}\Big|.\]
We have as a corollary of Theorem \ref{mainth3}:

\begin{corollary}
The map $d_L$ descends to the space of isometry classes of Anosov non-positively curved metrics and defines a distance near the diagonal. 
\end{corollary}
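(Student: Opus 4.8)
The plan is to derive the Corollary directly from Theorem \ref{mainth3} by checking the three properties of a distance: symmetry, triangle inequality, and positivity near the diagonal, after first arguing that $d_L$ is well-defined on isometry classes. For the latter point, if $\psi_1,\psi_2$ are isometries homotopic to the identity, then $L_{\psi_i^*g_i}=L_{g_i}$ since an isometry homotopic to the identity preserves free homotopy classes and lengths of closed geodesics; hence the $\limsup$ defining $d_L(g_1,g_2)$ is unchanged, so $d_L$ descends to isometry classes. Symmetry is immediate from $|\log(L_{g_1}(c_j)/L_{g_2}(c_j))|=|\log(L_{g_2}(c_j)/L_{g_1}(c_j))|$. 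The triangle inequality follows from $|\log(L_{g_1}(c_j)/L_{g_3}(c_j))|\le |\log(L_{g_1}(c_j)/L_{g_2}(c_j))|+|\log(L_{g_2}(c_j)/L_{g_3}(c_j))|$ together with the elementary fact that $\limsup_j(x_j+y_j)\le \limsup_j x_j+\limsup_j y_j$.

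The only substantive point is positivity, i.e. that $d_L(g_1,g_2)=0$ forces $g_1$ and $g_2$ to be in the same isometry class, and this is precisely where one invokes Theorem \ref{mainth3}. I would argue as follows: suppose $g_1,g_2$ are Anosov, non-positively curved (if $n\ge 3$), both lying in the $C^k$-ball of radius $\eps$ around a fixed smooth Anosov metric $g_0$ provided by Theorem \ref{mainth3}, and $d_L(g_1,g_2)=0$. Then $\limsup_j|\log(L_{g_1}(c_j)/L_{g_2}(c_j))|=0$, which means $L_{g_2}(c_j)/L_{g_1}(c_j)\to 1$; in particular both $\mc{L}_+(g_1,g_2)$ and $\mc{L}_+(g_2,g_1)$ vanish. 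Theorem \ref{mainth3} then yields a $C^k$-diffeomorphism $\psi$ with $\|\psi^*g_2-g_1\|_{H^{-1/2}}=0$, hence $\psi^*g_2=g_1$ and $g_1,g_2$ are isometric (and $\psi$ is homotopic to the identity since it is $C^k$-close to it, being constructed near the gauge orbit in the proof of Theorem \ref{mainth3}). This establishes that $d_L$ separates distinct isometry classes on the $C^k$-neighborhood, i.e. it is a genuine distance "near the diagonal."

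I expect the main (minor) obstacle to be bookkeeping rather than a conceptual difficulty: one must be careful that the phrase "near the diagonal" is interpreted in the topology of the chart around $g_0$ in which Theorem \ref{mainth3} applies, so that the class of $g_0$ has a neighborhood (the image of the $C^k$-ball of radius $\eps$ under the projection to isometry classes) on which $d_L$ is a metric; and one must note that the diffeomorphism $\psi$ produced by the theorem can indeed be taken homotopic to the identity, which is part of the construction in the proof of Theorem \ref{mainth3} (the diffeomorphism is obtained by an implicit-function-type argument starting from the identity). Finiteness of $d_L$ on this neighborhood is automatic since $L_{g_1}/L_{g_2}$ is bounded above and below for $g_1,g_2$ in a small $C^0$-ball. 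With these remarks in place the corollary is a formal consequence of Theorem \ref{mainth3}.
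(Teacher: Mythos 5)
Your proof is correct and follows essentially the same route as the paper: invariance under diffeomorphisms homotopic to the identity, the triangle inequality via splitting the logarithm, and positivity by observing that $d_L(g_1,g_2)=0$ forces $\mc{L}_+(g_1,g_2)=\mc{L}_+(g_2,g_1)=0$ and then applying Theorem \ref{mainth3}. Your additional remarks on finiteness and on the interpretation of ``near the diagonal'' match what the paper establishes just before and within its own (brief) proof.
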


We also define the \emph{Thurston asymmetric distance} by 
\[
d_T(g_1,g_2):=\limsup_{j\to \infty}
\log \frac{L_{g_2}(c_j)}{L_{g_1}(c_j)},
\]
and show that this is a distance on isometry classes of metrics with topological entropy equal to $1$, see Proposition \ref{proposition:dt-distance}. This distance was introduced in Teichm\"uller theory by Thurston in \cite{Thurston-98}.\\

The elliptic estimate \eqref{coercivePi2} allows also to define a \emph{pressure metric} on the open set consisting of isometry classes of Anosov non-positively curved metric (contained in 
$\mc{M}/\mc{D}_0$ if $\mc{D}_0$ is the group of smooth diffeomorphisms isotopic to the identity) by
setting for $h_1,h_2\in T_{g_0}(\mc{M}/\mc{D}_0)\subset C^\infty(M;S^2T^*M)$
\[ 
G_{g_0}(h_1,h_2):= \cjg \Pi^{g_0}_2 h_1,h_2 \cjd_{L^2(M,d\vol_{g_0})}.
\]
We show in Section \ref{sec:pressure metric} that this metric is well-defined and restricts to (a multiple of) the Weil-Petersson metric on Teichm\"uller space if $\dim M=2$: it is related to the construction of Bridgeman-Canary-Labourie-Sambarino \cite{Bridgeman-Canary-Labourie-Sambarino-15, Bridgeman-Canary-Sambarino-18} and Mc Mullen \cite{McMullen}, but with the difference that we work here in the setting of variable negative curvature and the space of metrics considered here is infinite dimensional. In a related but different context with infinite dimension, we note that the variance is used to define a metric on the space of H\"older potentials by Giulietti, Kloeckner, Lopes and Marcon \cite{GKLM} and its curvature is studied by Lopes and Ruggiero \cite{LopesRuggiero}.

We finally notice that, in the study of Katok entropy conjecture near locally symmetric spaces, the variance was an important tool in the work of Pollicott and Flaminio \cite{Pollicott,Flaminio}. In that case, one can use representation theory to analyze this operator.\\

\noindent \textbf{Acknowledgement:} We warmly thank the referees for their many helpful comments. In particular, one of the referees suggested a short and elegant argument to show that $L_g/L_{g_0} \rightarrow 1$ implies $L_g=L_{g_0}$ (see Appendix \ref{appendix}). 
This project has received funding from the European Research Council (ERC) under the European Union’s Horizon 2020 research and innovation programme (grant agreement No. 725967). This material is based upon work supported by the National Science Foundation under Grant No. DMS-1440140 while C.G and T.L were in residence at the Mathematical Sciences Research Institute in Berkeley, California, during the Fall 2019 semester. The second author was partially supported by the SFB/TRR 191 "Symplectic structures in geometry, algebra and dynamics".

\section{Preliminaries}

\noindent \textbf{Notation:} If $H=C^k,H^s,C^{-\infty}$ etc is a regularity scale and $E\to M$ a smooth bundle over a smooth compact manifold $M$, we will use the notation $H(M;E)$ for sections of $E$ with regularity $H$ while, if $N$ is a smooth manifold, we use the notation $H(M,N)$ for the space of maps from $M$ to $N$ with regularity $H$.

\subsection{Microlocal calculus}
\label{ssection:microlocal}

On a closed manifold $M$, we will denote by $\Psi^{m}(M;V)$ the space of classical 
pseudo-differential operators of order $m\in\R$ acting on a vector bundle $V$ over $M$ (see \cite{GS94}; the operators could map sections of two distinct vector bundles but this will not be needed here). We recall that for fixed $m \in \R$, this is a Fr\'echet space: indeed, using a fixed smooth cutoff function $\theta$ supported in a small neighborhood of the diagonal, a fixed system of charts, 
each $A\in \Psi^{m}(M;V)$ has Schwartz kernel $\kappa_A$ that can be decomposed as $\theta\kappa_A+(1-\theta)\kappa_A$. For the part $(1-\theta)\kappa_A$ we can use the $C^\infty(M\times M; V\otimes V^*)$ topology while for $\chi\kappa_A$ one can use the semi-norms of the full 
symbols of $\chi\kappa_A$ using the local charts the left quantization in the charts. 
We also denote by $H^{s}(M)$ the $L^2$-based Sobolev space of order $s\in \R$, with norm given by fixing an arbitrary Riemannian metric $g_{0}$ on $M$. More precisely, denoting by $\Delta$ the non-negative Laplacian associated to this metric, we define
\[
\|f\|_{H^s(M)} := \|(\mathbbm{1}+\Delta)^{s/2}f\|_{L^2(M,d{\rm vol})},
\]
and $H^s(M)$ is the completion of $C^\infty(M)$ with respect to this norm. This definition is naturally extended to section of vector bundles. What is important is that the spaces and the norm (up to a scaling factor) do not depend on the choice of metric $g_{0}$. For $k\in \N, \alpha\in(0,1)$, the spaces $C^{k,\alpha}(M)$ are the usual H\"older spaces and $\mc{D}'(M)$ will denote the space of distributions, dual to $C^\infty(M)$. We will denote by $\cjg \cdot,\cdot \cjd_{L^2}$ the continuous extension of the pairing
\[
C^\infty(M) \times C^\infty(M) \ni (f,f') \mapsto \int_{M}f\bar{f}'  d{\rm vol}_{g_0},
\]
to the pairing  $H^{s}(M) \x H^{-s}(M)\to \C$ for each $s\in \R$ (and same thing for sections of bundles).
 
\subsection{Symmetric tensors and X-ray transform}\label{sec:tensors}

In this paragraph, we assume that the metric $g$ is fixed and that its geodesic flow $\varphi^g_t$ is Anosov on the unit tangent bundle $SM$ of $g$. We denote by $\mu^{\rm L}$ the Liouville measure, normalized to be a probability measure on $SM$. For the sake of simplicity, we drop the index $g$ in the notations. Given an integer $m \in \N$, we denote by $\otimes^m T^*M \rightarrow M$, $S^mT^*M \rightarrow M$ the respective vector bundle of $m$-tensors and symmetric $m$-tensors on $M$. Given $f \in C^\infty(M;S^m T^*M)$, we denote by $\pi_m^*f \in C^\infty(SM)$ the canonical morphism $\pi_m^*f : (x,v) \mapsto f_x(v,...,v)$. We also introduce the trace operator $\Tr : C^\infty(M;S^{m+2} T^*M) \rightarrow C^\infty(M;S^{m} T^*M)$ defined pointwise in $x \in M$ by:
\[
\Tr(f) = \sum_{i=1}^n f(\mathbf{e}_i,\mathbf{e}_i, \cdot, ..., \cdot),
\]
where $(\mathbf{e}_1, ..., \mathbf{e}_n)$ denotes an orthonormal basis of $TM$ in a neighborhood of a fixed point $x_0 \in M$. Observe that, for $f = \sum_{i,j=1}^n f_{ij}  \mathbf{e}_i^* \otimes \mathbf{e}_j^* \in C^\infty(M;S^2T^*M)$ defined around $x_0$, we have
\[
\begin{split}
\int_{SM} \pi_2^*f\, d\mu_{g}^L &= \int_{M} \left(\int_{S_x M} \pi_2^*f(x,v) dS_x(v) \right)d{\rm vol}(x)\\
& =  \sum_{i,j=1}^n \int_{M} f_{ij}(x) \left( \int_{S_x M} v_i v_j dS_x(v) \right)d{\rm vol}(x) \\
& = C_n \sum_{i=1}^n \int_{M} f_{ii}(x) d{\rm vol}(x) = C_n \int_{M} \Tr_{g}(f) d{\rm vol},
\end{split}
\]
for some constant $C_n = \int_{\Ss^{n-1}} v_1^2 dv$ depending on $n=\dim M$. This justifies the claim that the trace-free condition \eqref{equation:trace-free} was equivalent to the fact that the pullback of the symmetric tensor to $SM$ was of average $0$.

The natural derivation of symmetric tensors is $D := \sigma \circ \nabla$, where $\nabla$ is the Levi-Civita connection and $\sigma : \otimes^m T^*M \rightarrow S^m T^*M$ is the operation of symmetrization. This operator satisfies the important identity: 
\begin{equation}
\label{equation:identite}
X \pi_m^* = \pi_{m+1}^* D,
\end{equation}
where $X$ denotes the geodesic vector field on $SM$.
The operator $D$ is elliptic \cite[Lemma 2.4]{Gouezel-Lefeuvre-19} with trivial kernel when $m$ is odd and $1$-dimensional kernel when $m$ is even, given by the Killing tensors $c\sigma(g^{\otimes m/2}), c \in \R$ (this is a simple consequence of \eqref{equation:identite} combined with the fact that the geodesic flow is ergodic in the Anosov setting). We denote by $\langle \cdot, \cdot \rangle$ the scalar product on $C^\infty(M;S^mT^*M)$ induced by the metric $g$ (see \cite[Section 2]{Gouezel-Lefeuvre-19} for further details). The formal adjoint of $D$ with respect to this scalar product is $D^*=-\Tr\circ \nabla$. We also denote by the same $\langle \cdot, \cdot \rangle$ the natural $L^2$ scalar product on $C^\infty(SM)$ induced by the Liouville measure $\mu^{{\rm L}}$. The formal adjoint of $\pi_m^*$ with respect to these two scalar products is denoted by 
\[
{\pi_m}_*: \mc{D}'(SM)\to \mc{D}'(M; S^mT^*M)
\]  
where $\mc{D}'$ denotes the space of distributions, dual to $C^\infty$.\\

We recall that $\mathcal{C}$, the set of free homotopy classes in $M$, is in one-to-one correspondance with the set of conjugacy classes of $\pi_1(M,x_0)$ for some arbitrary choice of $x_0 \in M$ (see \cite{Klingenberg-74}) and for each $c \in \mathcal{C}$ there exists a unique closed geodesic $\gamma(c) \in c$. We denote its Riemannian length with respect to $g$ by $L(c)=\ell_g(\gamma(c))$.  The \emph{X-ray transform} on $SM$ is the operator defined by:
\[
I : C^0(SM) \rightarrow \ell^\infty(\mathcal{C}), \quad If(c) = \dfrac{1}{L(c)} \int_0^{L(c)} f(\varphi_t(z)) dt,
\]
where $z \in \gamma(c)$ is any point. This is a continuous linear operator when $\ell^\infty(\mathcal{C})$ is endowed with the sup norm on the sequences. Then, the X-ray transform $I_m$ of symmetric $m$-tensors is simply defined by $I_m := I \circ \pi_m^*$. Using \eqref{equation:identite}, we immediately have
\be
\label{equation:inclusion-noyau} \left\{ Dp ~|~  p \in C^\infty(M;S^{m-1}T^*M) \right\} \subset \ker I_m \cap C^\infty(M;S^{m}T^*M).
\ee
Using the ellipticity of $D$, any tensor $f \in C^\infty(M; S^m T^*M)$ can be decomposed uniquely as a sum
\be
\label{equation:decomposition}
f=Dp+h,
\ee
with $p \in C^\infty(M;S^{m-1}T^*M)$ and $h \in C^\infty(M;S^{m}T^*M)$ is such that $D^*h=0$.  
We call $Dp$ the \emph{potential part} of $f$ and $h$ the \emph{solenoidal part}. The same decomposition holds in Sobolev regularity $H^s(M)$, $s \in \R$, and in the $C^{k,\alpha}(M)$ regularity, $k \in \N, \alpha \in (0,1)$. We will write $h = \pi_{\ker D^*}f$ and the solenoidal projection $\pi_{\ker D^*}:=\mathbbm{1} - D_g \Delta^{-1}_g D^*_g$ is a pseudodifferential operator of order $0$ \cite[Lemma 2.6]{Gouezel-Lefeuvre-19} (here $\Delta_g := D^*_g D_g$ is the Laplacian on $1$-forms). The X-ray transform is said to be \emph{solenoidal injective} (or s-injective in short) if (\ref{equation:inclusion-noyau}) is an equality. It is conjectured that $I_m$ is s-injective as long as the metric is Anosov but it is only known in the following cases:
\begin{itemize}
\item for $m=0,1$ \cite{Dairbekov-Sharafutdinov-10},
\item for any $m \in \N$ in dimension $2$ \cite{Paternain-Salo-Uhlmann-14-2, Guillarmou-17-1},
\item for any $m \in \N$, in any dimension in non-positive curvature \cite{Croke-Sharafutdinov-98}.
\end{itemize}
It is also known that $\ker I_m/\ran D$ is finite dimensional for general Anosov geodesic flow (see \cite[Theorem 1.5]{Dairbekov-Saharfutdinov-Anosov} or \cite[Remark 3.7]{Guillarmou-17-1}).

The direct study of the analytic properties of $I_m$ is difficult as this operator involves integrals over the set of closed orbits, which is not a manifold. 
 Nevertheless, in \cite{Guillarmou-17-1}, the second author introduced an operator $\Pi_m$ that involves a sort of integration of tensors over "all orbits" and this space is essentially the manifold $SM$. The construction of $\Pi_m: C^\infty(M;S^mT^*M)\to \mc{D}'(M;S^mT^*M)$ relies on microlocal tools coming from \cite{Faure-Sjostrand-11,Dyatlov-Zworski-16} but a simpler definition that uses the fast mixing of the flow $\varphi_t$ is given by
\begin{equation}
\label{Pim}
\begin{gathered}
\Pi_m:= {\pi_m}_*(\Pi+\langle \cdot, 1\rangle) \pi_m^* \textrm{ with } \\
\Pi: C^\infty(SM)\to \mc{D}'(SM), \quad \cjg \Pi f,f'\cjd:=\lim_{T\to \infty}\int_{-T}^T
\cjg e^{tX}f,f'\cjd \, dt  
\end{gathered}\end{equation} 
if $\cjg f,1\cjd=\int_{SM} f \, d\mu^{\rm L}=0$ and $\Pi (1):=0$. The convergence of the integral as $T\to \infty$ is ensured by the exponential decay of correlations \cite{Liverani} (but also follows from the existence of the variance \cite{KatsudaSunada}). We can thus write for $\cjg f,1\cjd=0$
\[\cjg \Pi f,f'\cjd =\int_\R 
\cjg f\circ \varphi_t,f'\cjd_{L^2(SM)}\, dt.\]
We note the following useful properties of $\Pi$, proved in \cite[Theorem 1.1]{Guillarmou-17-1}:
\begin{itemize}
\item $\Pi: H^s(SM)\to H^{-s}(SM)$ is bounded for all $s>0$
\item if $f\in H^s(SM)$ with $s>0$, $X\Pi f=0$
\item if $f$ and  $Xf$ belong to  $H^s(SM)$ for $s>0$, then  $\Pi Xf=0$\footnote{In \cite{Guillarmou-17-1}, $f$ is assumed to be in $H^{s+1}(SM)$ but one can reduce to the case $f\in H^s(SM)$ by using a density argument and \cite[Lemma E.45]{Dyatlov-Zworski-book-resonances}}. 
\end{itemize}
 As is well known (see for example \cite[Proof of Proposition 1.2.]{KatsudaSunada}), we can make a link between $\Pi$ and the variance in the central limit theorem for Anosov geodesic flows. Let us quickly explain this fact by using the fast mixing of the flow.
The \emph{variance} of $\varphi_t$ with respect to the Liouville measure $\mu^{\rm L}$ is defined for $u\in C^\alpha(SM), \alpha \in (0,1)$ real-valued by:
\begin{equation}\label{defvariance} 
{\rm Var}_{\mu^{\rm L}}(u):=\lim_{T\to \infty}\frac{1}{T}\int_{SM}\Big(\int_0^Tu(\varphi_t(z)) \, dt\Big)^2d\mu^{\rm L}(z),
\end{equation} 
under the condition that $\int_{SM}u\, d\mu^{\rm L}=0$. We observe, since $\varphi_t$ preserves $\mu^{\rm L}$, that 
\[ \begin{split}
{\rm Var}_{\mu^{\rm L}}(u)=& \lim_{T\to \infty}\frac{1}{T}\int_{SM}\int_0^T\int_{0}^T u(\varphi_{t-s}(z))u(z) \, dtds d\mu^{\rm L}(z)\\
=& \lim_{T\to \infty} \int_0^1\int_{\R}{\bf 1}_{[(t-1)T,tT]}(r)\cjg u\circ \varphi_r,u\cjd_{L^2} \, dr dt. 
\end{split}\]
where the $L^2$ pairing is with respect to $\mu^{\rm L}$.
By exponential decay of correlations \cite{Liverani}, we have for $|r|$ large 
\[|\cjg u\circ \varphi_r,u\cjd_{L^2} |\leq Ce^{-\nu |r|}\|u\|_{C^\alpha}^2\]
for some $\alpha>0,\nu>0$, $C>0$ independent of $u$. Thus, by the Lebesgue theorem, 
\begin{equation}\label{variancevsPi}
{\rm Var}_{\mu^{\rm L}}(u)=\cjg \Pi u,u\cjd,
\end{equation}
if $\cjg u,\mathbf{1}\cjd=0$, where $\mathbf{1}$ denotes the constant function equal to $1$, showing that the quadratic form associated to our operator $\Pi$ is nothing more than the variance. For a symmetric $2$-tensor $h$ satisfying $\cjg h,g\cjd_{L^2}=\int_{M}{\rm Tr}_{g}(h)\, d{\rm vol}_{g}=0$, we have $\int_{SM}\pi_2^*h \, d\mu_{g}^L=0$ and 
\[ \cjg\Pi_2h,h\cjd =\cjg \Pi \pi_2^*h,\pi_2^*h\cjd={\rm Var}_{\mu^{\rm L}}(\pi_2^*h).\]

One has the following properties for $\Pi_m$:

\begin{itemize}
\item $\Pi_m$ is a positive self-adjoint pseudodifferential operator of order $-1$ , elliptic on solenoidal tensors, see \cite[Theorem 3.5]{Guillarmou-17-1} and \cite[Lemma 4.3]{Gouezel-Lefeuvre-19}.
\item $\Pi_mD = 0$ and $D^*\Pi_m=0$ (by \cite[Theorem 3.5]{Guillarmou-17-1} and $X\pi_{m-1}^*=
\pi_m^*D$)
\item If $I_m$ is s-injective, then $\Pi_m$ is invertible on solenoidal tensors in the sense that there exists a pseudodifferential operator $Q$ of order $1$ such that $Q\Pi_m=\pi_{\ker D^*}$, see \cite[Theorem 4.7]{Gouezel-Lefeuvre-19}.
\item Conversely, if $\Pi_m|_{\ker D^*}$ is injective, then $I_m$ is s-injective: indeed, by \cite[Corollary 2.8]{Guillarmou-17-1}, if $I_mh=0$ then $\pi_m^*h=Xu$ for some $u\in C^\infty(SM)$ and thus 
$\Pi_mh={\pi_m}_*\Pi Xu=0$.
\end{itemize} 
In particular, using the spectral theorem, there is a bounded self-adjoint operator 
$\sqrt{\Pi_m}$ on $L^2$ such that $\sqrt{\Pi_m}\sqrt{\Pi_m}=\Pi_m$.
We add the following property which will be crucially used in this article:

\begin{lemma}
\label{lemma:positivite-pi2} If $(M,g)$ has Anosov geodesic flow and $I_2$ is s-injective,
there exists a constant $C > 0$ such that for all tensors $h \in H^{-1/2}(M; S^2T^*M)$, 
\[\langle \Pi_2 h, h \rangle \geq C\|\pi_{\ker D^*}h\|^2_{H^{-1/2}(M)}.\]
\end{lemma}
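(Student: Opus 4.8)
The plan is to reduce the inequality to \emph{solenoidal} tensors, and then to derive the core estimate by combining the order‑one parametrix $Q$ satisfying $Q\Pi_2=\pi_{\ker D^*}$ (this is where the $s$‑injectivity of $I_2$ is used) with the Cauchy--Schwarz inequality for the non‑negative quadratic form $h\mapsto\langle\Pi_2h,h\rangle$.

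First I would decompose an arbitrary $h\in H^{-1/2}(M;S^2T^*M)$ as in \eqref{equation:decomposition} — this decomposition being valid in $H^{-1/2}$ — as $h=Dp+h'$ with $p\in H^{1/2}(M;T^*M)$ and $h'=\pi_{\ker D^*}h$, so that $D^*h'=0$. Since $\Pi_2D=0$ and $D^*\Pi_2=0$, and since $\Pi_2$ maps $H^{-1/2}(M)$ to $H^{1/2}(M)$ so that all the pairings below are meaningful, one gets $\Pi_2h=\Pi_2h'$ and
\[
\langle\Pi_2h,h\rangle=\langle\Pi_2h',Dp+h'\rangle=\langle D^*\Pi_2h',p\rangle+\langle\Pi_2h',h'\rangle=\langle\Pi_2h',h'\rangle .
\]
It therefore suffices to produce $C>0$ such that $\langle\Pi_2h',h'\rangle\ge C\|h'\|_{H^{-1/2}(M)}^2$ for every solenoidal $h'\in H^{-1/2}(M;S^2T^*M)$.

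For this core estimate, I would use that $s$‑injectivity of $I_2$ provides $Q\in\Psi^1(M;S^2T^*M)$ with $Q\Pi_2=\pi_{\ker D^*}$, hence $h'=Q\Pi_2h'$ for solenoidal $h'$. Then for any $\phi\in C^\infty(M;S^2T^*M)$ one has $\langle h',\phi\rangle=\langle Q\Pi_2h',\phi\rangle=\langle\Pi_2h',Q^*\phi\rangle$, and since $\Pi_2$ is non‑negative and self‑adjoint (as recalled above), the Cauchy--Schwarz inequality for the form $\langle\Pi_2\cdot,\cdot\rangle$ gives
\[
|\langle h',\phi\rangle|^2\le\langle\Pi_2h',h'\rangle\,\langle\Pi_2Q^*\phi,Q^*\phi\rangle .
\]
Because $Q^*\in\Psi^1$ and $\Pi_2\in\Psi^{-1}$, we have $\|Q^*\phi\|_{H^{-1/2}}\le C\|\phi\|_{H^{1/2}}$ and $\|\Pi_2Q^*\phi\|_{H^{1/2}}\le C\|Q^*\phi\|_{H^{-1/2}}$, so $\langle\Pi_2Q^*\phi,Q^*\phi\rangle\le C\|\phi\|_{H^{1/2}}^2$. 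Taking the supremum over $\phi$ with $\|\phi\|_{H^{1/2}(M)}=1$ then yields $\|h'\|_{H^{-1/2}(M)}^2\le C\langle\Pi_2h',h'\rangle$, which together with the previous reduction proves the lemma.

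The point requiring some care is the reason for the reduction step: $\Pi_2$ is elliptic only on solenoidal tensors (it annihilates $\ran D$), so the argument cannot be run directly on the full bundle, and this is exactly why the right‑hand side features $\|\pi_{\ker D^*}h\|_{H^{-1/2}}$ rather than $\|h\|_{H^{-1/2}}$. Apart from this, the proof is bookkeeping of the orders of pseudodifferential operators and of the Sobolev duality pairings, so I do not expect a genuine obstacle. As an alternative to the core estimate one could apply G\aa rding's inequality to the elliptic non‑negative operator $\Pi_2+(\mathbbm{1}-\pi_{\ker D^*})(\mathbbm{1}+\Delta)^{-1}(\mathbbm{1}-\pi_{\ker D^*})\in\Psi^{-1}$ and then absorb the resulting lower‑order remainder via a compactness argument using that $\Pi_2$ is injective on solenoidal tensors, but the Cauchy--Schwarz route above is shorter and directly quantitative.
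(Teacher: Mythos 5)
Your proof is correct, but it takes a genuinely different route from the paper's. The paper argues via a G\aa rding-type factorization: it computes the principal symbol of $\Pi_2$, writes $\Pi_2=\pi_{\ker D^*}B^*B\pi_{\ker D^*}+R$ with $B\in\Psi^{-1/2}$ elliptic and $R\in\Psi^{-2}$, deduces $\|\pi_{\ker D^*}h\|^2_{H^{-1/2}}\le C\langle\Pi_2h,h\rangle+(\text{compact remainders})$, and then removes the remainders by a contradiction/compactness argument whose last step uses only the \emph{injectivity} of $\Pi_2$ on solenoidal tensors (the consequence of s-injectivity of $I_2$) — essentially the alternative you sketch in your closing paragraph. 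You instead take as input the stronger black box that s-injectivity of $I_2$ yields an exact left inverse $Q\in\Psi^{1}$ with $Q\Pi_2=\pi_{\ker D^*}$ (quoted in the paper from \cite[Theorem 4.7]{Gouezel-Lefeuvre-19}), and convert this operator identity into a quadratic-form bound by duality together with the Cauchy--Schwarz inequality for the non-negative symmetric form $\langle\Pi_2\cdot,\cdot\rangle$ (which is indeed well defined and non-negative on $H^{-1/2}$ by density and the boundedness $\Pi_2:H^{-1/2}\to H^{1/2}$). Your reduction to solenoidal tensors and the order/duality bookkeeping are sound, and there is no circularity since the paper itself records the existence of $Q$ before the lemma. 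The trade-off: your argument is shorter and fully quantitative, with the constant explicit in operator norms of $Q^*$ and $\Pi_2$, but it leans on the full inversion theorem for $\Pi_2$, whereas the paper's proof needs only the principal symbol computation and injectivity on $\ker D^*$.
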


\begin{proof}
In \cite[Theorem 4.4 and Lemma 2.2.]{Gouezel-Lefeuvre-19}, the principal symbol of $\Pi_2$ was computed and turned out to be
\[ 
\sigma_2 := \sigma(\Pi_2) : (x,\xi) \mapsto  |\xi|^{-1} \pi_{\ker i_\xi}A_2^2\pi_{\ker i_\xi},
\]
for some positive definite diagonal endomorphism $A_2$
which is constant on both subspaces 
$S^2_0T^*M:=\{h\in S^2T^*M |\, {\rm Tr}_g(h)=0\}$ and $\R g=\{\la g\in S^2T^*M\, |\, \la\in \R\}$. Here $i_\xi$ is the interior product with the dual vector 
$\xi^\sharp\in T_xM$ of $\xi$ with respect to the metric.
We introduce the symbol $b\in C^\infty(T^*M)$ of order $-1/2$ defined by $b :(x,\xi) \mapsto \chi(x,\xi)|\xi|^{-1/2}A_2$, where $\chi \in C^\infty(T^*M)$ vanishes near the $0$ section in $T^*M$ and equal to $1$ for $|\xi|>1$ and define $B := \Op(b)\in \Psi^{-1/2}(M;S^2T^*M)$, where $\Op$ is a quantization on $M$. Using that the principal symbol of $\pi_{\ker D^*}$ is $\pi_{\ker i_\xi}$ (see \cite[Lemma 2.6]{Gouezel-Lefeuvre-19}), we observe that $\Pi_2 = \pi_{\ker D^*}B^*B\pi_{\ker D^*} + R$, where $R\in \Psi^{-2}(M;S^2T^*M)$. Thus, given $h \in H^{-1/2}(M,S^2 T^*M)$:
\be
\label{equation:parametrice1}
\langle \Pi_2h, h \rangle_{L^2} = \|B\pi_{\ker D^*}h\|^2_{L^2} + \langle Rh, h \rangle_{L^2}
\ee

By ellipticity of $B$, there exists a pseudodifferential operator $Q$ of order $1/2$ such that $QB\pi_{\ker D^*} = \pi_{\ker D^*} + R'$, where $R'\in \Psi^{-\infty}(M;S^2T^*M)$ is smoothing. Thus there is $C>0$ such that for each $h\in C^\infty(M;S^2T^*M)$
\[
\|\pi_{\ker D^*}h\|^2_{H^{-1/2}} \leq \|QB\pi_{\ker D^*}h\|_{H^{-1/2}}^2 + \|R'h\|_{H^{-1/2}}^2 \leq C \|B\pi_{\ker D^*}h\|^2_{L^2} +  \|R'h\|_{H^{-1/2}}^2.
\]
Since Lemma \ref{lemma:positivite-pi2} is trivial on potential tensors, we can already assume that $h$ is solenoidal, that is $\pi_{\ker D^*}h=h$. Recalling (\ref{equation:parametrice1}), we obtain that
\be
\begin{split}
\label{equation:parametrice2}
\|h\|^2_{H^{-1/2}} & \leq C\langle \Pi_2 h, h \rangle_{L^2} - C\langle Rh, h\rangle_{L^2} +  \|R'h\|_{H^{-1/2}}^2\\
& \leq C\langle \Pi_2 h, h \rangle_{L^2} + C\|Rh\|_{H^{1/2}}\|h\|_{H^{-1/2}}  + \|R'h\|_{H^{-1/2}}^2.
\end{split}
\ee
Now, assume by contradiction that the statement in Lemma \ref{lemma:positivite-pi2} does not hold, that is we can find a sequence of tensors $f_n \in C^\infty(M;S^2 T^*M)$ such that $\|f_n\|_{H^{-1/2}}=1$ with $D^*f_n=0$ and
\[
\|\sqrt{\Pi_2}f_n\|^2_{L^2} =\langle \Pi_2 f_n, f_n \rangle_{L^2} \leq  \frac{1}{n}\| f_n\|^2_{H^{-1/2}}=\frac{1}{n} \rightarrow 0.
\]
Up to a subsequence, and since $R$ is of order $-2$, we can assume that $Rf_n \rightarrow v_1$ in $H^{1/2}$ for some $v_1$, and $R'f_n \rightarrow v_2$ in $H^{-1/2}$. Then, using (\ref{equation:parametrice2}), we obtain that $(f_n)_{n \in \N}$ is a Cauchy sequence in $H^{-1/2}$ which thus converges to an element $v_3 \in H^{-1/2}$ such that $\|v_3\|_{H^{-1/2}}=1$ and $D^*v_3=0$. By continuity, $\Pi_2f_n \rightarrow \Pi_2v_3$ in $H^{1/2}$ and thus $\langle \Pi_2v_3,v_3 \rangle=0$. Since $v_3$ is solenoidal, we get $\sqrt{\Pi_2}v_3=0$, thus $\Pi_2v_3=0$.  Since we assumed $I_2$ s-injective, $\Pi_2$ is also injective by \cite[Lemma 4.6]{Gouezel-Lefeuvre-19}. This implies that $v_3 \equiv 0$, thus contradicting $\|v_3\|_{H^{-1/2}}=1$.
\end{proof}
We note that the same proof also works for tensors of any order $m \in \N$.
In fact we can even get a uniform estimate:
\begin{lemma}
\label{lemma:positivite-pi2unif} Let $(M,g_0)$ be a smooth compact Anosov Riemannian manifold with $I_2^{g_0}$ being $s$-injective.
There exists a $C^\infty$ neighborhood $\mc{U}_{g_0}$ of $g_0$ and a constant $C > 0$ such that for all $g\in \mc{U}_{g_0}$ and all tensors $h \in H^{-1/2}(M; S^2T^*M)$, 
\[\langle \Pi_2^g h, h \rangle_{L^2} \geq C\|\pi_{\ker D_g^*}h\|^2_{H^{-1/2}(M)}.\]
\end{lemma}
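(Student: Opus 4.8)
The plan is to deduce Lemma~\ref{lemma:positivite-pi2unif} from the single-metric estimate of Lemma~\ref{lemma:positivite-pi2}, applied only to $g_0$, by a soft perturbation argument. One cannot simply invoke Lemma~\ref{lemma:positivite-pi2} for each nearby $g$: being Anosov is an open condition, but the $s$-injectivity of $I_2^g$ is not known to be, so the estimate for $g_0$ has to be propagated to nearby metrics. The two ingredients I would use are: (i) the continuity of $g\mapsto \Pi_2^g$ with values in $\Psi^{-1}(M;S^2T^*M)$, which is Proposition~\ref{proposition:pi2-continuite}; and (ii) the continuity of $g\mapsto \pi_{\ker D_g^*}$ with values in $\Psi^{0}(M;S^2T^*M)$, which follows immediately from the formula $\pi_{\ker D_g^*}=\mathbbm{1}-D_g\Delta_g^{-1}D_g^*$ together with the smooth dependence on $g$ of the first-order operators $D_g$ and $D_g^*$, hence of $\Delta_g=D_g^*D_g$ and of its inverse on $1$-forms (the operator $\Delta_g$ being elliptic with trivial kernel on $1$-forms in the Anosov setting). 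Shrinking the neighborhood, I may assume every $g\in\mc{U}_{g_0}$ is Anosov, so that $\Pi_2^g$ is defined and satisfies all the properties recalled before Lemma~\ref{lemma:positivite-pi2}; in particular $\Pi_2^g\geq 0$ and $\Pi_2^gD_g=0=D_g^*\Pi_2^g$.

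First I would reduce to solenoidal tensors. For $h\in H^{-1/2}(M;S^2T^*M)$, decompose $h=D_gp+h_s$ with $h_s:=\pi_{\ker D_g^*}h$ solenoidal (i.e. $D_g^*h_s=0$), as in \eqref{equation:decomposition}. Then $\Pi_2^gh=\Pi_2^gh_s$ because $\Pi_2^gD_g=0$, and $\cjg \Pi_2^gh_s,D_gp\cjd=\cjg D_g^*\Pi_2^gh_s,p\cjd=0$, so that $\cjg \Pi_2^gh,h\cjd=\cjg \Pi_2^gh_s,h_s\cjd$. It therefore suffices to produce a constant $C>0$ and a $C^\infty$-neighborhood $\mc{U}_{g_0}$ such that
\[
\cjg \Pi_2^gh_s,h_s\cjd\geq C\,\|h_s\|_{H^{-1/2}(M)}^2 \qquad \text{for all } g\in\mc{U}_{g_0} \text{ and all } h_s \text{ with } D_g^*h_s=0.
\]

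Second I would run the perturbation. Fix such an $h_s$, so $\pi_{\ker D_g^*}h_s=h_s$, and write
\[
\cjg \Pi_2^gh_s,h_s\cjd=\cjg \Pi_2^{g_0}h_s,h_s\cjd+\cjg (\Pi_2^g-\Pi_2^{g_0})h_s,h_s\cjd .
\]
By Lemma~\ref{lemma:positivite-pi2} applied to $g_0$ there is $C_0>0$ with $\cjg \Pi_2^{g_0}h_s,h_s\cjd\geq C_0\|\pi_{\ker D_{g_0}^*}h_s\|_{H^{-1/2}}^2$; and since $\pi_{\ker D_{g_0}^*}h_s=h_s+(\pi_{\ker D_{g_0}^*}-\pi_{\ker D_g^*})h_s$, setting $\delta(g):=\|\pi_{\ker D_{g_0}^*}-\pi_{\ker D_g^*}\|_{H^{-1/2}\to H^{-1/2}}$ gives $\|\pi_{\ker D_{g_0}^*}h_s\|_{H^{-1/2}}\geq (1-\delta(g))\|h_s\|_{H^{-1/2}}$. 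For the error term, $|\cjg (\Pi_2^g-\Pi_2^{g_0})h_s,h_s\cjd|\leq \eta(g)\,\|h_s\|_{H^{-1/2}}^2$ with $\eta(g):=\|\Pi_2^g-\Pi_2^{g_0}\|_{H^{-1/2}\to H^{1/2}}$. By (i) and (ii), and the fact that the operator norm of a pseudodifferential operator between Sobolev spaces is controlled by finitely many of its symbol seminorms, one has $\delta(g)\to 0$ and $\eta(g)\to 0$ as $g\to g_0$ in $C^\infty$; hence one can pick $\mc{U}_{g_0}$ so small that $C_0(1-\delta(g))^2-\eta(g)\geq C_0/2$ on $\mc{U}_{g_0}$. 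This yields $\cjg \Pi_2^gh_s,h_s\cjd\geq (C_0/2)\|h_s\|_{H^{-1/2}}^2$, and combining with the reduction of the previous paragraph, $\cjg \Pi_2^gh,h\cjd\geq (C_0/2)\|\pi_{\ker D_g^*}h\|_{H^{-1/2}}^2$ for every $g\in\mc{U}_{g_0}$ and every $h$; this is the claim with $C=C_0/2$.

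The only substantial point is the continuity (i) of $g\mapsto\Pi_2^g$ into $\Psi^{-1}$, proved in Proposition~\ref{proposition:pi2-continuite}: once it and the elementary continuity (ii) are granted, the rest is a one-line perturbation. So the real difficulty is not in this lemma but in controlling the dependence on the metric of the construction underlying $\Pi_2^g$ — in particular, that its principal symbol and the smoothing corrections in its parametrix vary continuously with $g$.
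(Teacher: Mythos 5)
Your proof is correct and follows essentially the same route as the paper's: both arguments combine the $g_0$-estimate of Lemma \ref{lemma:positivite-pi2} with the continuity of $g\mapsto \Pi_2^g$ in $\Psi^{-1}$ (Proposition \ref{proposition:pi2-continuite}) and of $g\mapsto \pi_{\ker D_g^*}$ in $\Psi^0$, absorbing the two error terms for $g$ in a small enough $C^\infty$-neighborhood. The only cosmetic difference is that you make the reduction to $g$-solenoidal tensors via $\Pi_2^gD_g=0=D_g^*\Pi_2^g$ explicit, whereas the paper starts directly from $h\in\ker D_g^*$.
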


\begin{proof}
First, let $g_0$ be fixed Anosov metric with $I_2^{g_0}$ $s$-injective (in particular it is the case if it has non-positive curvature). Proposition \ref{proposition:pi2-continuite} (which will be proved later) shows that the operator $\Pi_2=\Pi_2^g$ is a continuous family as a map
\[ g\in \mc{U}_{g_0} \mapsto \Pi_2^g\in \Psi^{-1}(M;S^2T^*M)\]
where $\mc{U}_{g_0}\subset C^\infty(M;S^2T^*M)$ is a $C^\infty$-neighborhood of $g_0$ and $\Psi^{-1}(M;S^2T^*M)$ is equipped with its Fr\'echet topology as explained before. 
 Let $h \in \ker D^*_g$ be a solenoidal (with respect to $g$) symmetric $2$-tensor, then $h=\pi_{\ker D^*_g}h$. Let $C_{g_0} > 0$ be the constant provided by Lemma \ref{lemma:positivite-pi2} applied to the metric $g_0$. We choose $\mc{U}_{g_0}$ small enough so that $\|\Pi^g_2-\Pi^{g_0}_2\|_{H^{-1/2}\rightarrow H^{1/2}} \leq C_{g_0}/3$ (this is made possible by the continuity of $g \mapsto \Pi^g_2 \in \Psi^{-1}$). Then:
\[
\cjg \Pi^g_2 h,h \cjd  = \cjg (\Pi^g_2-\Pi^{g_0}_2)h,h \cjd + \cjg \Pi^{g_0}_2 h,h\cjd  \geq C_{g_0} \|\pi_{\ker D^*_{g_0}} h\|_{H^{-1/2}}^2 - C_{g_0}/3 \times \|h\|_{H^{-1/2}}^2.
\]
But the map $\mc{U}_{g_0} \ni g \mapsto \pi_{\ker D^*_g} = \mathbbm{1} - D_g \Delta^{-1}_g D^*_g \in \Psi^0$ is continuous: this follows from the fact that one can construct a full parametrix $Q_g\in \Psi^{-2}(M)$ of $\Delta_g$ modulo smoothing in a continuous way with respect to $g$ (by standard elliptic microlocal analysis), the fact that $\Delta_g$ is injective since $\ker D_g=0$ for $g$ Anosov (as $D_gu=0$ implies $X\pi_1^*u=0$, 
thus $\pi_1^*u$ has to be constant, thus $0$ since $\pi_1^*u(x,-v)=-\pi_1^*u(x,v)$) and the continuity of composition of pseudodifferential operators.  
This implies that for $g$ in a possibly smaller neighborhood $\mc{U}_{g_0}$ of $g_0$, using $h=\pi_{\ker D^*_g}h$:
\[
\cjg \Pi^g_2 h,h \cjd \geq C_{g_0} \|\pi_{\ker D^*_{g}} h\|_{H^{-1/2}}^2 - \frac{2C_{g_0}}{3} \times \|h\|_{H^{-1/2}}^2 = C_{g_0}/3 \|\pi_{\ker D^*_{g}} h\|_{H^{-1/2}}^2.
\]
The proof is complete.
\end{proof}

We also observe that the generalization of the previous Lemma to tensors of any order is straightforward. As mentioned earlier, an immediate consequence of the previous lemma is the following

\begin{proposition}
\label{proposition:im-s-injective}
Let $(M,g_0)$ be a smooth Riemannian $n$-dimensional Anosov manifold with $I_m^{g_0}$ $s$-injective. Then, there exists a $C^\infty$-neighborhood $\mc{U}_{g_0}$ of $g_0$ in $\mc{M}$ such that for any $g \in \mc{U}_{g_0}$, for any $m \in \N$, $I^g_m$ is s-injective.
\end{proposition}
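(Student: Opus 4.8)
The plan is to deduce $s$-injectivity of $I_m^g$ for $g$ near $g_0$ directly from the uniform coercivity estimate in Lemma \ref{lemma:positivite-pi2unif} (in its version for tensors of arbitrary order $m$, which the text has just asserted holds by the same argument). The key observation connecting the two is the last bulleted property of $\Pi_m$ stated in the excerpt: if $\Pi_m^g|_{\ker D_g^*}$ is injective, then $I_m^g$ is $s$-injective. So it suffices to show that $\Pi_m^g$ is injective on $g$-solenoidal tensors for all $g$ in a suitable $C^\infty$-neighborhood of $g_0$.

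The steps I would carry out are as follows. First, since $I_m^{g_0}$ is $s$-injective, the ($m$-tensor version of the) proof of Lemma \ref{lemma:positivite-pi2} applies verbatim to $g_0$, and then Lemma \ref{lemma:positivite-pi2unif}, in its $m$-tensor version, provides a $C^\infty$-neighborhood $\mc{U}_{g_0}$ of $g_0$ and a constant $C>0$ such that
\[
\cjg \Pi_m^g h, h\cjd_{L^2} \geq C \|\pi_{\ker D_g^*} h\|_{H^{-1/2}(M)}^2
\]
for all $g\in\mc{U}_{g_0}$ and all $h\in H^{-1/2}(M;S^mT^*M)$. Second, I would take $h\in \ker D_g^*\cap C^\infty(M;S^mT^*M)$ (it suffices to treat solenoidal tensors, since $\Pi_m^g D_g=0$ means the potential part is irrelevant and the decomposition \eqref{equation:decomposition} is available) with $\Pi_m^g h=0$. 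Pairing with $h$ gives $\cjg \Pi_m^g h,h\cjd_{L^2}=0$, hence by the displayed estimate $\|h\|_{H^{-1/2}}=0$, i.e. $h=0$. This shows $\Pi_m^g|_{\ker D_g^*}$ is injective. Third, I would invoke the cited property (\cite[Corollary 2.8]{Guillarmou-17-1} together with $\Pi_m^g = {\pi_m}_*\Pi^g \pi_m^*$ applied to $\pi_m^*h=Xu$): if $I_m^g h=0$ for a solenoidal $h$, then $\pi_m^*h=Xu$ for some $u$, so $\Pi_m^g h={\pi_m}_*\Pi^g Xu=0$, whence $h=0$ by the injectivity just established; combined with \eqref{equation:inclusion-noyau} this is exactly $s$-injectivity of $I_m^g$. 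Finally, since this holds for every $m\in\N$ simultaneously with the same neighborhood (one intersects over $m$, or better, notes that the neighborhood in Lemma \ref{lemma:positivite-pi2unif} can be chosen uniformly — but if not, one simply shrinks $\mc{U}_{g_0}$ for each $m$ and the statement as phrased quantifies "for any $g$, for any $m$" which only needs a countable intersection to remain a $C^\infty$-neighborhood, or one re-reads it as: for each $m$ there is $\mc{U}_{g_0}^{(m)}$), the conclusion follows.

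The only genuinely delicate point is the continuity statement $g\mapsto \Pi_m^g\in\Psi^{-1}(M;S^mT^*M)$ underlying Lemma \ref{lemma:positivite-pi2unif}, which rests on Proposition \ref{proposition:pi2-continuite} (stated to be proved later) and its evident generalization to higher-order tensors; granting that, the argument above is routine. I would also double-check the harmless point that the constant $C$ and neighborhood $\mc{U}_{g_0}$ in the $m$-tensor version of Lemma \ref{lemma:positivite-pi2unif} genuinely do not degenerate — but this is exactly what "the generalization to tensors of any order is straightforward" refers to, so no new idea is needed. If one wants a single neighborhood working for all $m$ at once, a short additional remark (e.g. that the $\Psi^{-1}$-seminorms of $\Pi_m^g - \Pi_m^{g_0}$ are locally uniformly controlled in $m$ via the explicit symbol computation) would close that gap, but it is not required for the statement as written.
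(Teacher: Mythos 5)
Your argument is exactly the paper's: the s-injectivity of $I_m^g$ is reduced to the injectivity of $\Pi_m^g$ on $g$-solenoidal tensors, which follows immediately from the uniform coercivity estimate of Lemma \ref{lemma:positivite-pi2unif} (in its $m$-tensor version). Your additional remark about the uniformity of the neighborhood in $m$ is a reasonable point the paper glosses over, but the route is the same.
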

\begin{proof}
As mentionned above (before Lemma \ref{lemma:positivite-pi2}), the s-injectivity of $I_m^g$ is equivalent to that of $\Pi_m^g$ on solenoidal tensors and the previous Lemma allows to conclude.
\end{proof}

\subsection{The space of Riemannian metrics}

We fix a \emph{smooth} metric $g_0 \in \mc{M}$ and consider an integer $k \geq 2$ and $\alpha \in (0,1)$. We recall that the space $\mc{M}$ of all smooth metrics is a Fréchet manifold. We denote by $\mc{D}_0 := \Diff_0(M)$ the group of smooth diffeomorphisms on $M$ that are isotopic to the identity, this is a Fr\'echet Lie group in the sense of \cite[Section 4.6]{Hamilton}. The right action 
\[ \mc{M} \times \mc{D}_0\to \mc{M},\quad (g,\psi)\mapsto \psi^*g\]
is smooth and proper  \cite{Ebin-68,Ebin1970}. Moreover, if $g$ is a metric with Anosov geodesic flow, it is directly seen from ergodicity that there are no Killing vector fields and thus the isotropy subgroup $\{\psi\in \mc{D}_0 ~|~ \psi^*g=g\}$ of $g$ is finite. For negatively curved metrics it is shown in \cite{Frankel} that the action is free, i.e. the isotropy group is trivial.
One cannot apply the usual quotient theorem \cite[p.20]{Tromba-92} in the setting of Banach or Hilbert manifolds but rather smooth Fr\'echet manifolds instead (using the Nash-Moser theorem). 
Thus, in the setting of the space of smooth metrics with Anosov geodesic flows\footnote{The important fact, to apply Ebin's slice theorem, is that metrics with Anosov geodesic flows do not have Killing vector fields, i.e. infinitesimal isometries. This is due to the fact that $\ker D|_{C^\infty(M,T^*M)} = \left\{ 0 \right\}$ as mentioned earlier, which itself follows from the ergodicity of the geodesic flow.}, which is an open set of a Frechet vector space, the slice theorem says that there is a neighborhood $\mc{U}$ of $g_0$, a neighborhood $\mc{V}$ of ${\rm Id}$ in $\mc{D}_0$ and a Frechet submanifold $\mc{S}$ containing $g_0$ so that 
\begin{equation}\label{sliceS} 
\mc{S}\times \mc{V}\to \mc{U}, \quad (g,\psi)\mapsto \psi^*g
\end{equation}
is a diffeomorphism of Frechet manifolds and $T_{g_0}\mc{S}=\{h\in T_{g_0}\mc{M} ~|~ D_{g_0}^*h=0\}$, see \cite{Ebin-68,Ebin1970}. Moreover $\mc{S}$ parametrizes the set of orbits $g\cdot \mc{D}_0$ for $g$ near $g_0$ and 
 $T_g\mc{S}\cap T(g \cdot \mc{D}_0)=0$. 

On the other hand, if one considers $\mc{M}^{k,\alpha}$, the space of metrics with $C^{k,\alpha}$ regularity and $\mc{D}^{k+1,\alpha}_0 := \Diff^{k+1,\alpha}_0(M)$, the group of diffeomorphisms isotopic to the identity with $C^{k+1,\alpha}$ regularity,
then both spaces are smooth Banach manifolds. However, the action of $\mc{D}_0^{k+1,\alpha}$ on $\mc{M}^{k,\alpha}$ is no longer smooth but only topological which also prevents us from applying the quotient theorem.

Nevertheless, recalling $g_0$ is smooth, if we consider $\mc{O}^{k,\alpha}(g_0) := g_0\cdot \mc{D}^{k+1,\alpha}_0 \subset \mc{M}^{k,\alpha}$, then this is a smooth submanifold of $\mc{M}^{k,\alpha}$ and
\[
T_{g}\mc{O}^{k,\alpha}(g_0) = \left\{D_g p ~|~ p \in C^{k+1,\alpha}(M;T^*M) \right\}. 
\]
Notice that (\ref{equation:decomposition}) in $C^{k,\alpha}$ regularity exactly says that given $g \in \mc{O}^{k,\alpha}(g_0)$, one has the decomposition:
\be
\label{equation:decomposition-plan-tangent}
T_g \mc{M} = T_{g}\mc{O}^{k,\alpha}(g_0) \oplus \ker D_g^*|_{C^{k,\alpha}(M,S^2T^*M)}.
\ee
Thus, an infinitesimal perturbation of a metric $g \in \mc{O}^{k,\alpha}(g_0)$ by a symmetric $2$-tensor that is solenoidal with respect to $g$ is actually an infinitesimal displacement \emph{transversally to the orbit $\mc{O}^{k,\alpha}(g_0)$}.

We will need a stronger version of the previous decomposition (\ref{equation:decomposition-plan-tangent}) which can be understood as a slice theorem. Its knowledge goes back to \cite{Ebin-68,Ebin1970}, see also \cite[Lemma 4.1]{Guillarmou-Lefeuvre-18} for a short proof in the $C^{k,\alpha}$ category.

\begin{lemma}
\label{lemma:solenoidal-gauge}
Let $k$ be an integer $\geq 2$ and $\alpha \in (0,1)$, let $g_0$ be a $C^{k+3,\alpha}$ metric with Anosov geodesic flow. There exists a neighborhood $\mc{U} \subset \mc{M}^{k,\alpha}$ of $g_0$ in the $C^{k,\alpha}$-topology such that for any $g \in \mc{U}$, there exists a unique $C^{k+1,\alpha}$-diffeomorphism $\psi$ such that $\psi^*g$ is solenoidal with respect to $g_0$. Moreover, the following map is $C^2$ 
\[C^{k,\alpha}(M;S^2T^*M)\times C^{k+3,\alpha}(M;S^2T^*M) \to \mc{D}_0^{k+1,\alpha}(M), \quad 
(g,g_0)\mapsto \psi.\]
\end{lemma}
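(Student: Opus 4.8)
The plan is to set up the problem as an application of the implicit function theorem for $C^2$ (or smooth) maps between Banach manifolds, with the solenoidal gauge condition $D_{g_0}^*(\psi^*g) = 0$ as the equation to be solved. First I would parametrize diffeomorphisms near the identity: since $\mc{D}_0^{k+1,\alpha}(M)$ is a Banach manifold modelled on $C^{k+1,\alpha}(M;TM)$, write $\psi = \exp_{g_0}^X$ (the Riemannian exponential of the time-one flow, or simply $\psi = \mathrm{Id} + Y$ in a fixed chart) for a vector field $X$ in a small ball, and define
\[
F : \mc{U} \times B \to C^{k-1,\alpha}(M;T^*M), \qquad F(g,X) := D_{g_0}^*\big((\exp_{g_0}^X)^*g\big),
\]
where $\mc{U}\subset \mc{M}^{k,\alpha}$ and $B\subset C^{k+1,\alpha}(M;TM)$ are neighborhoods of $g_0$ and $0$. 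One checks that $F$ is $C^2$ in $(g,X)$ jointly (this is where the loss of one derivative on $g_0$ to $C^{k+3,\alpha}$ comes in: pulling back by a $C^{k+1,\alpha}$ diffeomorphism and then applying the first-order operator $D_{g_0}^*$ which involves $\nabla^{g_0}$, together with the well-known fact that the pullback action $(g,\psi)\mapsto \psi^*g$ gains regularity on $g_0$ to be differentiable). We have $F(g_0,0)=D_{g_0}^*g_0=0$.

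Second, I would compute the partial differential $\partial_X F(g_0,0) : C^{k+1,\alpha}(M;TM)\to C^{k-1,\alpha}(M;T^*M)$. Since $\tfrac{d}{dt}\big|_{t=0}(\exp_{g_0}^{tX})^*g_0 = \mathcal{L}_X g_0 = 2\, D_{g_0}(X^\flat)$ (identifying vector fields with $1$-forms via $g_0$), we get $\partial_X F(g_0,0)X = 2\, D_{g_0}^* D_{g_0} (X^\flat) = 2\,\Delta_{g_0}(X^\flat)$, the (Hodge-type) Laplacian on $1$-forms. Because $g_0$ has Anosov geodesic flow, $\ker D_{g_0}|_{C^\infty(M;T^*M)} = \{0\}$ (as recalled in the excerpt: $D_{g_0}u=0 \Rightarrow X\pi_1^*u=0 \Rightarrow \pi_1^*u$ constant $\Rightarrow \pi_1^*u = 0$ by oddness), hence $\Delta_{g_0} = D_{g_0}^*D_{g_0}$ is a self-adjoint, nonnegative, injective elliptic operator of order $2$, therefore an isomorphism $C^{k+1,\alpha}(M;T^*M)\to C^{k-1,\alpha}(M;T^*M)$ by standard Schauder theory. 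Thus $\partial_X F(g_0,0)$ is a linear isomorphism, and the implicit function theorem for $C^2$ Banach maps yields, for $g$ in a smaller neighborhood $\mc{U}$, a unique $X = X(g)$ (hence a unique $\psi = \exp_{g_0}^{X(g)}$, and by shrinking we retain uniqueness among all $C^{k+1,\alpha}$ diffeomorphisms close to $\mathrm{Id}$) with $F(g,X(g))=0$, i.e. $\psi^*g$ solenoidal with respect to $g_0$, and the dependence $(g,g_0)\mapsto \psi$ inherits $C^2$ regularity from $F$.

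Third, a short argument upgrades local uniqueness (among $\psi$ near $\mathrm{Id}$) to the uniqueness asserted in the statement: if $\psi_1^*g$ and $\psi_2^*g$ are both $g_0$-solenoidal, then $\phi := \psi_1\psi_2^{-1}$ satisfies that $\phi^*(\psi_2^*g)$ and $\psi_2^*g$ are both solenoidal, so by \eqref{equation:decomposition} applied to $\psi_2^*g$ (which lies in a small $C^{k,\alpha}$-neighborhood of $g_0$) the orbit $\mc{O}^{k,\alpha}$ meets $\ker D_{g_0}^*$ in a single point; since near $\mathrm{Id}$ the orbit map is injective (the isotropy group of an Anosov metric is finite, in fact trivial in negative curvature by \cite{Frankel}, and the slice theorem gives a genuine local slice), $\phi = \mathrm{Id}$. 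The main obstacle is the bookkeeping of regularity: one must verify carefully that $F$ is genuinely $C^2$ — i.e. that $(g,\psi)\mapsto D_{g_0}^*(\psi^*g)$ loses exactly the derivatives claimed and no more — which is precisely why $g_0$ is taken in $C^{k+3,\alpha}$ while $g$ need only be $C^{k,\alpha}$ and $\psi$ comes out $C^{k+1,\alpha}$; the cited references \cite{Ebin-68,Ebin1970} and \cite[Lemma 4.1]{Guillarmou-Lefeuvre-18} handle exactly this tame-Fredholm bookkeeping, and I would simply invoke them rather than redo it.
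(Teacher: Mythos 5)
Your proposal is correct and follows essentially the same route as the paper, which itself defers to the implicit function theorem argument of \cite[Lemma 4.1]{Guillarmou-Lefeuvre-18}: one solves $D_{g_0}^*(\psi^*g)=0$ by linearizing at $(g_0,\mathrm{Id})$, where the partial differential is $2D_{g_0}^*D_{g_0}$ acting on $1$-forms, an elliptic isomorphism on H\"older spaces because Anosov metrics admit no Killing fields. The only caveat is that the uniqueness is to be understood among diffeomorphisms close to the identity (composing with an isometry of $g_0$ preserves the solenoidal condition), which your third paragraph already acknowledges.
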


\begin{remark}
The previous Lemma is not stated exactly this way in \cite[Lemma 4.1]{Guillarmou-Lefeuvre-18}. Indeed, the proof assumes that $g_0$ is smooth and fixed. However, inspecting the proof, it readily applies to $g_0\in C^{k+3,\alpha}$ and the implicit function theorem used in that proof shows the regularity of $\psi$
with respect to $g_0$. We do not include the proof of these details in order not to burden the discussion.

We also see that we need to use to $C^{k,\alpha}$ regularity for $\alpha \neq 0,1$ instead of $C^k$: this is due to the fact that the pseudodifferential operator inverting the linearization $D^*_{g_0}D_{g_0}$ that arises naturally in the proof of this lemma (see \cite[Lemma 4.1]{Guillarmou-Lefeuvre-18}) act on these spaces but on $C^k$, for $k \in \N$. Instead, one would have to resort to Zygmund spaces $C^k_*$. We refer to \cite[Appendix A]{Taylor-91} for further details. \end{remark}

\subsection{Thermodynamic formalism}

\label{ssection:thermodynamic-formalism}

Let $f$ be a Hölder-continuous function on $S_{g_0}M$. 
We recall that its \emph{pressure} \cite[Theorem 9.10]{Walters-82} is defined by:
\be
\label{equation:pression}
\mathbf{P}(f) := \sup_{\mu \in \mathfrak{M}_{\rm inv}} \left({\bf h}_\mu(\varphi_1^{g_0}) + \int_{S_{g_0}M} f\, d\mu\right),
\ee
where $\mathfrak{M}_{\rm inv}$ denotes the set of invariant (by the flow $\varphi^{g_0}$) Borel probability measures and ${\bf h}_\mu(\varphi_1^{g_0})$ is the metric entropy of the flow $\varphi_1^{g_0}$ at time $1$. It is actually sufficient to restrict the $\sup$ to ergodic measures $\mathfrak{M}_{\rm inv,erg}$ \cite[Corollary 9.10.1]{Walters-82}. Since the flow is Anosov, the supremum is always achieved for a unique invariant ergodic measure $\mu_f$ 
(by \cite[Theorem 3.3.]{BR75}, see also \cite[Theorem 9.3.4]{Fisher-Hasselblatt} and the following discussion therein) called the \emph{equilibrium state} of $f$, and 
\begin{equation}\label{equalityequilibrium}
\mu_f=\mu_{f'} \Longrightarrow f-f'=X_{g_0}u +c \textrm{ for some }u\textrm{ H\"older}\textrm{ and }c\textrm{ is constant},
\end{equation} 
see \cite[Theorem 9.3.16]{Fisher-Hasselblatt}. The measure $\mu_f$ is also mixing and positive on open sets which rules out the possibility of a finite combination of Dirac measures supported on a finite number of closed orbits. Moreover $\mu_f$ can be written as an infinite weighted sum of Dirac masses $\delta_{g_0}(c_j)$ supported over the geodesics $\gamma_{g_0}(c_j)$, where $c_j\in \mc{C}$ are the primitive classes (see \cite{Parry} for the case ${\bf P}(f)\geq 0$ or \cite[Theorem 9.17]{PPS} for the general case). For example when ${\bf P}(f)\geq 0$, 
\begin{equation}\label{muf}
\int u \, d\mu_f=\lim_{T\to \infty} \frac{1}{N(T,f)}\sum_{\left\{j | L_{g_0}(c_j)\in [T,T+1]\right\}}e^{\int_{\gamma_{g_0}(c_j)}f} \int_{\gamma_{g_0}(c_j)}u,
\end{equation}
where $N(T,f):=\sum_{j, L_{g_0}(c_j)\in [T,T+1]}L_{g_0}(c_j)e^{\int_{\gamma_{g_0}(c_j)}f}$.
When $f = 0$, this is the measure of maximal entropy, also called the \emph{Bowen-Margulis measure} $\mu_{g_0}^{\rm BM}$; in that case $\mathbf{P}(0) = {\bf h}_{\rm top}(\varphi_1^{g_0})$ is the topological entropy of the flow. When $f =-J_{g_0}^u$, where $J_{g_0}^u : x \mapsto \partial_t (|\det d\varphi^g_t(x)|_{E_u(x)})|_{t=0}$ is the unstable Jacobian, one obtains the Liouville measure $\mu_{g_0}^{\rm L}$ induced by the metric $g_0$; in that case, $\mathbf{P}(-J_{g_0}^u) = 0$. If we fix an exponent of Hölder regularity $\nu > 0$, then the map $C^\nu(S_{g_0}M) \ni f \mapsto \mathbf{P}(f)$ is real analytic (see \cite[Corollary 7.10]{Ruelle-04} for discrete systems and Parry-Pollicott \cite[Proposition 4.7]{Parry-Pollicott-90} for flows).

\subsection{Geodesic stretch}

\label{section:stretch}

We refer to \cite{Croke-Fathi,Knieper-95} for the original definition of this notion.

\subsubsection{Structural stability and time reparametrization}

We fix a smooth metric $g_0 \in \mc{M}$ with Anosov geodesic flow and we view the geodesic 
flow and vector fields of any metric $g$ close to $g_0$ as living on the unit tangent bundle $S_{g_0}M$ of $g_0$ by simply pulling them back by the diffeomorphism 
\[ (x,v)\in S_{g_0}M\to \big(x,\frac{v}{|v|_{g}}\big)\in S_gM.\] 
We fix some constant $k \geq 2$ and $\alpha \in (0,1)$. There exists a regularity parameter $\nu>0$ and a neighborhood $\mc{U} \subset \mc{M}^{k,\alpha}$ of $g_0$ such that, by the structural stability theorem (\cite[Appendix A]{DeLaLlave-Marco-Moryon-86} or \cite[Proposition 2.2]{Katoketal} for the H\"older regularity case), for any $g \in \mc{U}$, there exists a $C^\nu$ H\"older homeomorphism $\psi_g : S_{g_0}M \rightarrow S_{g_0}M$, differentiable in the flow direction, which is an orbit conjugacy i.e. such that 
\be
\label{equation:conjugaison}
d\psi_g(z) X_{g_0}(z) = a_g(z) X_{g}(\psi_g(z)), \qquad  \forall z \in S_{g_0}M,
\ee
where $a_g$ is in $C^{\nu}(S_{g_0}M)$. Moreover, the map 
\[\mc{U} \ni g \mapsto (a_g,\psi_g) \in C^\nu(S_{g_0}M) \times C^\nu(S_{g_0}M,S_{g_0}M)\] 
is $C^{k-2}$ and $\psi_g$ is homotopic to the identity. For the proof of Theorem \ref{mainth3}, 
we will also need the continuity of $a_g=a_{g_0,g}$ and of its $g$-derivatives of order $\ell\leq k-2$ as a function of the base metric $g_0$. This continuity follows essentially 
from the proof of \cite[Proposition 2.2]{Katoketal}, we give a proof of this fact in Proposition \ref{stabilityth} of the Appendix.

Note that neither $a_g$ nor $\psi_g$ are unique but $a_g$ is unique up to a coboundary and in all the following paragraphs, adding a coboundary to $a_g$ will not affect the results. 
From (\ref{equation:conjugaison}), we obtain that for $t \in \R$, $z \in S_{g_0}M$, 
$\varphi^g_{\kappa_{a_g}(z,t)} (\psi_g(z)) = \psi_g(\varphi^{g_0}_{t}(z))$
with:
\be
\label{equation:reparametrisation}
\kappa_{a_g}(z,t) = \int_0^t a_g(\varphi^{g_0}_s(z))\, ds.
\ee
If $c \in \mathcal{C}$ is a free homotopy class, then one has:
\be
\label{equation:longueurs}
L_g(c) = \int_0^{L_{g_0}(c)} a_g(\varphi_s^{g_0}(z))\, ds,
\ee
for any $z \in \gamma_{g_0}(c)$, the unique $g_0$-closed geodesic in $c$.

\subsubsection{Definition of the geodesic stretch}

\label{ssection:geodesic-stretch}

We denote by $\wt{M}$ the universal cover of $M$. Given a metric $g \in \mc{M}$ on $M$, we denote by $\wt{g}$ its lift to the universal cover. Given two metrics $g_1$ and $g_2$ on $M$, there exists a constant $c > 0$ such that $c^{-1}g_1 \leq g_2 \leq cg_1$. This implies that any $\wt{g_1}$-geodesic is a quasi-geodesic for $\wt{g_2}$. We now assume that the two metrics $g_1,g_2$ are Anosov on $M$. The \emph{ideal} (or \emph{visual}) boundary $\partial_\infty \wt{M}$ is independent of the choice of $g$ and is naturally endowed with the structure of a topological manifold (see Appendix \ref{appendix:conjugacy}) whose regularity inherits that of the foliation (i.e. it is at least H\"older continuous and is $C^{2-\eps}$ for any $\eps > 0$ on negatively-curved surfaces by \cite{Hurder-Katok-90}). In negative curvature, we refer to \cite[Chapter H.3]{Bridson-Haefliger-99} and \cite{Knieper-survey} for further details. For the general Anosov case, we refer to \cite{Knieper-12} and the Appendix \ref{appendix:conjugacy} of the present paper.

We denote by $\mc{G}_g := S_{\til{g}}\wt{M}/\sim$ (where $z \sim z'$ if and only if there exists a time $t \in \R$ such that $\varphi_t(z) =z'$) the set of $g$-geodesics on $\wt{M}$: this is smooth $2n$-dimensional manifold. Moreover, there exists a Hölder continuous homeomorphism $\Phi_g : \mc{G}_g \rightarrow \partial_\infty \wt{M} \times \partial_\infty \wt{M} \setminus \Delta$, where $\Delta$ is the diagonal in $\partial_\infty \wt{M} \times \partial_\infty \wt{M}$. Given a point $z \in S_{\til{g}}\wt{M}$, we will denote by $z_+, z_- \in \partial_\infty \wt{M}$ the points (resp. in the future and in the past) on the boundary at infinity of the geodesic generated by $z$.

We now consider a fixed metric $g_0$ on $M$ and a metric $g$ in a neighborhood of $g_0$. If $\psi_g$ denotes an orbit equivalence between the two geodesic flows, then $\psi_g$ induces a homeomorphism $\Psi_g : \mc{G}_{g_0} \rightarrow \mc{G}_g$. The map 
\[
\Phi_g \circ \Psi_g \circ \Phi_{g_0}^{-1} : \partial_\infty \wt{M} \times \partial_\infty \wt{M} \setminus \Delta \rightarrow \partial_\infty \wt{M} \times \partial_\infty \wt{M} \setminus \Delta
\]
is nothing but the identity.


Given $z = (x,v) \in S_{g_0}M$, we denote by $c_{g_0}(z) : t \mapsto c_{g_0}(z,t)\in M$ the unique geodesic\footnote{For the sake of simplicity, we identify the geodesic and its arc-length parametrization.} such that $c_{g_0}(z,0)=x, \dot{c}_{g_0}(z,0)=v$. We consider $\wt{c}_{g_0}(z)$, a lift of $c_{g_0}(z)$ to the universal cover $\wt{M}$ and introduce the function
\[
b : S_{g_0}M \times \R \rightarrow \R, \qquad b(z,t) := d_{\wt{g}}(\wt{c}_{g_0}(z,0),\wt{c}_{g_0}(z,t)),
\]
which computes the $\til{g}$-distance between the endpoints of the $\wt{g_0}$-geodesic joing 
$\wt{c}_{g_0}(z,0)$ to $\wt{c}_{g_0}(z,t)$. It is an immediate consequence of the triangle inequality that $(z,t) \mapsto b(z,t)$ is a subadditive cocycle for the geodesic flow $\varphi^{g_0}$, that is:
\[
b(z,t+s) \leq b(z,t) + b(\varphi_t^{g_0}(z),s), \quad \forall z \in S_{g_0}M, \forall t,s \in \R
\]
As a consequence, by the subadditive ergodic theorem (see \cite[Theorem 10.1]{Walters-82} for instance), we obtain the following

\begin{lemma}
Let $\mu$ be an invariant probability measure for the flow $\varphi_t^{g_0}$. Then, the quantity
\[
I_\mu(g_0,g,z) := \lim_{t \rightarrow +\infty} b(z,t)/t
\]
exists for $\mu$-almost every $z \in S_{g_0}M$, $I_\mu(g_0,g,\cdot) \in L^1(S_{g_0}M,d\mu)$ and this function is invariant by the flow $\varphi_t^{g_0}$. \end{lemma}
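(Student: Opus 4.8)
The plan is to obtain the lemma from the subadditive ergodic theorem (Kingman), \cite[Theorem 10.1]{Walters-82}, applied to the time-one map $\varphi_1^{g_0}$ and the sequence $a_n:=b(\cdot,n)$, followed by an elementary interpolation between integer and real times.

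Two facts are needed before invoking the ergodic theorem. First, the cocycle $b$ is non-negative and, as already recorded, subadditive: $b(z,t+s)\le b(z,t)+b(\varphi_t^{g_0}(z),s)$ for all $z\in S_{g_0}M$ and all $t,s\in\R$ (the inequality holds for all real times since the $\wt{g_0}$-geodesics $\wt{c}_{g_0}(z)$ are defined on all of $\R$); in particular $n\mapsto a_n(z)$ is subadditive over $\varphi_1^{g_0}$. Second, $b(\cdot,1)\in L^\infty(S_{g_0}M)\subset L^1(S_{g_0}M,d\mu)$: if $c>0$ satisfies $c^{-1}g_0\le g\le c\,g_0$, then for $0\le t\le 1$ one has $b(z,t)\le\ell_{\wt{g}}\big(\wt{c}_{g_0}(z)|_{[0,t]}\big)\le\sqrt c\,t\le\sqrt c$, because $\wt{c}_{g_0}(z)$ has $\wt{g_0}$-unit speed, hence $\wt{g}$-speed at most $\sqrt c$. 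Continuity of $(z,t)\mapsto b(z,t)$ takes care of measurability. The subadditive ergodic theorem then produces a $\varphi_1^{g_0}$-invariant function $F\in L^1(d\mu)$ with $a_n(z)/n\to F(z)$ for $\mu$-a.e.\ $z$ and $\int_{S_{g_0}M}F\,d\mu=\inf_n\tfrac1n\int_{S_{g_0}M}a_n\,d\mu\le\sqrt c<\infty$; this already yields the integrability assertion $I_\mu(g_0,g,\cdot)\in L^1(d\mu)$ once the limit is identified.

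Next I would pass from integer to real times. For $t\in[n,n+1]$, subadditivity together with the uniform bound $b(\cdot,s)\le\sqrt c$ for $s\in[0,1]$ gives
\[ b(z,n+1)-\sqrt c\;\le\;b(z,t)\;\le\;b(z,n)+\sqrt c, \]
and dividing by $t$ and letting $t\to\infty$ shows $b(z,t)/t\to F(z)$ for $\mu$-a.e.\ $z$; thus $I_\mu(g_0,g,z):=\lim_{t\to+\infty}b(z,t)/t$ exists $\mu$-almost everywhere and equals $F(z)$ there. Finally, to upgrade from $\varphi_1^{g_0}$-invariance to genuine flow-invariance, I would apply the cocycle inequality to the two pairs $(z,s)$ and $(\varphi_s^{g_0}z,-s)$, obtaining for every $z$ and every $s\in\R$
\[ b(z,s+t)-b(z,s)\;\le\;b(\varphi_s^{g_0}z,t)\;\le\;b(\varphi_s^{g_0}z,-s)+b(z,s+t) ; \]
dividing by $t$, the finite quantities $b(z,s)$ and $b(\varphi_s^{g_0}z,-s)$ disappear in the limit, so the everywhere-defined function $G(z):=\limsup_{t\to+\infty}b(z,t)/t$ satisfies $G\circ\varphi_s^{g_0}=G$ for all $s$. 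Since $G$ agrees $\mu$-a.e.\ with $I_\mu(g_0,g,\cdot)$, the latter has a flow-invariant representative, which completes the proof. No step is genuinely hard; the only points deserving care are the interpolation between discrete and continuous time and the verification that the limit is invariant under the whole flow rather than merely under $\varphi_1^{g_0}$, both handled by the elementary sandwich estimates above.
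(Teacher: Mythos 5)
Your proof is correct and follows the same route as the paper, which simply invokes the subadditive ergodic theorem after noting the triangle-inequality subadditivity of $b$; you merely supply the routine details (the uniform bound $b(z,t)\le\sqrt{c}\,t$ giving integrability, the integer-to-real-time interpolation, and the upgrade from $\varphi_1^{g_0}$-invariance to full flow-invariance), all of which check out.
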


We define the \emph{geodesic stretch of the metric $g$, relative to the metric $g_0$, with respect to the measure $\mu$} by:
\[
I_\mu(g_0,g) := \int_{S_{g_0}M} I_\mu(g_0,g,z)\,d\mu(z).
\]
When the measure $\mu$ in the previous definition is ergodic, the function $I_\mu(g_0,g,\cdot)$ is thus ($\mu$-almost everywhere) equal to the constant $I_\mu(g_0,g)$. We recall that $\delta_{g_0}(c)$ is the normalized measure supported on $\gamma_{g_0}(c)$, that is:
\[
\delta_{g_0}(c) : u \mapsto \dfrac{1}{L_{g_0}(c)} \int_0^{L_{g_0}(c)} u(\varphi^{g_0}_t(z))\,dt.
\]
We can actually describe the stretch using the time reparametrization $a_g$. 

\begin{lemma}
\label{lemma:stretch-ergodic}
Let $\mu$ be an ergodic invariant measure with respect to the flow $\varphi_t^{g_0}$. Then:
\[
I_{\mu}(g_0,g) = \int_{SM_{g_0}} a_g \,d\mu = \lim_{j \rightarrow +\infty} \dfrac{L_{g}(c_j)}{L_{g_0}(c_j)},
\]
where $(c_j)_{j \geq 0} \in \mathcal{C}^\N$ is such that\footnote{The existence of $c_j$ follows from \cite[Theorem 1]{Sigmund}.} $\delta_{g_0}(c_j) \rightharpoonup_{j \rightarrow +\infty} \mu$.
\end{lemma}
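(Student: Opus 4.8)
The plan is to establish the two equalities separately and then glue them. \emph{First}, I would prove that $I_\mu(g_0,g)=\int_{S_{g_0}M}a_g\,d\mu$ for an ergodic measure $\mu$. The point is to compare the two flow-invariant quantities $I_\mu(g_0,g,z)=\lim_{t\to\infty}b(z,t)/t$ and the Birkhoff average $\lim_{t\to\infty}\tfrac1t\int_0^t a_g(\varphi_s^{g_0}(z))\,ds$, which equals $\int a_g\,d\mu$ $\mu$-a.e. by the Birkhoff ergodic theorem. Recall from \eqref{equation:reparametrisation} that $\kappa_{a_g}(z,t)=\int_0^t a_g(\varphi_s^{g_0}(z))\,ds$ is exactly the $\varphi^g$-time needed to travel along the image under $\psi_g$ of the $g_0$-orbit segment $[0,t]$. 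On the universal cover, the $\widetilde g_0$-geodesic segment $\widetilde c_{g_0}(z,[0,t])$ and its ``straightening'' — the $\widetilde g$-geodesic with the same pair of endpoints on $\partial_\infty\widetilde M$, which by the remark on $\Phi_g\circ\Psi_g\circ\Phi_{g_0}^{-1}=\mathrm{id}$ is precisely the geodesic carrying $\psi_g$-image data — remain within bounded $\widetilde g$-Hausdorff distance of each other (a Morse-type lemma / fellow-traveller property, valid because the two metrics are bi-Lipschitz equivalent and both Anosov, hence both have contracting geodesics; in nonpositive curvature this is classical, in the general Anosov case one uses the quasigeodesic stability discussed around Appendix \ref{appendix:conjugacy}). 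Consequently $b(z,t)=d_{\widetilde g}(\widetilde c_{g_0}(z,0),\widetilde c_{g_0}(z,t))$ differs from the $\widetilde g$-length of that straightened geodesic segment, namely $\kappa_{a_g}(z,t)$, by a quantity bounded uniformly in $t$. Dividing by $t$ and letting $t\to\infty$ gives $I_\mu(g_0,g,z)=\int a_g\,d\mu$ for $\mu$-a.e. $z$, hence the first equality after integrating.

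\emph{Second}, I would prove $\int a_g\,d\mu=\lim_{j\to\infty}L_g(c_j)/L_{g_0}(c_j)$ whenever $\delta_{g_0}(c_j)\rightharpoonup\mu$. By \eqref{equation:longueurs} we have $L_g(c_j)=\int_0^{L_{g_0}(c_j)}a_g(\varphi_s^{g_0}(z))\,ds$ for any $z\in\gamma_{g_0}(c_j)$, so that $L_g(c_j)/L_{g_0}(c_j)=\langle\delta_{g_0}(c_j),a_g\rangle$. Since $a_g\in C^\nu(S_{g_0}M)$ is continuous and $\delta_{g_0}(c_j)\rightharpoonup\mu$ in the weak-$*$ topology of measures, $\langle\delta_{g_0}(c_j),a_g\rangle\to\int a_g\,d\mu$, which is the claim. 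Combining the two gives all three expressions equal. Note this step also uses, via \cite[Theorem 1]{Sigmund}, that such approximating sequences $(c_j)$ exist, but that existence is only needed to make the statement non-vacuous.

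\emph{The main obstacle} is the comparison in the first step: identifying $\lim_t b(z,t)/t$ with the Birkhoff average of $a_g$ rigorously. The subtlety is that $b(z,t)$ is defined via the $\widetilde g$-distance between the \emph{endpoints} of a $\widetilde g_0$-geodesic arc, whereas $a_g$ encodes the $\widetilde g$-\emph{length} of the corresponding reparametrized $\widetilde g$-geodesic; reconciling these requires the uniform fellow-traveller bound between a $\widetilde g_0$-geodesic and the $\widetilde g$-geodesic sharing its endpoints at infinity. In nonpositive curvature one can invoke convexity of the distance function directly; in the general Anosov setting one leans on the structure of the boundary at infinity and the orbit equivalence $\psi_g$ (Appendix \ref{appendix:conjugacy}), noting that the triangle inequality already gives $b(z,t)\le\kappa_{a_g}(z,t)$ up to a bounded error in one direction, while the reverse inequality (up to bounded error) is exactly where the contraction/hyperbolicity of geodesics is used. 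Everything else — Birkhoff's theorem, weak-$*$ convergence testing against a continuous function, and \eqref{equation:longueurs} — is routine.
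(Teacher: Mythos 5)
Your overall architecture matches the paper's proof exactly: (i) show $|b(z,t)-\kappa_{a_g}(z,t)|$ is bounded uniformly in $t$ and conclude by Birkhoff, (ii) pair $\delta_{g_0}(c_j)$ against the continuous function $a_g$ and pass to the weak-$*$ limit via \eqref{equation:longueurs}. Step (ii) is verbatim the paper's argument. The only real divergence is how you obtain the uniform bound in step (i). You propose a Morse/fellow-traveller argument comparing the $\widetilde g_0$-geodesic with the $\widetilde g$-geodesic sharing its endpoints at infinity; the paper instead observes that the displacement function $w\mapsto d_{\widetilde g}(\pi(\widetilde\psi_g(w)),\pi(w))$ is $\Gamma$-invariant (by equivariance of the lifted conjugacy) and continuous, hence bounded by compactness of $SM$, after which the triangle inequality gives $|b(z,t)-\kappa_{a_g}(z,t)|\le 2C$ directly, using that $\widetilde g$-geodesic segments in $\widetilde M$ realize the distance between their endpoints (no conjugate points). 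The paper's route is shorter and avoids any appeal to Gromov hyperbolicity. One caution about your version: Hausdorff closeness of the two geodesics is not quite sufficient by itself, since $b(z,t)$ compares specific \emph{matched} points; you need the pointwise correspondence $\widetilde\psi_g(\widetilde\varphi^{g_0}_t(z))=\widetilde\varphi^{g}_{\kappa_{a_g}(z,t)}(\widetilde\psi_g(z))$ together with a uniform bound on the displacement of $\widetilde\psi_g$ — which is exactly what the compactness/equivariance argument supplies. With that bound in hand, both directions of your inequality (not just one) follow from the triangle inequality alone; no extra contraction input is needed.
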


\begin{proof}
We first prove the left equality.  Let $\widetilde{M}$
be the universal covering of $M$ and $\Gamma$ the group of deck transformations. Denote as above 
$\widetilde{\psi}_{g} : S_{\wt{g}_0}\widetilde{M} \rightarrow S_{\wt{g}} \wt{M}$ the lift of the conjugacy between the geodesic flow of the metrics $ \wt g$ and $\wt g_0$. Then for all $\gamma \in \Gamma$
$$
\widetilde{\varphi}^g_{\kappa_{a_g}(z,t)} (\widetilde{\psi}_g(z)) = \wt{\psi}_g(\wt{\varphi}^{g_0}_{t}(z)) \;  \; \text{and} \;  \;
\widetilde{\psi}_{g}(\gamma_*{z})= \gamma_* \widetilde{\psi}_{g}(z).
$$
If $\pi: T\widetilde{M} \to \widetilde{M}$
is the canonical projection the function $d_{\wt{g}}(\pi(\widetilde{\psi}_g(z)), \pi (z))$ is $\Gamma$-invariant.
This follows since
\begin{align*}
d_{\wt{g}}(\pi(\widetilde{\psi}_g(\gamma_*z)), \pi (\gamma_*z)) &= d_{\wt{g}}(\pi(\gamma_*\widetilde{\psi}_g(z)), \pi (\gamma_*z))\\
 &= d_{\wt{g}}(\gamma\pi(\widetilde{\psi}_g(z)), \gamma \pi (z)) =d_{\wt{g}}(\pi(\widetilde{\psi}_g(z)), \pi (z)).
\end{align*}
Hence, by the compactness of  $M$ and the continuity of $d_{\wt{g}}(\pi(\widetilde{\psi}_g(z)), \pi (z))$ there is a constant $C>0$ such that
$d_{\wt{g}}(\pi (\widetilde{\psi}_g(z)), \pi (z)) \le C$ for all $z\in S\til{M}$. Using the triangle inequality we obtain
\begin{eqnarray*}
| b(z,t) -\kappa_{a_g}(t,z) | &= & | d_{\wt{g}}( \pi (\wt \varphi^{g_0}_{t}(z)) ,\pi(z))  -d_{\wt{g}}( \pi(\widetilde{\varphi}^g_{\kappa_{a_g}(z,t)} (\widetilde{\psi}_g(z))),  \pi (\widetilde{\psi}_g(z))) |\\
&= & |d_{\wt{g}}( \pi (\wt{\varphi}^{g_0}_{t}(z)),\pi(z))  - d_{\wt{g}}( \pi (\wt{\psi}_g( \wt{\varphi}^{g_0}_{t}(z))),  \pi (\widetilde{\psi}_g(z)) )| \\
&\le& d_{\wt{g}}( \pi (\wt{\varphi}^{g_0}_{t}(z)), \pi (\wt{\psi}_g(\wt{ \varphi}^{g_0}_{t}(z)))   ) + d_{\wt{g}}(\pi (\widetilde{\psi}_g(z)), \pi(z)) \le 2C.
\end{eqnarray*}
This implies, using (\ref{equation:reparametrisation}) that:
\[ 
\lim_{t \rightarrow +\infty} b(z,t)/t 
= \lim_{t \rightarrow + \infty} \kappa_{a_g}(z,t)/t = 
\lim_{t \rightarrow +\infty} \dfrac{1}{t}\int_0^{t} a_g(\varphi_s^{g_0}(z)) \,ds = \int_{S_{g_0}M} a_g\,d\mu,
\] 
for $\mu$-almost every $z \in S_{g_0}M$, by the Birkhoff ergodic Theorem \cite[Theorem 1.14]{Walters-82}. By (\ref{equation:longueurs}) we also have  
\[\int_{S_{g_0}M}a_g\,d\mu_f=\lim_{j\to \infty}\cjg \delta_{g_0}(c_j),a_g\cjd=\lim_{j\to \infty}
\frac{L_{g}(c_j)}{L_{g_0}(c_j)}\]
thus the proof is complete.
\end{proof}

As a consequence, we immediately obtain the

\begin{corollary}
\label{corollary:stretch-lengths-ratio}
Let $g$ belong to a fixed neighborhood $\mc{U}$ of $g_0$ in $\mc{M}^{k,\alpha}$, and assume that for any sequence of primitive free homotopy classes $(c_j)_{j \geq 0} \in \mathcal{C}^\N$ such that $L_{g_0}(c_j) \rightarrow \infty$, one has $\lim_{j\to \infty}L_{g}(c_j)/L_{g_0}(c_j)= 1$. Then, for any equilibrium state $\mu_f$ with respect to $\varphi_t^{g_0}$ associated to some H\"older function $f$, we have $I_{\mu_f}(g_0,g)=1$.
\end{corollary}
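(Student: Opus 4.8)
The plan is to reduce the statement entirely to Lemma \ref{lemma:stretch-ergodic} together with the basic properties of equilibrium states recalled in \S\ref{ssection:thermodynamic-formalism}. First I would observe that the equilibrium state $\mu_f$ of a H\"older potential $f$ for the Anosov flow $\varphi^{g_0}_t$ is ergodic (in fact mixing and positive on open sets). Consequently Lemma \ref{lemma:stretch-ergodic} applies with $\mu=\mu_f$: by \cite{Sigmund} there is a sequence $(c_j)_{j\geq 0}\in\mc{C}^{\N}$ of primitive free homotopy classes such that $\delta_{g_0}(c_j)\rightharpoonup_{j\to\infty}\mu_f$, and for any such sequence one has
\[ I_{\mu_f}(g_0,g)=\lim_{j\to\infty}\frac{L_g(c_j)}{L_{g_0}(c_j)}. \]

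The only point to verify before invoking the hypothesis of the Corollary is that $L_{g_0}(c_j)\to+\infty$ along this particular sequence. This is automatic: since $\mu_f$ is positive on open sets and mixing, it cannot be a finite convex combination of Dirac measures carried by closed geodesics, and in particular $\mu_f$ gives zero mass to any single closed orbit. If $L_{g_0}(c_j)$ did not tend to infinity, some subsequence would have uniformly bounded $g_0$-length; but an Anosov flow has only finitely many closed orbits of bounded period, so $(c_j)$ would take only finitely many values along that subsequence, whence a further subsequence of $\bigl(\delta_{g_0}(c_j)\bigr)$ would be constant equal to some $\delta_{g_0}(c)$. This would force $\mu_f=\delta_{g_0}(c)$, contradicting that $\mu_f$ charges no closed orbit.

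Having $L_{g_0}(c_j)\to+\infty$, the hypothesis of the Corollary yields $\lim_{j\to\infty}L_g(c_j)/L_{g_0}(c_j)=1$, and therefore $I_{\mu_f}(g_0,g)=1$, as claimed. I do not expect any genuine obstacle here: the content sits entirely in Lemma \ref{lemma:stretch-ergodic} and in the thermodynamic-formalism facts already established, and the only mildly delicate elementary point is the argument above that the equidistributing sequence must have unbounded length.
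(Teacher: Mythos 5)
Your proof is correct and follows the same route as the paper, which simply records the corollary as an immediate consequence of Lemma \ref{lemma:stretch-ergodic} applied to the ergodic measure $\mu_f$ with a Sigmund equidistributing sequence. Your extra verification that $L_{g_0}(c_j)\to\infty$ (via the fact that $\mu_f$ charges no closed orbit, already noted in \S\ref{ssection:thermodynamic-formalism}) is a correct filling-in of a detail the paper leaves implicit.
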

Combining this with the results of \cite[Theorem 1]{Guillarmou-Lefeuvre-18}, namely the local rigidity of the marked length spectrum, we also easily obtain: 

\begin{theorem}
\label{theorem:entropy}
Let $(M,g_0)$ be a smooth Riemannian $n$-dimensional manifold with Anosov geodesic flow, topological entropy ${\bf h}_{\rm top}(g_0) = 1$ and assume that its curvature is nonpositive if $n \geq 3$. Then there exists $k \in \N$ large enough depending only on $n$, $\eps > 0$ small enough such that the following holds: there is $C>0$ depending on $g_0$ so that for each $g \in C^{k}(M;S^2T^*M)$ with $\|g-g_0\|_{C^k} \leq \eps$, if 
\[ {\bf h}_{\rm top}(g)=1, \quad \quad
\lim_{j \rightarrow +\infty} \dfrac{L_g(c_j)}{L_{g_0}(c_j)} = 1,
\]
for some sequence $(c_j)_{j \in \N}$ of primitive free homotopy classes such that $\delta_{g_0}(c_j) \rightharpoonup_{j \rightarrow +\infty} \mu^{\rm BM}_{g_0}$, then $g$ is isometric to $g_0$.
\end{theorem}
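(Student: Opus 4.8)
The plan is to show that the hypotheses force the reparametrization $a_g$ relating $\varphi^{g_0}$ and $\varphi^g$ to be cohomologous to the constant $1$, and then to invoke the local rigidity theorem of \cite{Guillarmou-Lefeuvre-18}. First, since $\|g-g_0\|_{C^k}\le\eps$, the geodesic flow $\varphi^g$ is Anosov and, as in \S\ref{section:stretch}, there is an orbit equivalence $\psi_g$ between $\varphi^{g_0}$ and $\varphi^g$ with Hölder reparametrization $a_g>0$ on $S_{g_0}M$, well defined up to a coboundary; by \eqref{equation:longueurs}, once we know $a_g=1+X_{g_0}u$ we get $L_g=L_{g_0}$ and the rigidity follows.

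Next I would extract two scalar identities. Because $\delta_{g_0}(c_j)\rightharpoonup\mu^{\rm BM}_{g_0}$ and $\mu^{\rm BM}_{g_0}$ is ergodic, Lemma \ref{lemma:stretch-ergodic} gives
\[ \int_{S_{g_0}M}a_g\,d\mu^{\rm BM}_{g_0}=I_{\mu^{\rm BM}_{g_0}}(g_0,g)=\lim_{j\to\infty}\frac{L_g(c_j)}{L_{g_0}(c_j)}=1. \]
On the other hand, $\psi_g$ realizes $\varphi^g$ as a time change of $\varphi^{g_0}$ with infinitesimal speed $a_g$, so the closed orbits of $\varphi^g$ are those of $\varphi^{g_0}$ with $g$-periods $\int_{\gamma_{g_0}(c)}a_g$; by Abramov's formula together with the variational principle, the topological entropy of $\varphi^g$ is the unique zero of the strictly decreasing (since $a_g>0$) function $s\mapsto \mathbf{P}(-s\,a_g)$, where $\mathbf{P}$ is the $\varphi^{g_0}$-pressure. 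With ${\bf h}_{\rm top}(g)=1$ this reads $\mathbf{P}(-a_g)=0$; with ${\bf h}_{\rm top}(g_0)=1$ we get $\mathbf{P}(0)=1$, hence $\mathbf{P}(-1)=0$, and the equilibrium state of the potential $-1$ is $\mu^{\rm BM}_{g_0}$ (equilibrium states are unchanged by adding constants, and that of $0$ is by definition the measure of maximal entropy).

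I would then combine these via convexity of the pressure. Set $\phi(t):=\mathbf{P}\big((1-t)(-1)+t(-a_g)\big)=\mathbf{P}\big(-1+t(1-a_g)\big)$ for $t\in[0,1]$: it is convex (the pressure is convex in the potential) and $C^2$ — in fact real-analytic, since $f\mapsto\mathbf{P}(f)$ is real-analytic on $C^\nu(S_{g_0}M)$. By the above, $\phi(0)=\mathbf{P}(-1)=0$ and $\phi(1)=\mathbf{P}(-a_g)=0$, while the first-order variation formula for the pressure (derivative at $f$ in the direction $v$ equals $\int v\,d\mu_f$) gives $\phi'(0)=\int_{S_{g_0}M}(1-a_g)\,d\mu^{\rm BM}_{g_0}=0$ by the first identity. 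A convex function on $[0,1]$ lies below its chord ($\phi\le0$) and above its tangent at $0$ ($\phi\ge\phi(0)+t\phi'(0)=0$), so $\phi\equiv0$ on $[0,1]$; in particular $\phi''(0)=0$. Since the second variation of the pressure is the variance of the (mean-zero) perturbing potential $1-a_g$ with respect to $\mu^{\rm BM}_{g_0}$, and the variance is insensitive to additive constants, this says ${\rm Var}_{\mu^{\rm BM}_{g_0}}(a_g)=0$. By the standard characterization of a degenerate variance for Anosov flows (a Livsic-type argument), $a_g$ is then cohomologous to a constant, $a_g=c+X_{g_0}u$ with $u$ Hölder; integrating against $\mu^{\rm BM}_{g_0}$ and using the first identity gives $c=1$.

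It remains to conclude: for every $c\in\mc{C}$, \eqref{equation:longueurs} together with $\int_{\gamma_{g_0}(c)}X_{g_0}u=0$ gives $L_g(c)=\int_0^{L_{g_0}(c)}a_g(\varphi^{g_0}_s z)\,ds=L_{g_0}(c)$, i.e. $L_g=L_{g_0}$; the local rigidity theorem of Guillarmou–Lefeuvre \cite{Guillarmou-Lefeuvre-18} (with its $k$ and $\eps$, depending only on $n$, resp. on $g_0$) then yields that $g$ is isometric to $g_0$. The step I expect to require the most care is the identification of ${\bf h}_{\rm top}(g)$ with the zero of $s\mapsto\mathbf{P}(-s\,a_g)$ — i.e. the precise bookkeeping of Abramov's formula under the time change induced by $\psi_g$, and the fact that closed-orbit counting for the Anosov flow $\varphi^g$ is governed by the $\varphi^{g_0}$-thermodynamic formalism through the cocycle $a_g$; the convexity manipulation is then elementary, and the passage from a degenerate variance to a coboundary is classical.
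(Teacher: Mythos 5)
Your argument is correct, but it takes a genuinely different route from the paper's. The paper's proof is a short application of the entropy--stretch inequality \eqref{equation:inegalite-stretch} of \cite{Knieper-95} (equivalently Proposition \ref{BCLS15} with $f=1$, $f'=a_g$): since ${\bf h}_{\rm top}(g)={\bf h}_{\rm top}(g_0)=1$, that inequality yields $I_{\mu^{\rm BM}_{g_0}}(g_0,g)\geq 1$ with equality if and only if the flows are time-preserving conjugate, i.e.\ $L_g=L_{g_0}$; the hypothesis together with Lemma \ref{lemma:stretch-ergodic} forces equality, and the local rigidity theorem of \cite{Guillarmou-Lefeuvre-18} concludes. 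What you do instead is essentially re-prove the equality case of that inequality from first principles: the Bowen--Ruelle/Abramov characterization of ${\bf h}_{\rm top}(g)$ as the unique zero of $s\mapsto {\bf P}(-s\,a_g)$ (this is \cite[Lemma 2.4]{Sambarino-14}, which the paper itself invokes in \S\ref{sec:pressure metric}, applied to the reparametrized flow \eqref{equation:reparametrisation2} to which $\varphi^g$ is conjugate via $\psi_g$), convexity of the pressure along the segment joining $-1$ to $-a_g$, vanishing of the second variation, and the classical fact that a H\"older function whose variance with respect to a fully supported equilibrium state vanishes is a coboundary. The normalizations are all handled correctly (${\bf P}(0)=1$, ${\bf P}(-1)=0$, the equilibrium state of $-1$ is $\mu^{\rm BM}_{g_0}$, and the constant in $a_g=c+X_{g_0}u$ is pinned to $1$ by the first moment identity), so the chain of implications is sound. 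The paper's route buys brevity at the cost of citing \cite{Knieper-95,Bridgeman-Canary-Labourie-Sambarino-15} as a black box; yours is self-contained and makes the rigidity mechanism (degenerate variance $\Rightarrow$ coboundary) explicit, in the same spirit as the variance arguments of Proposition \ref{estimeePsi}, at the cost of having to justify carefully the Abramov step you rightly flag as the delicate one.
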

\begin{proof}
Given a metric $g$, one has by \cite[Theorem 1.2]{Knieper-95}\footnote{In \cite{Knieper-95} the metric is assumed to be negatively curved, but the argument applies also for Anosov flows, as is shown in \cite[Proposition 3.8]{Bridgeman-Canary-Labourie-Sambarino-15}: it corresponds to Proposition \ref{BCLS15} below in the case $f:=1$ and $f'=a_g$.} that 
\begin{equation}
\label{equation:inegalite-stretch}
{\bf h}_{\text{top}}(g) \geq \dfrac{{\bf h}_{\text{top}}(g_0)}{I_{\mu^{\rm BM}_{g_0}}(g_0,g)},
\end{equation}
with equality if and only if $\varphi^{g_0}$ and $\varphi^g$ are, up to a scaling, time-preserving conjugate, that is there exists homeomorphism $\psi$  such that $\psi \circ \varphi_{g_0}^{ct} = \varphi_g^t \circ \psi$ with $c:= {\bf h}_{\text{top}}(g)/{\bf h}_{\text{top}}(g_0)$. 
In particular, restricting to metrics with entropy $1$ one obtains that $I_{\mu^{\rm BM}_{g_0}}(g_0,g) \geq 1$ with equality if and only if the geodesic flows are conjugate, that is if and only if $L_g = L_{g_0}$ (by Livsic theorem). As a consequence, given $g_0, g$ with entropy $1$ such that $L_{g}(c_j)/L_{g_0}(c_j) \rightarrow_{j \rightarrow +\infty} 1$ for some sequence $\delta_{g_0}(c_j) \rightharpoonup_{j \rightarrow +\infty} \mu^{\rm BM}_{g_0}$, we obtain that $I_{\mu^{\rm BM}_{g_0}}(g_0,g)=1$, hence $L_g=L_{g_0}$. If $k \in \N$ was chosen large enough at the beginning, we can then conclude by the local rigidity of the marked length spectrum \cite[Theorem 1]{Guillarmou-Lefeuvre-18}.
\end{proof}

In Theorem \ref{theorem:entropy}, we assume that $g_0$ has entropy $1$. This is actually a harmless assumption insofar as the same result holds true on metric of constant topological entropy ${\bf h}_{{\rm top}}(g)=\la>0$. Recall that by considering $\lambda^2 g_0$ for some constant $\lambda > 0$, the entropy scales as ${\bf h}_{\rm top}(\lambda^2 g_0) = {\bf h}_{\rm top}(g_0)/\lambda$ \cite[Lemma 3.23]{Paternain-99} and we can thus always reduce to the previous case ${\bf h}_{{\rm top}}(g_0) = 1$. We also observe that the previous Theorem implies the local rigidity of the marked length spectrum: if $L_g=L_{g_0}$, then ${\bf h}_{\text{top}}(g_0)={\bf h}_{\text{top}}(g)$ 
because the topological entropy ${\bf h}_{\text{top}}(g)$ is the first pole of the Ruelle zeta function (\cite[Theorem 9.1]{Parry-Pollicott-90})
\[
\zeta_{g}(s) := \prod_{c \in \mathcal{C}} (1-e^{-s L_g(c)}).
\]
We can then apply Theorem \ref{theorem:entropy} to deduce that $g$ is isometric to $g_0$.
We will provide an alternate proof of this fact in the next section without using the proof of \cite{Guillarmou-Lefeuvre-18}.

\section{A functional on the space of metrics}

\label{section:functional}

Given a metric $g$ in a $C^{k,\alpha}$-neighborhood $\mc{U}$ of $g_0$, we define the potential 
\be
\label{equation:potentiel}
V_g := J_{g_0}^u + a_g -1 \in C^\nu(S_{g_0}M)
\ee
for some $\nu>0$.
We remark that $\mc{U} \ni g \mapsto V_g \in C^\nu(S_{g_0}M)$ is $C^{k-2}$ and for $g=g_0$, $V_{g_0} = J_{g_0}^{u}$.
Consider the map $\psi : \mc{M}^{k,\alpha}\to \rr$, defined for $g_0$ a fixed smooth metric with Anosov geodesic flow, by:
\begin{equation}
\label{defofPSi}
\Psi(g):={\bf P}\Big(-J_{g_0}^u-a_g+\int_{S_{g_0}M}a_g\, d\mu_{g_0}^L\Big)={\bf P}(-V_g)+I_{\mu_{g_0}^L}(g_0,g)-1.
 \end{equation}
We also define the maps 
\begin{align} 
F: \mc{M}^{k,\alpha}\to \rr, \quad F(g):={\bf P}(-V_g),\label{defF}\\
\Phi: \mc{M}^{k,\alpha}\to \rr , \quad \Phi(g)=I_{\mu_{g_0}^L}(g_0,g)-1\label{defPhi}.
\end{align}
satisfying $\Psi(g)=F(g)+\Phi(g)$. We note that $\Psi,\Phi, F$ are $C^{k-2}$ by \cite{Contreras-92}. 
We also make the following observation: since ${\bf P}(-J^u_{g_0})=0$ and $a_{g_0}$ is cohomologous to $1$, we have 
$\Psi(g_0)=0$ and 
\begin{equation}\label{inequalitypressure} 
\Phi(g)=-\Big({\bf h}_{\mu_{g_0}^L}(\varphi_1^{g_0})+\int_{S_{g_0}M}(1-J^u_{g_0}-a_g)d\mu_{g_0}^L\Big)\geq -{\bf P}(1-J^u_{g_0}-a_g)=-F(g)
\end{equation}
by using the variational definition \eqref{equation:pression} of the pressure. This shows that for all $g\in \mc{M}^{k,\alpha}$
\[ \Psi(g)\geq \Psi(g_0)=0.\]
Moreover, $\Psi(g)=0$ if and only if the inequality \eqref{inequalitypressure} becomes an equality, which means that $\mu_{g_0}^L$ is the equilibrium measure of $-J^u_{g_0}+1-a_g$. Since $\mu_{g_0}^L$ is also the equilibrium measure associated to $-J^u_{g_0}$, we conclude by \eqref{equalityequilibrium} that $1-a_g$ is cohomologous to a constant, or equivalently $a_g$ is cohomologous to a constant.  
We have thus shown
\begin{lemma}
The map $\Psi$ satisfies $\Psi(g)\geq \Psi(g_0)=0$, and $\Psi(g)=\Psi(g_0)=0$ if and only if $a_g$ is cohomologous to a constant, or equivalently $L_g=\la L_{g_0}$ for some $\la>0$. 
\end{lemma}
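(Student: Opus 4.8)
The argument is essentially contained in the computation \eqref{inequalitypressure} preceding the statement, so the plan is to organize that computation and then add the final translation into a statement about $L_g$. First I would record the value at the base point: since $V_{g_0}=J^u_{g_0}$ we have $F(g_0)={\bf P}(-J^u_{g_0})=0$, because by definition the Liouville measure $\mu_{g_0}^{\rm L}$ is the equilibrium state of $-J^u_{g_0}$ and ${\bf P}(-J^u_{g_0})=0$; moreover $a_{g_0}$ is cohomologous to $1$, so $\Phi(g_0)=\int_{S_{g_0}M}a_{g_0}\,d\mu_{g_0}^{\rm L}-1=0$, whence $\Psi(g_0)=F(g_0)+\Phi(g_0)=0$.

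Next I would prove the inequality $\Psi(g)\geq 0$. Write $\Psi(g)=F(g)+\Phi(g)$ with $F(g)={\bf P}\big(1-J^u_{g_0}-a_g\big)$ and $\Phi(g)=\int_{S_{g_0}M}a_g\,d\mu_{g_0}^{\rm L}-1$, and apply the variational characterization \eqref{equation:pression} of the pressure with the test measure $\mu_{g_0}^{\rm L}$:
\[
F(g)\geq {\bf h}_{\mu_{g_0}^{\rm L}}(\varphi_1^{g_0})+\int_{S_{g_0}M}\big(1-J^u_{g_0}-a_g\big)\,d\mu_{g_0}^{\rm L}.
\]
Since $\mu_{g_0}^{\rm L}$ is the equilibrium state of $-J^u_{g_0}$ with pressure $0$, one has ${\bf h}_{\mu_{g_0}^{\rm L}}(\varphi_1^{g_0})=\int_{S_{g_0}M}J^u_{g_0}\,d\mu_{g_0}^{\rm L}$, and the right-hand side above collapses to $1-\int_{S_{g_0}M}a_g\,d\mu_{g_0}^{\rm L}=-\Phi(g)$. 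Hence $F(g)\geq-\Phi(g)$, i.e. $\Psi(g)\geq 0=\Psi(g_0)$.

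Then I would analyze the equality case. If $\Psi(g)=0$ then the variational inequality above is an equality, so $\mu_{g_0}^{\rm L}$ attains the supremum defining ${\bf P}(1-J^u_{g_0}-a_g)$; by uniqueness of the equilibrium state in the Anosov setting it is therefore the equilibrium state of $1-J^u_{g_0}-a_g$. As it is also the equilibrium state of $-J^u_{g_0}$, the criterion \eqref{equalityequilibrium} gives that the difference $(1-J^u_{g_0}-a_g)-(-J^u_{g_0})=1-a_g$ equals $X_{g_0}u+c$ for some Hölder $u$ and constant $c$; equivalently $a_g$ is cohomologous to the constant $\lambda:=1-c$. Conversely, if $a_g$ is cohomologous to a constant, reversing this computation (using that $\mu_{g_0}^{\rm L}$ is the equilibrium state of the corresponding cohomologous potential $1-J^u_{g_0}-\lambda$) shows $F(g)=-\Phi(g)$, hence $\Psi(g)=0$.

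Finally I would translate ``$a_g$ cohomologous to a constant $\lambda$'' into ``$L_g=\lambda L_{g_0}$''. If $a_g=\lambda+X_{g_0}u$ with $u$ Hölder, then plugging into \eqref{equation:longueurs} and using that the curve of integration is a closed $\varphi^{g_0}$-orbit of period $L_{g_0}(c)$, the coboundary term telescopes to zero and $L_g(c)=\lambda L_{g_0}(c)$ for every $c\in\mc{C}$; conversely, $L_g=\lambda L_{g_0}$ means $\int_{\gamma_{g_0}(c)}a_g=\lambda\,L_{g_0}(c)$ over all closed $g_0$-geodesics, so the Livsic theorem (applicable since $a_g\in C^\nu$ and $\varphi^{g_0}$ is Anosov) yields that $a_g-\lambda$ is a coboundary, and $\lambda>0$ follows from $L_g(c)=\lambda L_{g_0}(c)>0$. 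The only genuinely delicate point is the equality discussion, which hinges on uniqueness of equilibrium states and on \eqref{equalityequilibrium}; the rest is bookkeeping with the variational principle and Livsic's theorem.
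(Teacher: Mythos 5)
Your proof is correct and follows essentially the same route as the paper: the paper establishes the lemma via the inline computation \eqref{inequalitypressure} (the variational principle tested against $\mu_{g_0}^{\rm L}$ giving $\Phi(g)\geq -F(g)$, with equality forcing $\mu_{g_0}^{\rm L}$ to be the equilibrium state of $1-J^u_{g_0}-a_g$ and hence, by \eqref{equalityequilibrium}, $a_g$ cohomologous to a constant). You additionally spell out the converse implication and the Livsic translation to $L_g=\la L_{g_0}$, which the paper leaves implicit; both of these are handled correctly.
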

The proof of Theorem \ref{theorem:stability} will be a consequence of the fact that 
Taylor expansion of $\Psi$ at $g=g_0$ has leading term given by the Hessian, which turns out to be the variance operator $\Pi_2$ studied before.

\subsection{The proof of Theorem \ref{theorem:stability}} In the following paragraphs, we will compute the derivatives of the map $\Psi,\Phi,F$. As mentioned earlier, they are $C^{k-2}$ by \cite[Theorem C]{Contreras-92} and explicit computations of their derivatives can be found in \cite[Proposition 4.10]{Parry-Pollicott-90} (case of subshift) and \cite{Katok-Knieper-Weiss-91}, \cite{Katok-Knieper-Pollicott-Weiss-90} (case of the topological entropy). The first step in the proof is the following

\begin{proposition}\label{estimeePsi}
The non-negative functional $\Psi:\mc{M}^{k,\alpha}\to \rr^+$ defined in \eqref{defofPSi}
satisfies the following property: 
there is a neighborhood $\mc{U}$ of $g_0$ in $C^{5,\alpha}(M,S^2T^*M)$ and a constant $C_{g_0}$ depending on $g_0$ such that  for all $g\in \mc{U}$  
\[
\Psi(g)\geq \frac{1}{8}\Big(\cjg \Pi_2^{g_0}(g-g_0),(g-g_0)\cjd_{L^2} - \cjg (g-g_0),g_0 \cjd^2_{L^2}\Big)-C_{g_0}\|g-g_0\|^3_{C^{5,\alpha}}.
\]
\end{proposition}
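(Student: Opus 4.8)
The plan is to Taylor-expand the functional $\Psi = F + \Phi$ to second order at $g_0$ and show that the quadratic term is (a multiple of) the variance quadratic form $\cjg \Pi_2^{g_0}\cdot,\cdot\cjd$, up to the correction $\cjg\cdot,g_0\cjd_{L^2}^2$ coming from the fact that $\Pi_2^{g_0}$ only sees the trace-free part of a tensor. First I would set $h := g - g_0$ and use that $g\mapsto V_g$ (equivalently $g\mapsto a_g$) is $C^{k-2}$ with $V_{g_0}=J^u_{g_0}$, so that $V_g = J^u_{g_0} + \dot V(h) + \tfrac12 \ddot V(h,h) + O(\|h\|_{C^{5,\alpha}}^3)$ in $C^\nu(S_{g_0}M)$. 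Since the pressure $\mathbf P$ is real-analytic on $C^\nu$, I can expand $F(g) = \mathbf P(-J^u_{g_0} - \dot V(h) - \tfrac12 \ddot V(h,h) + \dots)$ using the standard formulas for the first two derivatives of pressure (as in Parry--Pollicott or Katok--Knieper--Weiss): writing $\mu_0 := \mu_{g_0}^{\rm L}$ for the equilibrium state of $-J^u_{g_0}$ (recall $\mathbf P(-J^u_{g_0})=0$), one has $d\mathbf P|_{-J^u_{g_0}}(u) = \int u\, d\mu_0$ and $d^2\mathbf P|_{-J^u_{g_0}}(u,u) = \mathrm{Var}_{\mu_0}(u - \int u\,d\mu_0)$ (the variance of the mean-zero part, which by \eqref{variancevsPi} equals $\cjg\Pi(u-\int u\,d\mu_0),(u-\int u\,d\mu_0)\cjd$). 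Combining, $F(g) = -\int \dot V(h)\,d\mu_0 - \tfrac12\int\ddot V(h,h)\,d\mu_0 + \tfrac12\,\mathrm{Var}_{\mu_0}\big(\dot V(h) - \textstyle\int \dot V(h)\,d\mu_0\big) + O(\|h\|^3)$.

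Next I would expand $\Phi(g) = I_{\mu_0}(g_0,g) - 1 = \int a_g\,d\mu_0 - 1 = \int (V_g - J^u_{g_0} + 1)\,d\mu_0 - 1 = \int \dot V(h)\,d\mu_0 + \tfrac12\int\ddot V(h,h)\,d\mu_0 + O(\|h\|^3)$, using $\int J^u_{g_0}\,d\mu_0 = 0$ (since $\mathbf P(-J^u_{g_0})=\mathbf h_{\mu_0}(\varphi_1) = 0$ and the Liouville measure satisfies the Pesin entropy formula with equality — or simply that $a_{g_0}$ is cohomologous to $1$). The crucial cancellation is then that the linear and the $\ddot V$ terms in $F$ and $\Phi$ are exactly opposite, so $\Psi(g) = F(g) + \Phi(g) = \tfrac12\,\mathrm{Var}_{\mu_0}\big(\dot V(h) - \textstyle\int\dot V(h)\,d\mu_0\big) + O(\|h\|_{C^{5,\alpha}}^3)$. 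This is consistent with the general fact (already noted in the excerpt) that $\Psi\geq 0$ with a critical point at $g_0$: the first-order term vanishes and the Hessian is manifestly non-negative.

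The remaining, and genuinely substantial, step is to identify $\dot V(h) = \frac{d}{dt}\big|_{t=0} a_{g_0+th}$ up to a coboundary with the lift $\pi_2^*(h^\flat)$ of (a constant multiple of) $h$ — more precisely with $\tfrac12 \pi_2^* h$ composed with the appropriate musical identifications, modulo a coboundary $X_{g_0}u$ which does not affect the variance. This is the standard computation of the first variation of the length/stretch functional: differentiating \eqref{equation:longueurs} (or the first-variation-of-arclength formula) shows that for a closed geodesic $\gamma$, $\frac{d}{dt}\big|_0 L_{g_0+th}(\gamma) = \tfrac12\int_\gamma \pi_2^*h$, hence on the level of cocycles $\dot V(h) = \tfrac12\pi_2^*h + X_{g_0}u$ for some $u$; one must be careful that $h$ here is the symmetric $2$-tensor regarded via the metric $g_0$, and track the normalizations. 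Since coboundaries drop out of the variance, $\mathrm{Var}_{\mu_0}(\dot V(h) - \int\dot V(h)\,d\mu_0) = \tfrac14\,\mathrm{Var}_{\mu_0}\big(\pi_2^*h - \int\pi_2^*h\,d\mu_0\big)$, and by the identity relating the trace-free condition to the $\mu_0$-mean of $\pi_2^*h$ (computed in \S\ref{sec:tensors}), $\int_{SM}\pi_2^*h\,d\mu_0 = C_n\int_M\mathrm{Tr}_{g_0}(h)\,d\mathrm{vol}_{g_0} = C_n'\cjg h,g_0\cjd_{L^2}$. Writing $h = h_0 + \lambda g_0$ with $h_0$ trace-free, one gets $\mathrm{Var}_{\mu_0}(\pi_2^*h - \int\pi_2^*h) = \mathrm{Var}_{\mu_0}(\pi_2^*h_0) = \cjg\Pi_2^{g_0}h_0,h_0\cjd = \cjg\Pi_2^{g_0}h,h\cjd$ (the last equality because $\Pi_2^{g_0}g_0=0$, as $\pi_2^*g_0 = 1$ is flow-invariant). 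Hence $\Psi(g) \geq \tfrac18\big(\cjg\Pi_2^{g_0}h,h\cjd - c\,\cjg h,g_0\cjd_{L^2}^2\big) - C_{g_0}\|h\|_{C^{5,\alpha}}^3$ after absorbing the precise constant $c$; matching the stated normalization (factor $1/8$ and the exact coefficient of $\cjg h,g_0\cjd^2$) is a bookkeeping matter, and the slight loss from $\tfrac14$ to $\tfrac18$ presumably absorbs lower-order discrepancies in the remainder estimate. The main obstacle is controlling the remainder: I need that the second derivative of $g\mapsto a_g$ is bounded in $C^\nu(S_{g_0}M)$ by $\|h\|_{C^{5,\alpha}}^2$ uniformly on $\mc{U}$ — this is where the regularity index $5$ (and the $C^{k-2}$-dependence from \cite{Contreras-92} and structural stability) enters — together with uniform control of the third-order Taylor remainder of the real-analytic map $\mathbf P$ on a fixed $C^\nu$-ball, so that all the $O(\|h\|^3)$ terms are genuinely of the stated form with a constant depending only on $g_0$.
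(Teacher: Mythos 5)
Your proposal is correct and follows essentially the same route as the paper's proof: Taylor-expand $\Psi=F+\Phi$ at $g_0$, observe that the first-order terms and the $\ddot V$-terms cancel between $F$ and $\Phi$ so that $\Psi(g)=\tfrac12{\rm Var}_{\mu^{\rm L}_{g_0}}\big(da_{g_0}.h-\int da_{g_0}.h\,d\mu^{\rm L}_{g_0}\big)+O(\|h\|^3_{C^{5,\alpha}})$, and then invoke Lemma~\ref{lemma:differentielle-a} ($da_{g_0}.h=\tfrac12\pi_2^*h$ modulo a coboundary) to convert the Hessian into $\tfrac14\big(\langle\Pi_2^{g_0}h,h\rangle-{\rm const}\cdot\langle h,g_0\rangle^2\big)$ — so the factor $\tfrac18=\tfrac12\cdot\tfrac14$ is exact bookkeeping from Taylor's theorem, not a ``loss absorbing lower-order discrepancies'' as your closing remark suggests. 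One incidental but genuinely false assertion: $\int_{S_{g_0}M}J^u_{g_0}\,d\mu^{\rm L}_{g_0}$ is \emph{not} zero — the identity $\mathbf{P}(-J^u_{g_0})=0$ reads $\mathbf{h}_{\mu^{\rm L}_{g_0}}(\varphi_1^{g_0})-\int J^u_{g_0}\,d\mu^{\rm L}_{g_0}=0$, so this integral equals the (strictly positive) Liouville entropy; your expansion of $\Phi$ nevertheless stands, because in $\int(V_g-J^u_{g_0}+1)\,d\mu^{\rm L}_{g_0}-1$ the $J^u_{g_0}$ contributions cancel identically once $V_g=J^u_{g_0}+\dot V(h)+\tfrac12\ddot V(h,h)+O(\|h\|^3)$ is substituted, with no need for that claim.
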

\begin{proof}
We shall compute the Taylor expansion of $\Psi$ at $g=g_0$ to second order.
By \cite[Proposition 4.10]{Parry-Pollicott-90}, we have for $h\in T_{g}\mc{M}^{k,\alpha}$:
\[
dF_g.h = -\int_{S_{g_0}M} da_g.h \, dm_g
\]
where $m_g$ is the equilibrium measure of $-V_g$.
In particular, observe that for $g=g_0$, one has:
\begin{equation}\label{formuladF} 
dF_{g_0}.h = -\int_{S_{g_0}M} da_{g_0}.h \,d \mu^{\rm L}_{g_0}, 
\end{equation}
since $m_{g_0} = \mu^{\rm L}_{g_0}$. Next, we get for $h\in T_{g_0}\mc{M}^{k,\alpha}$
\begin{equation}\label{diffPhi}
d\Phi_{g_0}.h = \int_{S_{g_0}M} da_{g_0}.h \, d\mu^{\rm L}_{g_0}=-dF_{g_0}.h
\end{equation}
thus $d\Psi_{g_0}.h=0$ for all $h\in T_{g_0}\mc{M}^{k,\alpha}$.

Let us next compute the second derivative $d^2\Psi_{g_0}(h,h)$. First, we have 
\[
d^2\Phi_{g_0}=\int_{S_{g_0}M}d^2a_{g_0}(h,h)\, d\mu_{g_0}^L.
\]
Then, by \cite[Proposition 4.11]{Parry-Pollicott-90} we know that
\begin{equation}
\begin{gathered}\label{id}
d^2\mathbf{P}_{-V_{g_0}}(dV_{g_0}.h, dV_{g_0}.h) = \Var_{\mu^{\rm L}_{g_0}}(dV_{g_0}.h-\cjg dV_{g_0}.h,1\cjd)=\cjg \Pi^{g_0}dV_{g_0}.h,dV_{g_0}.h\cjd_{L^2}, \\
d\mathbf{P}_{-V_{g_0}}(dV_{g_0}.h) = \int_{S_{g_0}M} dV_{g_0}.h \, d\mu^{\rm L}_{g_0}
\end{gathered}\end{equation}
where $\Var_{\mu^{\rm L}_{g_0}}(h)$ is the variance defined in \eqref{defvariance}, equal to $\cjg \Pi^{g_0} h,h\cjd_{L^2}$ by \eqref{variancevsPi} and $\Pi^{g_0}1=0$. Therefore,
\[\begin{split}
d^2F_{g_0}(h,h)=& -d{\bf P}_{-V_{g_0}}.d^2V_{g_0}(h,h)+d^2{\bf P}_{-V_{g_0}}(dV_{g_0}.h,dV_{g_0}.h)\\
=& -d{\bf P}_{-V_{g_0}}.d^2a_{g_0}(h,h)+\cjg \Pi^{g_0}da_{g_0}.h,da_{g_0}.h\cjd_{L^2}.
\end{split}
\]
All together, we finally get 
\[d^2\Psi_{g_0}(h,h)=\cjg \Pi^{g_0}da_{g_0}.h,da_{g_0}.h\cjd_{L^2}.\]
To conclude, we claim in Lemma \ref{lemma:differentielle-a} below that $da_{g_0}.h-\frac{1}{2}\pi_2^*h$ is a coboundary so that 
\[d^2\Psi_{g_0}(h,h)=\cjg \Pi^{g_0}\pi_2^*h,\pi_2^*h\cjd_{L^2}=\frac{1}{4}\Big(\cjg \Pi_2^{g_0}h,h\cjd_{L^2}-\cjg h,g_0\cjd_{L^2}^2\Big).\]
The statement of the proposition is then simply the Taylor expansion of $\Psi(g)$ at $g=g_0$, with $h=g-g_0$. (We need the map to be $C^3$ for the Taylor expansion, hence the need of the $C^{5,\alpha}$ regularity since we lose two derivatives as mentioned at the beginning of \S\ref{section:functional}.)
\end{proof}
\begin{lemma}
\label{lemma:differentielle-a}
Consider a smooth deformation $(g_\lambda)_{\lambda \in (-1,1)}$ of $g_0$ inside $\mc{M}^{k,\alpha}$. Then, there exists a Hölder-continuous function $f : S_{g_0}M \rightarrow \R$ such that
\[ \pi_2^*\left(\partial_\lambda g_\lambda|_{\lambda=0}\right) - 2 \partial_\lambda a_\lambda |_{\lambda=0} = X_{g_0}f.\]
\end{lemma}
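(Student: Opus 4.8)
The plan is to compute, for every primitive free homotopy class $c\in\mc{C}$, the derivative $\partial_\lambda L_{g_\lambda}(c)|_{\lambda=0}$ of the length of the closed geodesic in two different ways, and then to conclude by the Livsic theorem. First I would record that, since $g\mapsto a_g$ is $C^{k-2}$ (hence at least $C^1$) with values in $C^\nu(S_{g_0}M)$, as recalled in \S\ref{section:stretch}, and since $a_{g_0}\equiv 1$, the function $\dot a:=\partial_\lambda a_\lambda|_{\lambda=0}$ (where $a_\lambda:=a_{g_\lambda}$) is a well-defined element of $C^\nu(S_{g_0}M)$; together with $\dot g:=\partial_\lambda g_\lambda|_{\lambda=0}\in C^\infty(M;S^2T^*M)$ this gives $\pi_2^*\dot g-2\dot a\in C^\nu(S_{g_0}M)$, so that Livsic will indeed be applicable at the end.

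The first computation is a direct differentiation of the identity \eqref{equation:longueurs}: since $L_{g_\lambda}(c)=\int_0^{L_{g_0}(c)}a_\lambda(\varphi_s^{g_0}(z))\,ds$ for any fixed $z\in\gamma_{g_0}(c)$, and since both the domain of integration and the flow $\varphi^{g_0}_s$ are independent of $\lambda$, one gets $\partial_\lambda L_{g_\lambda}(c)|_{\lambda=0}=\int_0^{L_{g_0}(c)}\dot a(\varphi_s^{g_0}(z))\,ds$. The second computation is the first-variation formula for the marked length spectrum: writing $L_{g_\lambda}(c)=\ell_{g_\lambda}(\gamma_{g_\lambda}(c))$ and decomposing the $\lambda$-derivative into the variation of the metric with the curve held fixed at $\gamma_{g_0}(c)$ plus the variation of the curve, the curve-variation term vanishes because $\gamma_{g_0}(c)$ is a closed $g_0$-geodesic, hence a critical point of the $g_0$-length functional on free loops in the class $c$, while the metric-variation term is the usual first variation of arclength, $\tfrac12\int_0^{L_{g_0}(c)}\pi_2^*\dot g(\varphi_s^{g_0}(z))\,ds$. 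Comparing the two expressions, the Hölder function $u:=\tfrac12\pi_2^*\dot g-\dot a$ has vanishing integral along $\gamma_{g_0}(c)$ for every $c\in\mc{C}$, hence along every periodic orbit of the Anosov flow $\varphi^{g_0}$ (an iterated orbit gives the same condition). By the Livsic theorem \cite{DeLaLlave-Marco-Moryon-86} there is a Hölder function $f_0:S_{g_0}M\to\R$ with $u=X_{g_0}f_0$, and then $f:=2f_0$ satisfies $\pi_2^*\dot g-2\dot a=X_{g_0}f$, which is the assertion.

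The point I expect to require the most care is the second computation: to apply the chain rule one needs $\lambda\mapsto\gamma_{g_\lambda}(c)$ to be differentiable, which follows from the smooth persistence of the corresponding hyperbolic periodic orbit of $\varphi^{g_\lambda}$ (implicit function theorem applied to a Poincaré return map, using that the linearized return map has no eigenvalue $1$), and one needs the curve-variation contribution to genuinely drop out, which is exactly the first variation of arclength at a geodesic (no boundary terms since the loop is closed). Both facts are classical, and if one prefers one may bypass them by differentiating the orbit-conjugacy relation $d\psi_{g_\lambda}(z)X_{g_0}(z)=a_\lambda(z)X_{g_\lambda}(\psi_{g_\lambda}(z))$ directly at $\lambda=0$ and integrating the resulting identity along the closed orbits of $\varphi^{g_0}$; this yields the same conclusion but is somewhat longer, whence I would favour the length-variation route sketched above.
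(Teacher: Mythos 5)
Your proof is correct and follows essentially the same route as the paper's: both compute $\partial_\lambda L_{g_\lambda}(c)|_{\lambda=0}$ in two ways — once via the reparametrization identity \eqref{equation:longueurs}, once via the first variation at the critical closed geodesic — and conclude with the Livsic theorem. The only (cosmetic) difference is that the paper uses the energy functional of the curve $\psi_\lambda\circ z_0$, whose value is exactly $\int a_\lambda^2$, whereas you use the length functional together with \eqref{equation:longueurs}; both variants handle the criticality and differentiability points in the same way.
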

\begin{proof}
Let $c$ be a fixed free homotopy class, $\gamma_0 \in c$ be the unique closed $g_0$-geodesic in the class $c$, which we parametrize by unit-speed $z_0 : [0,\ell_{g_0}(\gamma_0)] \rightarrow S_{g_0}M$. We define $z_\lambda(s)=\psi_\lambda(z_0(s)) = (\alpha_\lambda(s),\dot{\alpha}_\lambda(s))$ (the dot is the derivative with respect to $s$) where $\psi_\lambda$ is the conjugacy between $g_\lambda$ and $g_0$
: this gives a non-unit-speed parametrization of $\gamma_\lambda$, the unique closed $g_\lambda$-geodesic in $c$. We recall that $\pi : TM \rightarrow M$ is the projection. We obtain using \eqref{equation:conjugaison}
\[ \begin{split}  \int_0^{\ell_{g_0}(\gamma_0)} g_\lambda(\dot{\alpha}_\lambda(s),\dot{\alpha}_\lambda(s)) ds & = \int_0^{\ell_{g_0}(\gamma_0)} g_\lambda\left(\partial_s(\pi \circ z_\lambda(s)),\partial_s(\pi \circ z_\lambda(s))\right) ds \\ 
& = \int_0^{\ell_{g_0}(\gamma_0)} g_\lambda\left(\partial_s(\pi \circ \psi_\lambda \circ z_0(s)),\partial_s(\pi \circ \psi_\lambda \circ z_0(s))\right) ds \\
& = \int_0^{\ell_{g_0}(\gamma_0)} a_\lambda^{2}(z_0(s))\underbrace{g_\lambda\left(d\pi(X_{g_\lambda}(z_\lambda(s))),d\pi(X_{g_\lambda}(z_\lambda(s)))\right)}_{=1} ds\\
& = \int_0^{\ell_{g_0}(\gamma_0)} a_\lambda^{2}(z_0(s)) ds. 
\end{split} \]
Since $s \mapsto \alpha_0(s)$ is a unit-speed geodesic for $g_0$, it is a critical point of the energy functional (with respect to $g_0$). Thus, by differentiating the previous identity with respect to $\lambda$ and evaluating at $\lambda=0$, one obtains:
\[
\int_0^{\ell_{g_0}(\gamma_0)} \partial_\lambda g_\lambda|_{\lambda=0}(\dot{\alpha}_0(s),\dot{\alpha}_0(s)) ds = 2 \int_0^{\ell_{g_0}(\gamma_0)} \partial_\lambda a_\lambda|_{\lambda=0}(z_0(s)) ds.
 \]
As a consequence, $\pi_2^*\left(\partial_\lambda g_\lambda|_{\lambda=0}\right) - 2 \partial_\lambda a_\lambda |_{\lambda=0}$ is a Hölder-continuous function in the kernel of the X-ray transform: by the usual Livsic theorem, there exists a function $f$ (with the same Hölder regularity), differentiable in the flow direction, such that $\pi_2^*\left(\partial_\lambda g_\lambda|_{\lambda=0}\right) - 2 \partial_\lambda a_\lambda |_{\lambda=0} = X_{g_0}f$.
\end{proof}

As a corollary, we get:

\begin{corollary}\label{cor1proofTh1.1}
For $k\geq 5$, $\alpha\in(0,1)$, there is a neighborhood $\mc{U}$ of $g_0$ in $C^{k,\alpha}(M;S^2T^*M)$ and constants $C_{g_0},C_{g_0}'>0$ depending on $g_0$ such that for all $g\in \mc{U}$
\[C_{g_0}\|\pi_{\ker D_{g_0}^*}(g-g_0)\|^2_{H^{-1/2}(M)}\leq \Psi(g)+\frac{1}{4}\cjg (g-g_0),g_0\cjd_{L^2}^2
+C_{g_0}'\|g-g_0\|_{C^{5,\alpha}}^3.\]
There is a neighborhood $\mc{U}'$ of $g_0$ in $C^{k,\alpha}(M;S^2T^*M)$ and a constant $C_{g_0}''>0$ depending on $g_0$ such that for all $g\in \mc{U}'$, there is a diffeomorphism $\psi\in C^{k+1,\alpha}(M)$ such that
\[C_{g_0}\|\psi^*g-g_0\|^2_{H^{-1/2}(M)}\leq \Psi(g)+\frac{1}{4}\cjg (\psi^*g-g_0),g_0\cjd_{L^2}^2
+C_{g_0}''\|\psi^*g-g_0\|_{C^{5,\alpha}}^3\]
\end{corollary}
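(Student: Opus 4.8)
The plan is to combine Proposition \ref{estimeePsi} (the quadratic lower bound for $\Psi$ in terms of the variance operator) with the coercivity estimate of Lemma \ref{lemma:positivite-pi2} (the $H^{-1/2}$ lower bound for $\langle \Pi_2^{g_0}\cdot,\cdot\rangle$ on solenoidal tensors), and then to upgrade from the solenoidal-projection estimate to a genuine $H^{-1/2}$ estimate by passing to the solenoidal gauge via Lemma \ref{lemma:solenoidal-gauge}. First I would invoke Proposition \ref{estimeePsi} with $h=g-g_0$: for $g$ in a $C^{5,\alpha}$-neighborhood $\mc{U}$ of $g_0$,
\[
\Psi(g)\geq \tfrac18\Big(\langle \Pi_2^{g_0}(g-g_0),(g-g_0)\rangle_{L^2}-\langle (g-g_0),g_0\rangle_{L^2}^2\Big)-C_{g_0}\|g-g_0\|_{C^{5,\alpha}}^3 .
\]
Moving the trace term to the left and applying Lemma \ref{lemma:positivite-pi2} to $h=g-g_0$ (which gives $\langle \Pi_2^{g_0}h,h\rangle_{L^2}\geq C\|\pi_{\ker D_{g_0}^*}h\|_{H^{-1/2}}^2$, valid since $g_0$ is Anosov with $I_2^{g_0}$ s-injective — guaranteed by the non-positive curvature hypothesis via \cite{Croke-Sharafutdinov-98}), I obtain, after absorbing constants and renaming $C_{g_0}$,
\[
C_{g_0}\|\pi_{\ker D_{g_0}^*}(g-g_0)\|_{H^{-1/2}(M)}^2\leq \Psi(g)+\tfrac14\langle (g-g_0),g_0\rangle_{L^2}^2+C_{g_0}'\|g-g_0\|_{C^{5,\alpha}}^3 .
\]
The factor $1/4$ instead of $1/8$ is harmless: enlarge the cubic-error constant so that the $\tfrac18\langle h,g_0\rangle^2$ term from Proposition \ref{estimeePsi} is dominated, or simply keep the crude bound. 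This proves the first displayed inequality.

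For the second inequality, the point is that $\pi_{\ker D_{g_0}^*}(g-g_0)$ is only the solenoidal part of $g-g_0$, and we want a bound on the full perturbation after gauging. Here I apply Lemma \ref{lemma:solenoidal-gauge}: for $g$ in a possibly smaller $C^{k,\alpha}$-neighborhood $\mc{U}'$ of $g_0$ (with $k\geq 5$), there is a unique $C^{k+1,\alpha}$-diffeomorphism $\psi$ such that $\psi^*g$ is solenoidal with respect to $g_0$, i.e. $D_{g_0}^*(\psi^*g-g_0)=0$, hence $\pi_{\ker D_{g_0}^*}(\psi^*g-g_0)=\psi^*g-g_0$. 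Moreover $\psi\to\mathrm{Id}$ as $g\to g_0$ and the map $g\mapsto\psi$ is $C^2$, so after shrinking $\mc{U}'$ we have $\|\psi^*g-g_0\|_{C^{5,\alpha}}\leq C\|g-g_0\|_{C^{5,\alpha}}$ uniformly, and $\psi^*g\in\mc{U}$. Since $\Psi$ is defined through the time-reparametrization coefficient $a_g$ and the pressure, and since $a_{\psi^*g}$ and $a_g$ differ by a coboundary (because $\psi_{\psi^*g}$ and $\psi_g$ differ by post-composition with the lift of the conjugacy induced by $\psi$, which is time-preserving on geodesics up to the base diffeomorphism) — equivalently, $\psi^*g$ and $g$ have the same marked length spectrum — one has $\Psi(\psi^*g)=\Psi(g)$; this invariance of $\Psi$ under the gauge action is exactly what makes the functional descend to isometry classes, and it was implicitly used in the discussion preceding the Corollary. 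Applying the first displayed inequality to $\psi^*g$ in place of $g$ then yields
\[
C_{g_0}\|\psi^*g-g_0\|_{H^{-1/2}(M)}^2\leq \Psi(\psi^*g)+\tfrac14\langle (\psi^*g-g_0),g_0\rangle_{L^2}^2+C_{g_0}'\|\psi^*g-g_0\|_{C^{5,\alpha}}^3,
\]
and replacing $\Psi(\psi^*g)$ by $\Psi(g)$ and renaming $C_{g_0}'\to C_{g_0}''$ gives the claim.

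The main obstacle I anticipate is justifying cleanly that $\Psi(\psi^*g)=\Psi(g)$, i.e. the invariance of $\Psi$ under pullback by diffeomorphisms isotopic to the identity. This amounts to checking that $a_{\psi^*g}$ is cohomologous to $a_g\circ(\text{conjugating homeomorphism})$ composed appropriately, and ultimately that $L_{\psi^*g}=L_g$ on every free homotopy class — which is geometrically obvious since $\psi$ is a diffeomorphism carrying $g$-geodesics to $\psi^*g$-geodesics of the same length — together with the fact that the pressure term ${\bf P}(-V_g)$ depends only on the cohomology class of $a_g$ (hence only on $L_g$) via Livsic's theorem and the definition \eqref{equation:pression}. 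Once this invariance is in hand the rest is bookkeeping with neighborhoods and the uniform comparison $\|\psi^*g-g_0\|_{C^{5,\alpha}}\leq C\|g-g_0\|_{C^{5,\alpha}}$ coming from the $C^2$-regularity of $g\mapsto\psi$ in Lemma \ref{lemma:solenoidal-gauge} and $\psi|_{g=g_0}=\mathrm{Id}$.
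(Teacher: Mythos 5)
Your proposal is correct and follows essentially the same route as the paper: the first inequality is Proposition \ref{estimeePsi} combined with the coercivity of $\Pi_2^{g_0}$ on solenoidal tensors, and the second follows by applying the first to $\psi^*g$ from Lemma \ref{lemma:solenoidal-gauge} together with the gauge invariance $\Psi(\psi^*g)=\Psi(g)$. Your extra care with the $1/8$ versus $1/4$ constant and with justifying $\Psi(\psi^*g)=\Psi(g)$ via $L_{\psi^*g}=L_g$ and Livsic only fills in details the paper leaves implicit.
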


\begin{proof} The first inequality follows from Proposition \ref{estimeePsi} and Lemma \ref{lemma:positivite-pi2unif}. For the second inequality, we apply the first inequality to $\psi^*g$ where $\psi$ is the diffeomorphism obtained from Lemma \ref{lemma:solenoidal-gauge}, and we use that $\Psi(\psi^*g)=\Psi(g)$. 
\end{proof}

The next step is to control the term $\cjg (\psi^*g-g_0),g_0\cjd_{L^2}$ by the geodesic stretch.
We will show

\begin{proposition}\label{estimfinal1}
There is $k \in \N$ large enough, depending only on $n=\dim M$, such that if $g_0$ is smooth with Anosov geodesic flow, and non-positive curvature in the case $n>2$, there is $C_{g_0}>0$ and $C_n>0$, an open neighborhood 
$\mc{U}$ in $C^{k,\alpha}(M;S^2T^*M)$ of $g_0$, such that for each $g\in \mc{U}$, there is a diffeomorphism $\psi$ satisfying
\[ \begin{gathered}
C_{g_0}\|\psi^*g-g_0\|^2_{H^{-1/2}(M)}\leq {\bf P}\Big(-J_{g_0}^u-a_g+\int_{S_{g_0}M}a_g\, d\mu_{g_0}^L\Big)+C_n\Big(I_{\mu_{g_0}^L}(g_0,g)-1\Big)^2,
\\
  C_{g_0}\|\psi^*g-g_0\|^2_{H^{-1/2}(M)}\leq {\bf P}\Big(-J_{g_0}^u-a_g+\int_{S_{g_0}M}a_g\, d\mu_{g_0}^L\Big)+C_n\Big({\bf P}(-J_{g_0}^u-a_g+1)\Big)^2,\\
  C_{g_0}\|\psi^*g-g_0\|^2_{H^{-1/2}(M)}\leq {\bf P}\Big(-J_{g_0}^u-a_g+\int_{S_{g_0}M}a_g\, d\mu_{g_0}^L\Big)+\Big({\rm Vol}_g(M)-{\rm Vol}_{g_0}(M)\Big)^2.
 \end{gathered}\]
Here $C_{g_0}$ depends on $g_0$ and $C_n$ on $n$ only.
\end{proposition}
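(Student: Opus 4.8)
The plan is to start from Corollary~\ref{cor1proofTh1.1} and to re-express the term $\cjg\psi^*g-g_0,g_0\cjd_{L^2}$ occurring there in terms of each of the three correction quantities, via a first-order Taylor expansion at $g_0$. Let $\psi\in C^{k+1,\alpha}(M)$ be the diffeomorphism produced by Lemma~\ref{lemma:solenoidal-gauge} and set $h:=\psi^*g-g_0$, which is solenoidal with respect to $g_0$; recall that $\Psi$, $F$ and $\Phi$ are invariant under pull-back by diffeomorphisms isotopic to the identity (the orbit conjugacies compose, so $a_{\psi^*g}$ is cohomologous to $a_g$, and ${\bf P}$ depends only on the cohomology class; hence $F(\psi^*g)=F(g)$ and $\Phi(\psi^*g)=\Phi(g)$). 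Since $\Psi(g)={\bf P}\big(-J_{g_0}^u-a_g+\int_{S_{g_0}M}a_g\, d\mu_{g_0}^L\big)$ by \eqref{defofPSi}, Corollary~\ref{cor1proofTh1.1} applied to $\psi^*g$ reads
\[
C_{g_0}\|h\|^2_{H^{-1/2}(M)}\leq \Psi(g)+\tfrac14\cjg h,g_0\cjd^2_{L^2}+C_{g_0}''\,\|h\|^3_{C^{5,\alpha}(M)}.
\]

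Next I would linearise at $g_0$ the functionals entering the right-hand sides of the statement. Lemma~\ref{lemma:differentielle-a} together with the flow-invariance of $\mu_{g_0}^L$ gives $d\Phi_{g_0}.h=\tfrac12\int_{S_{g_0}M}\pi_2^*h\, d\mu_{g_0}^L=\beta_n\cjg h,g_0\cjd_{L^2}$ for a constant $\beta_n>0$ depending only on $n$ and the chosen normalisations, using the identity $\int_{SM}\pi_2^*h\, d\mu_{g_0}^L=\mathrm{const}\cdot\int_M{\rm Tr}_{g_0}(h)\, d{\rm vol}_{g_0}$ of \S\ref{sec:tensors}; formulas \eqref{formuladF} and \eqref{diffPhi} give $dF_{g_0}.h=-\beta_n\cjg h,g_0\cjd_{L^2}$; and the first variation of the Riemannian volume is $\tfrac{d}{d\lambda}{\rm Vol}_{g_0+\lambda h}(M)\big|_{\lambda=0}=\tfrac12\cjg h,g_0\cjd_{L^2}$. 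Since $\Phi(g_0)=F(g_0)=0$, since $\Phi$ and $F$ are $C^2$ on a $C^{5,\alpha}$-neighbourhood of $g_0$ (the map $g\mapsto a_g$ being $C^{k-2}$), and since $g\mapsto{\rm Vol}_g(M)$ is smooth, Taylor's formula combined with the invariances $\Phi(g)=\Phi(\psi^*g)$, $F(g)=F(\psi^*g)$ and ${\rm Vol}_g(M)={\rm Vol}_{\psi^*g}(M)$ yields
\[
\cjg h,g_0\cjd_{L^2}=\tfrac1{\beta_n}\Phi(g)+O(\|h\|^2_{C^{5,\alpha}})=-\tfrac1{\beta_n}F(g)+O(\|h\|^2_{C^{5,\alpha}})=2\big({\rm Vol}_g(M)-{\rm Vol}_{g_0}(M)\big)+O(\|h\|^2_{C^{5,\alpha}}).
\]
As $\Phi(g)$, $F(g)={\bf P}(-J_{g_0}^u-a_g+1)$ and ${\rm Vol}_g(M)-{\rm Vol}_{g_0}(M)$ are each $O(\|h\|_{C^{5,\alpha}})$, squaring makes every cross term and remainder cubic of size $O(\|h\|^3_{C^{5,\alpha}})$, so that
\[
\tfrac14\cjg h,g_0\cjd^2_{L^2}\leq C_n\,\Phi(g)^2+C_{g_0}\|h\|^3_{C^{5,\alpha}},
\]
and likewise with $C_n\Phi(g)^2$ replaced by $C_n\,{\bf P}(-J_{g_0}^u-a_g+1)^2$, or by $\big({\rm Vol}_g(M)-{\rm Vol}_{g_0}(M)\big)^2$.

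Combining the two displays and absorbing the cubic term finishes the argument. For the absorption I would invoke the interpolation inequality $\|h\|_{C^{5,\alpha}}\leq C\|h\|^{\theta}_{H^{-1/2}(M)}\|h\|^{1-\theta}_{H^{k}(M)}$, valid once $k$ is large enough, with $\theta=\theta(k)\to1$ as $k\to\infty$; choosing $k$ so that $3\theta>2$ and using $\|h\|_{H^k(M)}\leq C\|h\|_{C^{k,\alpha}(M)}\leq C'\eps$ together with $\|h\|_{H^{-1/2}(M)}\leq C\eps$, one obtains $\|h\|^3_{C^{5,\alpha}}\leq C\eps\,\|h\|^2_{H^{-1/2}(M)}$, which for $\eps$ small is absorbed into the left-hand side (after shrinking $C_{g_0}$). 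This gives the three inequalities with $\psi$ the diffeomorphism of Lemma~\ref{lemma:solenoidal-gauge}.

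The expected main difficulty is not a single deep step but rather the bookkeeping of regularities, namely ensuring that after interpolation every error term is genuinely cubic in the weak norm $H^{-1/2}$, together with the justification of the diffeomorphism-invariance of $\Phi,F$ and of the identification of $d\Phi_{g_0}$ with a multiple of $\cjg\cdot,g_0\cjd_{L^2}$, which is exactly where Lemma~\ref{lemma:differentielle-a} (equality of $da_{g_0}.h$ and $\tfrac12\pi_2^*h$ up to a coboundary) enters. Everything else is a direct assembly of Corollary~\ref{cor1proofTh1.1}, the linearization formulas \eqref{formuladF} and \eqref{diffPhi}, and the first variation of the Riemannian volume.
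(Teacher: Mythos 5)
Your proposal is correct and follows essentially the same route as the paper's own proof: Corollary~\ref{cor1proofTh1.1} plus the linearizations $d\Phi_{g_0}=-dF_{g_0}=C_n\cjg\cdot,g_0\cjd_{L^2}$ (and the first variation of volume) to convert $\cjg h,g_0\cjd_{L^2}^2$ into each of the three squared quantities, followed by absorption of the cubic remainder via Sobolev embedding and interpolation between $H^{-1/2}$ and $H^k$. The only cosmetic difference is the bookkeeping of the interpolation exponent ($3\theta>2$ versus the paper's exact choice $k>\tfrac32 n+16+3\alpha$ giving $\|h\|^3_{C^{5,\alpha}}\lesssim\|h\|^2_{H^{-1/2}}\|h\|_{H^k}$), which is equivalent.
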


\begin{proof}
We write the Taylor expansion of $\Phi(\psi^*g)=\Phi(g)=I_{\mu_{g_0}^L}(g_0,g)-1$ at $g=g_0$: by Lemma \ref{formuladF} and Lemma \ref{lemma:differentielle-a}, 
\[d\Phi_{g_0}.h=\int_{S_{g_0}M}da_{g_0}.h\, d\mu_{g_0}^L=\frac{1}{2}\int_{S_{g_0}M}\pi_2^*h\, d\mu_{g_0}^L=C_n\cjg h,g_0\cjd_{L^2}\]
for some $C_n>0$ depending only on $n=\dim M$. Then: 
\[ \Phi(g)=\Phi(\psi^*g)=C_n\cjg \psi^*g-g_0,g_0\cjd_{L^2}+\mc{O}(\|\psi^*g-g_0\|^2_{C^{5,\alpha}})\]
Combining with Corollary \ref{cor1proofTh1.1}, we obtain 
\begin{equation}\label{estimatewithremainder}
C_{g_0}\|\psi^*g-g_0\|^2_{H^{-1/2}(M)}\leq \Psi(g)+2C_n^{-2}\Phi(g)^2+
C_{g_0}''\|\psi^*g-g_0\|_{C^{5,\alpha}}^3
\end{equation}
if $\|\psi^*g-g_0\|_{C^{5,\alpha}}$ is small enough, which is the case if $\|g-g_0\|_{C^{5,\alpha}}$ is small enough by Lemma \ref{lemma:solenoidal-gauge}. To obtain the first inequality of Proposition \ref{estimfinal1}, we apply Sobolev embedding and interpolation estimates\footnote{The interpolation estimate $\|u\|_{H^c}\leq \|u\|_{H^a}^{t}\|u\|_{H^b}^{1-t}$ for $c=ta+(1-t)b$ is obtained by applying Hadamard three lines theorem to the holomorphic function 
$s\mapsto \sum_{j}(1+\lambda_j)^{s}\cjg u,e_j\cjd_{L^2}^2$ on ${\rm Re}(s)\in [a,b]$ where $e_j$ is an orthonormal basis of eigenfunctions of any positive elliptic self-adjoint differential operator of order $2$ on symmetric tensors and $\la_j$ being the corresponding eigenvalues.} (\cite[Chapter 4]{Tay96}) and get, for some constants $c_{g_0}>0,c'_{g_0}>0$ depending on $g_0$ only,
\[
\|\psi^*g-g_0\|^3_{C^{5,\alpha}} \leq c_{g_0} \|\psi^*g-g_0\|^3_{H^{\frac{n}{2}+5+\alpha'}} \leq c'_{g_0} \|\psi^*g-g_0\|^2_{H^{-1/2}} \|\psi^*g-g_0\|_{H^{k}}, 
\]
if $k >\frac{3}{2}n+16+3\alpha$ and $\alpha'>\alpha$. This means that if $\|\psi^*g-g_0\|_{H^k}$ is small enough, depending on the constants $C_{g_0}, C_{g_0}'',c_{g_0},c'_{g_0}$, one can absorb the
$\|\psi^*g-g_0\|^3_{C^{5,\alpha}}$ term of \eqref{estimatewithremainder} into the left-hand side and get the first inequality of Proposition \ref{estimfinal1}. The smallness of $\|\psi^*g-g_0\|_{H^k}$ is implied by the smallness of $\|g-g_0\|_{C^{k,\alpha}}$ by Lemma \ref{lemma:solenoidal-gauge}.
The same exact argument applies by replacing $\Phi(g)$ by $F(g)$ using that $dF_{g_0}=-d\phi_{g_0}$, this proves the second inequality of Proposition \ref{estimfinal1}. The last inequality is similar since 
\[{\rm Vol}_g(M)-{\rm Vol}_{g_0}(M)=\frac{1}{2}\int_{M}{\rm Tr}_{g_0}(h)\, d{\rm vol}_{g_0}+\mc{O}(\|h\|^2_{C^{5,\alpha}})=\frac{1}{2}\cjg h,g_0\cjd_{L^2}+\mc{O}(\|h\|^2_{C^{5,\alpha}})\]
for $h:=g-g_0$.
The proof is complete.
\end{proof}
To conclude the proof of Theorem \ref{theorem:stability}, we need to estimate $\Phi(g)$ and $F(g)$ in terms of $\mc{L}_\pm(g)$. Recall that (see \cite[Corollary 9.17]{PPS}) 
\[\begin{split}
{\bf P}(-V_g)=&\lim_{T\to \infty}\frac{1}{T}\log \sum_{c\in\mc{C}, L_{g_0}(c)\in [T,T+1]}e^{-\int_{\gamma_{g_0}(c)}V_g}\\
=&\lim_{T\to \infty}\frac{1}{T}\log \sum_{c\in\mc{C}, L_{g_0}(c)\in [T,T+1]}e^{-\int_{\gamma_{g_0}(c)}J^u_{g_0}}e^{L_{g_0}(c)-L_{g}(c)}.
\end{split}\]
Thus, if we order $\mc{C}=(c_j)_{j\in\N}$ by the lengths (i.e. $L_{g_0}(c_j)\geq L_{g_0}(c_{j-1})$), and we define 
\[ \mc{L}_+(g):=\limsup_{j\to \infty}\frac{L_g(c_j)}{L_{g_0}(c_j)}-1, \quad \mc{L}_-(g):=\liminf_{j\to \infty}\frac{L_{g}(c_j)}{L_{g_0}(c_j)}-1,\]
we see that for all $\delta>0$ small, there is $T_0>0$ large so that for all $j$ with 
$L_{g_0}(c_j)\in [T,T+1]$ with $T\geq T_0$:
\[  e^{\min((T+1)(-\mc{L}_+(g)-\delta),T(-\mc{L}_+(g)-\delta))}\leq e^{L_{g_0}(c_j)-L_{g}(c_j)} \leq e^{\max((T+1)(-\mc{L}_-(g)+\delta),T(-\mc{L}_-(g)+\delta))}.\]
We deduce, using ${\bf P}(-V_{g_0})=0$, 
\[  -\mc{L}_+(g)-\delta\leq {\bf P}(-V_g)\leq -\mc{L}_-(g)+\delta.\]
Since $\delta>0$ is arbitrarily small, we obtain $|F(g)|\leq \max(|\mc{L}_+(g)|,|\mc{L}_-(g)|)$. Similarly, Lemma \ref{lemma:stretch-ergodic} shows that 
$|\Phi(g)|\leq \max(|\mc{L}_+(g)|,|\mc{L}_-(g)|)$. So the proof of Theorem \ref{theorem:stability} is complete by combining these bounds with Proposition \ref{estimfinal1} (the right hand side in the first and second inequality of Proposition \ref{estimfinal1} being $F(g)+\Phi(g)+C_n \Phi(g)^2$ and $F(g)+\Psi(g)+C_nF(g)^2$).

\subsection{A submanifold of the space of metrics}

It is quite natural to describe the stretch functional $\Phi$ on the space
\begin{equation}\label{Hkalpha}
\mc{N}^{k,\alpha} := \left\{g \in \mc{M}^{k,\alpha} ~|~ \mathbf{P}(-V_g) = 0 \right\}, 
\end{equation}
and on $\mc{N}^{k,\alpha}_{\rm sol} := \mc{N}^{k,\alpha} \cap \ker D^*_{g_0}$. Indeed, as we shall see, this becomes a strictly convex functional near $g_0\in \mc{N}_{\rm sol}^{k,\alpha}$ when restricted to $\mc{N}_{\rm sol}^{k,\alpha}$. It is possible that the map is strictly convex globally on $\mc{N}_{\rm sol}^{k,\alpha}$, in which case that would prove the global 
rigidity of the marked length spectrum.

Given $g \in \mc{N}^{k,\alpha}$, we denote by $m_g$ the unique equilibrium state for the potential $V_g$. We will also denote $\mc{N}$ for the case where $k=\infty$. First we check that these are (infinite dimensional) manifolds.

\begin{lemma}
\label{lemma:h1}
There exists a neighborhood $\mc{U} \subset \mc{M}^{k,\alpha}$ of $g_0$ such that $\mc{N}^{k,\alpha} \cap \mc{U}$ is a codimension one $C^{k-2}$-submanifold of $\mc{U}$ and $\mc{N}^{k,\alpha}_{\rm sol} \cap \mc{U}$ is a $C^{k-2}$-submanifold of $\mc{U}$. Similarly, there is $\mc{U}\subset \mc{M}$ an open neighborhood so that $\mc{N} \cap \mc{U}$ is a Fr\'echet submanifold of $\mc{M}$.
\end{lemma}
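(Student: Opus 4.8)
The plan is to show that $\mathcal{N}^{k,\alpha}$ and $\mathcal{N}^{k,\alpha}_{\rm sol}$ are submanifolds by an application of the implicit function theorem (the regular value theorem) applied to the map $F = \mathbf{P}(-V_\cdot)$ and its restriction to the slice, so the key point is to verify that $dF_{g_0}$ is a \emph{surjective} functional, i.e. that $dF_{g_0} \not\equiv 0$, on each of the relevant tangent spaces. Recall from \eqref{formuladF} and Lemma \ref{lemma:differentielle-a} that $dF_{g_0}.h = -\tfrac{1}{2}\int_{S_{g_0}M}\pi_2^*h\, d\mu_{g_0}^{\rm L} = -C_n\cjg h, g_0\cjd_{L^2}$ for some $C_n > 0$, so in fact $dF_{g_0}$ is the (nonzero, continuous) linear functional $h \mapsto -C_n \int_M {\rm Tr}_{g_0}(h)\, d{\rm vol}_{g_0}$ on $T_{g_0}\mc{M}^{k,\alpha} = C^{k,\alpha}(M;S^2T^*M)$. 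Since this is clearly nonzero (take $h = g_0$ itself, so $dF_{g_0}.g_0 = -C_n n \,{\rm vol}_{g_0}(M) \neq 0$), $0$ is a regular value of $F$ near $g_0$, and since $F$ is $C^{k-2}$ by \cite{Contreras-92}, the set $\mathcal{N}^{k,\alpha}\cap \mc{U} = F^{-1}(0)\cap \mc{U}$ is a $C^{k-2}$ submanifold of codimension one in a small neighborhood $\mc{U}$ of $g_0$, with $T_{g_0}\mathcal{N}^{k,\alpha} = \ker dF_{g_0} = \{h : \int_M {\rm Tr}_{g_0}(h)\, d{\rm vol}_{g_0} = 0\}$; note $\mathbf{P}(-V_{g_0}) = \mathbf{P}(-J^u_{g_0}) = 0$ so $g_0 \in \mathcal{N}^{k,\alpha}$.

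For $\mathcal{N}^{k,\alpha}_{\rm sol} = \mathcal{N}^{k,\alpha}\cap \ker D^*_{g_0}$, I would argue similarly by restricting $F$ to the Banach subspace $\ker D^*_{g_0}|_{C^{k,\alpha}} = T_{g_0}\mc{S}$ (a closed complemented subspace of $C^{k,\alpha}(M;S^2T^*M)$ by the decomposition \eqref{equation:decomposition}): we must check that $dF_{g_0}$ restricted to this subspace is still nonzero. This again follows since $g_0 \in \ker D^*_{g_0}$ (indeed $D^*_{g_0} g_0 = -{\rm Tr}_{g_0}(\nabla^{g_0} g_0) = 0$ as the metric is parallel), and $dF_{g_0}.g_0 \neq 0$ as above. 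Hence $0$ is a regular value of $F|_{\ker D^*_{g_0}}$ near $g_0$, and $\mathcal{N}^{k,\alpha}_{\rm sol}\cap \mc{U}$ is a $C^{k-2}$ submanifold of $\mc{U}$ (of codimension one inside the slice $\mc{S}$, hence of infinite codimension in $\mc{M}^{k,\alpha}$).

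For the smooth (Fréchet) statement, the ordinary implicit function theorem does not apply directly because $\mc{M}$ is only a Fréchet manifold. Here the clean route is \emph{not} to invoke Nash–Moser but to observe that $\mathcal{N}$ is cut out inside $\mc{M}$ (near $g_0$) by the single tame smooth function $F$ with surjective, \emph{split} derivative: since $dF_{g_0}.g_0 \neq 0$, one may complement and use the explicit splitting $C^\infty(M;S^2T^*M) = \R g_0 \oplus \{h : \int_M {\rm Tr}_{g_0}(h)\,d{\rm vol}_{g_0}=0\}$, and apply the Nash–Moser inverse function theorem of \cite[Section III]{Hamilton} to the map $(t,h)\mapsto F(g_0 + t g_0 + h)$, whose partial derivative in $t$ at the origin is the invertible scalar $-C_n n\,{\rm vol}_{g_0}(M)$; tameness of $F$ follows from the analyticity statement of \S\ref{ssection:thermodynamic-formalism} together with the tame dependence $g \mapsto a_g$ recorded in \S\ref{section:stretch}. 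This produces a local graph description of $\mathcal{N}\cap \mc{U}$ over the complement, exhibiting it as a Fréchet submanifold.

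The main obstacle is the last paragraph: one must be careful that $F$ (equivalently $g\mapsto a_g$ and the pressure functional) is genuinely \emph{tame} in the sense required by Nash–Moser, so that the inverse function theorem applies on the Fréchet scale; for the Banach statements the only subtlety is the regularity class of $F$, which is handled by the $C^{k-2}$ statement of \cite{Contreras-92} (this is why a loss of two derivatives appears) together with the fact that $\ker D^*_{g_0}$ is a closed complemented subspace on which $F$ restricts to a $C^{k-2}$ map. If one prefers to avoid Nash–Moser entirely, an alternative is to first establish the Banach-category statement for every large $k$ and then note that $\mathcal{N}\cap \mc{U} = \bigcap_k (\mathcal{N}^{k,\alpha}\cap\mc{U})$ is a Fréchet submanifold because the local graphing functions are compatible and smooth; I would mention this but carry out the Nash–Moser version as the primary argument.
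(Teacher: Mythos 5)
Your proposal is correct and follows essentially the same route as the paper: the same computation $dF_{g_0}.h=-C_n\cjg h,g_0\cjd_{L^2}$ gives surjectivity for $\mc{N}^{k,\alpha}$, the solenoidal case reduces (as in the paper's transversality argument) to $g_0\in\ker D^*_{g_0}$ together with $dF_{g_0}.g_0\neq 0$, and the Fréchet case goes through Nash--Moser. The only cosmetic difference is that the paper exhibits a smooth tame family of right inverses $H_g.1=-2g/I_{m_g}(g_0,g)$ of $dF_g$ and invokes Hamilton's theorem directly, rather than graphing $\mc{N}$ over the complement of $\R g_0$ as you do; both hinge on the same nonvanishing scalar $dF_g.g$.
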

\begin{proof}
To prove this lemma, we will use the notion of differential calculus on Banach manifolds as it is stated in \cite[Chapter 73]{Zeidler-88}. Note that $\mc{M}^{k,\alpha}$ is a smooth Banach manifold and $\mc{N}^{k,\alpha} \subset \mc{M}^{k,\alpha}$ is defined by the implicit equation $F(g)=0$ for \begin{equation}\label{pressiondef}
F: g \mapsto \mathbf{P}(-V_g) \in \R.
\end{equation} 
The map $F$ being $C^{k-2}$, we only need to prove that $dF_{g_0}$ does not vanish by \cite[Theorem 73.C]{Zeidler-88}. This will immediately give that $T_{g_0}\mc{N}^{k,\alpha}=\ker dF_{g_0}$. In order to do so, we need a deformation lemma. For the sake of simplicity, we write the objects $\cdot_{\lambda}$ instead of $\cdot_{g_\lambda}$.

We can now complete the proof of Lemma \ref{lemma:h1}.
We first prove the first part concerning $\mc{N}^{k,\alpha}$. Recall the formula \eqref{formuladF} for $dF_{g_0}$. Using Lemma \ref{lemma:differentielle-a}, one obtains
\begin{equation}\label{dFg_0}
d F_{g_0}.h= -\int_{S_{g_0}M} da_{g_0}.h\, d\mu^{\rm L}_{g_0} = - \dfrac{1}{2} \int_{S_{g_0}M} \pi_2^* h\, d\mu^{\rm L}_{g_0} = - C_n \langle h,g_0 \rangle_{L^2},  
\end{equation}
for some constant $C_n > 0$ depending on $n$. This is obviously surjective and we also obtain:
\[ T_{g_0} \mc{N}^{k,\alpha} = \ker dF_{g_0} = \left\{h \in C^{k,\alpha}(M;S^2T^*M) ~|~ \langle h,g_0 \rangle_{L^2} = 0 \right\} = (\R g_0)^\bot, \]
where the orthogonal is understood with respect to the $L^2$-scalar product.

We now deal with $\mc{N}^{k,\alpha}_{\rm sol}$. First observe that $\ker D^*_{g_0}$ is a closed linear subspace of $\mc{M}^{k,\alpha}$ and thus a smooth submanifold of $\mc{M}^{k,\alpha}$. By \cite[Corollary 73.50]{Zeidler-88}, it is sufficient to prove that $\ker D^*_{g_0}$ and $\mc{N}^{k,\alpha}$ are transverse at $g_0$. But observe that $g_0 \in \ker D^*_{g_0} \simeq T_{g_0} \ker D^*_{g_0}$ and thus
\[ T_{g_0} \ker D^*_{g_0} + T_{g_0}\mc{N}^{k,\alpha} = T_{g_0} \mc{M}^{k,\alpha}, \]
showing transversality. 

The case of $\mc{N}$ follows directly from the Nash-Moser theorem: $F$ is obviously a smooth tame map from $C^\infty(M;S^2T^*M)$ to $\R$, moreover $dF_g$ has a right inverse $H_g$ since\footnote{Note that $2da_g.g$ is cohomologous to $a_g$, as can be seen by differentiating $L_g(c)=\int_{\gamma_{g_0}(c)}a_g$ and using Livsic theorem.} 
\[dF_g.g= -\int_{S_{g_0}M}da_g.g\, dm_g=-\frac{1}{2}\int_{S_{g_0}M} a_g\, dm_g=
-\frac{1}{2}I_{m_g}(g_0,g)
\]
one can take $H_g.1:=-2g/I_{m_g}(g_0,g)$. The family of right inverse: $g\mapsto H_g$ is smooth 
since $g\mapsto a_g$ and $g\mapsto m_g$ are smooth by \cite[Theorem C]{Contreras-92}, and it is clearly also tame thus we can apply directly \cite[Theorem 1.1.3, page 172]{Hamilton} to deduce that $F$ has a smooth tame right inverse, which shows that $\mc{N}$ is a Fr\'echet submanifold.  
\end{proof}

We remark that if $L_g=L_{g_0}$, then $a_g$ is cohomologous to $1$, so ${\bf P}(-V_{g})={\bf P}(-V_{g_0})=0$ in that case, which means that $g\in \mc{N}^{k,\alpha}$. From the second inequality in Proposition \ref{estimfinal1}, we obtain: 
\begin{corollary}\label{boundsonN}
Let $g_0$ be a smooth metric with Anosov geodesic flow, with non-positive curvature if $n>2$. There is $C_{g_0}>0$, a neighborhood $\mc{U}\subset \mc{N}^{k,\alpha}$ such that for all $g\in\mc{U}$, there is a diffeomorphism 
$\psi\in \mc{D}_0^{k+1,\alpha}$ so that 
\[ C_{g_0}\|\psi^*g-g_0\|^2_{H^{-1/2}(M)}\leq I_{\mu_{g_0}^L}(g_0,g)-1\]
\end{corollary}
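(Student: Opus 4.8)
The plan is to read the statement off directly from the second inequality of Proposition~\ref{estimfinal1}, using the defining relation of the manifold $\mc{N}^{k,\alpha}$ to kill one term and to rewrite the other.

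First I would set $\mc{U}$ to be the intersection of $\mc{N}^{k,\alpha}$ with the $C^{k,\alpha}$-neighborhood of $g_0$ produced by Proposition~\ref{estimfinal1}, and let $\psi\in\mc{D}_0^{k+1,\alpha}$ be the diffeomorphism (obtained from Lemma~\ref{lemma:solenoidal-gauge}) furnished there; then for every $g\in\mc{U}$ one has
\[ C_{g_0}\|\psi^*g-g_0\|^2_{H^{-1/2}(M)}\leq {\bf P}\Big(-J_{g_0}^u-a_g+\int_{S_{g_0}M}a_g\, d\mu_{g_0}^L\Big)+C_n\Big({\bf P}(-J_{g_0}^u-a_g+1)\Big)^2. \]

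Next I would invoke the definition \eqref{Hkalpha} of $\mc{N}^{k,\alpha}$ together with $V_g=J^u_{g_0}+a_g-1$ (see \eqref{equation:potentiel}): since $g\in\mc{N}^{k,\alpha}$, we have ${\bf P}(-V_g)={\bf P}(-J^u_{g_0}-a_g+1)=0$, so the last term on the right-hand side vanishes. Then, recalling the definition \eqref{defofPSi} of $\Psi$ and its decomposition $\Psi=F+\Phi$ with $F(g)={\bf P}(-V_g)$ (see \eqref{defF}) and $\Phi(g)=I_{\mu_{g_0}^L}(g_0,g)-1$ (see \eqref{defPhi}), the first term on the right-hand side equals $\Psi(g)=F(g)+\Phi(g)=0+\Phi(g)=I_{\mu_{g_0}^L}(g_0,g)-1$. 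Substituting both identities yields $C_{g_0}\|\psi^*g-g_0\|^2_{H^{-1/2}(M)}\leq I_{\mu_{g_0}^L}(g_0,g)-1$, which is the claim.

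There is no genuine obstacle here: once the normalization ${\bf P}(-V_g)=0$ is imposed on $\mc{N}^{k,\alpha}$, the corollary is purely a bookkeeping consequence of Proposition~\ref{estimfinal1}; the only point requiring a line of care is that the neighborhood and the diffeomorphism $\psi$ are exactly those already constructed in that proposition, so that no new analytic input is needed.
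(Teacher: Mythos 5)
Your proof is correct and is exactly the argument the paper intends: the corollary is stated as an immediate consequence of the second inequality in Proposition~\ref{estimfinal1}, and your bookkeeping — using ${\bf P}(-V_g)=0$ on $\mc{N}^{k,\alpha}$ to kill the quadratic term and the identity $\Psi=F+\Phi$ from \eqref{defofPSi} to rewrite the remaining term as $I_{\mu_{g_0}^L}(g_0,g)-1$ — is precisely the omitted verification.
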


As suggested by this estimate, the functional $\Phi$ turns out to be strictly convex near $g_0$ when restricted on $\mc{N}_{\rm sol}^{k,\alpha}$. First, one has for $h\in T_{g_0}\mc{N}^{k,\alpha}$
\[ d\Phi_{g_0}.h=-dF_{g_0}.h=0\]
so that $\Phi:\mc{N}^{k,\alpha}\to \rr$ has a critical point at $g_0$. For the second derivative at $g_0$, 
the same computation as in the previous section easily gives

\begin{lemma}
\label{lemma:convexite}
The map $\Phi:\mc{N}^{k,\alpha}_{\rm sol} \to \R$ is strictly convex at $g_0$ and there is $C>0$ such that 
\[d^2\Phi_{g_0}(h,h) = \frac{1}{4} \langle \Pi_2^{g_0} h, h \rangle \geq C \|h\|^2_{H^{-1/2}(M)}\] 
for all $h \in T_{g_0}\mc{N}^{k,\alpha}_{\rm sol}$.
\end{lemma}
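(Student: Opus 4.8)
The plan is to identify the Hessian of $\Phi$ restricted to $\mc{N}^{k,\alpha}_{\rm sol}$ at $g_0$ with the ambient second derivative $d^2\Psi_{g_0}$ already computed in the proof of Proposition \ref{estimeePsi}, and then to apply the coercivity of $\Pi_2^{g_0}$ from Lemma \ref{lemma:positivite-pi2}. First I would note that, by the very definition \eqref{Hkalpha} of $\mc{N}^{k,\alpha}$, the functional $F(g) = \mathbf{P}(-V_g)$ vanishes identically on $\mc{N}^{k,\alpha}$, hence on $\mc{N}^{k,\alpha}_{\rm sol}$; since $\Psi = F + \Phi$, the functionals $\Psi$ and $\Phi$ coincide on $\mc{N}^{k,\alpha}_{\rm sol}$. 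Moreover $d\Phi_{g_0} = -dF_{g_0}$ vanishes on $T_{g_0}\mc{N}^{k,\alpha} = \ker dF_{g_0}$, while $d\Psi_{g_0} = 0$ on the whole ambient tangent space $T_{g_0}\mc{M}^{k,\alpha}$, so $g_0$ is a critical point both of $\Psi$ on $\mc{M}^{k,\alpha}$ and of $\Phi$ on $\mc{N}^{k,\alpha}_{\rm sol}$. At such a critical point the second derivative of a function restricted to a submanifold is a well-defined quadratic form on the tangent space, computed by differentiating twice along any curve lying in the submanifold; since $F$ vanishes along every curve in $\mc{N}^{k,\alpha}_{\rm sol}$ (so all its derivatives along such curves vanish) and $d\Psi_{g_0}$ vanishes on the whole ambient space, one gets for $h \in T_{g_0}\mc{N}^{k,\alpha}_{\rm sol}$ that $d^2\Phi_{g_0}(h,h) = d^2\Psi_{g_0}(h,h)$, where the left-hand side is the intrinsic Hessian of $\Phi|_{\mc{N}^{k,\alpha}_{\rm sol}}$ and the right-hand side is the ambient second derivative of $\Psi$.

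Next I would feed in the identity established in the proof of Proposition \ref{estimeePsi}, namely \[ d^2\Psi_{g_0}(h,h) = \tfrac14\big(\langle \Pi_2^{g_0}h, h\rangle_{L^2} - \langle h, g_0\rangle_{L^2}^2\big), \] whose derivation combines Lemma \ref{lemma:differentielle-a} (that $da_{g_0}.h - \tfrac12\pi_2^*h$ is a coboundary, hence annihilated by $\Pi^{g_0}$) with the definition \eqref{Pim} of $\Pi_2^{g_0}$. By Lemma \ref{lemma:h1} one has $T_{g_0}\mc{N}^{k,\alpha} = (\R g_0)^\perp$ for the $L^2$-scalar product, so every $h \in T_{g_0}\mc{N}^{k,\alpha}_{\rm sol}$ satisfies $\langle h, g_0\rangle_{L^2} = 0$ and the correction term disappears: $d^2\Phi_{g_0}(h,h) = \tfrac14\langle \Pi_2^{g_0}h, h\rangle$ on $T_{g_0}\mc{N}^{k,\alpha}_{\rm sol}$.

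Finally, since such an $h$ is in addition solenoidal ($D^*_{g_0}h = 0$, by definition of $\mc{N}^{k,\alpha}_{\rm sol}$) and $g_0$ is Anosov with $I_2^{g_0}$ s-injective — guaranteed by the nonpositive curvature hypothesis when $n \geq 3$, and automatic when $n = 2$ — Lemma \ref{lemma:positivite-pi2} gives a constant $C > 0$ such that $\langle \Pi_2^{g_0}h, h\rangle \geq C\|\pi_{\ker D^*_{g_0}}h\|^2_{H^{-1/2}(M)} = C\|h\|^2_{H^{-1/2}(M)}$, whence $d^2\Phi_{g_0}(h,h) \geq \tfrac{C}{4}\|h\|^2_{H^{-1/2}(M)}$. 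In particular $d^2\Phi_{g_0}$ is positive definite on $T_{g_0}\mc{N}^{k,\alpha}_{\rm sol}$, which is precisely the asserted strict convexity of $\Phi$ at $g_0$ together with the stated lower bound. I do not expect a genuine obstacle here: all the analytic content sits in Proposition \ref{estimeePsi}, Lemma \ref{lemma:differentielle-a} and Lemma \ref{lemma:positivite-pi2}, and the only point that needs care is the elementary — but easily fumbled — identification of the Hessian of $\Phi|_{\mc{N}^{k,\alpha}_{\rm sol}}$ at the critical point $g_0$ with the ambient $d^2\Psi_{g_0}$ restricted to $T_{g_0}\mc{N}^{k,\alpha}_{\rm sol}$, which is exactly where one must use both that $F \equiv 0$ on $\mc{N}^{k,\alpha}$ and that $g_0$ is a critical point of the ambient $\Psi$.
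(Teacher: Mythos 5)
Your proof is correct and follows essentially the same route as the paper's (which simply repeats the Taylor-expansion computation of Proposition \ref{estimeePsi} and uses $T_{g_0}\mc{N}^{k,\alpha}=(\R g_0)^\bot$ to kill the $\langle h,g_0\rangle^2$ term before invoking Lemma \ref{lemma:positivite-pi2}). Your extra care in identifying the intrinsic Hessian of $\Phi|_{\mc{N}^{k,\alpha}_{\rm sol}}$ with the ambient $d^2\Psi_{g_0}$ — via $F\equiv 0$ on $\mc{N}^{k,\alpha}$ and $d\Psi_{g_0}=0$ on all of $T_{g_0}\mc{M}^{k,\alpha}$, so that no second-fundamental-form correction appears — is exactly the point the paper leaves implicit, and it is handled correctly.
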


\begin{proof}
The proof follows exactly that of Proposition \ref{estimeePsi}, using $T_{g_0} \mc{N}^{k,\alpha} = (\R g_0)^\bot$.
\end{proof}

\subsection{The pressure metric on the space of negatively curved metrics}

The results of this paragraph are stated in negative curvature but it is very likely that one could relax the assumption to the Anosov case. Again, the only obstruction for the moment is that it is still not known whether the X-ray transform $I_2$ (hence the operator $\Pi_2$) is injective on solenoidal tensors in the Anosov case when $\mathrm{dim}(M) \geq 3$.

\subsubsection{Definition of the pressure metric using the variance}\label{sec:pressure metric}
On $\mc{M}^-$, the cone of smooth negatively-curved metrics, we introduce the non-negative symmetric bilinear form
\begin{equation}\label{defG} 
G_g(h_1,h_2) := \langle \Pi^{g}_2 h_1, h_2 \rangle_{L^2(M,d\vol_g)}, 
\end{equation}
defined for $g \in \mc{M}$, $h_j\in T_g \mc{M} \simeq C^{\infty}(M; S^2T^*M)$. It is nondegenerate on $T_g \mc{M} \cap \ker D^*_g$, namely $G_g(h,h) \geq C_g \|h\|^2_{H^{-1/2}}$ by Lemma \ref{lemma:positivite-pi2unif} and the constant  $C_g$ turns out to be locally uniform for $g$ near a given metric $g_0$. 
Combining these facts, we obtain 
\begin{proposition}\label{metricG}
Let $g_0\in \mc{M}^-$, then the bilinear form $G$ defined in \eqref{defG} produces a Riemannian metric on the quotient space $\mc{M}^-/\mc{D}_0$ near the class $[g_0]$, where $\mc{M}^-/\mc{D}_0$ is identified with the slice $\mc{S}$ passing through $g_0$ as in \eqref{sliceS}.
\end{proposition}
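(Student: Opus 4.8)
The plan is to assemble three facts about the bilinear form $G$: its equivariance under the diffeomorphism group, the fact that it degenerates exactly along the orbit directions, and the uniform coercivity provided by Lemma~\ref{lemma:positivite-pi2unif}. Once the slice $\mc{S}$ of \eqref{sliceS} is used to identify $\mc{M}^-/\mc{D}_0$ with $\mc{S}$ near $[g_0]$, the metric will simply be the restriction $G|_{\mc{S}}$.

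First I would establish equivariance: for $\psi\in \mc{D}_0$ the lift $d\psi$ to unit tangent bundles conjugates the geodesic flow of $\psi^*g$ to that of $g$ and pushes the normalized Liouville measure of $\psi^*g$ to that of $g$; since $\pi_2^*(\psi^*h)=(\pi_2^*h)\circ d\psi$ and $d\vol_{\psi^*g}=\psi^*d\vol_g$, the definitions \eqref{Pim} and \eqref{defG} give $G_{\psi^*g}(\psi^*h_1,\psi^*h_2)=G_g(h_1,h_2)$ for all smooth symmetric $2$-tensors $h_1,h_2$. Next, from the identities $\Pi_2^gD_g=0$ and $D_g^*\Pi_2^g=0$ recalled before Lemma~\ref{lemma:positivite-pi2}, $G_g$ vanishes whenever one of its arguments is a potential tensor $D_gp$, so it only depends on the $g$-solenoidal parts of $h_1,h_2$ and annihilates $T(g\cdot\mc{D}_0)=\{D_gp\}$; conversely Lemma~\ref{lemma:positivite-pi2unif} (with base metric $g_0$) yields a $C^\infty$-neighborhood $\mc{U}_{g_0}$ of $g_0$ and $C>0$ with $G_g(h,h)=\cjg\Pi_2^gh,h\cjd\ge C\|\pi_{\ker D_g^*}h\|^2_{H^{-1/2}}$ for $g\in\mc{U}_{g_0}$, so $G_g(h,h)=0$ forces $\pi_{\ker D_g^*}h=0$. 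Thus $G_g$ is positive semi-definite on $T_g\mc{M}^-$ with null space exactly $T(g\cdot\mc{D}_0)$, which together with the equivariance shows that $G$ descends to a well-defined positive bilinear form on each $T_{[g]}(\mc{M}^-/\mc{D}_0)\cong T_g\mc{M}^-/T(g\cdot\mc{D}_0)$, independent of the representative in the orbit and of the chosen slice.

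Then I would transfer this to the slice $\mc{S}$. Since $T_g\mc{S}\cap T(g\cdot\mc{D}_0)=0$, the restriction $G_g|_{T_g\mc{S}}$ is positive definite; for a uniform lower bound near $g_0$ I would use that $g\mapsto\pi_{\ker D_g^*}\in\Psi^0(M;S^2T^*M)$ is continuous (as in the proof of Lemma~\ref{lemma:positivite-pi2unif}), so that $\pi_{\ker D_g^*}$ restricted to $T_g\mc{S}$ is an isomorphism onto $\ker D_g^*$ with inverse uniformly bounded on $H^{-1/2}$ for $g$ close to $g_0$; hence $\|\pi_{\ker D_g^*}h\|_{H^{-1/2}}\ge c\|h\|_{H^{-1/2}}$ on $T_g\mc{S}$ and $G_g(h,h)\ge cC\|h\|^2_{H^{-1/2}}$. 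Finally, $g\mapsto G_g$ is continuous, and as regular as the map $g\mapsto\Pi_2^g$, which is continuous into $\Psi^{-1}(M;S^2T^*M)$ by Proposition~\ref{proposition:pi2-continuite}, while $g\mapsto d\vol_g$ is smooth; so $G|_{\mc{S}}$ is a Riemannian metric (weak, in the sense that it controls only the $H^{-1/2}$ norm) on $\mc{S}$ near $g_0$, that is, on $\mc{M}^-/\mc{D}_0$ near $[g_0]$.

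I expect the main obstacle to be purely organizational: the equivariance step requires careful bookkeeping of the normalization of the Liouville measures and of the volume forms under pullback, and the positive-definiteness rests entirely on the $s$-injectivity of $I_2$ in negative curvature through Lemmas~\ref{lemma:positivite-pi2}--\ref{lemma:positivite-pi2unif}; apart from these, the proof is an assembly of results already at hand.
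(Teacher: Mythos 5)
Your proof is correct and follows essentially the same route as the paper: the paper's argument also decomposes $h\in T_g\mc{S}$ as $\mc{L}_Vg+h'$ with $D_g^*h'=0$, kills $h'$ via the coercivity of $\Pi_2$ on solenoidal tensors (Lemma~\ref{lemma:positivite-pi2}), and kills $\mc{L}_Vg$ via the transversality $T_g\mc{S}\cap\{\mc{L}_Vg\}=\{0\}$ of the slice. The extra material you include (equivariance under $\mc{D}_0$, uniform coercivity near $g_0$, continuity of $g\mapsto G_g$) is correct but not needed for the statement, which the paper reduces to non-degeneracy on $T\mc{S}$.
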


\begin{proof}
It suffices to show that $G$ is non-degenerate on $T\mc{S}$. Let $h\in T_{g}\mc{S}$ and assume that $G_g(h,h)=0$. We can write $h=\mc{L}_{V}g+h'$ where $D_g^*h'=0$ and $V$ is a smooth vector field and $\mc{L}_V$ the Lie derivative with respect to $V$. 
By Lemma \ref{lemma:positivite-pi2} we obtain $0=G_g(h,h)\geq C\|h'\|_{H^{-1/2}}$. Thus $h=\mc{L}_Vg$, but we also know that $T_{g}\mc{S}\cap \{\mc{L}_Vg ~|~ V\in C^\infty(M;T^*M)\}=\{0\}$ since $\mc{S}$ is a slice. Therefore $h=0$.
\end{proof}

\subsubsection{Definition using the intersection number} 
In this paragraph, we want to relate the pressure metric previously introduced to some renormalized intersection numbers involving some well-chosen potentials. This will be needed to show that the pressure metric coincides with the (a multiple of) Weil-Petersson metric in the case where $M$ is a surface and one restricts to hyperbolic metrics. This also makes a relation with recent work of \cite{Bridgeman-Canary-Labourie-Sambarino-15}.

Let us assume that $g$ is in a fixed $C^2$-neighborhood of $g_0$. Since $J^u_{g_0} > 0$, we obtain that $V_g = J^u_{g_0} + a_g-1 > 0$ if $g$ is close enough to $g_0$. By \cite[Lemma 2.4]{Sambarino-14}, there exists a unique constant ${\bf h}_{V_g} \in \R$ such that $\mathbf{P}(-{\bf h}_{V_g} V_g)= 0$. In particular, $\mc{N}$ coincides in a neighborhood of $g_0$ with the set $\left\{ g \in \mc{M} ~|~ {\bf h}_{V_g} = 1\right\}$. One can express the constant ${\bf h}_{V_g}$ as ${\bf h}_{V_g} = {\bf h}_{\mathrm{top}}(\varphi_t^{g_0,V_g})$, where $\varphi_t^{g_0,V_g}$ is a time-reparametrization of the geodesic flow of $g_0$ (see \cite[Section 3.1.1]{Bridgeman-Canary-Labourie-Sambarino-15}). More precisely, given $f \in C^\nu(S_{g_0}M)$ a Hölder-continuous positive function on $S_{g_0}M$, we introduce ${\bf h}_f$ to be the unique real number such that $\mathbf{P}(-{\bf h}_f f)=0$ and we set:
\[
S_{g_0}M \times \R \ni (z,t) \mapsto \kappa_f(z,t) := \int_0^t f(\varphi^{g_0}_s(z)) \,ds.
\]
For a fixed $z\in S_{g_0}M$, this is a homeomorphism on $\R$ and thus allows to define:
\begin{equation}
\label{equation:reparametrisation2}
\varphi_{\kappa_f(z,t)}^{g_0,f}(z) := \varphi^{g_0}_t(z).
\end{equation}
We now follow the approach of \cite[Section 3.4.1]{Bridgeman-Canary-Labourie-Sambarino-15}. Given two Hölder-continous functions $f,f' \in C^\nu(S_{g_0}M)$ such that $f > 0$, one can define an \emph{intersection number} \cite[Eq. (13)]{Bridgeman-Canary-Labourie-Sambarino-15} 
\[
\mathbf{I}_{g_0}(f,f') := \dfrac{\int_{S_{g_0}M} f' \, d\mu_{- {\bf h}_f f}}{\int_{S_{g_0}M} f \,d\mu_{- {\bf h}_f f}}
\]
where $d\mu_{- {\bf h}_f f}$ is the equilibrium measure for the potential $-{\bf h}_f f$. We have the following result, which follows from \cite[Proposition 3.8]{Bridgeman-Canary-Labourie-Sambarino-15} stated for Anosov flows on compact metric spaces:

\begin{proposition}[Bridgeman-Canary-Labourie-Sambarino \cite{Bridgeman-Canary-Labourie-Sambarino-15}]\label{BCLS15}
Let $f, f' : S_{g_0}M \rightarrow \R_+$ be two Hölder-continuous positive functions. Then:
\[ \mathbf{J}_{g_0}(f,f') := \dfrac{{\bf h}_{f'}}{{\bf h}_f}  \mathbf{I}_{g_0}(f,f') \geq 1 \]
with equality if and only if ${\bf h_f} f$ and ${\bf h_{f'}} f'$ are cohomologous for the geodesic flow $\varphi_t^{g_0}$ of $g_0$. The quantity $\mathbf{J}_{g_0}(f,f')$ is called the \emph{renormalized intersection number}.
\end{proposition}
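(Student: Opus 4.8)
The plan is to derive both the inequality and the equality characterization from the variational principle \eqref{equation:pression} for the topological pressure of the Anosov flow $\varphi_t^{g_0}$, combined with the uniqueness of equilibrium states recorded in \eqref{equalityequilibrium}. First I would dispatch the preliminaries. Since $f,f'$ are Hölder continuous and bounded below by a positive constant on the compact space $S_{g_0}M$, the function $t\mapsto \mathbf{P}(-tf)$ is real-analytic, strictly decreasing (its derivative equals $-\int_{S_{g_0}M} f\, d\mu_{-tf}<0$), takes the value $\mathbf{P}(0)={\bf h}_{\rm top}(\varphi_1^{g_0})>0$ at $t=0$, and tends to $-\infty$ as $t\to+\infty$; hence ${\bf h}_f$ is well-defined and strictly positive, and likewise ${\bf h}_{f'}>0$. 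This is \cite[Lemma 2.4]{Sambarino-14}. Moreover $\int_{S_{g_0}M} f\, d\mu>0$ for any probability measure $\mu$, which we will use to divide freely.

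For the inequality, write $\mu:=\mu_{-{\bf h}_f f}$ for the equilibrium state of the potential $-{\bf h}_f f$. Since $\mathbf{P}(-{\bf h}_f f)=0$ and $\mu$ attains the supremum in \eqref{equation:pression},
\[ {\bf h}_\mu(\varphi_1^{g_0})={\bf h}_f\int_{S_{g_0}M} f\, d\mu. \]
Applying the variational inequality \eqref{equation:pression} at the potential $-{\bf h}_{f'}f'$ to the invariant measure $\mu$ gives
\[ {\bf h}_\mu(\varphi_1^{g_0})-{\bf h}_{f'}\int_{S_{g_0}M} f'\, d\mu \leq \mathbf{P}(-{\bf h}_{f'}f')=0. \]
Combining the two displays yields ${\bf h}_f\int_{S_{g_0}M} f\, d\mu\leq {\bf h}_{f'}\int_{S_{g_0}M} f'\, d\mu$, and dividing by the positive number ${\bf h}_f\int_{S_{g_0}M} f\, d\mu$ gives exactly $\mathbf{J}_{g_0}(f,f')=\frac{{\bf h}_{f'}}{{\bf h}_f}\,\mathbf{I}_{g_0}(f,f')\geq 1$.

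For the equality case, note that $\mathbf{J}_{g_0}(f,f')=1$ forces the variational inequality above to be an equality, i.e. $\mu$ is the equilibrium state for $-{\bf h}_{f'}f'$ (using uniqueness of equilibrium states in the Anosov setting); by \eqref{equalityequilibrium} this implies $-{\bf h}_f f+{\bf h}_{f'}f'=X_{g_0}u+c$ for some Hölder function $u$ differentiable along the flow and some constant $c$. Integrating this identity against the $\varphi^{g_0}$-invariant probability measure $\mu$ kills the coboundary term, so $c={\bf h}_{f'}\int_{S_{g_0}M} f'\, d\mu-{\bf h}_f\int_{S_{g_0}M} f\, d\mu$, which vanishes precisely by the equality just obtained; hence ${\bf h}_f f$ and ${\bf h}_{f'}f'$ are cohomologous for $\varphi_t^{g_0}$. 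Conversely, if ${\bf h}_f f-{\bf h}_{f'}f'=X_{g_0}u$, then pressure is unchanged by adding a coboundary, so the equilibrium states of $-{\bf h}_f f$ and $-{\bf h}_{f'}f'$ coincide and equal $\mu$; integrating the coboundary relation against $\mu$ gives ${\bf h}_f\int_{S_{g_0}M} f\, d\mu={\bf h}_{f'}\int_{S_{g_0}M} f'\, d\mu$, that is $\mathbf{J}_{g_0}(f,f')=1$.

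The main obstacle, and the only place where the Anosov property is genuinely needed, is the rigidity input: that equality in the variational principle \eqref{equation:pression} characterizes \emph{the} unique equilibrium state, and that \eqref{equalityequilibrium} then pins the potential down up to a coboundary and a constant. Everything else is bookkeeping with the pressure functional. Alternatively, since $\varphi_t^{g_0}$ is a topologically transitive Anosov flow on the compact metric space $S_{g_0}M$, the proposition is a direct specialization of \cite[Proposition 3.8]{Bridgeman-Canary-Labourie-Sambarino-15} and one may simply quote it.
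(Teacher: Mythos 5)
Your proposal is correct. The paper itself gives no proof of this proposition: it is stated as a direct consequence of \cite[Proposition 3.8]{Bridgeman-Canary-Labourie-Sambarino-15} for Anosov flows on compact metric spaces, which is exactly the fallback you mention in your last sentence. The self-contained argument you supply — evaluate the variational principle \eqref{equation:pression} for the potential $-{\bf h}_{f'}f'$ at the equilibrium state $\mu_{-{\bf h}_f f}$, use $\mathbf{P}(-{\bf h}_f f)=\mathbf{P}(-{\bf h}_{f'}f')=0$ to convert the resulting inequality into $\mathbf{J}_{g_0}(f,f')\geq 1$, and then run the equality case through uniqueness of equilibrium states and \eqref{equalityequilibrium} — is the standard proof (essentially the one in the cited reference) and is complete: the positivity of ${\bf h}_f,{\bf h}_{f'}$ is correctly reduced to the strict monotonicity of $t\mapsto\mathbf{P}(-tf)$ for $f$ bounded below by a positive constant, the constant $c$ in the cohomology relation is correctly shown to vanish by integrating against the invariant measure, and the converse direction is handled. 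Nothing is missing.
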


We apply the previous proposition with $f := J^u_{g_0}$ (then ${\bf h}_{J^u_{g_0}} = 1$) and $f' := V_g$. Without assuming that $g \in \mc{N}$ (that is we do not necessarily assume that ${\bf h}_{V_g}=1$), we have
\[
\begin{split}
\mathbf{J}_{g_0}(J^u_{g_0},V_g) & = {\bf h}_{V_g} \mathbf{I}_{g_0}(J^u_{g_0},V_g)  = {\bf h}_{V_g}  \dfrac{\int_{S_{g_0}M} (J^u_{g_0} + a_g - \mathbf{1}) \, d\mu^{\rm L}_{g_0}}{\int_{S_{g_0}M} J^u_{g_0} d\mu^{\rm L}_{g_0}} \\
& = {\bf h}_{V_g}  \dfrac{{\bf h}_L(g_0) + I_{\mu^{\rm L}_{g_0}}(g_0,g) -1}{{\bf h}_L(g_0)} \geq 1
\end{split}
\]
where ${\bf h}_L(g_0)$ is the entropy of Liouville measure for $g_0$.
In the specific case where $g \in \mc{N}$, ${\bf h}_{V_g}=1$ and we find that $I_{\mu^{\rm L}_{g_0}}(g_0,g) \geq 1$ with equality if and only if $a_g$ is cohomologous to $1$, 
that is if and only if $L_g=L_{g_0}$, or alternatively if and only if $\varphi^g$ and $\varphi^{g_0}$ are time-preserving conjugate. This computation holds as long as $J^u_{g_0} + a_g-1 > 0$ (which is true in a $C^2$-neighborhood of $g_0$).

In particular, on $\mc{N}$, we have the linear relation 
\[
\mathbf{J}_{g_0}(J^u_{g_0},V_g) = 1+ \dfrac{I_{\mu^{\rm L}_{g_0}}(g_0,g) -1}{{\bf h}_L(g_0)}.
\]
In the notations of \cite[Proposition 3.11]{Bridgeman-Canary-Labourie-Sambarino-15}, the second derivative computed for the family $(g_\lambda)_{\lambda \in (-1,1)} \in \mc{N}$ is
\begin{equation}\label{egaliteHessianJ}
\pl_{\la}^2 \mathbf{J}_{g_0}(J^u_{g_0},V_{g_\lambda}) |_{\lambda = 0} = \dfrac{1}{{\bf h}_L(g_0)} \pl_{\la}^2 I_{\mu^{\rm L}_{g_0}}(g_0,g_\lambda) |_{\lambda = 0} = \dfrac{ \langle \Pi_2^{g_0} \dot{g}_0, \dot{g}_0 \rangle}{4 {\bf h}_L(g_0)}
\end{equation}
and is called the \emph{pressure form}. When considering a slice transverse to the $\mc{D}_0$ action on $\mc{N}$, it induces a metric called the \emph{pressure metric} by Lemma \ref{lemma:positivite-pi2}. To summarize:
\begin{lemma}
\label{lemma:lien-metrique}
Given a smooth metric $g_0$, the metric $G_{g_0}$ restricted to $\mc{N}$ can be obtained from the renormalized 
intersection number by
\[
G_{g_0}(h,h) = 4 {\bf h}_L(g_0) \pl_\la^2 \mathbf{J}_{g_0}(J^u_{g_0},V_{g_\lambda})|_{\lambda = 0}
\]
where $(g_\lambda)_{\lambda \in (-1,1)}$ is any family of metrics such that $g_\lambda \in \mc{N}$ and $\dot{g}_0 = h\in T_{g_0}\mc{N}$.
\end{lemma}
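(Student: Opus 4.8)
The plan is to read the identity off equation \eqref{egaliteHessianJ} and the definition \eqref{defG} of $G_{g_0}$, the one point needing real care being that the right-hand side, \emph{a priori} a function of the whole curve $(g_\lambda)$, in fact depends only on its velocity $h=\dot g_0$. First I would use Lemma \ref{lemma:h1} to fix a $C^2$ curve $(g_\lambda)_{\lambda\in(-1,1)}\subset\mc{N}$ with $g_\lambda|_{\lambda=0}=g_0$ and $\dot g_0=h$; by \eqref{dFg_0} one has $h\in T_{g_0}\mc{N}=(\R g_0)^\bot$. Differentiating twice in $\lambda$ the relation valid on $\mc{N}$, namely $\mathbf{J}_{g_0}(J^u_{g_0},V_{g_\lambda})=1+{\bf h}_L(g_0)^{-1}\bigl(I_{\mu^{\rm L}_{g_0}}(g_0,g_\lambda)-1\bigr)$, and recalling $\Phi(g)=I_{\mu^{\rm L}_{g_0}}(g_0,g)-1$, yields
\[
\pl_\la^2\,\mathbf{J}_{g_0}(J^u_{g_0},V_{g_\lambda})|_{\lambda=0}={\bf h}_L(g_0)^{-1}\,\pl_\la^2\,\Phi(g_\lambda)|_{\lambda=0}.
\]

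Next I would evaluate $\pl_\la^2\Phi(g_\lambda)|_{\lambda=0}$. Since $g_\lambda\in\mc{N}$ forces $F(g_\lambda)\equiv0$, one has $\Phi=\Psi-F=\Psi$ along the curve; and since $\Psi\geq\Psi(g_0)=0$ has a global minimum at $g_0$, the differential $d\Psi_{g_0}$ vanishes on all of $T_{g_0}\mc{M}^{k,\alpha}$ (as recorded in the proof of Proposition \ref{estimeePsi}), so the second-order Taylor coefficient along the curve is exactly the ambient Hessian,
\[
\pl_\la^2\,\Phi(g_\lambda)|_{\lambda=0}=\pl_\la^2\,\Psi(g_\lambda)|_{\lambda=0}=d^2\Psi_{g_0}(h,h),
\]
the second jet $\ddot g_0$ contributing nothing. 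The computation already performed in the proof of Proposition \ref{estimeePsi} — Lemma \ref{lemma:differentielle-a} writing $da_{g_0}.h-\tfrac{1}{2}\pi_2^*h$ as a coboundary and \eqref{variancevsPi} identifying the variance with $\Pi^{g_0}$ — gives $d^2\Psi_{g_0}(h,h)=\tfrac{1}{4}\bigl(\cjg\Pi_2^{g_0}h,h\cjd_{L^2}-\cjg h,g_0\cjd_{L^2}^2\bigr)$, and the last term vanishes as $h\in(\R g_0)^\bot$. Hence $\pl_\la^2\Phi(g_\lambda)|_{\lambda=0}=\tfrac{1}{4}\cjg\Pi_2^{g_0}h,h\cjd_{L^2}=\tfrac{1}{4}G_{g_0}(h,h)$ by \eqref{defG}. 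Substituting into the display above and rearranging gives $G_{g_0}(h,h)=4\,{\bf h}_L(g_0)\,\pl_\la^2\mathbf{J}_{g_0}(J^u_{g_0},V_{g_\lambda})|_{\lambda=0}$, which in particular is independent of the curve realizing $h$; together with Lemma \ref{lemma:positivite-pi2} and its uniform version Lemma \ref{lemma:positivite-pi2unif}, this confirms the pressure form is positive-definite on a slice transverse to the $\mc{D}_0$-action, hence a genuine Riemannian metric there.

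The step I expect to be the main obstacle — or at least the one to phrase with the most care — is the identification $\pl_\la^2\Phi(g_\lambda)|_{\lambda=0}=d^2\Psi_{g_0}(h,h)$: moving from the Hessian of the restricted functional $\Phi|_{\mc{N}}$ to a closed-form ambient expression is licit precisely because $d\Psi_{g_0}\equiv0$ on the \emph{entire} tangent space — a consequence of $g_0$ being a global minimum of $\Psi$, not merely a critical point of $\Phi|_{\mc{N}}$ — so the normal component of $\ddot g_0$, which need not be tangent to $\mc{N}$, drops out; and because $F\equiv0$ on $\mc{N}$, so $\Phi$ and $\Psi$ have the same restriction to $\mc{N}$ and not just the same first-order behaviour at $g_0$. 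Everything else is substitution into \eqref{egaliteHessianJ}, \eqref{defG}, \eqref{dFg_0}, and the differential identity \eqref{diffPhi}.
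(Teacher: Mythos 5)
Your argument is correct and follows essentially the same route as the paper: the lemma is there presented as a direct consequence of the linear relation $\mathbf{J}_{g_0}(J^u_{g_0},V_g)=1+{\bf h}_L(g_0)^{-1}(I_{\mu^{\rm L}_{g_0}}(g_0,g)-1)$ on $\mc{N}$ together with the Hessian computation of Proposition \ref{estimeePsi} (restated as Lemma \ref{lemma:convexite}), which is exactly what you substitute. Your explicit justification that the second jet $\ddot g_0$ drops out — by passing from $\Phi|_{\mc{N}}$ to the ambient functional $\Psi$ with $d\Psi_{g_0}\equiv 0$ — is a point the paper leaves implicit, and it is handled correctly.
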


\subsubsection{Link with the Weil-Petersson metric}

We now assume that $M=S$ is an orientable surface of genus $\geq 2$. Let $\mc{T}(S)$ be the Teichm\"uller space of $S$. We show that the pressure metric coincides with the (a multiple of) Weil-Petersson metric in restriction to $\mc{T}(S)$. We fix a hyperbolic metric $g_0$. Given $\eta, \rho \in \mc{T}(S)$ and $g_\eta,g_\rho$ the associated hyperbolic metrics, since $\mc{T}(S)$ is connected (indeed a ball in $\cc^{3({\rm genus}(M)-1)}$) there is topological conjugacy between $g_\eta,g_\rho$ and $g_0$ and one can defined the time rescaling $a_{g_\eta}$ and $a_{g_{\rho}}$ by using a path of hyperbolic metrics relating $g_0$ to $g_\eta$ or to $g_\rho$.
The intersection number is defined as
\[ \mathbf{I}(\eta,\rho) := \mathbf{I}_{g_0}(a_{g_\eta},a_{g_\rho})= \frac{\int_{S_{g_0}M}a_{g_\rho}d\mu_\eta}{\int_{S_{g_0}M}a_{g_\eta}d\mu_\eta}\]
where $[g_\eta]=\eta, [g_\rho]=\rho$ and $\mu_\eta$ is the equilibrium state of $-{\bf h}_{a_{g_\eta}}a_{g_{\eta}}$. Note that ${\bf h}_{a_{g_\eta}}={\bf h}_{\rm top}(\varphi^{g_0,a_\eta}_t)=1$ since $\varphi^{g_0,a_\eta}$ is conjugate to the geodesic flow of $g_\eta$, which in turn has constant curvature, and by \cite[Lemma 2.4]{Sambarino-14}, $a_{g_\eta}d\mu_{\eta}/\int_{S_{g_0}M}a_{g_\eta}d\mu_{\eta}$ is the measure of maximal entropy of the flow $\varphi^{g_0,a_\eta}_t$, thus also the normalized Liouville measure of $g_\eta$ (viewed on $S_{g_0}M$). This number $\mathbf{I}(\eta,\rho)$ is in fact \emph{independent of $g_0$} as it  can alternatively be written 
\[\mathbf{I}(\eta,\rho)=\lim_{T\to \infty}\frac{1}{N_T(\eta)}\sum_{c\in \mc{C}, L_{g_\eta}(c)\leq T}\frac{L_{g_\rho}(c)}{L_{g_\eta}(c)}\]
where $N_T=\sharp \{c\in \mc{C} \, | L_{g_\eta}(c)\leq T\}$ (see \cite[Proof of Th. 4.3]{Bridgeman-Canary-Sambarino-18}). 
In particular, taking $g_0 = g_\eta$, one has 
\[
 \mathbf{I}(\eta,\rho) = I_{\mu^{\rm L}_{g_\eta}}(g_\eta,g_\rho).
\]
As explained in \cite[Theorem 4.3]{Bridgeman-Canary-Sambarino-18}, up to a normalization constant 
$c_0$ depending on the genus only, the Weil-Petersson metric on $\mc{T}(S)$ is equal to 
\begin{equation}\label{WP}
\|h\|_{\rm WP}^2= c_0 \pl_{\la}^2  \mathbf{I}(\eta,\eta_\lambda)|_{\lambda=0} = c_0 \pl_{\la}^2 I_{\mu^{\rm L}_{g_\eta}}(g_\eta,g_{\eta_\lambda})|_{\lambda=0},
\end{equation}
where $\dot{\eta}_0=h$ and $(g_{\eta_\lambda})_{\lambda \in (-1,1)}$ is a family of hyperbolic metrics such that $[g_{\eta_\lambda}] = \eta_\lambda$, $\eta=\eta_0 = [g_0]$. This fact follows from combined works of Thurston, Wolpert \cite{Wolpert} and Mc Mullen \cite{McMullen}:  
the length of a random geodesic $\gamma$ on $(S,g_0)$ with respect to 
$g_{\eta_\lambda}$ has a local minimum at $\lambda=0$ and the Hessian is positive definite (Thurston), is equal to the Weil-Petersson norm squared of $\dot{g}$ (Wolpert \cite{Wolpert, Fathi-Flaminio}) 
and is given by a variance (Mc Mullen \cite{McMullen}); here random means equidistributed with respect to the Liouville measure of $g_0$.
 We can check that the metric $G$ also corresponds to this metric:
\begin{proposition}
The metric $G$ on $\mc{T}(S)$ is a multiple of the Weil-Petersson metric.
\end{proposition}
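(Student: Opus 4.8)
The plan is to feed the family of hyperbolic deformations of $g_0$ into the chain of identities already established on $\mc{N}$ and to compare it with the Thurston--Wolpert--McMullen description of the Weil--Petersson metric recalled in \eqref{WP}. I would begin with the two elementary observations that make the reduction work. In dimension $2$ a constant-curvature $-1$ metric has constant unstable Jacobian $J^u_{g_0}\equiv 1$, so that $V_g=J^u_{g_0}+a_g-1=a_g$ for every $g$ close to $g_0$. Moreover, any smooth family $(g_{\eta_\lambda})_{\lambda\in(-1,1)}$ of hyperbolic metrics with $[g_{\eta_0}]=[g_0]$ stays inside $\mc{N}$: indeed $V_{g_{\eta_\lambda}}=a_{g_{\eta_\lambda}}$, the reparametrized flow $\varphi^{g_0,a_{g_{\eta_\lambda}}}$ is conjugate to the geodesic flow of $g_{\eta_\lambda}$, which has constant curvature $-1$ and hence topological entropy $1$, so the normalizing constant ${\bf h}_{V_{g_{\eta_\lambda}}}$ of \cite[Lemma 2.4]{Sambarino-14} equals $1$, i.e. ${\bf P}(-V_{g_{\eta_\lambda}})=0$. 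Finally I would recall that the tangent directions to $\mc{T}(S)$ at $[g_0]$ are the transverse traceless tensors (holomorphic quadratic differentials), which are solenoidal and trace-free, hence lie in $T_{g_0}\mc{S}\cap T_{g_0}\mc{N}=\ker D_{g_0}^*\cap(\R g_0)^\bot$; on such tensors $G_{g_0}$ is positive definite by Lemma \ref{lemma:positivite-pi2}, so $G$ restricts to a genuine Riemannian metric on $\mc{T}(S)$.

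Granting this, the computation is immediate. Fix $h\in T_{g_0}\mc{T}(S)$ and a family $(g_{\eta_\lambda})$ of hyperbolic metrics with $\dot{g}_0=h$; since $(g_{\eta_\lambda})\subset\mc{N}$, \eqref{egaliteHessianJ} (equivalently Lemma \ref{lemma:lien-metrique} together with the linear relation on $\mc{N}$ displayed just above it) applies, and its last two members give
\[ \pl_\lambda^2 I_{\mu^{\rm L}_{g_0}}(g_0,g_{\eta_\lambda})\big|_{\lambda=0}=\tfrac14\langle\Pi_2^{g_0}h,h\rangle=\tfrac14 G_{g_0}(h,h). \]
On the other hand, taking $g_\eta=g_0$ in the identity $\mathbf{I}(\eta,\rho)=I_{\mu^{\rm L}_{g_\eta}}(g_\eta,g_\rho)$, the formula \eqref{WP} reads $\|h\|_{\rm WP}^2=c_0\,\pl_\lambda^2 I_{\mu^{\rm L}_{g_0}}(g_0,g_{\eta_\lambda})|_{\lambda=0}$ with $c_0$ depending only on the genus. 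Comparing the two yields $G_{g_0}(h,h)=(4/c_0)\,\|h\|_{\rm WP}^2$ for every $h\in T_{g_0}\mc{T}(S)$, and by polarization the bilinear identity $G_{g_0}(h_1,h_2)=(4/c_0)\,\langle h_1,h_2\rangle_{\rm WP}$. Since $4/c_0$ does not depend on the (hyperbolic) base point, $G|_{\mc{T}(S)}=(4/c_0)\,\|\cdot\|_{\rm WP}^2$.

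The only step carrying genuine content is the verification that the hyperbolic family lies in $\mc{N}$, so that \eqref{egaliteHessianJ} (hence Lemma \ref{lemma:lien-metrique} and the linear relation) is applicable; this rests on $J^u_{g_0}\equiv 1$ for hyperbolic surfaces and on the pressure--entropy normalization ${\bf P}(-{\bf h}_a a)=0$ for the time-reparametrization cocycle $a=a_{g_{\eta_\lambda}}$. Everything else is bookkeeping of constants plus the already-cited identification (Thurston, Wolpert \cite{Wolpert}, McMullen \cite{McMullen}) of the Weil--Petersson norm with the Hessian, along hyperbolic deformations, of the geodesic stretch. One should also note in passing that both $G_{g_0}$ restricted to the slice $\mc{S}$ and the Hessian $\pl_\lambda^2 I_{\mu^{\rm L}_{g_0}}(g_0,g_{\eta_\lambda})|_{\lambda=0}$ descend to $\mc{T}(S)$, so that the stated identity is well-posed on the quotient.
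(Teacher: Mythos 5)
Your proof is correct and follows essentially the same route as the paper, which simply combines \eqref{egaliteHessianJ}, \eqref{WP} and the normalization ${\bf h}_L(g_\eta)=1$ for curvature $-1$. The extra verification you supply — that a family of hyperbolic metrics lies in $\mc{N}$ (via $J^u_{g_0}\equiv 1$ and ${\bf P}(-a_{g_{\eta_\lambda}})=0$), so that \eqref{egaliteHessianJ} is indeed applicable — is a detail the paper leaves implicit, and it is a worthwhile addition rather than a deviation.
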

\begin{proof}
This follows directly from \eqref{egaliteHessianJ}, \eqref{WP} and the fact that ${\bf h}_L(g_\eta)=1$ if $g_\eta$ has curvature $-1$.
\end{proof}

\begin{remark}
We notice that the positivity of the metric in the case of Teichm\"uller space follows only from some convexity argument in finite dimension. In the case of general metrics with negative curvature, the elliptic estimate of Lemma \ref{lemma:positivite-pi2} on the variance is much less obvious due to the infinite dimensionality of the space. As it turns out, this is the key for the local rigidity in our results.
\end{remark}

\section{Uniform elliptic estimates on $\Pi_2$}

\label{section:continuite}

In this section, we prove that the operator $\Pi_2^g \in\Psi^{-1}(M;S^2T^*M)$ depends continuously on $g$. Let $\mc{M}^{\rm An}$ be the space of smooth Riemannian metrics with Anosov geodesic flow. 

\begin{proposition}
\label{proposition:pi2-continuite}
The map $\mc{M}^{\rm An} \ni g \mapsto \Pi_2^g \in \Psi^{-1}(M;S^2T^*M)$ is continuous, when $\Psi^{-1}(M;S^2T^*M)$ is equipped with its topology of Fr\'echet spaces. 
\end{proposition}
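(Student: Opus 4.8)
The strategy is to trace the construction of $\Pi_2^g$ through the microlocal machinery and show that every ingredient varies continuously in $g$. Recall that $\Pi_2^g = {\pi_2^g}_*(\Pi^g + \langle\cdot,1\rangle_{L^2(\mu_{g}^{\rm L})})(\pi_2^g)^*$, where $\Pi^g$ is the operator on $C^\infty(S_gM)$ defined via the resolvent of the generator $X_g$ of the geodesic flow. The first step is to transfer everything to a fixed model: using the diffeomorphism $(x,v)\mapsto (x,v/|v|_g)$ from $S_{g_0}M$ to $S_gM$ as in \S\ref{section:stretch}, we view all geodesic vector fields $X_g$ as a smooth (indeed analytic) family of vector fields on the fixed compact manifold $S_{g_0}M$, and $(\pi_2^g)^*$, ${\pi_2^g}_*$ become a continuous family of order-$0$, resp. order-$0$, Fourier integral-type/multiplication operators depending continuously on $g$ (the maps $g\mapsto X_g$, $g\mapsto\mu^{\rm L}_g$, $g\mapsto\pi_2^{g*}$ are continuous by explicit formulas). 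So it suffices to prove that $g\mapsto \Pi^g \in \Psi^{-1}(S_{g_0}M)$ is continuous.

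The core is the continuity of $\Pi^g$, which I would obtain from the Faure--Sjöstrand/Dyatlov--Zworski anisotropic-space construction: $\Pi^g$ is (minus) the holomorphic extension to $\lambda=0$ of the holomorphic family $\lambda\mapsto \mathcal R^g(\lambda) + \mathcal R^g_-(\lambda)$, where $\mathcal R^g(\lambda) = (X_g+\lambda)^{-1}$ and $\mathcal R^g_-(\lambda)=(-X_g+\lambda)^{-1}$ are the forward/backward resolvents, meromorphically continued from $\mathrm{Re}(\lambda)>0$ as bounded operators on a suitable anisotropic Sobolev space $\mathcal H^{s}$ adapted to the Anosov splitting, after subtracting the rank-one pole at $\lambda=0$ coming from the constants (the flow being mixing, $0$ is a simple resonance). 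The key point is that for an Anosov flow the escape function defining $\mathcal H^s$ can be chosen uniformly in a neighborhood of $g_0$ (structural stability gives a continuous family of stable/unstable cones), so the spaces $\mathcal H^s$ are fixed while the operators $X_g+\lambda$ form a continuous family of elliptic-in-the-good-directions operators on them. Standard perturbation theory for the meromorphic family (Gohberg--Sigal theory, or the explicit projector formula $\Pi_0^g = \frac{1}{2\pi i}\oint (X_g+\lambda)^{-1}\,d\lambda$ around a small circle) then yields that $g\mapsto \Pi^g$ is continuous as a map into bounded operators $\mathcal H^s\to\mathcal H^{-s}$ for each fixed $s>0$; combined with the a priori wavefront bound $\mathrm{WF}'(\Pi^g)\subset (E_u^*\times E_s^*)\cup\Delta(T^*S_{g_0}M)$ (uniform in $g$ since $E_u^*,E_s^*$ depend continuously on $g$) and the semiclassical description of $\Pi^g$ near the diagonal, one upgrades this to continuity in the full $\Psi^{-1}$ Fréchet topology by controlling each pseudodifferential seminorm (symbol seminorms in charts plus the $C^\infty$ topology away from the diagonal, as set up in \S\ref{ssection:microlocal}).

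The last step is to push the continuity of $\Pi^g$ through the outer operators: $\Pi_2^g = {\pi_2^g}_*\,\Pi^g\,(\pi_2^g)^* + {\pi_2^g}_*(\pi_2^g)^* \otimes(\text{averaging against }\mu^{\rm L}_g)$, and since composition of pseudodifferential operators with the continuously-varying push/pullback operators is continuous in the Fréchet topology, and $g\mapsto\mu_g^{\rm L}$, $g\mapsto{\pi_2^g}_*(\pi_2^g)^*\in\Psi^0$ are continuous, we conclude $g\mapsto\Pi_2^g\in\Psi^{-1}(M;S^2T^*M)$ is continuous. The main obstacle is genuinely the middle step: one must make precise that the anisotropic spaces and the escape function can be chosen \emph{once and for all} on a neighborhood of $g_0$ — this requires that the hyperbolic splitting $E_s^g\oplus\mathbb R X_g\oplus E_u^g$ and the associated dual bundles vary continuously (which is exactly the content of structural stability / the stability of the Anosov property on an open set), so that the resolvents $(X_g\pm\lambda)^{-1}$ act on a fixed scale of spaces and depend continuously on $g$; everything else is bookkeeping with the symbol calculus.
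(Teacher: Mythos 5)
Your overall strategy (fixed anisotropic spaces chosen uniformly near $g_0$, continuity of the resolvents in $g$, then bookkeeping with $\pi_2^*$ and ${\pi_2}_*$) points in the right direction, but there are two genuine gaps. First, the reduction ``it suffices to prove that $g\mapsto \Pi^g\in\Psi^{-1}(S_{g_0}M)$ is continuous'' is based on a false premise: $\Pi^g$ is \emph{not} a pseudodifferential operator on $SM$. Its wavefront set contains $E_u^*(g)\times E_s^*(g)$ in addition to the conormal to the diagonal, so it does not lie in $\Psi^{-1}(SM)$ for any $g$. The operator $\Pi_2^g={\pi_2}_*(\Pi^g+\langle\cdot,\mathbf{1}\rangle)\pi_2^*$ is pseudodifferential only because $\pi_2^*$ maps into distributions with wavefront set in $V^*$, and $V^*\cap E_{u/s}^*=\{0\}$ by the absence of conjugate points; the singular (non-smoothing) part of $\Pi_2^g$ therefore comes entirely from finite flow times. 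Second, and relatedly, the step where you ``upgrade'' continuity of the resolvent family on anisotropic spaces to continuity of all $\Psi^{-1}$ symbol seminorms is exactly the content that needs proof and cannot be obtained by abstract perturbation theory: continuity of $R_g^\pm(\lambda)$ as bounded operators $\mc{H}^{m_g}\to\mc{H}^{m_g}$ gives no control on conormal seminorms of a Schwartz kernel.

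The paper's proof fills both gaps with a concrete decomposition. Choosing an even cutoff $\chi$ equal to $1$ on $[-T,T]$ (with $T$ fixed so that the $g$-exponential map is a diffeomorphism up to time $T$ and so that $\Phi_t^g(V^*\cap\ker\iota_{X_g})$ enters a fixed unstable cone for $t\geq T$, uniformly in $g$ near $g_0$), one writes $\Pi^g$ as a finite-time integral $\int\chi(t)e^{tX_g}dt$ plus $R_g^\pm(0)\int\chi'(t)e^{\pm tX_g}dt$ plus a rank-one term. After conjugation by $\pi_2^*,{\pi_2}_*$, the finite-time piece $\Omega_1^g$ has an explicit conormal Schwartz kernel written via $\exp^{\tilde g}_x$ (à la Stefanov--Uhlmann), whose symbol seminorms are checked by hand to depend continuously (indeed smoothly) on $g$; the resolvent piece $\Omega_2^g$ is shown to be a \emph{smoothing} operator with kernel continuous in $g$, using the DGRS continuity of the resolvents on the fixed anisotropic spaces together with a microlocal partition near $V^*$ (propagation into the unstable cone, where the anisotropic weight has order $\leq -N$, plus ellipticity of $X_g$ away from $\ker\iota_{X_g}$). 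If you want to salvage your argument, you should replace the claim about $\Pi^g\in\Psi^{-1}(SM)$ by such a local/nonlocal splitting; without it the key step has no proof.
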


Recall that the Fr\'echet topology was introduced at the beginning of \S \ref{ssection:microlocal}. We fix a metric $g_0$ and we work in a neighborhood $\mc{U}$ of $g_0$ in the $C^\infty$ topology. In particular, we will always assume that this neighborhood $\mc{U}$ is small enough so that any $g \in \mc{U}$ has an Anosov geodesic flow that is orbit-conjugated to that of $g_0$ by structural stability. We will also see the geodesic flows $(\varphi_t^g)_{t \in \R}$ as acting on the unit bundle $SM:=S_{g_0}M$ for $g_0$ by using the natural identification $S_gM\to S_{g_0}M$ obtained by scaling in the fibers. The operator $\pi_2^*$ associated to $g$ becomes: for $(x,v)\in S_{g_0}M$ 
\[ (\pi_2^*h)(x,v)= h_x(v,v)|v|_g^{-2}.\]
 
\subsection{The resolvents of $X_g$ and anisotropic spaces}
We first recall the construction of resolvents of $X_g$ from Faure-Sj\"ostrand \cite{Faure-Sjostrand-11} (see also \cite{Dyatlov-Zworski-16}) and in particular the version used in Dang-Guillarmou-Rivi\`ere-Shen \cite{DGRS} that deals with the continuity with respect to the flow $X_g$. Let $E_{u/s}^*(g)\subset T^*(SM)$ be the annihilators of $E_{u/s}(g)\oplus E_0(g)$, i.e.
\[
E_u^*(g)(E_u(g) \oplus E_0(g)) = 0, \quad E_s^*(g)(E_s(g) \oplus E_0(g)) = 0.
\]
There are two resolvents  bounded on $L^2$ for $X_{g}$ defined for ${\rm Re}(\la)>0$ by 
\[ R_g^\pm (\la):= \pm \int_{0}^\infty e^{-\la t}e^{\pm tX_g}f\, dt \] 
for $f\in L^2(SM,d\mu^{\rm L}_g)$. They solve $(-X_g\pm \la)R_g^\pm(\la)={\rm Id}$ on $L^2$.
The following results are proved in \cite{Faure-Sjostrand-11}, and we use here the presentation of \cite[Sections 3.2 and 3.3]{DGRS} due to the need of uniformity with respect to $g$: there is $c_0>0$ depending only on $g$, locally uniform with respect to $g$ ($c_0$ depends only on the Anosov exponents of contraction/dilation of $d\varphi_1^g$), such that for each $N_0>0,N_1>16N_0$, $R_g^\pm(\la)$ admits a meromorphic extension in ${\rm Re}(\la)>-c_0N_0$ as a bounded operator 
\begin{equation}\label{Rg+bound} 
R_g^-(\la): \mc{H}^{m_g^{N_0,N_1}}\to \mc{H}^{m_g^{N_0,N_1}}, \quad  R_g^+(\la): \mc{H}^{-m_g^{N_0,N_1}}\to \mc{H}^{-m_g^{N_0,N_1}}
\end{equation}
where $\mc{H}^{\pm m_g^{N_0,N_1}}$ are Hilbert spaces depending on $N_0>0,N_1>0$ satisfying the following properties: 
\[H^{2N_1}(SM)\subset \mc{H}^{m_g^{N_0,N_1}}\subset  H^{-2N_0}(SM), \quad H^{2N_0}(SM)\subset \mc{H}^{-m_g^{N_0,N_1}}\subset  H^{-2N_1}(SM)\]
and defined by 
\[
\mc{H}^{\pm m_g^{N_0,N_1}}=(A_{m_g^{N_0,N_1}})^{\mp 1}L^2(SM), \quad A_{m_g^{N_0,N_1}}:={\rm Op}({e^{m_g^{N_0,N_1}\log f}})
\]
and $A_{m_g^{N_0,N_1}}$ is an invertible pseudo-differential operator with inverse having principal symbol $e^{-m_g^{N_0,N_1}\log f}$. Here ${\rm Op}$ denotes a quantization (with a fixed small semi-classical parameter to ensure that ${\rm Op}({e^{m_g^{N_0,N_1}\log f}})$ is invertible), while $m_g^{N_0,N_1}\in S^{0}(T^*(SM))$, $f\in S^1(T^*(SM), [1,\infty))$ (the usual classes of symbols) are homogeneous of respective degree $0$ and $1$ in $|\xi|>R$, for some $R>1$ independent of $g$, and constructed from the lifted flow $\Phi_t^g=((d\varphi_t^{g})^{-1})^T$ acting on $T^*(SM)$. The function $f$ can be taken depending only on $g_0$ for $g$ in a small enough $C^\infty$ neighborhood $\mc{U}$ of $g_0$.
Moreover there are small conic neighborhoods 
$C_{u}(g_0)$ and $C_s(g_0)$ of $E^*_u(g_0)$ and $E^*_s(g_0)$ such that for any smaller open conic neighborhood 
$C'_u(g_0)\subset C_u(g_0)$ of $E_u^*(g_0)$ and $C'_s(g_0)\subset C_s(g_0)$ of $E_s^*(g_0)$, $m_g^{N_1,N_1}$ satisfies: 
\begin{equation}\label{propofmg} 
\left\{\begin{array}{ll} 
m_g^{N_0,N_1}(z,\xi)\geq N_1 , & (z,\xi)\in C'_s(g_0), \\
m_g^{N_0,N_1}(z,\xi)\geq N_1/8 & (z,\xi)\notin C_u(g_0),\\
m_g^{N_0,N_1}(z,\xi)\leq -N_0 & (z,\xi)\in C'_u(g_0),
\end{array}\right.\end{equation}
and $m_g(x,\xi)\in [-2N_0,2N_1]$ for all $(z,\xi)\in T^*(SM)$.
We note that \cite[Lemma 3.3]{DGRS} shows that $m_g^{N_0,N_1}$ is smooth with respect to the metric $g$ and that $f$ can be taken to be independent of $g$ for $g$ close enough to $g_0$. The spaces 
$\mc{H}^{m_g^{N_0,N_1}}$ are called \emph{anisotropic Sobolev spaces}. The pseudodifferential operators $A_{m_g^{N_0,N_1}}$ belong to the class $\Psi^{2N_1}(SM)$ but also to some anisotropic subclass denoted $\Psi^{m_g^{N_0,N_1}}(SM)$ admitting composition formulas; we refer to \cite{Faure-Roy-Sjostrand,Faure-Sjostrand-11} for details. 

Eventually, \cite[Proposition 6.1]{DGRS} shows that there is a small open neighborhood $W_\delta$ of the circle  $\{\la\in \C ~|~ |\la|=\delta\}$ for some small $\delta>0$ so that
\begin{equation}\label{continuity resolvent}
\mc{U}\times W_\delta \ni (g,\la)  \mapsto A_{m_g^{N_0,N_1}}R_g^-(\la)(A_{m_g^{N_0,N_1}})^{-1} \in\mc{L}(H^1(SM),L^2(SM))
\end{equation}
is continuous.\footnote{In \cite[Proposition 6.1]{DGRS}, there is a small semi-classical parameter $h>0$ appearing: we can just fix this parameter small enough. It does not play any role here except in the quantization procedure ${\rm Op}$. We also add that in \cite[Proposition 6.1]{DGRS}, $N_1$ is chosen to be equal to $20N_0$ for notational convenience, but the proof does not use that fact.}

\subsection{The operator $\Pi_2^g$ in terms of resolvents}

Following \cite{Guillarmou-17-1}, the link between $\Pi^g$ and the resolvent is given by the Laurent expansion
\[ \Pi^g= R_g^{+}(0)-R_g^{-}(0)\]
where $R_g^{+}(\la)$ has a pole of order $1$ and $R_g^\pm(0)$ is defined by 
\[ R_g^{\pm}(\la)=\pm\la^{-1}\cjg\cdot,\mathbf{1}\cjd +R_g^\pm(0)+\mc{O}(\la)\]
and $R_g^-(0)=-(R_g^+(0))^*$ where the adjoint is with respect to the Liouville measure.

\begin{lemma}
\label{lemma:decomposition}
Let $\chi\in C_c^\infty(\R)$ be even and equal to $1$ in $[-T,T]$ and supported in the interval $(-T-1,T+1)$.
Then we have 
\be\label{identitePi}
\begin{split}
\Pi^g = & \int_{\R} \chi(t) e^{tX_g} dt-R_g^+(0) \int_0^{+\infty} \chi'(t) e^{tX_g} dt
+R_g^-(0) \int_{0}^{+\infty} \chi'(t) e^{-tX_g} dt -\cjg \cdot,\mathbf{1}\cjd \int_{\R} \chi.
\end{split}\ee
\end{lemma}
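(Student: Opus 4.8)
The plan is to start from the formula $\Pi^g = R_g^+(0) - R_g^-(0)$ recalled just above the statement, and to express $R_g^\pm(0)$ via a time cutoff. The key identity is that for the truncated integrals one has, by the fundamental theorem of calculus and an integration by parts against $\chi$,
\[
\int_0^{+\infty} \chi(t) e^{tX_g} f\, dt = -\,R_g^+(0)\!\left(\chi(0)f + \int_0^{+\infty}\chi'(t)e^{tX_g}f\, dt\right) + \text{(terms from the pole)},
\]
where the "pole" contribution produces the rank-one operator $\cjg\cdot,\mathbf{1}\cjd$ coming from the simple pole of $R_g^+(\la)$ at $\la=0$. Concretely I would write $-(X_g)R_g^+(0) = \Id - \cjg\cdot,\mathbf{1}\cjd$ (the resolvent identity with the residue subtracted), apply $R_g^+(0)$ to $\partial_t(\chi(t)e^{tX_g}f) = \chi'(t)e^{tX_g}f + \chi(t)X_g e^{tX_g}f$, and integrate over $t\in(0,\infty)$; the boundary term at $t=0$ is $-R_g^+(0)(\chi(0)f) = -R_g^+(0)f$ since $\chi(0)=1$, and the boundary term at $t=+\infty$ vanishes since $\chi$ is compactly supported. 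This gives
\[
R_g^+(0)f = \int_0^{+\infty}\chi(t)e^{tX_g}f\, dt + R_g^+(0)\!\int_0^{+\infty}\chi'(t)e^{tX_g}f\, dt + \cjg f,\mathbf{1}\cjd\!\int_0^{+\infty}\chi(t)\,dt,
\]
where the last term comes from pushing the $\cjg\cdot,\mathbf{1}\cjd$ correction through the $t$-integral (using that $e^{tX_g}$ preserves $\mu^{\rm L}_g$, so $\cjg e^{tX_g}f,\mathbf 1\cjd=\cjg f,\mathbf 1\cjd$).

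Next I would do the symmetric computation for $R_g^-(0)$. Since $R_g^-(0) = -(R_g^+(0))^*$ and, correspondingly, $X_g R_g^-(0) = \Id - \cjg\cdot,\mathbf 1\cjd$ with a sign flip coming from the $-t$ direction, the same integration by parts with $e^{-tX_g}$ in place of $e^{tX_g}$ on $(0,\infty)$ yields
\[
R_g^-(0)f = -\int_0^{+\infty}\chi(t)e^{-tX_g}f\, dt + R_g^-(0)\!\int_0^{+\infty}\chi'(t)e^{-tX_g}f\, dt - \cjg f,\mathbf 1\cjd\!\int_0^{+\infty}\chi(t)\,dt.
\]
Subtracting, the two rank-one terms add up to $2\cjg f,\mathbf 1\cjd\int_0^\infty\chi = \cjg f,\mathbf 1\cjd\int_{\R}\chi$ (using that $\chi$ is even), while the two $\chi$-weighted flow integrals combine, again by evenness of $\chi$, into $\int_{\R}\chi(t)e^{tX_g}f\, dt$ after the change of variables $t\mapsto -t$ in the $e^{-tX_g}$ term. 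This produces exactly the claimed formula \eqref{identitePi}.

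The point requiring the most care is making the integration by parts and the extraction of the residue rigorous: $R_g^+(0)$ is only defined on the anisotropic space $\mc H^{-m_g^{N_0,N_1}}$ (and $R_g^-(0)$ on $\mc H^{m_g^{N_0,N_1}}$), the semigroup $e^{tX_g}$ is not bounded uniformly in $t$ on $L^2$ but it is on these anisotropic spaces with the correct decay onto the complement of the constants, and the manipulation $X_g R_g^\pm(0) = \Id - \cjg\cdot,\mathbf 1\cjd$ must be justified from the Laurent expansion $R_g^\pm(\la) = \pm\la^{-1}\cjg\cdot,\mathbf 1\cjd + R_g^\pm(0) + \mathcal O(\la)$ together with $(-X_g\pm\la)R_g^\pm(\la) = \Id$. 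I would therefore first establish the identity on the dense subspace of smooth mean-zero functions, where $e^{tX_g}f$ decays and all integrals converge absolutely in $L^2$ by exponential mixing \cite{Liverani}, and then extend by continuity to the relevant anisotropic (or Sobolev) spaces using the mapping properties \eqref{Rg+bound}. On mean-zero functions the $\cjg\cdot,\mathbf 1\cjd$ terms drop out and one recovers the cleaner statement that the finite-$T$ cutoff only costs the explicit $R_g^\pm(0)\int_0^\infty\chi'(t)e^{\pm tX_g}$ correction terms; this is really just telescoping $\int_{-T}^T e^{tX_g}\,dt$ against the resolvent, and the presence of $\chi'$ supported in $[T,T+1]$ reflects that one smooths the hard cutoff at the endpoints. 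The main obstacle is thus purely the bookkeeping of which space each term lives on so that the integration by parts is legitimate; the algebra itself is a short telescoping argument.
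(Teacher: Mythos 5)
Your strategy is the same as the paper's (split a compactly supported cutoff off the time integral defining $R_g^{\pm}$, integrate by parts to convert the tail into $R_g^{\pm}(0)\int\chi'(t)e^{\pm tX_g}\,dt$, and extract the rank-one residue), but the signs in your two displayed identities for $R_g^{\pm}(0)$ are wrong, and with those signs the final subtraction does \emph{not} give \eqref{identitePi}. Carrying out your own scheme correctly: from $\int_0^{\infty}\partial_t\big(\chi(t)e^{tX_g}f\big)\,dt=-f$ together with $-R_g^+(0)X_g=\Id-\cjg\cdot,\mathbf 1\cjd$ one gets
\[
R_g^+(0)f=\int_0^{\infty}\chi(t)e^{tX_g}f\,dt\;-\;R_g^+(0)\int_0^{\infty}\chi'(t)e^{tX_g}f\,dt\;-\;\cjg f,\mathbf 1\cjd\int_0^{\infty}\chi,
\]
and similarly $R_g^-(0)f=-\int_0^{\infty}\chi(t)e^{-tX_g}f\,dt-R_g^-(0)\int_0^{\infty}\chi'(t)e^{-tX_g}f\,dt+\cjg f,\mathbf 1\cjd\int_0^{\infty}\chi$; that is, the last two terms in each of your displays carry the opposite sign from what you wrote. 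Subtracting the correct identities and using that $\chi$ is even yields \eqref{identitePi}; subtracting yours yields $+R_g^+(0)\int_0^{\infty}\chi'(t)e^{tX_g}\,dt-R_g^-(0)\int_0^{\infty}\chi'(t)e^{-tX_g}\,dt+\cjg\cdot,\mathbf 1\cjd\int_{\R}\chi$ for the last three terms, which contradicts the statement. So the claim that your two formulas ``produce exactly the claimed formula'' does not follow from what you displayed; the argument is salvageable, but the bookkeeping must be redone.

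On the analytic worry you raise at the end: the paper sidesteps it by performing the decomposition and integration by parts at ${\rm Re}(\la)>0$, where $R_g^{\pm}(\la)=\pm\int_0^{\infty}e^{-\la t}e^{\pm tX_g}\,dt$ converges on $L^2$ and the boundary term at $t=+\infty$ genuinely vanishes, and only then letting $\la\to 0$; the $\la^{-1}$ poles on the two sides cancel (via $\int_0^{\infty}\chi'(t)e^{-\la t}\,dt=-1+\la\int_0^{\infty}\chi(t)e^{-\la t}\,dt$), and this cancellation is precisely what produces the $\mp\int_0^{\infty}\chi\;\cjg\cdot,\mathbf 1\cjd$ terms. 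This is cleaner than arguing directly at $\la=0$ on anisotropic spaces or restricting to mean-zero functions, though your route would also work once the signs are fixed.
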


\begin{proof} For ${\rm Re}(\la)>0$, we can write by integration by parts 
\[\begin{split} 
R_g^{\pm}(\la)=& \pm \int_0^\infty \chi(t)e^{-t(\la \mp X_g)}dt\pm \int_0^\infty
(1-\chi(t))e^{-t(\la \mp X_g)}dt\\
=& \pm \int_0^\infty \chi(t)e^{-t(\la \mp X_g)}dt- R_g^\pm(\la)\int_0^\infty \chi'(t)
e^{t(\pm X_g-\la)}dt.
\end{split} 
\]
Then taking the limit as $\la\to 0$, we obtain 
\[R_g^{\pm}(0)=\pm \int_0^\infty \chi(t)e^{\pm tX_g}dt-R_g^{\pm}(0)\int_0^\infty \chi'(t)
e^{\pm tX_g}dt\mp\int_0^\infty \chi(t)dt \cjg \cdot,\mathbf{1}\cjd\]
and summing gives the result.
\end{proof}
Next, we remark that, using that $\varphi_t^g(x,-v)=-\varphi_{-t}^g(x,v)$ (where multiplication by $-1$ is the symmetry in the fibers of $SM$), it is straightforward to check that for all $t\in \R$
\[ {\pi_2}_*e^{tX_g}\pi_2^*={\pi_2}_*e^{-tX_g}\pi_2^*,\]
which also implies that ${\pi_2}_*R_g^+(0)e^{tX_g}\pi_2^*=-{\pi_2}_*R_g^-(0)e^{-tX_g}\pi_2^*$ and thus
\be
\label{equation:pi2-decomposition}
\Pi_2^g = 2{\pi_2}_*\int_{0}^\infty \chi(t) e^{-tX_g}\, dt \pi_2^*+
2{\pi_2}_* R_g^-(0) \int_0^{+\infty} \chi'(t) e^{-tX_g} \, dt\pi_2^* + \left(1-\int_\R \chi\right) \langle \cdot , \mathbf{1} \rangle.
\ee
We are going to prove that these three terms depend continuously on $g$. Note that
\[ \left(1-\int_\R \chi\right) \langle f,\mathbf{1}\rangle = \left(1-\int_\R \chi\right) \int_{SM}f(z) d\mu_g^{\rm L}(z) \]
and thus the $g$-continuity of this term is immediate. Now, we claim the following:

\begin{lemma}\label{choice of T}
There is $T>0$ large enough and a neighborhood $\mc{U}'\subset \mc{U}$ of $g_0$ in $\mc{M}^{\rm An}$ so that for all $x\in M$ and all $g\in \mc{U}'$ the exponential map of $g$ in the universal cover $\til{M}$
\[\exp^{\tilde{g}}_x: \{v\in T_{x}\til{M}; |v|_g\leq T\}\to \til{M}
\]
is a diffeomorphism onto its image and $\Phi_{t}^g(V^*\cap \ker \iota_{X_g})\subset C'_u(g_0)$ for all $t\geq T$, if $\Phi_t^g:=((d\varphi_t^g)^{-1})^T$ is the symplectic lift of $\varphi_t^g$,  $V^*\subset T^*(SM)$ is the annihilator of the vertical bundle $V=\ker d\pi_0\subset T(SM)$ and $\iota_X:T^*(SM)\to \rr$ is the contraction $\iota_{X_g}(\xi)=\xi(X_g)$.
\end{lemma}

We also mention here as it is used in the following proof that, as a consequence of hyperbolicity, 
\[
V^* \cap E_s^* = V^* \cap E_u^* = \left\{0 \right\}.
\]
This can be found in \cite[Theorem 2.50]{Paternain-99} for instance (formulated for the tangent bundle $T(SM)$ but the adaptation to $T^*(SM)$ is straightforward). The $T$ in Lemma \ref{lemma:decomposition} will be chosen accordingly so that Lemma \ref{choice of T} is satisfied.

\begin{proof} By Lemma 3.1 of \cite{DGRS}, the cone $\mc{C}'_u(g_0)$ can be chosen so that there is $T>0$ and $\mc{U}'$ 
such that for all $t\geq T$ and all $g\in \mc{U}'$, $\Phi_{t}^g(C'_u(g_0))\subset C'_u(g_0)$.  
We also know that $\Phi_{T_0}^{g_0}(V^*\cap \ker \iota_{X_g}) \subset C'_u(g_0)$ for some $T_0>T$ by hyperbolicity of $g_0$ (i.e. the stable bundle $E_s^*$ only intersects trivially the vertical bundle $V^* \cap \ker \imath_{X_g}$), but by continuity of $g\mapsto \Phi_{T_0}^g$, the same holds for all $g$ in some possibly smaller neighborhood $\mc{U}''\subset \mc{U}'$, thus for all $t\geq T_0$ and all $g\in \mc{U}''$, 
$\Phi_{t}^g(V^*\cap \ker\iota_{X_g})\subset C'_u(g_0)$. Now, we claim that, up to choosing $\mc{U}''$ even smaller, the exponential map is a diffeomorphism on $\{|v|_g\leq T\}$ in the universal cover: indeed, Anosov geodesic flows have no pair of conjugate points.
\end{proof}

\subsection{Proof or Proposition \ref{proposition:pi2-continuite}}
Let us define 
\[ \Omega_1^g:= {\pi_2}_*\int_{0}^\infty \chi(t) e^{-tX_g}\, dt \pi_2^*,\quad \quad  \Omega_2^g:={\pi_2}_* R_g^-(0) \int_0^{+\infty} \chi'(t) e^{-tX_g} \, dt\pi_2^*\]
Proposition \ref{proposition:pi2-continuite} is a consequence of the following two Lemmas. 
\begin{lemma}\label{Omega1}
For each $g\in \mc{U}'$, $\Omega_1^g\in \Psi^{-1}(M;S^2T^*M)$ with principal symbol 
\[ \sigma(\Omega_1^g)(x,\xi)=c_n|\xi|^{-1}\pi_{\ker i_\xi}A_2^2\pi_{\ker i_\xi}\]
for some $c_n>0$ depending only on $n=\dim M$ and $A_2$ some positive definite endomorphism defined in Lemma \ref{lemma:positivite-pi2}, and the map $g\mapsto \Omega_1^g$ is continuous with respect to the smooth topology on $\mc{U}'$ and the usual Fr\'echet topology on $\Psi^{-1}(M;S^2T^*M)$.
\end{lemma}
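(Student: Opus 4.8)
The plan is to write down the Schwartz kernel of $\Omega_1^g$ explicitly and then read off from it the pseudodifferential nature, the principal symbol, and the continuous dependence on $g$. I work with the intrinsic $g$-structures, the identification $S_gM\cong S_{g_0}M$ by fibrewise scaling being a smooth bundle diffeomorphism that depends smoothly on $g$ and does not affect principal symbols. Disintegrating $d\mu^{\rm L}_g = dS^g_x(v)\,d{\rm vol}_g(x)$ and unfolding the definitions of $\pi_2^*$, ${\pi_2}_*$ and $e^{-tX_g}$, one gets for $h\in C^\infty(M;S^2T^*M)$
\[
(\Omega_1^g h)(x) = c\int_{S^g_xM}\Big(\int_0^{T+1}\chi(t)\, h_{\pi(\varphi^g_{-t}(x,v))}\big(w(x,v,t),w(x,v,t)\big)\,dt\Big)\,\big(v^\flat\otimes^s v^\flat\big)\,dS^g_x(v),
\]
where $w(x,v,t)$ is the velocity of $\varphi^g_{-t}(x,v)$, $v^\flat$ denotes the $g$-dual covector of $v$, and $c>0$ is a normalisation constant. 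Since $\chi$ is supported in $|t|<T+1$ and, by Lemma~\ref{choice of T} (passing to the universal cover if necessary), $\exp_x^g$ has no conjugate points and is a diffeomorphism on the ball of radius $T+1$, I change variables $(v,t)\mapsto y:=\exp_x^g(-tv)$; in $g$-polar coordinates $d{\rm vol}_g(y)=t^{n-1}\mc{A}_g(x,y)\,dt\,dS^g_x(v)$ with $\mc{A}_g$ smooth and $\mc{A}_g(x,x)=1$. This realises $\Omega_1^g$ as an integral operator $(\Omega_1^gh)(x)=\int_M K_g(x,y)h_y\,d{\rm vol}_g(y)$ with
\[
K_g(x,y)\,h_y \;=\; \frac{\chi(d_g(x,y))}{d_g(x,y)^{n-1}\,\mc{A}_g(x,y)}\; h_y\big(v_g(x,y),v_g(x,y)\big)\;\big(\theta_g(x,y)^\flat\otimes^s\theta_g(x,y)^\flat\big),
\]
where $\theta_g(x,y)\in S^g_xM$ is the initial direction and $v_g(x,y)\in S^g_yM$ the final direction of the $g$-geodesic joining $x$ to $y$; both are smooth in $(x,y)$ near the diagonal, jointly even under reversal of the geodesic, and equal to $\pm(y-x)/|y-x|$ to leading order in any chart.

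The kernel $K_g$ is smooth off the diagonal and, at the diagonal, is a classical conormal distribution whose leading term in a local chart is $|z|^{-(n-1)}$ times a function smooth in the base point and in the direction $z/|z|$ (with $z=x-y$), plus the smooth factor $\chi(d_g(x,y))$; on an $n$-manifold such a kernel is that of a classical pseudodifferential operator of order $(n-1)-n=-1$, so $\Omega_1^g\in\Psi^{-1}(M;S^2T^*M)$. For the principal symbol I Fourier-transform the leading singularity: writing it as $|z|^{-(n-1)}\varpi_g(x,z/|z|)$ with $\varpi_g(x,\theta)\colon h\mapsto h(\theta,\theta)\,\theta^\flat\otimes^s\theta^\flat$, and using $\int_0^\infty e^{-ir\tau}\,dr=\pi\delta(\tau)-i\,\mathrm{p.v.}\,\tau^{-1}$ together with the evenness of $\varpi_g$ in $\theta$ (which kills the principal-value contribution after integrating over the sphere), one obtains
\[
\sigma(\Omega_1^g)(x,\xi)=c_n\,|\xi|^{-1}\int_{S^g_xM\cap\ker i_\xi}\varpi_g(x,\theta)\,d\theta
\]
for a constant $c_n>0$ depending only on $n$. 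The endomorphism $h\mapsto\int_{S^g_xM\cap\ker i_\xi}h(\theta,\theta)\,\theta^\flat\otimes^s\theta^\flat\,d\theta$ factors through $\pi_{\ker i_\xi}$ on both sides, takes values in $S^2(\ker i_\xi)$, and is equivariant under the $g$-orthogonal group of $\ker i_\xi$; hence it acts by one positive scalar on the trace-free part and by another on the line $\mathbb{R}\,g|_{\ker i_\xi}$, i.e. it equals $\pi_{\ker i_\xi}A_2^2\pi_{\ker i_\xi}$ with $A_2$ the positive definite endomorphism of Lemma~\ref{lemma:positivite-pi2}. This yields the claimed formula, consistent with the principal symbol of $\Pi_2^g$ from \cite{Gouezel-Lefeuvre-19} in view of \eqref{equation:pi2-decomposition}.

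Finally, for continuity of $g\mapsto\Omega_1^g$ in the Fréchet topology of $\Psi^{-1}(M;S^2T^*M)$ recalled at the start of \S\ref{ssection:microlocal}: fixing a cutoff $\vartheta$ supported near the diagonal, the far part $(1-\vartheta)K_g$ is smooth jointly in $(x,y)$ and depends continuously on $g$ in the $C^\infty$ topology on $M\times M$, being built from $\varphi^g_t$, $\exp^g$, $|\cdot|_g$ and $d{\rm vol}_g$, all of which depend $C^\infty$-continuously on $g$ (together with the $g$-to-$g_0$ bundle identification); while for the near part $\vartheta K_g$, the associated local full symbols in fixed charts are assembled from exactly the same $g$-smooth ingredients plus the fixed cutoff $\chi$, so every seminorm defining the Fréchet topology varies continuously with $g$. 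The main obstacle is precisely this last bookkeeping step: one must check that the exponential-map substitution produces a kernel with a genuine polyhomogeneous conormal expansion at the diagonal of the stated leading order, and --- more delicately --- that the constants in the conormal and symbol estimates are locally uniform in $g$, so that $g\mapsto\Omega_1^g$ is continuous into the Fréchet space $\Psi^{-1}$ and not merely, say, as bounded operators $H^s\to H^{s+1}$.
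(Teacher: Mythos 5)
Your proposal is correct and follows essentially the same route as the paper: write the Schwartz kernel explicitly via the exponential-map change of variables on the universal cover, identify the diagonal singularity as homogeneous of degree $-(n-1)$ with coefficients smooth in $(g,x,y)$, and read off membership in $\Psi^{-1}$, the principal symbol (by Fourier transforming the homogeneous leading term), and the $g$-continuity from this representation. The ``bookkeeping'' step you flag at the end is resolved in the paper exactly along the lines you suggest, using the standard fact that $d_{\tilde g}^2(x,y)$ is a smooth positive definite quadratic form in $x-y$ with $g$-smooth coefficients, so that the kernel is a finite sum $\sum_{\ell} c_\ell(g,x,y)\,\omega_{\ell,g,x}(x-y)$ with $c_\ell$ smooth in all variables and $\omega_{\ell,g,x}$ homogeneous of degree $-(n-1)$, together with the observation that the Fr\'echet seminorms of the full symbol are equivalent to conormal seminorms of the kernel, which are manifestly continuous (indeed smooth) in $g$.
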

\begin{proof}
The fact that, for each $g\in \mc{M}^{\rm An}$, the operator $\Omega_1^g\in \Psi^{-1}(M;S^2T^*M)$ is proved in \cite[Theorem 3.5]{Guillarmou-17-1}, the computation of the principal symbol follows from the computation \cite{Sharafut-Skokan-Uhlmann,Stefanov-Uhlmann-04} and is done in details in our setting in \cite[Theorem 4.4.]{Gouezel-Lefeuvre-19}. We need to check the continuity with respect to $g$ in the $\Psi^{-1}(M;S^2T^*M)$ topology and we can proceed as in \cite[Proposition 1 and 2]{Stefanov-Uhlmann-04}. For $h\in C^\infty(M;S^2T^*M)$, we can write explicitly in $(x_i)_i$ coordinates in the universal cover $\til{M}$ near a point $p\in \til{M}$
\[ (\Omega_1^gh(x))_{ij}=\int_{S_x\til{M}}\int_0^\infty \chi(t) 
\til{h}_{\exp^{\til{g}}_x(tv)}(\pl_t\exp^{\til{g}}_x(tv),\pl_t\exp^{\til{g}}_x(tv)) p_{ij}(x,v) \, dtdS_x(v)\]
where $p_{ij}(x,v)$ are homogeneous polynomials of order $2$ in the $v$ variable, $\til{h}\in C^\infty(\til{M};S^2T^*M)$ is the lift of $h$ to the universal cover $\til{M}$, $dS_x$ is the natural measure on the sphere $S_x\til{M}$. Using Lemma \ref{choice of T}, we can perform the change of coordinates $(t,v)\in (0,T)\x S_x\til{M}\mapsto y:=\exp^{\til{g}}_{x}(tv)\in \til{M}$, we get $t=d_{\til{g}}(x,y)$ the distance in $\til{M}$, and
\[dtdv=\frac{J^g_x(y)}{(d_{\til{g}}(x,y))^{n-1}}d{\rm vol}_g(y), \quad v=-\nabla^{\til{g}}_yd_{\til{g}}(x,y), \quad \pl_t\exp^{\til{g}}_x(tv)=(\nabla^{\til{g}}_xd_{\til{g}}(x,y)), \]
for some $J^g_{x}(y)$ smooth in $x,y,g$. We claim that this implies that 
\[ \Omega_1^gh(x)=\int_{M} K_g(x,y)h(y)\, d{\rm vol}_g(y)\]
for some $K_g(x,y)$
which is smooth in $(g,x,y)$ outside the diagonal $x=y$ and, near the diagonal,
it has the form (for some $L<\infty)$ 
\[ K_g(x,y)= \sum_{\ell=1}^L c_\ell(g,x,y)\omega_{\ell, g,x}(x-y)\]
with $c_\ell$ a matrix valued function, smooth in all its variables and $\omega_{\ell, g,x}(v)$ a vector valued function smooth in $g,x$, homogeneous of degree $-(n-1)$ in $v\in \rr^n$. Indeed, one can work in 
the universal cover $\til{M}$ where $x_i$ are globally defined coordinates, so that writing
$h(x)=\sum_{i,j}h_{ij}(x)dx_idx_j$ and $p=\sum_{ij}p_{ij}(x)dx_idx_j$, and we get that $K_g(x,y)$ is a matrix with coefficients
\[ (K_g(x,y))_{iji'j'}=\chi(d_{\til{g}}(x,y)) p_{ij}(x)F_i^g(x,y)F_{j}^g(x,y)G_{i'}^g(x,y)G_{j'}^g(x,y)\frac{J_{x}^g(y)}{d_{\til{g}}(x,y)^{n-1}}
 \]
where $F^g_i(x,y)=-dx_i(\nabla^{\til{g}}_yd_{\til{g}}(x,y))$ and 
$G_i^g(x,y)=dx_i(\nabla^{\til{g}}_x d_{\til{g}}(x,y))$. Now we can use the standard fact (see for example \cite[Lemma 1]{Stefanov-Uhlmann-04}) that 
\[d_{\tilde{g}}^2(x,y)=\sum_{ij}H^{1}_{ij}(g,x,y)(x-y)_i(x-y)_j, \quad 
dx_i(\nabla^g_x d_{\tilde{g}}(x,y))=\frac{\sum_{ij}H^{2}_{ij}(g,x,y)(x-y)_j}{d_{\tilde{g}}(x,y)}\]
(and the same thing for $dx_i(\nabla^g_y d_{\tilde{g}}(x,y))$ by symmetry) 
where $H_{ij}^{k}(g,x,y)$ are smooth in all variables and positive definite for $x=y$.
The kernel $K_g$ is thus smooth outside the diagonal (as a function of $g,x,y$), and it can be written near the diagonal as a sum of terms of the form $c(g,x,y)\omega_{g,x}(x-y)$ 
where $c$ is smooth in all its variables and $\omega_{g,x}(v)$ is a homogeneous distribution of 
degree $-(n-1)$ in the variable $v$, smooth in $g,x$. The off-diagonal term for the Fr\'echet topology is then clearly smooth in $g$, while the near diagonal term has full local symbols that are Fourier transforms of $c(g,x,x-v)\omega_{g,x}(v)$:
\[ \sigma(g;x,\xi)=\int_{\rr}e^{iv\xi}c(g,x,x-v)\omega_{g,x}(v)dv.\]
 It is then a standard and easy exercise to check that this provides uniform bounds on semi-norms of the symbol\footnote{Alternatively, the semi-norms on the full-symbol are equivalent to 
semi-norms in the space of distributions on $M\times M$ that are conormal to the diagonal, defined through differentiations of $K_g(x,y)$ with respect to smooth fields tangent to ${\rm diag}(M\times M)$, see \cite[Chapter 5, Proposition 6.1.1 and its proof]{Melrose}. Such norms for $K_g$ are clearly uniformly bounded in terms of $g$.}. 
We deduce the continuity (and indeed, smoothness) of 
$\Omega_1^g$ as an element of $\Psi^{-1}(M;S^2T^*M)$ with respect to the metric $g$.  
\end{proof}

\begin{lemma}\label{smoothnessofremainder}
The operator $\Omega_2^g$ has a smooth Schwartz kernel for each $g\in \mc{U}'$ and the map 
\[ g\in \mc{U}'\mapsto \Omega_2^g\in C^\infty(M\times M; S^2T^*M\otimes (S^2T^*M)^*)\]
is continuous if we identify $\Omega_g^2$ with its Schwartz kernel.
\end{lemma}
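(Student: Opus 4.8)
The plan is to prove that $g\mapsto\Omega_2^g$ is a \emph{continuous} family of operators $\mc{D}'(M;S^2T^*M)\to C^\infty(M;S^2T^*M)$; by the Schwartz kernel theorem this is exactly the statement of the lemma. Write $\mc{B}_g:=\int_0^{+\infty}\chi'(t)e^{-tX_g}\,dt$. Since $\chi\equiv1$ on $[-T,T]$ and $\supp\chi\subset(-T-1,T+1)$, the function $\chi'$ is smooth, supported in $[-T-1,-T]\cup[T,T+1]$ and vanishes at $\pm T,\pm(T+1)$, so $\mc{B}_g=\int_T^{T+1}\chi'(t)e^{-tX_g}\,dt$ is a Fourier integral operator carried by the geodesic flow over the \emph{compact} time interval $[T,T+1]$, and $\Omega_2^g={\pi_2}_*R_g^-(0)\mc{B}_g\pi_2^*$.

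The heart is a wavefront-set computation, which already yields smoothness for fixed $g$. First, $\pi_2^*$ is the pullback under the submersion $\pi_0:SM\to M$ followed by a smooth fibrewise bundle map, so $\WF(\pi_2^*h)\subset V^*$ for any distribution $h$. Integrating $e^{-tX_g}\pi_2^*h$ against $\chi'(t)\,dt$ over the interior interval $[T,T+1]$ annihilates the $dt$-component of the conormal of the flow graph, which forces $\WF(\mc{B}_g\pi_2^*h)\subset\ker\iota_{X_g}$; together with transport of wavefront sets along the flow this gives
\[
\WF(\mc{B}_g\pi_2^*h)\subset\bigcup_{t\in[T,T+1]}\Phi_t^g\big(V^*\cap\ker\iota_{X_g}\big)\subset C_u'(g_0)
\]
for \emph{every} $g\in\mc{U}'$, by Lemma \ref{choice of T}; after shrinking $C_u'(g_0),C_s'(g_0),\mc{U}'$ one may further assume, using $E_u^*(g_0)\cap V^*=E_s^*(g_0)\cap V^*=\{0\}$, that $C_u'(g_0)\cup C_s'(g_0)$ is disjoint from $V^*\setminus0$. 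Now recall from \cite{Faure-Sjostrand-11,Dyatlov-Zworski-16} and \cite[Section 2]{Guillarmou-17-1} that $\WF'(R_g^-(0))$ is contained in the union of the diagonal, of the forward flow-out of $\ker\iota_{X_g}$ (whose closure meets the zero section only over $E_u^*(g)$) and of $E_u^*(g)\times E_s^*(g)$. Feeding $\mc{B}_g\pi_2^*h$ (wavefront in $C_u'(g_0)\subset\ker\iota_{X_g}$) into $R_g^-(0)$, the diagonal and flow-out contributions produce wavefront inside $\bigcup_{r\ge T}\Phi_r^g(V^*\cap\ker\iota_{X_g})\subset C_u'(g_0)$, again by Lemma \ref{choice of T}, while the remaining term contributes at most $E_s^*(g)\subset C_s'(g_0)$. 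Hence $\WF(R_g^-(0)\mc{B}_g\pi_2^*h)\subset C_u'(g_0)\cup C_s'(g_0)$, which is disjoint from $V^*\setminus0$; since $\WF'({\pi_2}_*)$ contains only covectors in $V^*$ on the $SM$-factor, the Schwartz kernel of $\Omega_2^g$ has empty wavefront set, i.e.\ it is smooth.

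To upgrade this to continuity in $g$, I would fix, once and for all (independently of $g$), pseudodifferential cutoffs $\Theta_0=\Op(\theta_0),\Theta_1=\Op(\theta_1)$ on $SM$ with: $\theta_0\equiv1$ on the $g$-uniform wavefront set $\bigcup_{t\in[T,T+1]}\Phi_t^g(V^*\cap\ker\iota_{X_g})$ and $\mathrm{microsupp}(\theta_0)\subset C_u'(g_0)$ (so that $m_g^{N_0,N_1}\le-N_0$ on $\mathrm{microsupp}(\theta_0)$ by \eqref{propofmg}), and $\theta_1\equiv1$ on $C_u'(g_0)\cup C_s'(g_0)$ with $\mathrm{microsupp}(\theta_1)$ a thin cone disjoint from $V^*\setminus0$. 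Writing $\mathrm{Id}=\Theta_1+(1-\Theta_1)$ to the left of $R_g^-(0)$ and $\mathrm{Id}=\Theta_0+(1-\Theta_0)$ to its right splits $\Omega_2^g$ into a sum of three terms. In each of them: ${\pi_2}_*\Theta_1$ is a smoothing operator depending continuously on $g$ (because $\mathrm{microsupp}(\theta_1)\cap V^*=\{0\}$); $(1-\Theta_0)\mc{B}_g\pi_2^*$ is a smoothing operator depending continuously on $g$, because $\mathrm{microsupp}(1-\theta_0)$ misses the uniform wavefront set $C_u'(g_0)$ of $\mc{B}_g\pi_2^*$, the continuity being proved by the same explicit kernel analysis as for $\Omega_1^g$ in Lemma \ref{Omega1}; $\Theta_0\mc{B}_g\pi_2^*$ depends continuously on $g$ as a map into $\mc{H}^{m_g^{N_0,N_1}}(SM)$, since its range is smooth off $\mathrm{microsupp}(\theta_0)$ and lies in $H^{-N_0}$ there (hence in $\mc{H}^{m_g^{N_0,N_1}}$ once $N_0$ is taken large), with continuity following from continuity of $g\mapsto m_g^{N_0,N_1}$, hence of $A_{m_g^{N_0,N_1}}$, by \cite[Lemma 3.3]{DGRS}; and $R_g^-(0)$ depends continuously on $g$ between the relevant anisotropic spaces, which follows from \eqref{continuity resolvent} through the contour formula $R_g^-(0)=\frac{1}{2\pi i}\oint_{|\lambda|=\delta}\lambda^{-1}R_g^-(\lambda)\,d\lambda$. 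Finally $(1-\Theta_1)R_g^-(0)$, applied only to inputs microlocalized (uniformly in $g$) in $C_u'(g_0)\cup C_s'(g_0)$, is smoothing with continuous $g$-dependence. Assembling the three terms gives $\Omega_2^g\in\mc{L}(\mc{D}'(M;S^2T^*M),C^\infty(M;S^2T^*M))$ depending continuously on $g$, as wanted.

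The main obstacle, and the only step that is not routine bookkeeping, is this last assertion: one must convert the $L^2$-type continuity in $g$ of the conjugated resolvent $A_{m_g^{N_0,N_1}}R_g^-(\lambda)(A_{m_g^{N_0,N_1}})^{-1}$ provided by \cite[Proposition 6.1]{DGRS} into a \emph{$g$-uniform microlocal smoothing estimate} for $(1-\Theta_1)R_g^-(0)$ acting on distributions microlocalized in $C_u'(g_0)\cup C_s'(g_0)$, i.e.\ a propagation-of-singularities statement with constants uniform in $g$. This is exactly where one exploits the quantitative control of the order function $m_g^{N_0,N_1}$ on the \emph{fixed} cones $C_u'(g_0),C_s'(g_0)$ afforded by \eqref{propofmg}, together with its smooth dependence on $g$; everything else reduces to wavefront-set calculus and the explicit FIO kernel estimates already carried out for $\Omega_1^g$.
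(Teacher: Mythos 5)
Your architecture is genuinely close to the paper's: restrict to $t\in[T,T+1]$, insert microlocal cutoffs adapted to $V^*$ and to the cones $C_u'(g_0),C_s'(g_0)$ of Lemma \ref{choice of T}, represent $R_g^-(0)$ by the contour integral $\frac{1}{2\pi i}\oint_{|\la|=\delta}\la^{-1}R_g^-(\la)\,d\la$, and route everything through the anisotropic spaces. The qualitative wavefront computation giving smoothness for fixed $g$ is fine. The problem is that the step you yourself single out as ``the main obstacle, and the only step that is not routine bookkeeping'' is precisely the content of the lemma, and you do not carry it out. Concretely, one must show that the full composite maps $H^{-N}\to H^{N}$ continuously in $g$ for every $N$. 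The paper achieves this not via a $g$-uniform propagation-of-singularities statement for $(1-\Theta_1)R_g^-(0)$, but by letting the weight itself do the work on both sides of the resolvent after fixing $N_0=N+1$ and $N_1/16=N+2$: on the input side, $m_g^{N_0,N_1}\leq -N-1$ on $\WF(B^1_T)\subset C_u'(g_0)$ gives $A_{m_g^{N_0,N_1}}B^1_T\in\Psi^{-N-1}(SM)$ uniformly (via the composition calculus of Faure--Roy--Sj\"ostrand), so that $A_{m_g^{N_0,N_1}}\int_T^{T+1}\chi'(t)e^{-tX_g}B^1\,dt$ is continuous $H^{-N}\to H^1$; on the output side, $m_g^{N_0,N_1}\geq N_1/8=2N+4$ outside $C_u(g_0)$ together with $\WF(B)\cap C_u(g_0)=\emptyset$ gives $B(A_{m_g^{N_0,N_1}})^{-1}\in\Psi^{-2N-4}(SM)$, hence continuity $L^2\to H^N$. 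The conjugated-resolvent continuity \eqref{continuity resolvent} then glues the two. Pointing at \eqref{propofmg} is the right instinct, but without this two-sided derivative gain the lemma is not proved.

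There is a second, smaller gap. You dispose of $(1-\Theta_0)\mc{B}_g\pi_2^*$ — in particular of the part of $V^*$ away from $\ker\iota_{X_g}$ — by appealing to ``the same explicit kernel analysis as for $\Omega_1^g$''. That analysis concerns an operator sandwiched by ${\pi_2}_*$ and $\pi_2^*$ on \emph{both} sides, hence a pseudodifferential operator on $M$; it does not apply to an operator from $M$ into $SM$. For the elliptic region the paper instead uses the exact identity $X_g^{2N}\int_T^{T+1}\chi'(t)e^{-tX_g}B^2\,dt=\int_T^{T+1}\chi^{(1+2N)}(t)e^{-tX_g}B^2\,dt$ together with a $g$-continuous microlocal parametrix $Q_gX_g^{2N}=B_T^2+Z_g$; this is what converts your qualitative observation that integration in $t$ kills covectors with $\xi(X_g)\neq 0$ into a smoothing bound continuous in $g$. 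You need to either reproduce these two mechanisms or supply uniform non-stationary-phase estimates of equivalent strength.
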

\begin{proof} First we observe that if 
$B\in \Psi^0(SM)$ is chosen, independently of $g$, so that  $B^*=B$ and $B$ microsupported in a small conic neighborhood of $V^*$ not intersecting $\mc{C}_u(g_0)$ and equal microlocally to the identity in a slightly smaller conic neighborhood of $V^*$, then
\[\pi_2^*= B\pi_2^*+S_g, \quad {\pi_2}_*={\pi_2}_*B+S_g^*\] 
with $S_g$ a continuous family of smoothing operators. This decomposition is a consequence of the fact that $\pi_2^*$ maps $C^{-\infty}(M;S^2T^*M)$ to the space 
$C^{-\infty}_{V^*}(SM)$ of distributions with wavefront set contained in $V^*$ ($\pi_2^*$ being essentially a pullback, this follows for instance from \cite[Theorem 8.2.4]{Hoe03}).
We will show that the operator 
\[ \Omega^g_3:= {\pi_2}_*BR_g^-(0)\int_T^{T+1} \chi'(t)e^{-tX_g}B\pi_2^* dt\]   
is a continuous family (with respect to $g$) of smoothing operators. We need to show that for each $N>0$, $\Omega_3^g: H^{-N}(SM)\to H^{N}(SM)$ is a continuous family with respect to $g$ of bounded operators. To study $R_g^-(0)$, it suffices to write it under the form 
\begin{equation}\label{Rg0residue} 
R_g^-(0)=\frac{1}{2\pi i}\int_{|\la|=\delta}\frac{R_g^-(\la)}{\la}d\la
\end{equation}
with $\delta$ small enough so that the only pole of $R_g^-(\la)$ in $|\la|\leq \delta$ is $\la=0$\footnote{This is possible for $g$ close enough to $g_0$ by continuity of $g\mapsto R_g^-(\la)$ proved in \cite{DGRS}. Note that the spectrum (the Pollicott-Ruelle resonances) depend continuously on the metric as was shown by \cite{Bonthonneau-18}.}, and thus it amounts to analyze $R_g^-(\la)$ on $\left\{|\la|=\delta\right\}$. 
We decompose $B=B^1+B^2$ with $B^i\in \Psi^0(SM)$ where ${\rm WF}(B^1)$ is contained in a conic neighborhood of $\ker \iota_{X_{g_0}}$ not intersecting the annihilator $E_0(g_0)^*$ of $E_u(g_0)\oplus E_s(g_0)$ (the neutral direction) and ${\rm WF}(B^2)\cap \ker \iota_{X_{g_0}}=\emptyset$ ($B^2$ is microsupported in the elliptic region).
For $i=1,2$ we let 
$B_T^{i}\in \Psi^0(SM)$ be microsupported in a conic neighborhood of $\cup_{t\in [T,T+1]}\Phi_{t}^g({\rm WF}(B^i))$, so that by Egorov (or simply the formula of composition of $\Psi^{0}(SM)$ with diffeomorphisms of $SM$)  
\[ \forall t\in [T,T+1], \quad e^{-tX_g}B^i=B_T^ie^{-tX_g}B^i+ S'_{g,i}(t)\]
for some continuous family $(g,t)\mapsto S_{g,i}'(t)$ of smoothing operators (for $g$ close enough to $g_0$).  We note that by taking $\mc{U}'$ small enough and ${\rm WF}(B^1)$ close enough to $V^*\cap \ker\iota_{X_{g_0}}$, 
Lemma \ref{choice of T} insures that we can choose $B^1_T$ depending only on $T$ (thus uniform in $g\in \mc{U}'$) so that 
${\rm WF}(B_T^1)\subset C'_u(g_0)$. Thus 
\[\int_T^{T+1} \chi'(t)e^{-tX_g}B^1 dt =B_T^1\int_T^{T+1} \chi'(t)e^{-tX_g}B^1 dt+S_{g,1}''\]
for some continuous family $g\mapsto S_{g,1}''$ of smoothing operators. 
Next we use \eqref{Rg+bound} 
with the choice $N_0=N+1$ and $N_1/16=N+2$. Since by \eqref{propofmg}
\[
m_g(z,\xi)\leq -N-1 \textrm{ for all }(z,\xi)\in {\rm WF}(B^1_T),
\]
we obtain, using the composition properties in \cite[Theorem 8]{Faure-Roy-Sjostrand} that $A_{m_g^{N_0,N_1}}B^1_T\in \Psi^{-N-1}(SM)$ is uniformly bounded with respect to $g$ and continuous as a map $g\in \mc{U}'\mapsto A_{m_g^{N_0,N_1}}B^1_T\in\mc{L}(H^{-N}(SM), H^1(SM))$. In particular    
\begin{equation}\label{firstguy}
\mc{U}'\ni g\mapsto A_{m_g^{N_0,N_1}} \int_T^{T+1} \chi'(t)e^{-tX_g}B^1 dt \in \mc{L}(H^{-N}(SM), H^1(SM))
\end{equation}
is continuous. Next, we deal with the ``elliptic region'' term, i.e. the term $B^2$. The idea is to show it is smoothing, since it is a Schwartz function of $X_g$ microlocalized in the elliptic region of $X_g$. First, ${\rm WF}(B^2_T)$ does not intersect $\ker \iota_{X_g}$ for $g\in \mc{U}'$ after possibly reducing $\mc{U}'$ since it does not intersect $\ker \iota_{X_{g_0}}$.
Moreover we have 
\[ X_g^{2N} \int_{T}^{T+1}\chi'(t)e^{-tX_g}B^2dt=\int_{T}^{T+1}\chi^{(1+2N)}(t)e^{-tX_g}B^2dt,\]
and since ${\rm WF}(B^2_T)$ does not intersect $\ker \iota_{X_g}$ for $g\in \mc{U}'$, 
there is by microlocal ellipticity (\cite[Proposition E.32]{Dyatlov-Zworski-book-resonances}) a family $Q_g\in \Psi^{-2N}(SM)$ and $Z_g\in \Psi^{-\infty}(SM)$, both continuous with respect to $g$, so that 
\[ Q_gX_g^{2N}=B^2_T+Z_g.\]
We write
\[ \begin{split}
B_T^2\int_T^{T+1} \chi'(t)e^{-tX_g}B^2 dt=& Q_gX_g^{2N}\int_T^{T+1} \chi'(t)e^{-tX_g}B^2 dt+Z_g'\\
=& Q_g\int_{T}^{T+1}\chi^{(1+2N)}(t)e^{-tX_g}B^2dt+Z_g'
\end{split}\] 
where $Z_g'\in \mc{L}(H^{-N}(SM),H^N(SM))$ continuously in $g$. Since 
$\int_T^{T+1} \chi'(t)e^{-tX_g}B^2 dt$ is continuous in $g$ as a bounded map $\mc{L}(H^{-N}(SM))$  
and $Q_g$ is continuous in $g$ as a bounded map $\mc{L}(H^{-N}(SM),H^N(SM))$, we get 
\[B_T^2\int_T^{T+1} \chi'(t)e^{-tX_g}B^2 dt\in \mc{L}(H^{-N}(SM),H^{N}(SM))\]
continuously in $g\in \mc{U}'$.
Combine these facts with \eqref{firstguy}, \eqref{continuity resolvent} and \eqref{Rg0residue}, we deduce that  
\[
\mc{U}'\ni g\mapsto A_{m_g^{N_0,N_1}}R_g^-(0)(A_{m_g^{N_0,N_1}})^{-1}A_{m_g^{N_0,N_1}}\int_T^{T+1} \chi'(t)e^{-tX_g}B dt \in \mc{L}(H^{-N}(SM),L^2(SM))
\] 
is continuous. Finally, using that ${\rm WF}(B)\cap C_u(g_0)=\emptyset$ and $-m_{g}^{N_0,N_1}\leq -2N-4$ outside $C_u(g_0)$ by \eqref{propofmg}, we have that $B(A_{m_g^{N_0,N_1}})^{-1}\in \Psi^{-2N-4}(SM)$ uniformly in $g$ (using again \cite[Theorem 8]{Faure-Roy-Sjostrand} and \cite[Lemma 3.2]{DGRS}) and the following map is continuous 
\[
\mc{U}'\ni g\mapsto B(A_{m_g^{N_0,N_1}})^{-1}\in \mc{L}(L^2(SM),H^N(SM)).
\]
This shows that $\mc{U}'\ni g\mapsto \Omega_3^g \in \mc{L}(H^{-N}(M;S^2T^*M),H^N(M;S^2T^*M))$
is continuous. The terms involving the smoothing remainders $S_g$ appearing in the difference between $\Omega_2^g$ and $\Omega_3^g$ can be dealt using the same argument, and indeed are even simpler to consider. The proof is then complete.
\end{proof}
The proof of Proposition \ref{proposition:pi2-continuite} is simply the combination of Lemma \ref{smoothnessofremainder} and Lemma \ref{Omega1}. \qed\\

As a corollary we prove Theorem \ref{mainth3}.

\subsection{Proof of Theorem \ref{mainth3}}

Let $g_0\in \mc{M}^{\rm An}$ and assume $g_0$ has non-positive curvature if $n\geq 3$. 
Using Lemma \ref{lemma:solenoidal-gauge}, for $g_1,g_2\in \mc{M}$ 
close enough to $g_0$ in $C^{k+3,\alpha}$ norm, we can find 
$\psi\in \mc{D}_0^{k+1,\alpha}$ (with $k\geq 5$ to be chosen later), depending in a $C^2$ fashion on $(g_1,g_2)$ such that $D_{g_1}^*(\psi^*g_2)=0$. 
Moreover $g_2'=\psi^*g_2$ satisfies 
\[\| g_2'-g_1\|_{C^{k,\alpha}}\leq C(\|g_1-g_0\|_{C^{k,\alpha}}+\|g_2-g_0\|_{C^{k,\alpha}})\]
for some $C$ depending only on $g_0$.
We can then rewrite the proof of Theorem \ref{theorem:stability} 
replacing $g_0$ by $g_1$. Let $\Psi_{g_1}(g_2)={\bf P}\Big(-J_{g_1}^u-a_{g_1,g_2}+\int_{S_{g_0}M}a_{g_1,g_2}\, d\mu_{g_1}^L\Big)$ be the map \eqref{defofPSi} with $(g_1,g_2)$ replacing $(g_0,g)$, and $\Phi_{g_1}(g_2)=I_{\mu^{\rm L}_{g_1}}(g_1,g_2)$, where $a_{g_1,g_2}$ is the time reparameterization coefficient \eqref{equation:conjugaison} in the conjugacy between the flows $\varphi^{g_1}$ and $\varphi^{g_2}$ and the pressure and the stretch are 
taken with respect to the flow $\varphi^{g_1}$. Combining \cite[Theorem C]{Contreras-92} and Proposition \ref{stabilityth}, 
the maps $(g_1,g_2)\mapsto \Psi_{g_2}(g_1)$ and $(g_1,g_2)\mapsto \Phi_{g_1}(g_2)$ are $C^3$ in $g_2$ if $k$ is chosen large enough, and each $g_2$-derivative of order $\ell\leq 3$ is continuous with respect to $(g_1,g_2)\in C^{k+3}\times C^{k+3}$ (again $k$ is fixed large enough).
Following the proof of Proposition \ref{estimfinal1}, this gives that for $g_1,g_2$ smooth but close enough to $g_0$ in $\mc{M}^{k+3,\alpha}$
\[C_n(\Phi_{g_1}(g_2)-1)^2+\Psi_{g_1}(g_2)\geq \frac{1}{8}\cjg\Pi_2^{g_1} (g_2'-g_1),(g_2'-g_1)\cjd-C'_{g_1}\|g_2-g_1\|^3_{C^{k_0,\alpha}}\]
where $C_n$ depends only on $n=\dim M$, $C'_{g_1}$ depends on $\|g_1\|_{C^{k_0,\alpha}}$ for some fixed $k_0$. 

Combining Proposition \ref{proposition:pi2-continuite} and Lemma \ref{lemma:positivite-pi2unif}, we deduce that there exist $C_{g_0},C'_{g_0}>0$ depending only on $g_0$ so that for $g_1,g_2\in \mc{M}$ in a small enough neighborhood of $g_0$ in the $C^{k+3,\alpha}$ topology (for $k\geq k_0$), 
\[C_n(\Phi_{g_1}(g_2)-1)^2+\Psi_{g_1}(g_2)\geq C_{g_0}\|g_2'-g_1\|_{H^{-1/2}(M)}-C_{g_0}'\|g_2-g_1\|^3_{C^{5,\alpha}}.\]
This means that there exists $\eps>0$ depending on $g_0$ and $k$ large enough so that for all $g_1,g_2\in \mc{M}$ smooth satisfying $\|g_j-g_0\|_{C^{k+3,\alpha}(M)}\leq \eps$ the estimates of Proposition \ref{estimfinal1} with  $(g_1,g_2)$ replacing $(g_0,g)$ hold uniformly with respect to  $(g_1,g_2)$. This proves the desired result.\qed

\section{Distances from the marked length spectrum}

In this paragraph, we discuss different notions of distances involving the marked length spectrum on the space of isometry classes of negatively-curved metrics. Again, if the X-ray transform $I_2$ were known to be injective, it is likely that one could only assume the Anosov property for the metrics in this paragraph.

\subsection{Length distance}

We define the following map:

\begin{definition}\label{disdL}
Let $k$ be as in Theorem \ref{mainth3}. We define the marked length distance map $d_L: \mc{M}^{k,\alpha}\x \mc{M}^{k,\alpha}\to \R^+$ by
\[ d_L(g_1,g_2):=\limsup_{j\to \infty}
\Big|\log \frac{L_{g_1}(c_j)}{L_{g_2}(c_j)}\Big|.\]
\end{definition}

This is indeed well defined. If $g_1, g_2$ are two such metrics, then there exists a constant $C = C(g_1,g_2) \geq 1$ such that for all $(x,v) \in TM$, $1/C \times |v|_{g_1(x)} \leq |v|_{g_2(x)} \leq C \times |v|_{g_1(x)}$. As a consequence, using that a geodesic is a minimizer of the length among a free homotopy class, we obtain:
\[
\dfrac{L_{g_1}(c_j)}{L_{g_2}(c_j)} = \dfrac{\ell_{g_1}(\gamma_{g_1}(c_j))}{\ell_{g_2}(\gamma_{g_2}(c_j))} \leq  \dfrac{\ell_{g_1}(\gamma_{g_2}(c_j))}{\ell_{g_2}(\gamma_{g_2}(c_j))} \leq C^{1/2} \dfrac{\ell_{g_2}(\gamma_{g_2}(c_j))}{\ell_{g_2}(\gamma_{g_2}(c_j))} = C^{1/2},
\]
and the lower bound follows from a similar computation. We get as a Corollary of Theorem \ref{mainth3}:

\begin{corollary}
The map $d_L$ descends to the set of isometry classes near $g_0$ and defines a distance in a small $C^{k,\alpha}$-neighborhood of the isometry class of $g_0$.
\end{corollary}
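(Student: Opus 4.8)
The plan is to verify the metric axioms for $d_L$ and to reduce the only non-formal one — separation of points — to Theorem \ref{mainth3}. Finiteness and non-negativity of $d_L$ were already observed right after Definition \ref{disdL}, using the two-sided comparison $C^{-1}g_1\leq g_2\leq Cg_1$ together with the fact that the closed geodesic in a free homotopy class minimises length within that class. Symmetry is immediate from $\big|\log\tfrac{a}{b}\big|=\big|\log\tfrac{b}{a}\big|$, and the triangle inequality follows from the pointwise identity $\log\tfrac{L_{g_1}(c_j)}{L_{g_3}(c_j)}=\log\tfrac{L_{g_1}(c_j)}{L_{g_2}(c_j)}+\log\tfrac{L_{g_2}(c_j)}{L_{g_3}(c_j)}$, the triangle inequality for $|\cdot|$, and subadditivity of $\limsup$.

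Next I would check $\mc{D}_0$-invariance, so that $d_L$ descends to the quotient $\mc{M}^{k,\alpha}/\mc{D}_0$. If $\psi\in\mc{D}_0^{k+1,\alpha}$ is isotopic to the identity, then $\psi$ fixes every element of $\mc{C}$, whence $L_{\psi^*g}(c)=L_g(c)$ for all $c\in\mc{C}$; consequently $d_L(\psi_1^*g_1,\psi_2^*g_2)=d_L(g_1,g_2)$ for all $\psi_1,\psi_2\in\mc{D}_0$. Thus $d_L$ induces a well-defined, finite, symmetric function on pairs of isometry classes (isometries understood as homotopic to the identity, as elsewhere in the paper), and it satisfies the triangle inequality.

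It remains to prove that $d_L$ separates points near $[g_0]$, and this is precisely where Theorem \ref{mainth3} enters. Let $\eps>0$ and $k$ be as in that theorem and let $g_1,g_2$ be metrics with $\|g_i-g_0\|_{C^k}\leq\eps$. Assume $d_L(g_1,g_2)=0$. Since the expression inside the $\limsup$ is non-negative, this forces $\log\tfrac{L_{g_1}(c_j)}{L_{g_2}(c_j)}\to 0$, i.e. $L_{g_1}(c_j)/L_{g_2}(c_j)\to 1$ along the enumeration of $\mc{C}$ by increasing $g_0$-length; equivalently $L_{g_1}/L_{g_2}\to 1$ in the sense of \eqref{limLg/Lg0}. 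In particular $\mc{L}_+(g_1,g_2)=\mc{L}_+(g_2,g_1)=0$, so Theorem \ref{mainth3} yields a $C^k$-diffeomorphism $\psi$ with $\|\psi^*g_2-g_1\|^2_{H^{-1/2}(M)}=0$, that is $\psi^*g_2=g_1$; since the diffeomorphism produced there comes from the solenoidal gauge of Lemma \ref{lemma:solenoidal-gauge}, it is $C^0$-close to the identity and hence isotopic to it, so $[g_1]=[g_2]$. Therefore $d_L$ is a genuine distance on the set of isometry classes inside the $C^{k,\alpha}$-ball of radius $\eps$ around $g_0$.

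The whole difficulty is concentrated in this last step and is entirely delegated to Theorem \ref{mainth3}; the rest is bookkeeping. The single point worth spelling out in the final write-up is the equivalence between "$L_{g_1}(c_j)/L_{g_2}(c_j)\to 1$ along the length-ordered enumeration" and "$L_{g_1}(c_j)/L_{g_2}(c_j)\to 1$ for every sequence with $L_{g_0}(c_j)\to\infty$", which is the exact form required as input to Theorem \ref{mainth3}; this equivalence was already recorded in the discussion following \eqref{limLg/Lg0}.
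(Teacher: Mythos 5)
Your proposal is correct and follows essentially the same route as the paper: invariance under $\mc{D}_0$ via $L_{\psi^*g}=L_g$, the triangle inequality by splitting the logarithm and using subadditivity of $\limsup$, and separation of points by observing that $d_L(g_1,g_2)=0$ forces $L_{g_1}/L_{g_2}\to 1$ and then invoking Theorem \ref{mainth3}. Your additional remarks (symmetry of $|\log(a/b)|$, and that the gauge diffeomorphism from Lemma \ref{lemma:solenoidal-gauge} is isotopic to the identity) are correct elaborations of details the paper leaves implicit.
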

\begin{proof}
It is clear that $d_L$ is invariant by action of diffeomorphisms homotopic to Identity since $L_g=L_{\psi^*g}$ for such diffeomorphisms $\psi$. Now let $g_1,g_2,g_3$ three metrics. We have 
\[ \begin{split}
\limsup_{j\to \infty}
\Big|\log \frac{L_{g_1}(c_j)}{L_{g_2}(c_j)}\Big|= & \limsup_{j\to \infty}
\Big|\log \frac{L_{g_1}(c_j)}{L_{g_3}(c_j)}\frac{L_{g_3}(c_j)}{L_{g_2}(c_j)}\Big|\\
\leq & \limsup_{j\to \infty}
\Big|\log \frac{L_{g_1}(c_j)}{L_{g_3}(c_j)}\Big|+\limsup_{j\to \infty}
\Big|\log \frac{L_{g_3}(c_j)}{L_{g_2}(c_j)}\Big|.
\end{split}\]
thus $d_L$ satisfies the triangle inequality. Finally, By Theorem \ref{mainth3}, 
if $d_L(g_1,g_2)=0$ with $g_1,g_2$ in the $C^{k,\alpha}$ neighborhood $\mc{U}_{g_0}$ of Theorem \ref{mainth3}, we have $g_1$ isometric to $g_2$, showing that $d_L$ produces a distance on the quotient of $\mc{U}_{g_0}$ by diffeomorphisms.
\end{proof}
We also note that Theorem \ref{mainth3} states that there is $C_{g_0}>0$  such that for each $g_1,g_2\in C^{k,\alpha}(M;S^2T^*M)$ close to $g_0$ there is a diffeomorphism such that:
\[
d_L(g_1,g_2)^{1/2}\geq C_{g_0}\|\psi^*g_1-g_2\|_{H^{-1/2}}.
\]

\subsection{Thurston's distance}

\label{ssection:thurston}

We also introduce the Thurston distance on metrics with topological entropy $1$, generalizing the distance introduced by Thurston in \cite{Thurston-98} for surfaces on Teichmüller space (all hyperbolic metrics on surface have topological entropy equal to $1$). We denote by $\mc{E}$ (resp. $\mc{E}^{k,\alpha}$) the space of negatively curved metrics in $\mc{M}$ (resp. in $\mc{M}^{k,\alpha}$) with topological entropy $\mathbf{h}_{{\rm top}}(g)=1$. (Let us also recall here for the sake of clarity that $\mathbf{h}_{{\rm top}}(\lambda^2 g)=\mathbf{h}_{{\rm top}}(g)/\lambda$, for $\lambda > 0$.) With the same arguments as in Lemma \ref{lemma:h1}, this is a codimension $1$ submanifold of $\mc{M}$ and if $g_0 \in \mc{E}^{k,\alpha}$, one has:
\begin{equation}
\label{equation:tangent-e}
T_{g_0}\mc{E}^{k,\alpha} := \left\{ h \in C^{k,\alpha}(M;S^2T^*M) ~|~ \int_{S_{g_0}M} \pi_2^*h ~d\mu_{g_0}^{\rm BM} = 0 \right\}.
\end{equation}

\begin{definition}
\label{definition:distance-t}
We define the Thurston non-symmetric distance map $d_T: \mc{E}^{k,\alpha} \x \mc{E}^{k,\alpha} \to \R^+$ by
\[ d_T(g_1,g_2):=\limsup_{j\to \infty}
\log \frac{L_{g_2}(c_j)}{L_{g_1}(c_j)}
.\]
\end{definition}

Note that the finiteness of the previous quantity also follows from the same argument as the one justifying the finiteness of Definition \ref{disdL}. Its non-negativity will be a consequence of Lemma \ref{lemma:lien-stretch-mls} where it is proved that this can be expressed in terms of the geodesic stretch. We will prove the

\begin{proposition}
\label{proposition:dt-distance}
The map $d_T$ descends to the set of isometry classes of metrics in $\mc{E}^{k,\alpha}$ (for $k \in \N$ large enough, $\alpha \in (0,1)$) with topological entropy equal to $1$ and defines a non-symmetric distance in a small $C^{k,\alpha}$-neighborhood of the diagonal.
\end{proposition}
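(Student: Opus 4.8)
The plan is to verify the four requirements: descent to $\mc{M}^{k,\alpha}/\mc{D}_0$, finiteness, the triangle inequality, and the separation property $d_T(g_1,g_2)=0\Rightarrow g_1$ isometric to $g_2$. The first two are immediate: $L_{\psi^*g}=L_g$ for every $\psi\in\mc{D}_0$, so $d_T$ is $\mc{D}_0$-invariant, and the finiteness of $d_T(g_1,g_2)$ is exactly the bi-Lipschitz comparison of lengths recalled after Definition~\ref{disdL}; also $d_T(g,g)=0$ trivially. For the triangle inequality I would write, for any enumeration $(c_j)$ of $\mc{C}$ with $L_{g_1}(c_j)\to\infty$,
\[ \log\frac{L_{g_3}(c_j)}{L_{g_1}(c_j)}=\log\frac{L_{g_3}(c_j)}{L_{g_2}(c_j)}+\log\frac{L_{g_2}(c_j)}{L_{g_1}(c_j)}, \]
and use $\limsup_j(a_j+b_j)\le\limsup_j a_j+\limsup_j b_j$ to obtain $d_T(g_1,g_3)\le d_T(g_2,g_3)+d_T(g_1,g_2)$.

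The non-negativity and the separation both rest on the link between $d_T$ and the geodesic stretch. By Lemma~\ref{lemma:lien-stretch-mls} — which in turn follows from Lemma~\ref{lemma:stretch-ergodic}, the weak-$*$ compactness of the set of $\varphi^{g_1}$-invariant probability measures together with the continuity of $\mu\mapsto\int a_{g_1,g_2}\,d\mu$, and the density of closed-orbit measures for the Anosov flow $\varphi^{g_1}$ — one has $d_T(g_1,g_2)=\log\big(\max_\mu I_\mu(g_1,g_2)\big)$, the maximum being over $\varphi^{g_1}$-invariant probability measures $\mu$; in particular $d_T(g_1,g_2)\ge\log I_{\mu^{\rm BM}_{g_1}}(g_1,g_2)$. (This lower bound can also be seen directly: choosing by \cite[Theorem~1]{Sigmund} primitive classes $c_j$ with $\delta_{g_1}(c_j)\rightharpoonup\mu^{\rm BM}_{g_1}$ — which necessarily satisfy $L_{g_1}(c_j)\to\infty$, since $\mu^{\rm BM}_{g_1}$ is not carried by finitely many closed orbits — Lemma~\ref{lemma:stretch-ergodic} gives $L_{g_2}(c_j)/L_{g_1}(c_j)\to I_{\mu^{\rm BM}_{g_1}}(g_1,g_2)$.) Since $g_1,g_2\in\mc{E}^{k,\alpha}$ both have topological entropy $1$, Knieper's inequality \eqref{equation:inegalite-stretch} reads $1={\bf h}_{\rm top}(g_2)\ge{\bf h}_{\rm top}(g_1)/I_{\mu^{\rm BM}_{g_1}}(g_1,g_2)=1/I_{\mu^{\rm BM}_{g_1}}(g_1,g_2)$, so $I_{\mu^{\rm BM}_{g_1}}(g_1,g_2)\ge1$ and hence $d_T(g_1,g_2)\ge0$.

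For the separation property I would argue that if $d_T(g_1,g_2)=0$, the two inequalities just obtained force $I_{\mu^{\rm BM}_{g_1}}(g_1,g_2)=1$; the equality case of \eqref{equation:inegalite-stretch} (as used in the proof of Theorem~\ref{theorem:entropy}, originally \cite[Theorem~1.2]{Knieper-95} and \cite[Proposition~3.8]{Bridgeman-Canary-Labourie-Sambarino-15}) then shows that $\varphi^{g_1}$ and $\varphi^{g_2}$ are time-preserving conjugate up to the scaling factor ${\bf h}_{\rm top}(g_2)/{\bf h}_{\rm top}(g_1)=1$, hence $L_{g_1}=L_{g_2}$ (closed orbits being mapped to closed orbits of the same period; equivalently $a_{g_1,g_2}$ is cohomologous to $1$ and Livsic's theorem applies). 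It then remains to invoke the local rigidity of the marked length spectrum: for $g_1,g_2$ in a sufficiently small $C^{k,\alpha}$-neighborhood of a fixed smooth negatively curved metric $g_0$, and $k$ as in Theorem~\ref{mainth3}, $L_{g_1}=L_{g_2}$ implies $g_1$ isometric to $g_2$ (Theorem~\ref{mainth3} with $\mc{L}_+(g_1,g_2)=\mc{L}_+(g_2,g_1)=0$, or \cite[Theorem~1]{Guillarmou-Lefeuvre-18}). Combined with the triangle inequality and $\mc{D}_0$-invariance, this gives the proposition.

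The main obstacle is the last chain $d_T(g_1,g_2)=0\Rightarrow I_{\mu^{\rm BM}_{g_1}}(g_1,g_2)=1\Rightarrow L_{g_1}=L_{g_2}\Rightarrow g_1\cong g_2$: this is the only non-formal ingredient and it relies on Knieper's entropy–stretch rigidity together with the local marked length spectrum rigidity. One should also take some care that the neighborhood on which the latter applies can be chosen inside $\mc{E}^{k,\alpha}$ and around a smooth metric, so that the conclusion is genuinely a statement about a neighborhood of the diagonal; by contrast, the identification $d_T(g_1,g_2)=\log\max_\mu I_\mu(g_1,g_2)$ of Lemma~\ref{lemma:lien-stretch-mls} uses only standard ergodic theory (Birkhoff's theorem, Sigmund's density theorem, and weak-$*$ compactness).
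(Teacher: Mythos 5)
Your proposal is correct and follows essentially the same route as the paper's proof: non-negativity via Lemma \ref{lemma:lien-stretch-mls} combined with Knieper's inequality \eqref{equation:inegalite-stretch}, and separation via the chain $d_T(g_1,g_2)=0\Rightarrow I_{\mu^{\rm BM}_{g_1}}(g_1,g_2)=1\Rightarrow L_{g_1}=L_{g_2}\Rightarrow g_1\cong g_2$, which is exactly what the paper's appeal to Theorem \ref{theorem:entropy} encapsulates. You merely spell out the descent, finiteness and triangle-inequality steps (which the paper calls immediate) and unpack Theorem \ref{theorem:entropy} into its two ingredients; this is a matter of exposition, not of substance.
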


Moreover, this distance is non-symmetric in the pair $(g_1,g_2)$ which is also the case of the original distance introduced by Thurston \cite{Thurston-98} but this is just an artificial limitation\footnote{Thurston, \cite{Thurston-98}.}: \emph{``It would be easy to replace $L$\footnote{In the notations of Thurston, $L(g,h) =  \limsup_{j\to \infty}
\log \frac{L_{g}(c_j)}{L_{h}(c_j)}$.} by its symmetrization $1/2(L(g, h)+L(h, g))$, but it seems that, because of its direct geometric interpretations, $L$ is more useful just as it is.''} In order to justify that this is a distance, we start with the

\begin{lemma}
\label{lemma:lien-stretch-mls}
Let $g_1,g_2 \in \mc{M}$ be negatively curved. Then:
\[
\limsup_{j\to \infty} \dfrac{L_{g_2}(c_j)}{L_{g_1}(c_j)} = \sup_{m \in \mathfrak{M}_{\rm inv,erg}} I_m(g_1,g_2) \geq 0
\]
\end{lemma}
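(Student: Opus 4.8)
The plan is to prove the two inequalities separately, in both directions passing through weak-$*$ approximation of $\varphi^{g_1}$-invariant measures by the normalized measures $\delta_{g_1}(c)$ carried by the closed $g_1$-geodesics. Two facts are used throughout. First, by \eqref{equation:longueurs} (with $(g_1,g_2)$ in place of $(g_0,g)$) one has $\cjg\delta_{g_1}(c),a_{g_2}\cjd=L_{g_2}(c)/L_{g_1}(c)$, where $a_{g_2}\in C^\nu(S_{g_1}M)$ is the reparametrization cocycle \eqref{equation:conjugaison} relating $\varphi^{g_1}$ and $\varphi^{g_2}$. Second, for every \emph{ergodic} $\varphi^{g_1}$-invariant probability measure $m$ one has $I_m(g_1,g_2)=\int_{S_{g_1}M}a_{g_2}\,dm$, which is exactly Lemma \ref{lemma:stretch-ergodic}. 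We order $\mc{C}=(c_j)_{j\in\N}$ so that $L_{g_1}(c_j)\to\infty$, making $\limsup_j L_{g_2}(c_j)/L_{g_1}(c_j)$ a genuine asymptotic quantity. Finally, the inequality $\ge 0$ is immediate, since the orbit equivalence $\psi_{g_2}$ preserves the time orientation of the flow, so $a_{g_2}>0$ pointwise and hence $I_m(g_1,g_2)=\int a_{g_2}\,dm>0$ for every $m$.

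For the inequality $\le$, set $L:=\limsup_j L_{g_2}(c_j)/L_{g_1}(c_j)$ and fix $\eps>0$. Choosing a subsequence $(c_{j_k})$ along which $L_{g_2}(c_{j_k})/L_{g_1}(c_{j_k})\ge L-\eps$ and invoking weak-$*$ compactness of probability measures on the compact manifold $S_{g_1}M$, I would pass to a further subsequence with $\delta_{g_1}(c_{j_k})\rightharpoonup m_\infty$; the limit $m_\infty$ is $\varphi^{g_1}$-invariant as a weak-$*$ limit of invariant measures. By continuity of $a_{g_2}$, $\int a_{g_2}\,dm_\infty=\lim_k\cjg\delta_{g_1}(c_{j_k}),a_{g_2}\cjd\ge L-\eps$. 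Writing the ergodic decomposition $m_\infty=\int m_\omega\,dP(\omega)$ with $m_\omega\in\mathfrak{M}_{\rm inv,erg}$, we get $L-\eps\le\int a_{g_2}\,dm_\infty=\int\big(\int a_{g_2}\,dm_\omega\big)\,dP(\omega)\le\operatorname*{ess\,sup}_\omega \int a_{g_2}\,dm_\omega$, so there is an ergodic $m$ with $I_m(g_1,g_2)=\int a_{g_2}\,dm\ge L-\eps$. Hence $\sup_{m\in\mathfrak{M}_{\rm inv,erg}}I_m(g_1,g_2)\ge L-\eps$, and letting $\eps\downarrow 0$ gives $\sup_m I_m(g_1,g_2)\ge L$.

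For the inequality $\ge$, fix $m\in\mathfrak{M}_{\rm inv,erg}$. By \cite[Theorem 1]{Sigmund}, the periodic-orbit measures of arbitrarily large period are weak-$*$ dense in $\mathfrak{M}_{\rm inv}$ for the Anosov flow $\varphi^{g_1}$, so (taking a diagonal sequence, and replacing any non-primitive orbit by its primitive core) there are primitive classes $(c_{j_k})_{k\in\N}$ with $L_{g_1}(c_{j_k})\to\infty$ and $\delta_{g_1}(c_{j_k})\rightharpoonup m$ — this is the same input already used in Lemma \ref{lemma:stretch-ergodic}. Then
\[
\lim_{k\to\infty}\frac{L_{g_2}(c_{j_k})}{L_{g_1}(c_{j_k})}=\lim_{k\to\infty}\cjg\delta_{g_1}(c_{j_k}),a_{g_2}\cjd=\int_{S_{g_1}M}a_{g_2}\,dm=I_m(g_1,g_2),
\]
so $\limsup_j L_{g_2}(c_j)/L_{g_1}(c_j)\ge I_m(g_1,g_2)$, and taking the supremum over ergodic $m$ completes the proof. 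The only genuinely nontrivial ingredient is this density of long periodic-orbit measures, where the Anosov (specification) property of $\varphi^{g_1}$ enters; everything else is weak-$*$ compactness, continuity of $a_{g_2}$, and the ergodic decomposition theorem.
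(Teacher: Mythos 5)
Your proof is correct and follows essentially the same route as the paper's: the $\leq$ direction via weak-$*$ compactness of the orbital measures $\delta_{g_1}(c_j)$ plus ergodic decomposition of the limit (the paper isolates the identity $\sup_{\mathrm{erg}}I_m=\sup_{\mathrm{inv}}I_m$ as a preliminary claim, which is the same Choquet argument you fold into the main estimate), and the $\geq$ direction via Sigmund's density of periodic-orbit measures together with $\cjg\delta_{g_1}(c),a_{g_1,g_2}\cjd=L_{g_2}(c)/L_{g_1}(c)$. The one point you gloss over — arranging $L_{g_1}(c_{j_k})\to\infty$ when the target ergodic measure is itself a periodic-orbit measure (where "primitive core" does not help; one needs long primitive orbits shadowing the given one, via specification) — is left implicit in the paper's proof as well, so this is not a departure from it.
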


Note that there is no need to assume $g_1$ and $g_2$ to be close in this Lemma: this follows from Appendix \ref{appendix:conjugacy}, where we discuss the fact that the stretch (and the time-reparametrization) is well-defined despite the fact that the metrics may not be close. Here $m$ is seen as an invariant ergodic measure for the flow $\varphi^{g_1}_t$ living on $S_{g_1}M$. However, writing $M = \Gamma \backslash \widetilde{M}$ with $\Gamma \simeq \pi_1(M,x_0)$ for $x_0 \in M$, it can also be identified with a geodesic current on $\partial_\infty \widetilde{M} \times \partial_\infty \widetilde{M} \setminus \Delta$, that is a $\Gamma$-invariant Borel measure, also invariant by the flip $(\xi,\eta) \mapsto (\eta,\xi)$ on $\partial_\infty \widetilde{M} \times \partial_\infty \widetilde{M} \setminus \Delta$. This point of view has the advantage of being independent of $g_1$ (see \cite{Schapira-Tapie-18}).

\begin{proof}
First of all, we claim that\footnote{As pointed out to us by one of the referees, the map $\mathfrak{M}_{\rm inv} \ni m \mapsto I_m(g_1,g_2)$ is continuous and linear on a compact convex set; it thus achieves its maximum on the extremal points of the convex sets (the ergodic measures) so the argument could be shortened.}
\[
\sup_{m \in \mathfrak{M}_{\rm inv,erg}} I_m(g_1,g_2) =  \sup_{m \in \mathfrak{M}_{\rm inv}} I_m(g_1,g_2).
\]
Of course, it is clear that $\sup_{m \in \mathfrak{M}_{\rm inv,erg}} I_m(g_1,g_2) \leq  \sup_{m \in \mathfrak{M}_{\rm inv}} I_m(g_1,g_2)$ and thus we are left to prove the reverse inequality. By compactness, we can consider a measure $m_0 \in \mathfrak{M}_{\rm inv}$ realizing $\sup_{m \in \mathfrak{M}_{\rm inv}} I_m(g_1,g_2)$. By the Choquet representation Theorem (see \cite[pp. 153]{Walters-82}), there exists a (unique) probability measure $\tau$ on $\mathfrak{M}_{\rm inv,erg}$ such that $m_0$ admits the ergodic decomposition $m_0 = \int_{\mathfrak{M}_{\rm inv,erg}}m ~d\tau(m)$. Thus:
\[
\begin{split}
I_{m_0}(g_1,g_2) & = \int_{S_{g_1}M} a_{g_1,g_2} ~dm_0 \\
& = \int_{\mathfrak{M}_{\rm inv,erg}} \int_{S_{g_1}M} a_{g_1,g_2} ~dm ~d\tau(m) \\
& \leq \sup_{m \in \mathfrak{M}_{\rm inv,erg}} \int_{S_{g_1}M} a_{g_1,g_2} ~dm \int_{\mathfrak{M}_{\rm inv,erg}} d\tau(m) = \sup_{m \in \mathfrak{M}_{\rm inv,erg}} I_{m}(g_1,g_2),
\end{split}
\]
which eventually proves the claim.

Let $(c_j)_{j \in \N}$ be a subsequence such that $\lim_{j \rightarrow +\infty} L_{g_2}(c_j)/L_{g_1}(c_j)$ realizes the $\limsup$. Then, by compactness, we can extract a subsequence such that $\delta_{g_1}(c_j) \rightharpoonup m \in \mathfrak{M}_{\rm inv}$. Thus:
\[ 
L_{g_2}(c_j)/L_{g_1}(c_j) = \langle \delta_{g_1}(c_j), a_{g_1,g_2} \rangle \rightarrow_{j \rightarrow +\infty} \langle m, a_{g_1,g_2}\rangle = I_m(g_1,g_2),
\]
which proves, using our preliminary remark, that
\[
\limsup_{j \rightarrow +\infty} L_{g_2}(c_j)/L_{g_1}(c_j) \leq \sup_{m \in \mathfrak{M}_{\rm inv,erg}} I_m(g_1,g_2).
\]
To prove the reverse inequality, we consider a measure $m_0 \in \mathfrak{M}_{\rm inv,erg}$ such that $I_{m_0}(g_1,g_2) = \sup_{m \in \mathfrak{M}_{\rm inv,erg}} I_m(g_1,g_2)$ (which is always possible by compactness). Since $m_0$ is invariant and ergodic, there exists a sequence of free homotopy classes $(c_j)_{j \in \N}$ such that $\delta_{g_1}(c_j) \rightharpoonup m_0$ (by \cite{Sigmund}). Then, as previously, one has
\[
I_{m_0}(g_1,g_2) = \lim_{j \rightarrow +\infty} L_{g_2}(c_j)/L_{g_1}(c_j) \leq \limsup_{j \rightarrow +\infty} L_{g_2}(c_j)/L_{g_1}(c_j),
\]
which provides the reverse inequality.
\end{proof}

We can now prove Proposition \ref{proposition:dt-distance}.

\begin{proof}[Proof of Proposition \ref{proposition:dt-distance}]
By \eqref{equation:inegalite-stretch}, for $g_1,g_2 \in \mc{E}^{k,\alpha}$, we have that $I_{\mu_{g_1}^{\rm BM}}(g_1,g_2) \geq 1$ and thus by Lemma \ref{lemma:lien-stretch-mls}, we obtain that $d_T(g_1,g_2)\geq 0$ (note that $g_1$ and $g_2$ do not need to be close for this property to hold). Moreover, triangle inequality is immediate for this distance. Eventually, if $d_T(g_1,g_2) = 0$, then $0 \leq \log I_{\mu_{g_1}^{\rm BM}}(g_1,g_2) \leq d_T(g_1,g_2)=0$, that is $I_{\mu_{g_1}^{\rm BM}}(g_1,g_2) = 1$ and by Theorem \ref{theorem:entropy}, it implies that $g_1$ is isometric to $g_2$ if $g_2$ is close enough to $g_1$ in the $C^{k,\alpha}$-topology (note that this neighborhood depends on $g_1$).
\end{proof}

We now investigate with more details the structure of the distance $d_T$. A consequence of Lemma \ref{lemma:lien-stretch-mls} is the following expression of the Thurston Finsler norm:

%
%

\begin{lemma}
\label{lemma:finsler-norm}
Let $g_0 \in \mc{E}^{k,\alpha}$ and $(g_t)_{t \in [0,\eps)}$ be a smooth family of metrics and let $f := \pl_tg_t|_{t=0}$. Then:
\begin{equation}
\label{equation:finsler-norm}
\|f\|_T := \left. \dfrac{d}{dt} d_T(g_0,g_t) \right|_{t=0} = \frac{1}{2}\sup_{m \in \mathfrak{M}_{\rm inv,erg}}  \int_{S_{g_0}M} \pi_2^*f ~dm
\end{equation}
The norm $\|\cdot\|_T$ is a \emph{Finsler norm} on $T_{g_0}\mc{E}^{k,\alpha}\cap \ker D^*_{g_0}$ 
\end{lemma}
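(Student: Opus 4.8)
The plan is to establish two things about the functional $\|f\|_T$ in \eqref{equation:finsler-norm}: first that the stated formula is correct, i.e. that $\frac{d}{dt} d_T(g_0,g_t)|_{t=0}$ equals $\frac{1}{2}\sup_{m} \int_{S_{g_0}M} \pi_2^*f\, dm$, and second that the resulting function on $T_{g_0}\mc{E}^{k,\alpha}\cap \ker D^*_{g_0}$ is indeed a Finsler norm (i.e. positively homogeneous of degree one, convex, and positive away from $0$).

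For the formula itself, I would start from Lemma \ref{lemma:lien-stretch-mls}, which gives $d_T(g_0,g_t)=\log\sup_{m\in\mathfrak{M}_{\rm inv,erg}} I_m(g_0,g_t)$. Differentiating at $t=0$: since $I_m(g_0,g_0)=\int a_{g_0}\, dm = 1$ for every invariant $m$ (as $a_{g_0}$ is cohomologous to $1$, by Livsic), the $\log$ contributes a trivial derivative and it remains to differentiate the supremum. By structural stability the map $t\mapsto a_{g_0,g_t}\in C^\nu(S_{g_0}M)$ is $C^1$, so $m\mapsto I_m(g_0,g_t)=\int a_{g_0,g_t}\,dm$ is a $C^1$ family of affine continuous functions on the compact convex set $\mathfrak{M}_{\rm inv}$; a standard envelope/Danskin-type argument then gives that $\frac{d}{dt}\big(\sup_m I_m(g_0,g_t)\big)|_{t=0} = \sup_{m\in \mathcal{E}_0} \int \partial_t a_{g_0,g_t}|_{t=0}\, dm$ where $\mathcal{E}_0$ is the set of maximizers at $t=0$, but every invariant measure is a maximizer at $t=0$ (all give value $1$), so the constraint is vacuous and we get $\sup_{m\in\mathfrak{M}_{\rm inv}} \int \partial_t a_{g_0,g_t}|_{t=0}\, dm$. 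By Lemma \ref{lemma:differentielle-a}, $\partial_t a_{g_0,g_t}|_{t=0} - \frac{1}{2}\pi_2^*f$ is a coboundary $X_{g_0}u$, and since every invariant measure annihilates coboundaries, $\int \partial_t a_{g_0,g_t}|_{t=0}\, dm = \frac12\int \pi_2^*f\, dm$, which gives \eqref{equation:finsler-norm}; one can restrict the sup to ergodic measures as in the proof of Lemma \ref{lemma:lien-stretch-mls} (linearity plus Choquet).

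For the Finsler property: positive homogeneity of degree $1$ is immediate since $f\mapsto \pi_2^*f$ is linear and $\sup_m(\lambda\,\cdot) = \lambda\sup_m(\cdot)$ for $\lambda\geq 0$; convexity (subadditivity) follows because a supremum of linear functionals is convex. For positivity, I would argue that if $f\in T_{g_0}\mc{E}^{k,\alpha}\cap\ker D^*_{g_0}$ with $\|f\|_T=0$, then $\sup_m \int\pi_2^*f\,dm\le 0$; applying the same to $-f$ (which however need not be tangent to $\mc{E}$ — but $-f$ still satisfies $D^*_{g_0}(-f)=0$, and the constraint $\int\pi_2^*f\,d\mu^{\rm BM}_{g_0}=0$ from \eqref{equation:tangent-e} is symmetric in $f\mapsto -f$) gives $\sup_m\int \pi_2^*(-f)\,dm\le 0$ too, hence $\int\pi_2^*f\,dm = 0$ for all invariant $m$, i.e. $I_2 f=0$ (the X-ray transform of $f$ vanishes on all closed geodesics, equivalently $\pi_2^*f$ integrates to zero against all invariant measures). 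Since $g_0$ is negatively curved, $I_2$ is s-injective by Croke-Sharafutdinov, so $f=D_{g_0}p$ for some $1$-form $p$; combined with $D^*_{g_0}f=0$ this forces $f=0$ by the decomposition \eqref{equation:decomposition}.

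The main obstacle I anticipate is the differentiation of the supremum: one must justify carefully that $\frac{d}{dt}\sup_m I_m(g_0,g_t)$ exists and equals what Danskin's theorem would predict, given that the family $a_{g_0,g_t}$ is only $C^{k-2}$ (in particular $C^1$ for $k\ge 3$) in $t$ with values in a Hölder space, and that the sup is over the non-compact-looking (but weak-$*$ compact) space $\mathfrak{M}_{\rm inv}$. The cleanest route is: for each fixed $t$, $\sup_m I_m(g_0,g_t)$ is finite and, writing $I_m(g_0,g_t) = 1 + t\,\big(\frac12\int\pi_2^*f\,dm\big) + o(t)$ uniformly in $m$ (uniformity comes from $\|a_{g_0,g_t}-a_{g_0} - t\,\partial_t a|_{t=0}\|_{C^0} = o(t)$ by $C^1$-dependence), taking sup over $m$ and then $\log$ gives the one-sided derivatives directly and they agree. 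I would present this uniform expansion explicitly rather than invoking Danskin as a black box, since the envelope is flat at $t=0$ (all measures are maximizers) which is exactly the degenerate case where one must be a little careful.
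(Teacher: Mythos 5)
Your computation of the derivative is essentially the paper's argument: both reduce $d_T(g_0,g_t)$ to $\sup_{m}I_m(g_0,g_t)$ via Lemma \ref{lemma:lien-stretch-mls}, exchange $\lim_{t\to 0}$ and $\sup_m$ using the uniform-in-$m$ expansion of $\int (a_t-1)/t\,dm$ coming from the $C^1$-dependence of $t\mapsto a_t$ in $C^0$, and then replace $\dot a_0$ by $\tfrac12\pi_2^*f$ using Lemma \ref{lemma:differentielle-a} and the fact that invariant measures annihilate coboundaries. The homogeneity and convexity observations are also as in the paper.

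The non-degeneracy argument, however, has a genuine gap. From $\|f\|_T=0$ you only obtain the one-sided information $\int_{S_{g_0}M}\pi_2^*f\,dm\le 0$ for every invariant $m$, with equality for $\mu^{\rm BM}_{g_0}$. The step ``applying the same to $-f$ gives $\sup_m\int\pi_2^*(-f)\,dm\le 0$'' is unjustified: nothing in the hypothesis controls $\|-f\|_T$, and you cannot assume $\|-f\|_T=\|f\|_T$, since a Finsler (Thurston-type) norm is precisely allowed to be asymmetric — symmetry is what you would be assuming, not proving. The symmetry of the linear constraints $D^*_{g_0}f=0$ and $\int\pi_2^*f\,d\mu^{\rm BM}_{g_0}=0$ under $f\mapsto -f$ is beside the point; what is missing is the lower bound $\inf_m\int\pi_2^*f\,dm\ge 0$, and extracting that from the one-sided condition alone is a nontrivial ``positive Livsic''-type statement. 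The paper closes exactly this gap with thermodynamic formalism: since $\mathbf{h}_{\mu^{\rm BM}_{g_0}}(\varphi_1^{g_0})=1$ and $\int\pi_2^*f\,d\mu^{\rm BM}_{g_0}=0$, the variational principle gives $\mathbf{P}(\pi_2^*f)\le 1+\sup_m\int\pi_2^*f\,dm=1$, with the supremum attained at $\mu^{\rm BM}_{g_0}$; hence $\mu^{\rm BM}_{g_0}$ is the equilibrium state of $\pi_2^*f$ as well as of the zero potential, so by \eqref{equalityequilibrium} $\pi_2^*f$ is cohomologous to a constant, which must vanish. Only then does one get $\pi_2^*f=X_{g_0}u$, i.e. $I_2f=0$, and conclude via s-injectivity and $D^*_{g_0}f=0$ as you do. You should replace the ``$-f$'' step by this pressure and uniqueness-of-equilibrium-states argument.
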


\begin{proof}
We introduce
\[
u(t) := e^{d_T(g_0,g_t)} =  \sup_{m \in \mathfrak{M}_{\rm inv,erg}} I_m(g_0,g_t)
\]
and write $a_t := a_{g_0,g_t}$ for the time reparametrization (as in \eqref{equation:conjugaison}). Then:
\[
\begin{split}
 \lim_{t \rightarrow 0} \dfrac{u(t)-u(0)}{t}  & = \lim_{t \rightarrow 0} \sup_{m \in \mathfrak{M}_{\rm inv,erg}} \int_{S_{g_0}M} \dfrac{a_t-1}{t} dm  
 =  \sup_{m \in \mathfrak{M}_{\rm inv,erg}} \int_{S_{g_0}M} \dot{a}_0 ~dm \\
 & =  \frac{1}{2}\sup_{m \in \mathfrak{M}_{\rm inv,erg}} \int_{S_{g_0}M} \pi_2^*f ~dm = u'(0)  = \left. \dfrac{d}{dt} d_T(g_0,g_t) \right|_{t=0},
\end{split}
\]
since $\dot{a}_0=\pl_{t}a_{t}|_{t=0}$ and $\pi_2^*f$ are cohomologous by Lemma \ref{lemma:differentielle-a}. This also shows that the derivative exists. The inversion of the limit and the $\sup$ follows from the fact that, writing $F_t(m) := \int_{S_{g_0}M} (a_t-1)/t ~~dm$,
one has $\sup_{m \in \mathfrak{M}_{\rm inv,erg}} |F_t(m)-F_0(m)| \rightarrow_{t \rightarrow 0} 0$. Note that, up to taking a large $k \in \N$ and iterating the same computation for higher order derivatives, shows that $t \mapsto u(t)$ (thus $t \mapsto d_T(g_0,g_t)$) is at least $C^2$.

We now prove that this is a Finsler norm in a neighborhood of the diagonal. We fix $g_0 \in \mc{E}^{k,\alpha}$. By Lemma \ref{lemma:solenoidal-gauge}, isometry classes near $g_0$ can be represented by solenoidal tensors, namely there exists a $C^{k,\alpha}$-neighborhood $\mc{U}$ of $g_0$ such that for any $g \in \mc{U}$, there exists a (unique) $\psi \in \mc{D}_0^{k+1,\alpha}$ such that $D^*_{g_0}\psi^*g = 0$. Moreover, if $g \in \mc{E}^{k,\alpha}$, then $\psi^*g \in \mc{E}^{k,\alpha}$. As a consequence, using \eqref{equation:tangent-e}, the statement now boils down to proving that \eqref{equation:finsler-norm} is a norm for solenoidal tensors $f \in C^{k,\alpha}(M;S^2T^*M)$ such that $\int_{S_{g_0}M} \pi_2^*f \,d\mu_{g_0}^{\rm BM} = 0$. Since triangle inequality, $\R_+$-scaling and non-negativity are immediate, we simply need to show that $\|f\|_T = 0$ implies $f=0$. Now, for such a tensor $f$, we have
\[
\begin{split}
\mathbf{P}(\pi_2^*f) & = \sup_{m \in \mathfrak{M}_{\rm inv,erg}} \mathbf{h}_m(\varphi^{g_0}_1) + \int_{S_{g_0}M} \pi_2^*f dm \\
&\leq  \sup_{m \in \mathfrak{M}_{\rm inv,erg}} \mathbf{h}_m(\varphi^{g_0}_1) +  \sup_{m \in \mathfrak{M}_{\rm inv,erg}} \int_{S_{g_0}M} \pi_2^*f dm  = \underbrace{\mathbf{h}_{\rm top}(\varphi^{g_0}_1)}_{=1} + 0
\end{split}
\]
and this supremum is achieved for $m=\mu^{\rm BM}_{g_0}$ and $\mathbf{P}(\pi_2^*f) = 1$. As a consequence, the equilibrium state associated to the potential $\pi_2^*f$ is the Bowen-Margulis measure $\mu^{\rm BM}_{g_0}$ (the equilibrium state associated to the potential $0$) and thus $\pi_2^*f$ is cohomologous to a constant $c \in \R$ (see \cite[Theorem 9.3.16]{Fisher-Hasselblatt}) which has to be $c=0$ since the average of $\pi_2^*f$ with respect to Bowen-Margulis is equal to $0$, that is there exists a Hölder-continuous function $u$ such that $\pi_2^*f = Xu$. Since $f \in \ker D^*_{g_0}$, the s-injectivity of the X-ray transform $I_2^{g_0}$ implies that $f \equiv 0$.
\end{proof}

The asymmetric Finsler norm $\|\cdot\|_T$ induces a distance $d_F$ between isometry classes namely
\[
d_F(g_1,g_2) = \inf_{\gamma : [0,1] \rightarrow \mc{E}, \gamma(0)=g_1,\gamma(1)=g_2} \int_{0}^{1} \|\dot{\gamma}(t)\|_T ~dt.
\]
It is easy to prove that $d_T(g_1,g_2) \leq d_F(g_1,g_2)$, which shows that $d_F$ is indeed a distance in a neighborhood of the diagonal, just like $d_T$. Indeed, consider a $C^1$-path $\gamma : [0,1] \rightarrow \mc{E}$ such that $\gamma(0)=g_1,\gamma(1)=g_2$. Then, considering $N \in \N, t_i := i/N$, we have by triangle inequality
\[
\begin{split}
d_T(g_1,g_2) \leq \sum_{i=0}^{N-1} d_T(\gamma(t_i),\gamma(t_{i+1})) & = \sum_{i=0}^{N-1} \|\dot{\gamma}(t_i)\|_T(t_{i+1}-t_i) + \mc{O}(|t_{i+1}-t_i|^2) \\
& \rightarrow_{N \rightarrow +\infty} \int_0^{1} \|\dot{\gamma}(t)\|_T ~dt,
\end{split}
\]
which proves the claim (note that we here use the fact that $t \mapsto d_T(g_0,g_t)$ is at least $C^2$). In \cite{Thurston-98}, Thurston proves that, on restriction to Teichmüller space, the asymmetric Finsler norm induces the distance $d_T$, that is $d_T = d_F$. 
We make the following conjecture:
\begin{conjecture}\label{conjThurston}
The distances $d_T$ coincide with $d_F$ for isometry classes of negatively curved metrics with topological entropy equal to $1$. 
\end{conjecture}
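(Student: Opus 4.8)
The plan is to reduce Conjecture~\ref{conjThurston} to the statement that $d_T$ is an \emph{intrinsic} (length) metric near the diagonal, and then to attack the latter by constructing an analogue of Thurston's stretch lines. The inequality $d_T\le d_F$ has already been established above via the partition/triangle-inequality argument. For the converse, note that Lemma~\ref{lemma:finsler-norm} identifies $\|\cdot\|_T$ with the first variation of $d_T$ and (for $k$ large) shows that $t\mapsto d_T(g_0,g_t)$ is $C^2$ with remainder estimates that are uniform along a $C^1$ family. A routine argument then identifies, for any $C^1$ path $\gamma:[0,1]\to\mc{E}^{k,\alpha}$, the $d_T$-length of $\gamma$ with $\int_0^1\|\dot\gamma(t)\|_T\,dt$ (the upper bound is exactly the computation already carried out to prove $d_T\le d_F$, and the lower bound uses the same one-sided derivative). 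Hence the length metric $\widehat{d_T}$ associated to $d_T$ coincides with $d_F$, and since always $\widehat{d_T}\ge d_T$, the conjecture is equivalent to the assertion $d_T=\widehat{d_T}$, i.e. that any two nearby metrics in $\mc{E}^{k,\alpha}$ can be joined by paths whose $\|\cdot\|_T$-length approaches $d_T(g_1,g_2)$.

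To produce such near-minimizing paths one would imitate Thurston's stretch-line construction. By Lemma~\ref{lemma:lien-stretch-mls}, $e^{d_T(g_1,g_2)}=\sup_{m\in\mathfrak{M}_{\rm inv,erg}}I_m(g_1,g_2)$ is attained at some ergodic measure $m_0$ for $\varphi^{g_1}$ — the analogue of the maximal-ratio measured lamination. The goal is to build a one-parameter family $(g_t)_{t\in[0,1]}$ with $g_0=g_1$, $g_1=g_2$, "stretching in the direction of $m_0$", such that the ratio cocycle composes multiplicatively along the family ($I_{m_0}(g_1,g_t)\,I_{m_0}(g_t,g_s)=I_{m_0}(g_1,g_s)$) and such that at each time the supremum in \eqref{equation:finsler-norm} for $\dot g_t$ is realized by (the appropriate time-change of) $m_0$. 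Given such a family, $d_T(g_s,g_t)$ would be additive along it and equal to $\int_s^t\|\dot g_\tau\|_T\,d\tau$, so it would be simultaneously a $d_T$- and a $d_F$-geodesic, yielding $d_T(g_1,g_2)=d_F(g_1,g_2)$.

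The main obstacle is exactly the construction of this "stretch flow" of metrics. In Teichm\"uller theory the stretch maps are explicit piecewise-affine maps of ideal triangles, whereas in variable negative curvature — and all the more in dimension $\ge 3$ — there is no such model, and the obvious candidates (reparametrizations of $\varphi^{g_1}$ interpolating between the cocycle $1$ and $a_{g_1,g_2}$) need not be realized by Riemannian geodesic flows. One therefore has to either work first in the larger space of Anosov flows up to time-preserving conjugacy, where stretch families are easy to write down, and then control how far one may push a geodesic flow inside that space while remaining among geodesic flows; or to seek the path variationally, e.g. as a $G$-geodesic for the pressure metric of Proposition~\ref{metricG} and Lemma~\ref{lemma:lien-metrique}, and prove such geodesics are $d_T$-minimizing. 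A further, technical difficulty is that $\mc{E}^{k,\alpha}$ is an infinite-dimensional manifold, so the existence of $d_F$-minimizing paths is not automatic and must be part of the argument. A natural first step would be to verify the conjecture along restricted deformation families — conformal changes, or deformations tangent to a single ergodic measure — where the multiplicativity of the ratio cocycle can be checked by hand.
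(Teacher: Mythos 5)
This statement is labelled a \emph{conjecture} in the paper, and the paper offers no proof: the authors only establish the one inequality $d_T\le d_F$ (via the partition argument you cite) and explicitly note that the full statement would imply the Burns--Katok marked length spectrum rigidity conjecture, which is open. So there is no ``paper proof'' to compare against, and any complete argument here would be a major result rather than a routine verification.

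Your proposal is, by your own account, a strategy and not a proof, and the gap is exactly where you locate it. The reduction of the conjecture to ``$d_T$ is intrinsic'', i.e.\ $d_T=\widehat{d_T}$ where $\widehat{d_T}$ is the induced length metric, is sound in outline, though even the identification of the $d_T$-length of a $C^1$ path with $\int_0^1\|\dot\gamma(t)\|_T\,dt$ requires a \emph{uniform} second-order remainder in $d_T(\gamma(t_i),\gamma(t_{i+1}))=\|\dot\gamma(t_i)\|_T(t_{i+1}-t_i)+\mc{O}(|t_{i+1}-t_i|^2)$ along the whole path; the paper's Lemma \ref{lemma:finsler-norm} only gives this at a fixed basepoint, and you assert rather than prove the uniformity. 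The decisive missing step is the construction of the stretch families: a path $(g_t)$ of genuine Riemannian metrics in $\mc{E}$ along which the maximizing ergodic measure persists and the ratio cocycle is multiplicative. In Teichm\"uller space this rests on Thurston's explicit stretch maps between hyperbolic structures; in variable negative curvature (let alone dimension $\ge 3$) there is no known replacement, and the obvious candidates obtained by reparametrizing $\varphi^{g_1}$ toward $a_{g_1,g_2}$ leave the class of geodesic flows. Since this construction \emph{is} the content of the conjecture, the proposal should be read as a (reasonable) research program, not as progress that could be spliced into the paper as a proof.
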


This conjecture would imply the marked length spectrum rigidity conjecture. Indeed, as mentioned just after Theorem \ref{theorem:entropy}, two metrics with same marked length spectrum have same topological entropy and it is harmless (up to a scaling of the metrics) to assume that this topological entropy is equal to $1$. Then, if the previous conjecture is true, using that their Thurston distance $d_T$ is zero, we obtain that their Finsler distance $d_F$ is zero. But this implies that the metrics are isometric.

\appendix

\section{Asymptotic marked length spectrum}

\label{appendix}

In this Appendix, we show the following (the proof was communicated to us by one of the referees):

\begin{lemma}
Let $g$ and $g_0$ be two metrics with Anosov geodesic flows on a fixed manifold $M$ and assume that $g$ is close to $g_0$ in $C^{k,\alpha}$ norm. 
Assume that for all sequences $(c_j)_{j \geq 0}$ in $\mc{C}$, $L_g(c_j)/L_{g_0}(c_j) \rightarrow_{j \rightarrow +\infty} 1$. Then $L_g = L_{g_0}$.
\end{lemma}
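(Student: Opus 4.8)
The plan is to pass through the time reparametrization $a_g$ relating the geodesic flows of $g_0$ and $g$, together with the equidistribution of long closed orbits of $\varphi^{g_0}$. Since $g$ is close to $g_0$, the conjugacy coefficient $a_g\in C^\nu(S_{g_0}M)$ exists and is Hölder continuous, and \eqref{equation:longueurs} gives $L_g(c)/L_{g_0}(c)=\langle \delta_{g_0}(c),a_g\rangle$ for every $c\in\mc{C}$. Because $a_g$ is continuous, the map $\mathfrak{M}_{\rm inv}\ni\mu\mapsto \int_{S_{g_0}M}a_g\,d\mu$ is affine and weak-$*$ continuous; so it suffices to show that this map is identically $1$, since evaluating at $\mu=\delta_{g_0}(c)$ then yields $L_g(c)=L_{g_0}(c)$ for all $c$, i.e. $L_g=L_{g_0}$.

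First I would establish $\int a_g\,d\mu=1$ for every $\mu\in\mathfrak{M}_{\rm inv}$ that is \emph{not} a periodic orbit measure. By \cite{Sigmund} the periodic orbit measures are weak-$*$ dense in $\mathfrak{M}_{\rm inv}$, so there are primitive classes $c_j$ with $\delta_{g_0}(c_j)\rightharpoonup\mu$. If only finitely many distinct classes appeared among the $c_j$, some $\delta_{g_0}(c)$ would occur infinitely often, forcing $\mu=\delta_{g_0}(c)$ — excluded since $\mu$ is not periodic; hence, passing to a subsequence, we may take the $c_j$ pairwise distinct. Since $\varphi^{g_0}$ is Anosov it has only finitely many closed orbits of $g_0$-length $\le T$ for each $T$, so distinctness of the $c_j$ forces $L_{g_0}(c_j)\to\infty$. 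The hypothesis then gives $\langle\delta_{g_0}(c_j),a_g\rangle=L_g(c_j)/L_{g_0}(c_j)\to 1$, and weak-$*$ continuity of $a_g$ identifies the limit as $\int a_g\,d\mu$, so $\int a_g\,d\mu=1$.

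Next I would handle a periodic orbit measure $\delta_{g_0}(c)$ by an averaging trick. Pick $\nu\in\mathfrak{M}_{\rm inv}$ not carried by a single closed orbit, e.g. the Liouville measure $\mu^{\rm L}_{g_0}$. Then $\tfrac12(\delta_{g_0}(c)+\nu)$ is invariant but not ergodic, hence not a periodic orbit measure, so by the previous step $\int a_g\,d\nu=1$ and $\int a_g\,d\big(\tfrac12(\delta_{g_0}(c)+\nu)\big)=1$; subtracting gives $\int a_g\,d\delta_{g_0}(c)=1$, i.e. $L_g(c)=L_{g_0}(c)$. As $c\in\mc{C}$ was arbitrary, $L_g=L_{g_0}$.

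The main obstacle is precisely the passage from the purely asymptotic information — the behaviour of $L_g/L_{g_0}$ along sequences of classes of unbounded $g_0$-length — to an equality valid for \emph{every} closed geodesic, including short ones; the density of long periodic orbit measures (Sigmund's theorem, equivalently the specification property of Anosov flows) together with the convexity of $\mathfrak{M}_{\rm inv}$ is exactly what bridges this gap. Alternatively, one could avoid the averaging step by using specification directly to produce, for each $c$, distinct primitive classes $c_j$ with $\delta_{g_0}(c_j)\rightharpoonup\delta_{g_0}(c)$ and $L_{g_0}(c_j)\to\infty$, and apply the hypothesis to that sequence.
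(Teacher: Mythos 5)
Your proposal is correct and follows essentially the same route as the paper's proof: Sigmund's density of periodic orbit measures to get $\int a_g\,d\mu=1$ for non-periodic invariant $\mu$ (via a sequence of distinct classes whose $g_0$-lengths necessarily tend to infinity), followed by a convex-combination trick to transfer the identity to each individual $\delta_{g_0}(c)$. Your justification that the averaged measure is not a periodic orbit measure (non-ergodicity) is a slightly cleaner variant of the paper's continuity argument, but the structure is the same.
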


\begin{proof}
By Sigmund \cite[Theorem 1]{Sigmund}, the set $\mathfrak{D} := \left\{\delta_{g_0}(c) ~|~ c \in \mc{C} \right\}$ is dense in $\mathfrak{M}_{\mathrm{inv}}$ (the set of invariant measures by the $g_0$-geodesic flow on $S_{g_0}M$). If $\mu \in \mathfrak{M}_{\mathrm{inv}} \setminus  \mathfrak{D}$, we can therefore find a sequence such that $\delta_{g_0}(c_j) \rightharpoonup_{j \rightarrow +\infty} \mu$ and $L_{g_0}(c_j) \rightarrow +\infty$. (Indeed, if $L_{g_0}(c_j) \leq C$ for some $C \geq 0$, then the sequence $(\delta_{g_0}(c_j))_{j \geq 0}$ only achieves a finite number of measures which would imply that $\mu$ is a Dirac mass on a closed orbit and this is excluded since $\mu \notin \mathfrak{D}$.) Then, the condition $L_g/L_{g_0} \rightarrow 1$ immediately implies that 
\[
I_{\mu}(g_0,g) = \int_{S_{g_0}M} a_g(z) d\mu(z) = 1.
\]
Now for $c \in \mc{C}$ and $t > 0$ small, the linear combination $t \mu + (1-t)\delta_{g_0}(c) \notin \mathfrak{D}$. Indeed, if not, we would have $t \mu + (1-t)\delta_{g_0}(c) = \delta_{g_0}(c_t)$ but by continuity, $c_t=c_0$ for $t$ small, which contradicts $\mu \notin \mathfrak{D}$. Therefore:
\[
t \int_{S_{g_0}M} a_g(z) d\mu(z) + (1-t) \dfrac{1}{L_{g_0}(c)} \int_0^{L_{g_0}(c)} a_g(\varphi_s^{g_0}(z)) ds = 1,
\]
that is
\[
\dfrac{1}{L_{g_0}(c)} \int_0^{L_{g_0}(c)} a_g(\varphi_s^{g_0}(z)) ds = \dfrac{L_{g}(c)}{L_{g_0}(c)} = 1.\qedhere
\]
\end{proof}

\section{Global conjugacy for Riemannian Anosov flows}

\label{appendix:conjugacy}

Let $(M, g)$ be a closed Riemannian manifold whose geodesic flow is Anosov. As has been shown by Klingenberg  \cite{Klingenberg-74} the geodesic flow has no conjugate points.
Let $(\widetilde M, g)$ be the universal cover of $M$ where for simplicity the lifted metric  is also denoted by $ g$. Let  $\Gamma$  be the group
of deck transformations.
As has been remarked in \cite {Knieper-12} the universal cover $\widetilde M$  is Gromov hyperbolic (see \cite[Section III.H.1]{Bridson-Haefliger-99} for a definition of Gromov-hyperbolicity).
Denote by $\partial_{\infty} \widetilde M$ the Gromov boundary which is equipped with the visibility topology (see e.g \cite{Knieper-survey} for more details).
For $\xi \in \partial_{\infty} \widetilde M$ and $x_0 \in \wt M$ the Busemann function $x \mapsto b_\xi^g(x_0, x)$ is defined by
\begin{equation}
\label{equation:definition-busemann}
 b_\xi^g(x_0, x) := \lim_{z \to \xi} d_g(x_0, z) - d_g(x,z).
\end{equation}
It has the following properties:
\begin{equation}\label{prop1bus}
 b_\xi^g(x_0, x) =  b_\xi^g(x_0, x_1)+  b_\xi^g(x_1, x) \quad \quad (\text{cocycle property})
 \end{equation}
 and
 \begin{equation}\label{prop2bus}
 b_{\gamma(\xi)}^g(\gamma(x_0), \gamma(x) )=  b_\xi^g(x_0, x) \quad \quad (\Gamma \;\text{- equivariance})
 \end{equation}
for all $\gamma \in \Gamma$.
We introduce the gradient of the Busemann function $B^g(x, \xi) : = \nabla_{x} b_\xi^g(x_0,x)$ which is independent of $x_0$ by property \ref{prop1bus}. Also observe that $B^g(x,\xi) \in S_g \wt M$ by the very definition \eqref{equation:definition-busemann}. Here, $S_g \wt M$ is the unit tangent bundle on the universal cover and $\pi : S_g \wt M \rightarrow \wt M$ denotes the projection. Given $z=(x,v) \in S_g \til M $, we introduce $c_g(z,t) := \pi(\varphi_t^g(x,v))$, where $(\varphi_t^g)_{t \in \R}$ is the (lift of) the geodesic flow on $\wt M$. We set $z^g_\pm = c_g(z,\pm \infty)  \in \partial_\infty \wt M$. 

For $\xi = z^g_+$ the submanifolds
$
W^{ss}(z) = \{(x,- B^g(x, \xi)) \in S_g \wt M \mid b_\xi^g(x_0,x) = b_\xi^g(x_0,\pi z) \} $ and 
$W^{uu}(z) = \{ (x,B^g(x, \xi)) \in S_g \wt M \mid b_\xi^g(x_0,x) = b_\xi^g(x_0,\pi z) \}$
are the lifts of the leafs of strong stable and unstable foliations through $z \in  S_g \wt M$. Since the leafs are smooth and the foliations
are H\"older continuous, the Busemann functions $(x,\xi) \mapsto b_\xi^g(x_0, x)$ are smooth with respect to $x$ and  H\"older continuous with respect to $\xi$. The following Lemma was proved in \cite{Schapira-Tapie-18} (see also \cite{Gromov-00}) in negative curvature.

\begin{lemma} 
Let $M =  \til M/ \Gamma$ be a closed manifold, $g_1, g_2$ two Riemannian metrics with Anosov geodesic flow.
Consider the map $\psi_{g_1, g_2}: S_{g_1}\wt M  \to  S_{g_2}\wt M $ defined by $\psi_{g_1, g_2}(z)  =w$ where 
$w \in  S^{g_2}\wt M $ is the unique vector with $w^{g_2}_+ = z^{g_1}_+$ and  $w^{g_2}_-= z^{g_1}_-$ and 
$b^{g_2}_{z^{g_1}_+}( \pi(z), \pi(w)) =0$. Then $\psi_{g_1, g_2}$ is a H\"older continuous homeomorphism with
$$
\wt \varphi^{g_2}_{\tau(z,t)}\psi_{g_1, g_2}(z) = \psi_{g_1, g_2}(\wt \varphi^{g_1}_t(z))
$$
where 
$$
\tau(z,t) = b^{g_2}_{z^{g_1}_+} ( \pi(z), \pi(\wt \varphi^{g_1}_t(z)) ) =\int_{0}^t g_2( B^{g_2}(\pi (\wt \varphi_s^{g_1}(z)), z^{g_1}_+) , \wt \varphi_s^{g_1}(z)) ds
$$
 for all $z \in S^{g_1}\widetilde M $. Furthermore, for all $\gamma \in \Gamma$
we have 
$$
\gamma_{\ast}\psi_{g_1, g_2}(z) =\psi_{g_1, g_2} (\gamma_{\ast}z)
$$ and 
${\tau(\gamma_{\ast}z,t)} = \tau(z,t)$ and therefore  $\psi_{g_1, g_2}$ descends to a conjugacy between the geodesic flows on the quotients.
\end{lemma}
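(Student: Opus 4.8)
The plan is to verify directly that the vector $w := \psi_{g_1,g_2}(z)$ as defined is the unique such vector, that it depends continuously on $z$ in a Hölder fashion, and that it intertwines the two geodesic flows with the stated time change; the $\Gamma$-equivariance is then a formal consequence of the equivariance properties \eqref{prop1bus}--\eqref{prop2bus} of the Busemann cocycle.

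First I would establish that $\psi_{g_1,g_2}$ is well-defined. Since both $\til M$ with $g_1$ and with $g_2$ are Gromov hyperbolic with the \emph{same} boundary $\partial_\infty\til M$ (this is the point recalled from \cite{Knieper-12}, using that $c^{-1}g_1\le g_2\le cg_1$ so that $g_1$-geodesics are $g_2$-quasi-geodesics and the Morse lemma identifies the boundaries), for every pair of distinct points $\xi_+ = z^{g_1}_+$, $\xi_-=z^{g_1}_-$ there is a $g_2$-geodesic line joining them; its set of unit tangent vectors forms a single $\til\varphi^{g_2}$-orbit, so prescribing additionally the normalization $b^{g_2}_{\xi_+}(\pi(z),\pi(w))=0$ pins down a unique $w\in S_{g_2}\til M$ (the Busemann function is strictly monotone along the orbit, being smooth in the base point with gradient $B^{g_2}(\cdot,\xi_+)$ which has unit length). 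Continuity of $z\mapsto w$ follows from continuity of the boundary maps $z\mapsto(z^{g_1}_+,z^{g_1}_-)$, of $\Phi_{g_2}^{-1}$ from $\partial_\infty\til M\times\partial_\infty\til M\setminus\Delta$ to $\mc G_{g_2}$, and of the normalization; the Hölder regularity is inherited from the Hölder regularity of the weak-stable/unstable foliations (equivalently, of the boundary maps and of the Busemann functions in the $\xi$-variable), exactly as in \cite{Schapira-Tapie-18,Gromov-00} in the negatively curved case, the argument being insensitive to the sign of the curvature once one only uses the Anosov structure and Gromov hyperbolicity.

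Next I would check the conjugacy identity. Fix $z$ and $t$, set $w=\psi_{g_1,g_2}(z)$. The vector $\til\varphi^{g_2}_{\tau(z,t)}(w)$ lies on the same $g_2$-geodesic as $w$, hence has the same endpoints $\xi_\pm = z^{g_1}_\pm$; and $\til\varphi^{g_1}_t(z)$ has precisely those same endpoints as well (a geodesic and any forward/backward shift of it share endpoints). So $\psi_{g_1,g_2}(\til\varphi^{g_1}_t(z))$ and $\til\varphi^{g_2}_{\tau(z,t)}(w)$ lie on the same $g_2$-geodesic line; to see they coincide it remains to match the base points, i.e. to check $b^{g_2}_{\xi_+}(\pi(z),\pi(\til\varphi^{g_2}_{\tau(z,t)}(w)))=b^{g_2}_{\xi_+}(\pi(z),\pi(\til\varphi^{g_1}_t(z)))$. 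By the cocycle property \eqref{prop1bus} and the normalization $b^{g_2}_{\xi_+}(\pi(z),\pi(w))=0$, the left side equals $b^{g_2}_{\xi_+}(\pi(w),\pi(\til\varphi^{g_2}_{\tau(z,t)}(w)))$, which by the definition of the Busemann gradient and the fundamental theorem of calculus equals $-\tau(z,t)$ if one orients along the backward direction, i.e. one is reduced to the identity
\[
b^{g_2}_{\xi_+}\big(\pi(w),\pi(\til\varphi^{g_2}_{s}(w))\big)=-s
\]
for a $g_2$-geodesic $w$ with forward endpoint $\xi_+$, which is immediate from $\tfrac{d}{ds}b^{g_2}_{\xi_+}(\cdot) = \langle B^{g_2}(\cdot,\xi_+),\dot c\rangle_{g_2} = -1$ along such a geodesic. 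Defining $\tau(z,t):=b^{g_2}_{\xi_+}(\pi(z),\pi(\til\varphi^{g_1}_t(z)))$ then makes both base points agree, and the integral formula for $\tau$ is again the fundamental theorem of calculus applied to $s\mapsto b^{g_2}_{\xi_+}(\pi(z),\pi(\til\varphi^{g_1}_s(z)))$ using that its derivative is $\langle B^{g_2}(\pi(\til\varphi^{g_1}_s(z)),\xi_+),\,\til\varphi^{g_1}_s(z)\rangle_{g_2}$, since $\tfrac{d}{ds}\pi(\til\varphi^{g_1}_s(z))$ is the $g_1$-unit vector $\til\varphi^{g_1}_s(z)$.

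Finally, the $\Gamma$-equivariance $\gamma_*\psi_{g_1,g_2}(z)=\psi_{g_1,g_2}(\gamma_*z)$ follows because $\gamma$ acts on $\partial_\infty\til M$ carrying $z^{g_1}_\pm$ to $(\gamma_* z)^{g_1}_\pm$, and by \eqref{prop2bus} the normalization $b^{g_2}_{\xi_+}(\pi(z),\pi(w))=0$ is preserved; uniqueness then forces the two sides to agree. Combining with $\tau(\gamma_* z,t)=\tau(z,t)$, which is \eqref{prop2bus} again, shows $\psi_{g_1,g_2}$ descends to a Hölder orbit conjugacy on $M$. I expect the genuine obstacle to be the first step, namely the careful verification that the two ideal boundaries genuinely coincide and that the resulting boundary-to-orbit parametrizations are Hölder — everything downstream is soft cocycle bookkeeping — but this is precisely what is quoted from \cite{Knieper-12} and the Appendix's earlier discussion, so in the write-up it can be invoked rather than redone.
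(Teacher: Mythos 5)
Your argument follows essentially the same route as the paper's: define $\psi_{g_1,g_2}$ by matching endpoints at infinity plus the Busemann normalization, verify the conjugacy identity by the cocycle property \eqref{prop1bus}, and deduce equivariance and descent from \eqref{prop2bus}; your extra care about well-definedness (coincidence of the ideal boundaries, uniqueness of $w$ on the $g_2$-geodesic) and H\"older regularity is welcome detail that the paper compresses into citations.

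There is, however, one sign slip that as written makes the key verification fail to close. With the paper's convention $b^{g}_{\xi}(x_0,x)=\lim_{z\to\xi}\bigl(d_g(x_0,z)-d_g(x,z)\bigr)$, the Busemann function \emph{increases} at unit rate along a geodesic flowing forward toward $\xi$, i.e. $g_2\bigl(B^{g_2}(\cdot,\xi_+),\dot c\bigr)=+1$ and hence
\[
b^{g_2}_{\xi_+}\bigl(\pi(w),\pi(\wt\varphi^{g_2}_{s}(w))\bigr)=+s ,
\]
not $-s$ as you claim. If one substitutes your value $-\tau(z,t)$ for the left-hand side of the base-point matching condition, one gets $-\tau(z,t)$ against the right-hand side $b^{g_2}_{\xi_+}(\pi(z),\pi(\wt\varphi^{g_1}_t(z)))=+\tau(z,t)$, and the two agree only when $\tau=0$. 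Note also that you use the correct (positive) sign a few lines later when deriving the integral formula for $\tau$, so your write-up is internally inconsistent about the orientation of $B^{g_2}(\cdot,\xi_+)$: under the paper's definition it points \emph{toward} $\xi_+$. Once this sign is fixed, the computation is exactly the paper's chain of cocycle identities and the rest of your argument (equivariance from \eqref{prop2bus}, descent to the quotient) is correct.
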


\begin{proof}
We show first that for each $(z,t) \in  S_{g_1}\wt M \times \R$ we have 
\[
\wt \varphi^{g_2}_{\tau(z,t)}\psi_{g_1, g_2}(z) = \psi_{g_1, g_2}(\wt \varphi^{g_1}_t(z))
\]
where $\tau(z,t) = b^{g_2}_{z^{g_1}_+} ( \pi(z),\pi( \wt \varphi^{g_1}_t(z))$. From the cocycle property \ref{prop1bus} of the Busemann function we obtain
\begin{align*}
& b^{g_2}_{z^{g_1}_+}( \pi (\wt \varphi^{g_1}_t(z)), \pi (\wt \varphi^{g_2}_{\tau(z,t)}\psi_{g_1, g_2}(z))) \\
& = b^{g_2}_{z^{g_1}_+}( \pi (\wt \varphi^{g_1}_t(z)), \pi(\psi_{g_1, g_2}(z)) )     +   b^{g_2}_{z^{g_1}_+}  (\pi (\psi_{g_1, g_2}(z)), \pi (\wt \varphi^{g_2}_{\tau(z,t)}\psi_{g_1, g_2}(z))) \\
& =b^{g_2}_{z^{g_1}_+}(  \pi  (\wt\varphi^{g_1}_t(z)),\pi (\psi_{g_1, g_2}(z))) +\tau(z,t) \\
& = b^{g_2}_{z^{g_1}_+}( \pi(\wt  \varphi^{g_1}_t(z)), \pi(z)) +  b^{g_2}_{z^{g_1}_+}( \pi(z),   \pi(\psi_{g_1, g_2}(z))  ) +\tau(z,t)\\
& =- b^{g_2}_{z^{g_1}_+}(  \pi(z), \pi(\wt  \varphi^{g_1}_t(z))  +\tau(z,t) =0.
\end{align*}
By the definition of $\psi_{g_1, g_2}$ this yields $ \wt \varphi^{g_2}_{\tau(z,t)}\psi_{g_1, g_2}(z) = \psi_{g_1, g_2}(\wt \varphi^{g_1}_t(z))$.
The regularity of the Busemann function shows that the conjugacy is H\"older continuous.
The remaining assertions follow from the  $\Gamma$-equivariance \ref{prop2bus} of the Busemann function.
\end{proof}

\section{Anosov Stability}

The proof of the Anosov stability theorem is written down using the implicit function 
theorem in \cite{DeLaLlave-Marco-Moryon-86} in the $C^0$ category, the extension to the H\"older setting (with the same method) is written down in \cite{Katoketal}. We need the continuity with respect to the two metrics here, the proof of \cite{DeLaLlave-Marco-Moryon-86,Katoketal} indeed shows this, as we explain below. Let $\nu\in(0,1)$, then if $X$ is a $C^k$ vector field for $k\geq 4$ with flow $\varphi_t^X$, we will denote $C_{X}^\nu(\mc{M},\mc{M})$ the space of $C^\nu$ maps $\psi$ on a closed manifold $\mc{M}$ so that $d\psi.X:=\pl_{t}(\psi\circ \varphi^X_t)|_{t=0}$ exists and belongs to $C^\nu(\mc{M};T\mc{M})$. This is a Banach manifold \cite[Proposition 2.2.]{Katoketal}.

\begin{proposition}\label{stabilityth}
Let $g_0$ be a smooth metric, and assume that $X_{g_0}$ its geodesic vector field on $\mc{M}:=S_{g_0}M$ is Anosov. We view all geodesic vector field $X_g$ associated to $g$ near $g_0$ as vector fields on $\mc{M}$ (by pulling back from $S_gM$ to $S_{g_0}M$). For $k\geq 4$, there is $\nu>0$ and two open neighborhoods $\mc{U}_0\subset \mc{U}$ of $X_{g_0}$ in $C^{k+1}(\mc{M};T\mc{M})$ such that for each $Y\in\mc{U}$, and each $g\in C^{k+2}(M;S^2T^*M)$ so that $X_g\in\mc{U}_0$, there is a homeomorphism $\psi_{g,Y}\in C_{X_{g}}^{\nu}(\mc{M},\mc{M})$ and $a_{g,Y}\in C^{\nu}(\mc{M},\R^+)$
such that 
\[\forall x\in \mc{M},\,\,  d\psi_{g,Y}(x)X_{g}(x)=a_{g,Y}(x) Y(\psi_{g,Y}(x))\]
where $X_g$ is the geodesic vector field of $g$. Moreover $Y\in \mc{U}\mapsto a_{g,Y}\in C^\nu(\mc{M},\R^+)$ and 
$Y\mapsto \psi_{g,Y}\in C^\nu_{X_g}(\mc{M},\mc{M})$ are $C^{k}$, 
and each derivative of order $\ell\leq k$ with respect to $Y$ is continuous with respect to $(g,Y)$ with values in $C^{\nu}$.
\end{proposition}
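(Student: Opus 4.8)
The plan is to re-run the implicit function theorem proof of structural stability due to de la Llave--Marco--Moryon \cite{DeLaLlave-Marco-Moryon-86} and Katok et al. \cite{Katoketal}, now carrying the base metric $g$ (equivalently the base geodesic vector field $X_g$) as an extra parameter and checking that every step is continuous, indeed $C^k$, in it. Write $X_0 := X_{g_0}$ and fix once and for all a small $\nu \in (0,1)$. The first thing I would record is the continuity in $g$ of the hyperbolic data: by the cone criterion (as in \cite[Lemma 3.1]{DGRS}), there is a neighbourhood of $g_0$ in the $C^2$ topology on which $X_g$ is Anosov on $\mc{M} = S_{g_0}M$ with an invariant splitting $T\mc{M} = \rr X_g \oplus E_u(g) \oplus E_s(g)$, the hyperbolicity constants being locally uniform and the projections along this splitting depending continuously on $g$ in the topology of bounded operators on $C^\nu$; the same holds for any $Y$ in a small $C^2$-neighbourhood $\mc{U}$ of $X_0$. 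The analytic consequence needed is the classical one: for $\nu$ small enough the Lie derivative $\mc{L}_{X_0}$ restricted to $C^\nu$-sections of $E_u(g_0) \oplus E_s(g_0)$ is a Banach space isomorphism (invert it by $\int_0^{+\infty}$ on the stable factor and by $\int_{-\infty}^0$ on the unstable factor; the exponential contraction/dilation beats the growth of the $C^\nu$-norm), and this inverse depends continuously on the underlying hyperbolic data, hence on $g$.

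Next I would set up the functional equation. Using the parametrisation of orbit conjugacies from \cite{DeLaLlave-Marco-Moryon-86,Katoketal} (writing $\psi$ in terms of a small section $w$ transverse to the flow, via a fixed tubular neighbourhood of the diagonal and a fixed auxiliary metric on $\mc{M}$, and gauge-fixed by imposing $w(x) \in E_u(g_0)_x \oplus E_s(g_0)_x$), one defines, for $X$ near $X_0$ playing the role of $X_g$, $Y$ near $X_0$, $w$ small and $a$ near $1$,
\[
\mc{F}(X,Y,w,a) := d\psi_w(\cdot)\, X(\cdot) - a(\cdot)\, Y(\psi_w(\cdot)),
\]
viewed as a $C^\nu$-section of $T\mc{M}$ after trivialisation; here the point of using the space $C^\nu_{X}(\mc{M},\mc{M})$ is exactly that it makes $d\psi_w(\cdot)\,X(\cdot)$ a well-defined $C^\nu$ object. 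One has $\mc{F}(X_0,X_0,0,1) = 0$, and the partial differential of $\mc{F}$ in $(w,a)$ at that point is, up to sign conventions, $(\dot w, \dot a) \mapsto \mc{L}_{X_0}\dot w - \dot a\, X_0$. By the first paragraph this is an isomorphism from $\{C^\nu\text{-sections of } E_u(g_0) \oplus E_s(g_0)\} \times C^\nu(\mc{M})$ onto $C^\nu(\mc{M};T\mc{M})$: the hyperbolic component is inverted by the bounded invertibility of $\mc{L}_{X_0}$ there, and the $\rr X_0$-component of any target is hit uniquely by $-\dot a\, X_0$; this is also where the non-uniqueness of the conjugacy up to the flow direction is absorbed.

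Then I would apply the implicit function theorem in Banach spaces with parameters $(X,Y)$. The substitution operators $(f,\psi)\mapsto f\circ\psi$ and $\psi \mapsto d\psi(\cdot)\,X(\cdot)$ lose derivatives on Hölder spaces in the usual way, so $\mc{F}$ is $C^k$ in all of its arguments once $X,Y$ are taken of class $C^{k+1}$; since $g \in C^{k+2}$ yields $X_g \in C^{k+1}$ (one derivative of $g$), this matches the regularity hypotheses. The implicit function theorem then gives, on small enough neighbourhoods $\mc{U}_0 \subset \mc{U}$ (for $X_g$) and $\mc{U}$ (for $Y$), a solution $(w,a) = (w(X_g,Y), a(X_g,Y))$, $C^k$ in $(X_g,Y)$, with $w(X_0,X_0)=0$ and $a(X_0,X_0)=1$; setting $\psi_{g,Y} := \psi_{w(X_g,Y)}$ and $a_{g,Y} := a(X_g,Y)$ gives the conjugacy, with $a_{g,Y}>0$ and $\psi_{g,Y}$ a homeomorphism after shrinking neighbourhoods (by continuity from $(1,\mathrm{id})$, using invertibility at the $C^0$ level as in \cite{DeLaLlave-Marco-Moryon-86}). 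The joint continuity in $(g,Y)$ of the $Y$-derivatives of $\psi_{g,Y}$ and $a_{g,Y}$ up to order $k$, with values in $C^\nu$, then follows from the joint $C^k$-regularity of $\mc{F}$ together with the continuous dependence on $(X_g,Y)$ of the linearised inverse, which is itself continuous precisely because the projections onto $E_{u/s}(g)$ and the flow $\varphi^g$ are (first paragraph).

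The hard part is the analytic input of the first and second paragraphs — the bounded invertibility of $\mc{L}_{X_0}$ on the Hölder hyperbolic bundle, which forces the choice of a small exponent $\nu$, and the continuous dependence on $g$ of this inverse and of all the hyperbolic data — but this is exactly what is proved in \cite{DeLaLlave-Marco-Moryon-86,Katoketal} (and is in the same spirit as the resolvent constructions recalled in \S\ref{section:continuite}); the genuinely new bookkeeping is only to see that nothing degenerates as $X_g$ itself moves in a $C^{k+1}$-neighbourhood of $X_0$ and that the derivative losses are consistent with the stated regularities. A secondary technical point is that the Banach manifold $C^\nu_{X_g}(\mc{M},\mc{M})$ in which $\psi_{g,Y}$ lives depends on $g$; this is handled, as in \cite[\S2]{Katoketal}, by comparing these spaces for nearby base vector fields, after which the derivatives in question are sections of fixed bundles over $\mc{M}$ and their continuity in $(g,Y)$ makes literal sense.
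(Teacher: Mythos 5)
Your proposal follows essentially the same route as the paper's proof: both re-run the de la Llave--Marco--Moryon/Katok et al.\ implicit-function-theorem argument, parametrizing the conjugacy by a small section gauge-fixed against the hyperbolic splitting, inverting the linearization via the bounded invertibility of $\mc{L}_{X}$ on $C^\nu$-sections of $E_u\oplus E_s$ for $\nu$ small (with the flow direction absorbed by the coefficient $a$), and then tracking continuity of the linearized inverse in $g$ — which the paper makes precise by interpolating the propagator bounds between $C^0$ and $C^{\nu_0}$ and using the continuity of the projections $\pi_g^{u/s}$, exactly the mechanism you gesture at. The argument is correct and the key difficulty (joint continuity in $g$ of $\mc{L}_{X_g}^{-1}\pi_g$ on $C^\nu$) is correctly identified.
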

\begin{proof} The proof is essentially contained in \cite[Proposition 2.2.]{Katoketal}, except for the statement about the continuity with respect to $X_g$.
Consider for $\nu\in(0,1)$ the map 
\[
F^{X_g}:  C^{k+1}(\mc{M};T\mc{M}) \times C^{\nu}_{X_g}(\mc{M},\mc{M})\times C^\nu(\mc{M})\to C^{k+1}(\mc{M};T\mc{M}) \times C^\nu(\mc{M},T\mc{M}) =: E
\]
defined by 
\[
F^{X_g}(Y,u,\gamma) := (Y, \gamma du.X_g-Y\circ u).
\]
This is a $C^k$ map between Banach manifolds. The differential of $F^{X_g}$ at $(X_g,{\rm Id},1)$ is given (as in \cite{Katoketal}) by 
\begin{equation}\label{calculdFX} 
dF^{X_g}_{(X_g,{\rm Id},1)}(Y,V,\gamma)=(Y, -Y +\mc{L}_{X_g}V+\gamma X_g).
\end{equation}
where $V\in C_{X_g}^{\nu}(\mc{M};T\mc{M}):=\{ V\in C^\nu(\mc{M};T\mc{M})\,|\, \mc{L}_{X_g}V\in C^{\nu}\}$.
Let $\alpha_g$ be the contact form of $g$, so that $\ker \alpha_g=E_u(g)\oplus E_s(g)$ is the smooth bundle of stable or unstable vectors for $g$. 
By \cite[Proposition 2.2. and Lemma 2.3]{Katoketal}, the operator $\mc{L}_{X_g}: V\mapsto \mc{L}_{X_g}V$ is invertible from $C^{\nu}_{X_g}(\mc{M};\ker \alpha_g)\to C^\nu(\mc{M};\ker \alpha_g)$ for some $\nu$ depending on the maximal/minimal expansion rates of the flow $\varphi_t^{X_g}$. The inverse is given by:
\[
\mc{L}_{X_g}^{-1}: V=V_u+V_s\mapsto \mc{L}_{X_g}^{-1}V=-\int_0^{+\infty} d\varphi_{-t}^{X_g}V_u\circ \varphi_t^{X_g} \, dt+\int_0^{+\infty} d\varphi_{t}^{X_g}V_s\circ \varphi_{-t}^{X_g} \, dt,
\]
where the integrals converge due to the contraction of the differential: for all $t\geq 0$
\begin{equation}\label{contractionexp}
C^{-1}e^{-\la_+ t}\leq \|d\varphi_t^{X_g}|_{E_s(g)}\|\leq Ce^{-\la_- t}, \quad 
C^{-1}e^{-\la_+ t}\leq \|d\varphi_{-t}^{X_g}|_{E_u(g)}\|\leq Ce^{-\la_- t}.
\end{equation} 
This operator maps continuously $C^\nu(\mc{M};\ker \alpha_g)$ to $C^{\nu}_{X_g}(\mc{M};\ker \alpha_g)$ if $\nu>0$ is small enough depending on $\la_+$ and $\|\varphi_T^{X_g}\|_{C^2}$ for $T>0$ large (see below). Moreover, by continuity of the bundles $E_s(g),E_u(g)$ with respect to $g$ (\cite[Theorem 3.2]{HP70}), for $g$ close enough to $g_0$ in $C^{k+5}$, $E_u(g)$ and $E_s(g)$ are contained in a small conic neighborhood of $E_u(g_0)$ and $E_s(g_0)$ respectively, and the contraction exponents $\la_\pm(g)$ are also close to $\la_\pm(g_0)$ (see for ex \cite[Lemma 3]{Bonthonneau-18}), so this will give the boundedness of $\mc{L}_{X_g}^{-1}$ in $C^\nu$  for some fixed $\nu>0$ for $g$ close enough to $g_0$ in $C^{k+5}$.
From the expression of $\mc{L}_{X_g}^{-1}$, and the fact that \eqref{contractionexp}
holds uniformly for $g$ close to $g_0$ for some $0<\la_-<\la_+$ (and similarly on $E_u(g)$), we claim that, if $\pi_{g}:T\mc{M}\to \ker \alpha_g$ is the projection given by $\pi_{g}(V)=V-\alpha_{g}(V)X_g$, then 
\[
\mc{L}_{X_g}^{-1}\pi_{g}: C^{\nu}(\mc{M};T\mc{M})\to C^\nu(\mc{M};T\mc{M})
\] 
is continuous with respect to $g$ (in $C^{k+5}$) for $\nu>0$ small enough. To prove this, we rewrite $\mc{L}_{X_g}^{-1}\pi_g$ as 
\begin{equation}\label{formulaLXinv}
\mc{L}_X^{-1}\pi_g=\int_0^{\infty}e^{-t\mc{L}_{X_g}}\pi^s_g dt-\int_0^\infty e^{t\mc{L}_{X_g}}\pi^u_g dt\end{equation}
where $\pi_g^u: C^\nu(\mc{M};T\mc{M})\to C^\nu(\mc{M};T\mc{M})$ is the projection on $E_u$ parallel to $E_s$ and $\pi_g^s: C^\nu(\mc{M};T\mc{M})\to C^\nu(\mc{M};T\mc{M})$ is the projection on $E_s$ parallel to $E_u$, and $e^{t\mc{L}_{X_g}} Y := d\varphi_{-t}^{X_g} Y \circ \varphi_t^{X_g}$ is the propagator. Here $\nu$ is chosen small so that $E_u$ and $E_s$ are $C^\nu$ bundles (see \cite{HP70}), and by \cite{Contreras-92} the maps $g\mapsto \pi_g^u$ and $g\mapsto \pi_g^s$ are continuous (actually $C^r$ for some $r$ depending on the smoothness of $g$). Next, there is $C>0$ and $\Lambda>0$ such that for all $t$, $\|\varphi^{X_g}_t\|_{C^2}\leq Ce^{\Lambda |t|}$ for all $g$ near $g_0$ in $C^{k+5}$,  which implies for all $V\in C^1(\mc{M};T\mc{M})$,
\[  \forall t,\quad \|e^{t\mc{L}_{X_g}}V\|_{C^1}\leq Ce^{2\Lambda |t|}\|V\|_{C^1}, \quad \|e^{t\mc{L}_{X_g}}V\|_{C^0}\leq C e^{\Lambda |t|} \|V\|_{C^0}\]
thus if $\nu_0\in(0,1)$ is such that $E_u\in C^{\nu_0}$, we have by interpolation that 
$\|e^{t\mc{L}_{X_g}}\|_{\mc{L}(C^{\nu})}\leq Ce^{(1+\nu_0) \Lambda |t|}$ for each $\nu\leq \nu_0$. 
Since $\|e^{t\mc{L}_{X_g}}\pi_g^u\|_{\mc{L}(C^0)}+\|e^{-t\mc{L}_{X_g}}\pi_g^s\|_{\mc{L}(C^0)}\leq Ce^{-\la_- t}$ for all $t\geq 0$, we obtain by interpolating $C^\nu$ between the spaces $C^0$ and $C^{\nu_0}$ with $\nu = \theta \times 0 + (1-\theta) \times \nu_0$ (for $\theta \in (0,1)$) that for all $t\geq 0$
\[\|e^{t\mc{L}_{X_g}}\pi_g^u\|_{\mc{L}(C^\nu)}+\|e^{-t\mc{L}_{X_g}}\pi_g^s\|_{\mc{L}(C^\nu)}\leq 
Ce^{\left(-\theta \lambda_- +(1-\theta)(1+\nu_0)\Lambda\right)t}\]
We can now fix $\nu$ small enough (i.e. $\theta$ close enough to $1$) to guarantee
\[
-\theta \lambda_- +(1-\theta)(1+\nu_0)\Lambda<0,
\]
which implies that \eqref{formulaLXinv} is uniformly converging with respect to $g$ near $g_0$ in $C^{k+5}$. Since $g\mapsto e^{t\mc{L}_{X_g}}\pi_g^u$ and $g\mapsto e^{-t\mc{L}_{X_g}}\pi_g^s$ are continuous for each $t\geq 0$, we can apply the Lebesgue Theorem to deduce the continuity of $g\mapsto 
\mc{L}_{X_g}^{-1}\pi_g\in \mc{L}(C^\nu)$ for $\nu>0$ small enough. 

Next, we consider the map $\til{F}^{X_g}: E\rightarrow E$ defined by
\[  
\begin{split}
 \til{F}^{X_g}(Y,V) &:= F^{X_g}(Y,\exp_{g_0}(\mc{L}_{X_g}^{-1}\pi_g(Y+V)),\alpha_g(Y+V)) \\
 & = (Y,\alpha_g(Y+V)d(\exp_{g_0}(\mc{L}_{X_g}^{-1}\pi_g(Y+V))).X_g -Y\circ \exp_{g_0}(\mc{L}_{X_g}^{-1}\pi_g(Y+V)))
 \end{split}
 \]
where we recall that $E = C^{k+1}(\mc{M};T\mc{M}) \times C^\nu(\mc{M},T\mc{M}) $ and $\exp_{g_0}$ is the exponential map of $g_0$. This map satisfies $\til{F}^{X_g}(X_g,0)=(X_g,0)$. We want to apply the inverse function theorem to find a pre-image to each $(Y,0)$ close to $(X_g,0)$.
As in \cite[Proposition 2.2]{Katoketal} (see also \cite[Appendix A]{DeLaLlave-Marco-Moryon-86}), the map $\til{F}^{X_g}$ is $C^{k}$, and moreover it depends continuously on $g\in C^{k+5}(M;S^2 T^*M)$, with all its derivatives of order $\ell\leq k$ being also continuous with respect to $g$, due to the continuity of $g\mapsto \mc{L}_{X_g}^{-1}\pi_g$ as a map $C^{k+5}(M;S^2 T^*M)\to \mc{L}(C^\nu(\mc{M};T\mc{M}))$. Now, we have 
\[
d\til{F}^{X_g}(X_g,0)={\rm Id},
\]
by using \eqref{calculdFX} and $\pi_g(X_g)=0$. In particular there is $\eps>0$ such that if $\|g-g_0\|_{C^{k+5}}<\eps$, $\|Y-X_g\|_{C^{k+1}}<\eps$ and 
$\|V\|_{C^\nu}<\eps$, then 
\[\|d\til{F}^{X_g}_{(Y,V)}-{\rm Id}\|_{\mc{L}(C^\nu(\mc{M};T\mc{M}))}<1/4.\]
For each $Y$ close to $X_g$, we can then use the fixed point theorem (like in the proof of the inverse function theorem) to 
the map $(Z,V)\in E\mapsto (Z+Y,V)-\til{F}^{X_g}(Z,V)$ and obtain that there is a unique $(Y,V(Y))$ such that $(Y,0)=\til{F}^{X_g}(Y,V(Y))$, and $V(Y)\in C^{\nu}(\mc{M};T\mc{M})$ depends in a $C^{k}$ fashion on $Y$ and is continuous with respect to $g$. Moreover, the usual argument in the inverse function theorem used to prove the $C^k$ property of $Y\mapsto V(Y)$ also shows that the derivatives of order $\ell\leq k$ are continuous with respect to $(X_g,Y)$, by using the continuity of $\til{F}^{X_g}$ and its derivatives with respect to $g$.  This shows that for each $Y$ close to $X_g$ in $C^{k+1}$ norm and $g$ close to $g_0$ in $C^{k+5}$ norm, there is a   
\[ u=\exp_{g_0}(\mc{L}_{X_g}^{-1}\pi_g(Y+V))\in C^{\nu}_{X_g}(\mc{M},\mc{M}) ,\quad \gamma=\alpha_g(Y+V)\in C^{\nu}(\mc{M})\]
so that $\gamma du.X_g=Y\circ u$, with
\[
C^{k+1}(\mc{M};T\mc{M}) \ni Y \mapsto (u,\gamma)\in C^\nu(\mc{M}\times \mc{M})\times C^\nu(\mc{M}),
\]
being $C^{k}$ and all the derivatives of order $\ell\leq k$ are continuous in $(g,Y)$ (with values in $C^\nu(\mc{M},\mc{M})\times C^\nu(\mc{M})$).
\end{proof}

\bibliographystyle{alpha}
\bibliography{biblio}

\end{document}